\newtheorem{theorem}{Theorem}[section]
\newtheorem{lemma}[theorem]{Lemma}
\newtheorem{corollary}[theorem]{Corollary}
\newtheorem{definition}[theorem]{Definition}
\newtheorem{proposition}[theorem]{Proposition}
\newtheorem{remark}[theorem]{Remark}
  \def\leq{\leqslant}  \def\geq{\geqslant}
\begin{document}

\title[Notes on Chevalley Groups and Root Category]
{Notes on Chevalley Groups and Root Category \uppercase\expandafter{\romannumeral1}}

\author[Buyan Li]{Buyan Li}
\address{Department of Mathematical Sciences, Tsinghua University, Beijing 100084, P. R. China}
\email{liby21@mails.tsinghua.edu.cn}

\author[Jie Xiao]{Jie Xiao}
\address{School of Mathematical Sciences, Beijing Normal University, Beijing 100875, P. R. China}
\email{jxiao@bnu.edu.cn}

\subjclass[2000]{16G20, 20D06}

\date{\today}

\keywords{Chevalley group, root category}

\bibliographystyle{abbrv}

\maketitle

\begin{abstract}
    Based on the construction of simple Lie algebras via root category and following Chevalley's results, we construct Chevalley groups from the root category.
    Then we prove the Bruhat decomposition and the simplicity of the Chevalley groups, and calculate the orders of finite Chevalley groups.
\end{abstract}

\setcounter{tocdepth}{1}\tableofcontents

\section{Introduction}

In 1955, Chevalley published his famous paper\cite{Chevalley} in which he constructed a class of simple groups over arbitrary fields, especially finite fields.
These groups, now known as Chevalley groups, generalize the classical Lie groups to arbitrary fields and provide a unifying framework for studying finite and infinite groups. 

Chevalley groups are constructed from simple complex Lie algebras, which are characterized by their root systems. 
A Chevalley basis is a specific set of root vectors of the Lie algebra that has integer structure constants, allowing the algebra to be defined over $\mathbb{Z}$, and therefore arbitrary fields.
The Chevalley group is then obtained as the group of automorphisms of the Lie algebra constructed from the Chevalley basis over any fixed field, and is generated by specific unipotent elements corresponding to the roots.
In Carter's book\cite{carter} and Steinberg's lectures\cite{Steinberg}, the construction process of Chevalley groups is thoroughly explained, along with some of their important properties.

Ringel\cite{Ringel1990}\cite{RINGEL1990137} used the representation theory of finite dimensional associative hereditary representation-finite algebra $A$ to obtain a $\mathbb{Z}$-form of the positive part $\mathfrak{n}_+$, where $\mathfrak{g}=\mathfrak{n}_-\oplus \mathfrak{h} \oplus \mathfrak{n}_+$ is a triangular decomposition of the Lie algebra $\mathfrak{g}$, and used the Hall algebra to realize the positive part of the quantum group.
In particular, Ringel proved the existence of Hall polynomials, and defined the structure constants to be the evaluations of Hall polynomials.
In fact, he calculated Hall polynomials for any three indecomposable objects, and listed the results in his paper\cite{RINGEL1990137}.
On the other hand, Happel\cite{Happel}\cite{Happelbook} developed the theory of derived category of mod$A$, and extended the Gabriel theorem by pointing out that the isomorphism classes of indecomposable objects in the root category $\mathcal{R}$ correspond bijectively to the root system of the Lie algebra $\mathfrak{g}$ (not only the positive part).
Combining these, Peng and Xiao\cite{1997ROOT} used the root category to recover the whole Lie algebra $\mathfrak{g}$.
Specifically, the Grothendieck group provides the Cartan subalgebra, the isomorphism classes of indecomposable objects of $\mathcal{R}$ label a Chevalley basis.
They demonstrated the existence of the Hall polynomials in the triangulated category $\mathcal{R}$, and used the evaluations of the Hall polynomials within their framework as structure constants to get the Lie product. 
Using the octahedral axiom of $\mathcal{R}$, they obatined the Jacobi identity and realized a simple Lie algebra.

Chevalley\cite{C2} and Kostant\cite{Kostant} defined a $\mathbb{Z}$-form of the coordinate ring of a connected semisimple group over $\mathbb{C}$.
In \cite{zform}, Lusztig generalized the constructions of Kostant to quantum case. 
Based on the construction of $\dot{\mathbf{U} }$ and its canonical basis, Lusztig used the group scheme to construct a connected reductive group.
In \cite{Lusztig}, Lusztig used the canonical basis of quantum group to obtain a basis of the adjoint representation of a simple Lie algebra $\mathfrak{g}$, such that the action of Chevalley generators of $\mathfrak{g}$ is represented by matrices with natural number entries. 
As an application, he obtained a definition of the Chevalley group over a field $k$ associated to $\mathfrak{g}$, which is simpler compared to Chevalley's original definition.

Based on the results of Lusztig, Geck wrote a survey\cite{gecknotes} about the Chevalley groups and had some new developments, e.g. in \cite{Geck}. 
He noted in \cite{Geck}:

\begin{quote}
    \textit{Ringel \cite{RINGEL1990137} (see also Peng-Xiao \cite{1997ROOT}) found an entirely different method for fixing the signs in the structure constants, starting from any orientation of the Dynkin diagram of $\Phi$ and then using the representation theory of quivers and Hall polynomials.}
\end{quote}

The purpose of this article is just to continue the process starting from Ringel and present a construction of Chevalley groups.
Since Ringel, Peng and Xiao successfully used the representation theory of quivers and Hall polynomials to construct Lie algebras, they also provided a method to determine the signs of the structure constants for Chevalley groups.
Although it's a direct corollary of combining their constructions with Chevalley groups, we believe it is necessary to write down the whole story explicitly.
Our construction arising from root category is self-contained, while we also use methods and calculations of \cite{carter}.

In Section 2, we introduce some basic results about quiver representations and Hall polynomials.
In Section 3, we state the construction of the simple Lie algebra $\mathfrak{g}_{\mathcal{R}}$ arising from the root category $\mathcal{R}$ given by Peng and Xiao.
In Section 4, we construct the Chevalley group of the Lie algebra $\mathfrak{g}_{\mathcal{R}}$, using the properties of the root category.
The structure constants in the Chevalley group are given by evaluations of Hall polynomials.
In particular, we use the BGP reflection functors in root category\cite{BGP} to define some important elements in the Chevalley group, which will correspond to the elements in the Weyl group.
Finally, we prove the Bruhat decomposition and the simplicity of the Chevalley group.
When the field is finite, we calculate the order of the finite Chevalley group via properties of the AR-quiver.

\subsection*{Conventions}

We denote by $|S|$ the number of elements in the set $S$, and by $[M]$ the isomorphism class of object $M$.

\section{Background}

\subsection{Quivers and Representations}

In this subsection, we state some results in the representation theory of quivers.
We refer Chapter2,3 of \cite{Deng} for details.

\begin{definition}
    A quiver $Q=(Q_0,Q_1,t,h)$ consists of set $Q_0$ of vertices, set $Q_1$ of arrows, and two maps $t,h:Q_1\rightarrow Q_0$.
    Each arrow $\alpha$ in $Q_1$, $t\alpha, h\alpha$ are called the tail and head of $\alpha$, respectively.

    A valuation on a quiver $Q$ consists of two families $\{ d_i\}_{i\in Q_0}$ and $\{ m_{\alpha}\}_{\alpha \in Q_1}$ of positive integers, such that for each $\alpha \in Q_1$, $m_{\alpha}$ is a common multiple of $d_{t\alpha}$ and $d_{h\alpha}$.
    A quiver $Q$ together with a valuation $(\{ d_i\}_{i\in Q_0} , \{ m_{\alpha}\}_{\alpha \in Q_1})$ is called a valued quiver. 
\end{definition}

The valued quiver $\Gamma$ with valuation $(\{ d_i\}_{i\in \Gamma_0} , \{ m_{\alpha}\}_{\alpha \in \Gamma_1})$ defines a matrix $C_{\Gamma} = (c_{ij})_{i,j\in \Gamma_0}$, where 
\begin{equation*}
    c_{ij} = 
    \begin{cases}
        2  \qquad  & i=j,\\
        -\sum\limits_{\rho: \text{ arrows between } i \text{ and } j }   \frac{m_{\rho}}{d_i}            & i\neq j.
    \end{cases}
\end{equation*}

\begin{lemma} \cite[Lemma 3.5]{Deng} 
    Let $\Gamma$ be a valued quiver, $C_{\Gamma}$ be the matrix defined as above.
    Then $C_{\Gamma}$ is a symmetrizable Cartan matrix.
    Conversely, every symmetrizable Cartan matrix can be obtained in this way.
\end{lemma}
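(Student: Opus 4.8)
The plan is to prove both directions by unwinding the definitions; the content is really an integrality bookkeeping, so I would be careful to keep track of which $d_i$ divides which quantity. Throughout, recall that a generalized Cartan matrix is an integer matrix $(c_{ij})$ with $c_{ii}=2$, $c_{ij}\le 0$ for $i\ne j$, and $c_{ij}=0\iff c_{ji}=0$, and that it is symmetrizable when some diagonal matrix $D$ with positive entries makes $DC$ symmetric.

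For the first assertion, I would check the three conditions for $C_\Gamma$ in turn. The diagonal condition is immediate. For $i\ne j$, every arrow $\rho$ between $i$ and $j$ satisfies $d_i\mid m_\rho$ by the definition of a valuation, so each summand $m_\rho/d_i$ is a positive integer and $c_{ij}$ is a non-positive integer; moreover $c_{ij}=0$ exactly when there is no arrow between $i$ and $j$, a symmetric condition, giving $c_{ij}=0\iff c_{ji}=0$. Symmetrizability I would establish by exhibiting the symmetrizer $D=\operatorname{diag}(d_i)_{i\in\Gamma_0}$: since the summation over arrows between $i$ and $j$ is symmetric in $i,j$, one gets
\[
d_i\,c_{ij}\;=\;-\!\!\sum_{\rho\text{ between }i,j}\!\! m_\rho\;=\;d_j\,c_{ji},
\]
so $DC_\Gamma$ is symmetric and $D$ has positive entries.

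For the converse, start from a symmetrizable generalized Cartan matrix $C=(c_{ij})_{i,j\in I}$ and pick a diagonal symmetrizer $D=\operatorname{diag}(\epsilon_i)$ with positive entries; after multiplying by a common denominator I may assume the $\epsilon_i$ are positive integers. I would then build $\Gamma$ with $\Gamma_0=I$ and valuation $d_i:=\epsilon_i$, and, after fixing a total order on $I$, for each pair $i<j$ with $c_{ij}\ne 0$ insert a single arrow $\rho_{ij}\colon i\to j$ with $m_{\rho_{ij}}:=-\epsilon_i c_{ij}$ (this is well posed since $c_{ij}\ne 0\iff c_{ji}\ne 0$). It then remains to check that this is a legitimate valued quiver and that it produces $C$. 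For the former, $m_{\rho_{ij}}=-\epsilon_i c_{ij}$ is a multiple of $\epsilon_i=d_i$ and, using $\epsilon_i c_{ij}=\epsilon_j c_{ji}$, also equals $-\epsilon_j c_{ji}$, a multiple of $\epsilon_j=d_j$; hence $m_{\rho_{ij}}$ is a common multiple of $d_i$ and $d_j$. For the latter, the off-diagonal formula gives $-m_{\rho_{ij}}/d_i=c_{ij}$ when there is an arrow, while the entries with no arrow are $0$ on both sides, so $C_\Gamma=C$.

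I do not expect a genuine obstacle here; the only spots needing a moment's care are the reduction to an \emph{integral} symmetrizer in the converse (a positive rational symmetrizer exists by the definition of symmetrizability, and scaling by a common denominator keeps it positive while making it integral) and the remark that the positivity of the $d_i$ is precisely what upgrades ``$DC_\Gamma$ symmetric'' to ``$D$ is a symmetrizer'' in the sense required by the statement.
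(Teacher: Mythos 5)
Your proof is correct. The paper does not give its own argument here --- it simply cites \cite[Lemma 3.5]{Deng} --- and your proof is the standard one: the diagonal matrix $\operatorname{diag}(d_i)$ symmetrizes $C_\Gamma$ because $d_i c_{ij}=-\sum_\rho m_\rho=d_j c_{ji}$, and for the converse one clears denominators in a rational symmetrizer to get positive integers $\epsilon_i$, sets $d_i=\epsilon_i$, and builds a single arrow with $m_\rho=-\epsilon_i c_{ij}=-\epsilon_j c_{ji}$ for each pair $i\ne j$ with $c_{ij}\ne 0$, which by that very identity is a common multiple of $d_i$ and $d_j$ as required. One small point worth making explicit: the word ``Cartan matrix'' in the statement should be read as ``generalized Cartan matrix'' (as you do); a general valued quiver need not produce a finite-type matrix, and no positivity of the associated form is claimed or needed.
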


Now let $(Q,\sigma)$ be a quiver with an automorphism $\sigma$.
For the pair $(Q,\sigma)$, we can naturally define a valued quiver $\Gamma(Q,\sigma)$.

\begin{definition}
    The valued quiver $\Gamma=\Gamma(Q,\sigma)$ associated with a quiver $Q$ with an automorphism $\sigma$ is defined as follows.
   
    (1) The vertex set $\Gamma_0$ is the set of $\sigma$-orbits in $Q_0$, and the arrow set $\Gamma_1$ is the set of $\sigma$-orbits in $Q_1$.
     For each $\alpha \in \Gamma_1$, its tail (resp. head) is the $\sigma$-orbit of tails (resp. heads) of arrows in $\alpha$.
    
    (2) The valuation of $\Gamma$ consists of families of positive integers $(\{ d_i\}_{i\in \Gamma_0} , \{ m_{\alpha}\}_{\alpha \in \Gamma_1})$.
    For each $i\in \Gamma_0$, let $d_i = | \{ \text{vertices in the } \sigma \text{-orbit } i  \} |$.
    For each $\alpha \in \Gamma_1$, let $m_{\alpha} = | \{ \text{arrows in the } \sigma \text{-orbit } \alpha \} |$.
 \end{definition}

\begin{definition} 
    With the notations as above and let $k$ be a field. 

    A $k$-modulation $\mathbb{M}$ of $\Gamma$ consists of a family of division $k$-algebras $\mathcal{D}_i$ with $dim_k \mathcal{D}_i = d_i$, for each $i\in \Gamma_0$, and of a family of $\mathcal{D}_{ h \alpha}$-$\mathcal{D}_{ t \alpha}$-bimodules $M_{\alpha}$ with $dim_k M_{\alpha} = m_{\alpha}$, for each $\alpha\in \Gamma_1$, such that 
    the action of $k$ on $M_{\alpha}$ via the left action of $\mathcal{D}_{h\alpha}$ and via the right action of $\mathcal{D}_{t\alpha}$ coincide.

    A $k$-modulated quiver $\mathfrak{Q}$ is a pair $(\Gamma,\mathbb{M})$ consisting of a valued quiver $\Gamma$ and a $k$-modulation $\mathbb{M}$ of $\Gamma$.
\end{definition}

Now we introduce the definition of the Auslander-Reiten quiver (AR-quiver) of a finite dimensional algebra $A$ over a field $k$.

Let mod$A$ be the category of finite dimensional left $A$-modules.
For each module $M$ in mod$A$, let 
\begin{align*}
    \mathcal{D}_M = \operatorname{End}_A(M)/\operatorname{rad} (\operatorname{End}_A(M)), 
\end{align*}
which is a division algebra if $M$ is indecomposable.

For any two $A$-modules $M=\oplus_i M_i$ and $N=\oplus_j N_j$, where $M_i$ and $N_j$ are indecomposable, define the radical of $\operatorname{Hom}_A(M,N)$ by 
\begin{align*}
    \operatorname{rad}_A(M,N) = \{ f=(f_{ij}):M\rightarrow N | f_{ij}:M_i\rightarrow N_j  \text{ is not an isomorphism for all } i,j \}.
\end{align*}
This defines a subfunctor $\operatorname{rad}_A(-,-):(\operatorname{mod}A)^{op}\times \operatorname{mod}A \rightarrow \operatorname{mod}k$ of $\operatorname{Hom}_A(-,-)$.
If $M,N$ are indecomposable, the $\mathcal{D}_N$-$\mathcal{D}_M$-bimodule 
\begin{align*}
    \operatorname{Irr}_A(M,N) = \operatorname{rad}_A(M,N)/\operatorname{rad}_A^2(M,N)
\end{align*}
 is called the space of irreducible morphisms from $M$ to $N$.

\begin{definition}
    With the notations as above,
    the Auslander-Reiten quiver (AR-quiver) of a finite dimensional algebra $A$ over a field $k$, is a $k$-modulated quiver $\mathfrak{Q}_A=(\Gamma,\mathbb{M})$.
    The vertices of $\Gamma$ are isomorphism classes $[M]$ of indecomposable $A$-modules.
    There is a unique arrow from $[M]$ to $[N]$ for indecomposable modules $M,N$, if and only if $\operatorname{Irr}_A(M,N)\neq 0$.
    The valuation of $\Gamma$ assigns to each vertex $[M]$ the positive integers $d_M = dim_k \mathcal{D}_M$, and each arrow $[M] \xrightarrow{\alpha} [N]$ the positive integer $m_{\alpha} = dim_k \operatorname{Irr}_A(M,N)$.
    The $k$-modulation $\mathbb{M}$ is given by division algebras $\mathcal{D}_M$, for each vertex $[M]$, and $\mathcal{D}_N$-$\mathcal{D}_M$-bimodules $\operatorname{Irr}_A(M,N)$, for each arrow $[M]\rightarrow [N]$.
\end{definition}

Now we consider the case when $k=\mathbb{F}_q$ is a finite field with $q$ elements.
Let $K=\overline{\mathbb{F}}_q$ be the algebraic closure of $\mathbb{F}_q$ and $A$ be a finite dimensional $K$-algebra. 
A Frobenius map $F$ of a $K$-vector space $V$ is an $\mathbb{F}_q$-linear isomorphism of $V$, satisfying that for all $v\in V,\lambda \in K$, $F(\lambda v)=\lambda^q F(v)$, and $F^t(v)=v$ for some $t\geq 1$. 
A map $F:A\rightarrow A$ is a Frobenius morphism on $A$ if it's a Frobenius map on the underlying $K$-vector space and compatible with the multiplication of $A$. 

Suppose $\phi :K\rightarrow K$ is the field automorphism given by $\phi(\lambda) = \lambda^q$.
For any $K$-vector space $V$, let $V^{(1)} = V\otimes_{\phi} K$, which means for each $v\in V$ and $\lambda \in K$, we have $\lambda v \otimes 1 = v\otimes \lambda^q$.
Let $v^{(1)} = v\otimes 1$ and $\varpi_V:V\rightarrow V^{(1)}$ be the $\mathbb{F}_q$-linear isomorphism maps $v$ to $v^{(1)}$.

Let $M$ be an $A$-module and $\pi : A \rightarrow \operatorname{End}_K(M)$ be the $K$-algebra homomorphism describing this module structure.
Then we have a $K$-algebra homomorphism $\pi^{(1)}:A^{(1)}\rightarrow (\operatorname{End}_K(M))^{(1)}$.
The composition of maps
\begin{align*}
    A\xrightarrow{F^{-1}} A \xrightarrow{\varpi_A} A^{(1)} \xrightarrow{\pi^{(1)}} (\operatorname{End}_K(M))^{(1)} \xrightarrow{\cong} \operatorname{End}_K(M^{(1)}) 
\end{align*}
defines an $A$-module structure on $M^{(1)}$.
 Denote this module by $M^{[1]}$ and it's called the Frobenius twist of $M$.
If $f:M\rightarrow N$ is an $A$-module homomorphism, then $f^{[1]} = f \otimes 1 : M^{[1]} \rightarrow N^{[1]}$ is an $A$-module homomorphism.
We obtain a functor $(-)^{[1]}: \operatorname{mod}A \rightarrow \operatorname{mod}A$, which is called the Frobenius twist functor.


Since $A$ is finite dimensional, any finite dimensional $A$-module $M$ is F-periodic, and let $p_F(M)$ be the F-period of $M$.
The Frobenius twist functor induces an automorphism $\varsigma $ of the AR-quiver $ \mathfrak{Q}_A$ of $A$.
We describe $\varsigma $ as follows.
For each vertex $[M]$ in $\mathfrak{Q}_A$, define $\varsigma([M]) = [M^{[1]}]$.
Let $M,N$ be indecomposable $A$-modules.
For $0 \leq s < p_F(M)$ and $0\leq t < p_F(N)$, 
  there are $n_{s,t}$ arrows $\xi_{s,t}^{(m)} $ from $[M^{[s]}]$ to $[N^{[t]}]$ in $\mathfrak{Q}_A$, where $n_{s,t} = dim_K \operatorname{Irr}_A(M^{[s]},N^{[t]})$ and $1\leq m \leq n_{s,t}$.
Note that $n_{s,t} = n_{s+1,t+1}$ for all $s,t$ (modulo $p_F(M),p_F(N)$, respectively).
Define $\varsigma (\xi_{s,t}^{(m)}) = \xi_{s+1,t+1}^{(m)}$ for $0 \leq s < p_F(M)$ and $0\leq t < p_F(N)$.

For any quiver with automorphism $(Q,\sigma)$, we can attach an $\mathbb{F}_q$-modulated quiver $\mathfrak{Q}_{Q,\sigma,q}$ to it.
Let $KQ$ be the path algebra of $Q$ over $K$.
There is a Frobenius morphism $F=F_{Q,\sigma,q}$ of $KQ$, such that $F(\sum_{s} x_s p_s) = \sum_s x_s^q \sigma (p_s)$, where $x_s \in K,p_s$ are paths in $KQ$.
We call the $F$-fixed point algebra $(KQ)^F = \{  a\in KQ | F(a) = a\}$ the $\mathbb{F}_q$-algebra associated with $(Q,\sigma)$.

Let $\Gamma=\Gamma(Q,\sigma)$ be the valued quiver,
we define an $\mathbb{F}_q$-modulated quiver $(\Gamma,\mathbb{M})$ as follows.
For each vertex $i\in \Gamma_0$, let $N_i = \oplus_{a\in i} Ke_a$, where $e_a$ is the idempotent of $KQ$ corresponding to vertex $a\in Q_0$.
For each arrow $\alpha \in \Gamma_1$, let $N_{\alpha} = \oplus_{\tau\in \alpha} K\tau$, and the $N_{h\alpha}$-$N_{t\alpha}$-bimodule structure of $N_{\alpha}$ is given by the multiplication in $KQ$.
They are F-stable subspaces of $KQ$.
For each $i\in \Gamma_0$, let $N_i^F$ be the set of $F$-fixed points in $N_i$. 
Then the set $N_{\alpha}^F$ of $F$-fixed points in $N_{\alpha}$ is an $N_{h\alpha}^F$-$N_{t\alpha}^F$-bimodule.
The $\mathbb{F}_q$-modulation $\mathbb{M}$ of the valued quiver $\Gamma$ is defined to be $\mathbb{M} = \mathbb{M}(Q,\sigma,q) = ( \{ N_i^F \}_{i\in \Gamma_0}, \{  N_{\alpha}^F \}_{\alpha \in \Gamma_1} )$.
We denote this $\mathbb{F}_q$-modulated quiver $(\Gamma,\mathbb{M})$ by $\mathfrak{Q}_{Q,\sigma,q}$.

As a special case of the previous constructions, for $(\mathfrak{Q}_A,\varsigma)$ we obtain an $\mathbb{F}_q$-modulated quiver $\mathfrak{Q}_{\mathfrak{Q}_A,\varsigma,q}$. 
Now we can state the following theorem.

\begin{theorem} \label{deng} \cite[Thm 3.30]{Deng}
    Let $A$ be a finite dimensional algebra over $K=\overline{\mathbb{F}}_q$ with Frobenius morphism $F$. 
    Let $\mathcal{Q} = \mathfrak{Q}_A$ be the AR-quiver of $A$, and $\varsigma$ be the automorphism induced by $F$.
    Then the $\mathbb{F}_q$-modulated quiver $\mathfrak{Q}_{\mathcal{Q},\varsigma,q } $ associated with $(\mathcal{Q},\varsigma)$ is equivalent to the AR-quiver $\mathfrak{Q}_{A^F}$ of $A^F$.
\end{theorem}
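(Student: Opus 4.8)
The plan is to set up the Frobenius-descent dictionary between $\operatorname{mod}A$, equipped with semilinear Frobenius structures, and $\operatorname{mod}A^F$, and then read the vertices, the valuation and the modulation off each side. First I would record the two functors. For an $A^F$-module $V$, the extension of scalars $\overline V:=V\otimes_{\mathbb{F}_q}K$ is an $A$-module (via $A^F\otimes_{\mathbb{F}_q}K\cong A$) carrying the semilinear automorphism $F_V:=\operatorname{id}_V\otimes\phi$; conversely, an $A$-module $M$ equipped with a finite-order semilinear automorphism $F_M$ compatible with $F$ has fixed-point set $M^{F_M}$, an $A^F$-module with $\overline{M^{F_M}}\cong M$. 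Two facts make this a usable dictionary: (i) the base-change isomorphism $\operatorname{Hom}_{A^F}(V,W)\otimes_{\mathbb{F}_q}K\cong\operatorname{Hom}_A(\overline V,\overline W)$ is $F$-equivariant and, since $K/\mathbb{F}_q$ is separable, respects the radical filtration, so that $\operatorname{rad}_{A^F}(V,W)\otimes K\cong\operatorname{rad}_A(\overline V,\overline W)$ and $\operatorname{Irr}_{A^F}(V,W)\otimes K\cong\operatorname{Irr}_A(\overline V,\overline W)$; and (ii) a Lang--Steinberg argument applied to the automorphism group $\operatorname{Aut}_A(M)$, an algebraic group over $K$ equipped with a Frobenius, shows that $M$ admits such an $F_M$ precisely when $M\cong M^{[1]}$ and that any two choices are conjugate. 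With Krull--Schmidt, passing to $F$-fixed points then commutes with finite direct sums and splits idempotents.

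Next I would classify indecomposables. Since $K=\overline{\mathbb{F}}_q$ is algebraically closed, $\mathcal{D}_M=K$ for every indecomposable $A$-module $M$. Fix such an $M$, put $\ell=p_F(M)$, set $\widetilde M=M\oplus M^{[1]}\oplus\cdots\oplus M^{[\ell-1]}$ and $E=\operatorname{End}_A(\widetilde M)$, and use an isomorphism $M^{[\ell]}\cong M$ to put a cyclic semilinear structure on $\widetilde M$ permuting the $\ell$ pairwise non-isomorphic indecomposable summands. Then the only fixed idempotents of $E$ are $0$ and $1$, so the fixed-point $A^F$-module $\widehat M$ of $\widetilde M$ is indecomposable, with $\mathcal{D}_{\widehat M}\cong(E/\operatorname{rad}E)^{F}\cong(K^{\ell})^{F}\cong\mathbb{F}_{q^{\ell}}$, that is, of valuation $\ell=p_F(M)$. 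Conversely, for an indecomposable $A^F$-module $V$, the permutation action of $F_V$ on the indecomposable summands of $\overline V$ together with the indecomposability of $V$ forces $\overline V$ to be a single $\varsigma$-orbit of indecomposables, each of multiplicity one: a second orbit would yield a nontrivial fixed idempotent (the projector onto it), and a multiplicity $\ge 2$ would yield fixed endomorphisms forming a matrix algebra $M_a(\mathbb{F}_{q^{\ell}})$, again with nontrivial idempotents. Hence taking $F$-fixed points, $[M]\mapsto[\widehat M]$, induces a bijection (depending only on the $\varsigma$-orbit of $[M]$) between the $\varsigma$-orbits of isomorphism classes of indecomposable $A$-modules, which are the vertices of $\Gamma(\mathcal{Q},\varsigma)$, and the isomorphism classes of indecomposable $A^F$-modules, which are the vertices of $\mathfrak{Q}_{A^F}$, matching the valuation $d_{[M]}=|\,\varsigma\text{-orbit of }[M]\,|=\ell$ with $\dim_{\mathbb{F}_q}\mathcal{D}_{\widehat M}$.

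For arrows and modulation, fix indecomposables $M,N$ on the $A$-side. Since $\widetilde M=\bigoplus_s M^{[s]}$ and $\widetilde N=\bigoplus_t N^{[t]}$, one has $\operatorname{Irr}_A(\widetilde M,\widetilde N)\cong\bigoplus_{0\le s<p_F(M),\,0\le t<p_F(N)}\operatorname{Irr}_A(M^{[s]},N^{[t]})$, a $K$-space with basis the arrows $\xi_{s,t}^{(m)}$ of $\mathcal{Q}$; the induced Frobenius sends $\xi_{s,t}^{(m)}$ to $\xi_{s+1,t+1}^{(m)}$ up to a $\phi$-twist of scalars, which is exactly the definition of $\varsigma$ on arrows, so this $F$-space is identified with the $F$-stable subspace $\bigoplus_{\alpha}N_\alpha$ of $K\mathcal{Q}$ indexed by the arrow-orbits $\alpha$ between the vertex-orbits of $[M]$ and $[N]$. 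Applying (i) to $\widehat M,\widehat N$ and taking $F$-fixed points gives $\operatorname{Irr}_{A^F}(\widehat M,\widehat N)\cong\bigoplus_{\alpha}N_\alpha^F$ as $\mathcal{D}_{\widehat N}$-$\mathcal{D}_{\widehat M}$-bimodules, with the identifications $\mathcal{D}_{\widehat M}\cong N_{[M]}^F$ from the vertex step; equivalently, the modulation bimodule carried by the arrow $[\widehat M]\to[\widehat N]$ of $\mathfrak{Q}_{A^F}$ is the direct sum of the bimodules $N_\alpha^F$ carried by those $\alpha$. In particular this arrow exists iff some $\operatorname{Irr}_A(M^{[s]},N^{[t]})\ne 0$ iff some such $\alpha$ exists, and $\dim_{\mathbb{F}_q}\operatorname{Irr}_{A^F}(\widehat M,\widehat N)=\sum_\alpha\dim_{\mathbb{F}_q}N_\alpha^F=\sum_\alpha m_\alpha$; compatibility with tails and heads is immediate. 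Combining the three steps gives the asserted equivalence of $\mathbb{F}_q$-modulated quivers.

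I expect the main obstacle to be input (ii): showing that $F$-stability ($M\cong M^{[1]}$) already forces the existence of a finite-order semilinear $F_M$, and that the fixed-point functor is exact enough to commute with direct sums, idempotent splittings and the radical. This is where the Lang--Steinberg theorem for the automorphism group (applied to its reduced subgroup) enters; it is the one genuinely non-formal ingredient, after which the matching of vertices, valuations and arrows is bookkeeping with the $\operatorname{Hom}$- and $\operatorname{rad}$-base-change isomorphisms.
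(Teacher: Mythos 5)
The paper does not prove this statement: Theorem~\ref{deng} is quoted verbatim from \cite[Thm 3.30]{Deng} and used as an external input, so there is no internal proof to compare against. I will therefore only assess your argument against the standard treatment in that reference.

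Your reconstruction follows the same general blueprint as the Deng--Du--Parshall--Wang development of Frobenius morphisms on algebras and modules: (a) the pair of mutually inverse operations, $V\mapsto V\otimes_{\mathbb{F}_q}K$ with its canonical semilinear structure, and $(M,F_M)\mapsto M^{F_M}$; (b) for an indecomposable $M$ with $F$-period $\ell$, pass to $\widetilde M=M\oplus M^{[1]}\oplus\cdots\oplus M^{[\ell-1]}$, equip it with a cyclic semilinear structure, and show the fixed points are an indecomposable $A^F$-module with endomorphism division ring $\mathbb{F}_{q^{\ell}}$, so that $\varsigma$-orbits of vertices of $\mathfrak{Q}_A$ biject with vertices of $\mathfrak{Q}_{A^F}$ with matching valuations; (c) match irreducible-map bimodules via base change of $\operatorname{Hom}$ and $\operatorname{rad}$ and then take $F$-fixed points. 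The structure is right and the bookkeeping in steps (b) and (c) checks out, including the key observation that the only $F$-fixed idempotents of $\operatorname{End}_A(\widetilde M)$ are $0$ and $1$, and that $(E/\operatorname{rad} E)^F\cong\mathbb{F}_{q^{\ell}}$.

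Two points deserve a word of caution. First, you correctly single out the nontrivial input as the statement that $F$-stability of $M$ already yields a finite-order compatible semilinear $F_M$, and that any two such are equivalent. Your Lang--Steinberg reduction is sound, but it relies on $\operatorname{Aut}_A(M)$ being a connected algebraic group over $K$; one should say why (it is the unit group of the finite-dimensional $K$-algebra $\operatorname{End}_A(M)$, an open subset of an affine space, hence irreducible and connected). In the cited reference this is handled as a standalone proposition with essentially this content rather than being invoked by name; the substance is the same. Second, the assertion that separability of $K/\mathbb{F}_q$ makes base change respect the $\operatorname{rad}$-filtration should be handled with some care since $K/\mathbb{F}_q$ is an infinite extension; one either reduces to a sufficiently large finite subfield $\mathbb{F}_{q^N}\supseteq\mathbb{F}_q$ over which everything is defined, or argues directly that $\operatorname{rad}_A(\overline V,\overline W)=\operatorname{rad}_{A^F}(V,W)\otimes K$ by comparing indecomposable decompositions on both sides (which your argument really is doing once the vertex step is in place). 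Neither point invalidates the proof, but both are places where a careful write-up would need to slow down.
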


\subsection{Hall Polynomials}

Let $k$ be a finite field with $q$ elements and $A$ be a finite dimensional $k$-algebra.
We always assume that $A$ is representation-finite and hereditary.
Ringel defined Hall polynomials in \cite{Ringel1990}\cite{RingelLie} for mod$A$, whose evaluations are structure constants of some Hall algebra.

Let $X,Y,M$ be finite dimensional $A$-modules and $F_{XY}^M$ be the number of filtrations 
   $ 0 \subseteq U \subseteq M$
of $M$ such that $M/U \cong X$ and $U\cong Y$.
For any field extension $E$ of $k$, we denote $(-)^E = - \otimes_k E$.
Then $M^E$ is well-defined for indecomposable object $M\in \operatorname{mod}A$, and for any $N\in \operatorname{mod}A$, we let $N^E=N_1^E\oplus \cdots \oplus N_t^E$, where $N$ decompose into direct sum of indecomposables $N_i$.
The extension $E$ of $k$ is conservative for $A$, if for any indecomposable object $M$ in mod$A$, the algebra $(\operatorname{End}(M)/\operatorname{rad}(\operatorname{End}(M)))^E$ is a field.
Ringel showed that for any finite dimensional $A$-modules $X,Y,M$, there exists a polynomial $\varphi_{XY}^M(T) \in \mathbb{Z}[T]$, such that for any conservative field extension $E$ of $k$, we have 
\begin{align*}
    \varphi_{XY}^M(|E|) = F_{X^E,Y^E}^{M^E},
\end{align*}
where $F_{X^E,Y^E}^{M^E}$ is similarly defined as the number of filtrations of $A^E$-modules.
The polynomials $\varphi_{XY}^M$ are called Hall polynomials.

Peng and Xiao\cite{1997ROOT} extended this definition to the framework of bounded derived category $D^b(A)$ of mod$A$.

Assume $\mathcal{C}$ is a triangulated category with translation functor $T$.
For any objects $X,Y,Z \in \mathcal{C}$, define a set
\begin{align*}
    W(X,Y;Z)=& \{(f,g,h)\in \operatorname{Hom}_{\mathcal{C}}(X,Z) \times \operatorname{Hom}_{\mathcal{C}}(Z,Y) \times \operatorname{Hom}_{\mathcal{C}}(Y,TX)| \\
    &  X \xrightarrow{f} Z \xrightarrow{g} Y \xrightarrow{h} TX \text{ is a triangle}\}, 
\end{align*}
and an action of $Aut(X)\times Aut(Y)$ on this set by 
\begin{align*}
    (\alpha,\beta)(f,g,h) = (\alpha f,g \beta, \beta^{-1}h(T\alpha)^{-1}),
\end{align*}
for any $(\alpha,\beta)\in Aut(X)\times Aut(Y),(f,g,h)\in W(X,Y;Z)$.
Let $[(f,g,h)]$ be the orbit of $(f,g,h)\in W(X,Y;Z)$ under this action, and put 
\begin{align*}
    V(X,Y;Z) = \{ [(f,g,h)] | (f,g,h)\in W(X,Y;Z)   \} = W(X,Y;Z)/(Aut(X)\times Aut(Y)).
\end{align*}

Notice that $D^b(A)$ is a triangulated category with translation functor $T$.
Then similar to the statement in the case of mod$A$, for objects $X,Y,M\in D^b(A)$, if there exists an polynomial $\varphi_{XY}^M(T)\in \mathbb{Z}[T]$ such that 
\begin{align*}
    \varphi_{XY}^M(|E|) = |V(X^E,Y^E;M^E)|
\end{align*}
holds for any conservative extension $E$ of $k$, we say that $D^b(A)$ has the Hall polynomial $\varphi_{XY}^M$.
Note that $V(X^E,Y^E;M^E)$ is considered in the full subcategory $D^b(A)^E$ of the category $D^b(A^E)$, whose objects are of the form $L^E$ with $L\in D^b(A)$.

Let $\mathcal{R}$ be the root category, i.e. the orbit category $D^b(A)/T^2$. 
By \cite[Prop 7.1]{1997ROOT}, $\mathcal{R}$ is a triangulated category with translation functor $T$.
Like $D^b(A)$, we can also define Hall polynomials for the root category $\mathcal{R}$.  
The following proposition demonstrates the existence of Hall polynomials.

\begin{proposition}\cite[Proposition 3.1, 3.2]{1997ROOT}
     $D^b(A)$(or $\mathcal{R}$) has the polynomials $\varphi_{XY}^M$ and $\varphi_{YX}^M$ for any objects $X,Y,M\in D^b(A)$(or $X,Y,M\in \mathcal{R}$) with $Y$ indecomposable.
\end{proposition}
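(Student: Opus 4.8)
The plan is to reduce the statement about the triangulated categories $D^b(A)$ and $\mathcal{R}$ to the already-known existence of Hall polynomials in $\operatorname{mod}A$ of Ringel, by systematically bookkeeping how triangles in $D^b(A)$ (resp.\ $\mathcal{R}$) decompose into data living in $\operatorname{mod}A$. The main structural input is that $D^b(A)$ for $A$ hereditary has a very simple shape: every object is isomorphic to a finite direct sum $\bigoplus_{i\in\mathbb{Z}} T^i M_i$ with $M_i\in\operatorname{mod}A$, and $\operatorname{Hom}_{D^b(A)}(T^iM,T^jN)$ vanishes unless $j-i\in\{0,1\}$, being $\operatorname{Hom}_A(M,N)$ when $j=i$ and $\operatorname{Ext}^1_A(M,N)$ when $j=i+1$. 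For $\mathcal{R}=D^b(A)/T^2$ one has the analogous description with indices taken in $\mathbb{Z}/2$, and $\operatorname{Hom}_{\mathcal{R}}(T^iM,T^jN)=\operatorname{Hom}_A(M,N)\oplus\operatorname{Ext}^1_A(M,N)$ when $j-i$ is even, and the swapped version when odd. Since $Y$ is assumed indecomposable, $Y\cong T^aN$ for a single indecomposable $A$-module $N$ and a single shift $a$; this is exactly the hypothesis that makes the counting tractable and is why it appears.

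The key steps, in order, are as follows. First, I would set up the normal forms above and reduce to the case $a=0$ (i.e.\ $Y=N\in\operatorname{mod}A$) by applying the translation functor $T^{-a}$, which is an automorphism of the category and hence induces a bijection on the relevant sets $V(X,Y;M)$; this also commutes with base change $(-)^E$. Second, fixing $X=\bigoplus_i T^iX_i$ and $M=\bigoplus_i T^iM_i$, I would analyze a triangle $X\to M\to Y\to TX$ with $Y=N$ concentrated in degree $0$: because $Y$ lives in a single degree and the Hom-spaces between shifts are so restricted, the map $M\to Y$ can only be nonzero out of the degree-$0$ and degree-$(-1)$ components of $M$, and $Y\to TX$ only into the degree-$1$ and degree-$0$ components of $X$. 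Teasing this apart shows every such triangle is (up to the $\operatorname{Aut}(X)\times\operatorname{Aut}(Y)$ action) a direct sum of a ``trivial'' part (components of $X$ and $M$ in degrees where $N$ does not interact, which must match up to the split triangle) and an ``interesting'' part controlled by a short exact sequence or extension in $\operatorname{mod}A$ involving $N$ and at most two consecutive degree-components of $X$ and $M$. Third, I would count $|V(X^E,Y^E;M^E)|$ by stratifying over these combinatorial types; each stratum's size is a product of factors, each of which is either a power of $|E|$ (orders of Hom/Ext groups and automorphism groups, which are polynomial in $|E|$ over conservative extensions because $\operatorname{End}/\operatorname{rad}$ becomes a field of controlled degree) or a Hall-type count $F^{M_i^E}_{X_i^E,N^E}$ (or $F^{M_i^E}_{N^E,X_i^E}$) in $\operatorname{mod}A^E$, which by Ringel's theorem is the evaluation of an integral Hall polynomial. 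Assembling, $|V(X^E,Y^E;M^E)|$ is a $\mathbb{Z}$-linear combination of products of such quantities, hence agrees with the evaluation at $|E|$ of an explicit polynomial $\varphi^M_{XY}(T)\in\mathbb{Z}[T]$, for every conservative $E$; this gives the statement for $D^b(A)$, and the identical argument with $\mathbb{Z}/2$-graded bookkeeping gives it for $\mathcal{R}$. The case of $\varphi^M_{YX}$ is symmetric: run the same analysis on triangles $Y\to M\to X\to TY$, using that $Y=N$ is concentrated in degree $0$ on the source side this time.

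The main obstacle I anticipate is the orbit-counting: the set $V$ is defined as $W$ modulo the action of $\operatorname{Aut}(X)\times\operatorname{Aut}(Y)$, so I must control not just the number of triangles but the orbit sizes, i.e.\ the stabilizers. Concretely, one needs that for each isomorphism type of triangle the stabilizer in $\operatorname{Aut}(X)\times\operatorname{Aut}(Y)$ has order polynomial in $|E|$ and, more delicately, that the number of orbits of each type is itself a polynomial count — this is where one must invoke the fact that over a conservative extension $E$ the endomorphism rings of the indecomposables base-change in a controlled way, so that $\operatorname{Aut}$ groups and their relevant subquotients have $|E|$-polynomial orders, and then relate $|W^E|/(\text{orbit sizes})$ to the $\operatorname{mod}A^E$ Hall numbers. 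Peng and Xiao handle exactly this reduction in \cite{1997ROOT}; the role of this proof sketch is to indicate that the triangulated Hall numbers in degrees $>0$ contribute only split (hence trivially polynomial) data, so that all genuine content is pushed into $\operatorname{mod}A$ where Ringel's polynomiality applies. A secondary technical point is ensuring the argument is uniform in $E$ — that the combinatorial stratification (which isomorphism classes of $A$-modules appear, with what multiplicities) does not change when passing from $k$ to a conservative $E$ — which again follows from conservativity, since it guarantees that indecomposables stay indecomposable and the decomposition types of all objects involved are preserved.
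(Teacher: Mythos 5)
Your overall strategy --- decompose objects of $D^b(A)$ (for $A$ hereditary) as finite sums of shifts of modules, use the indecomposability of $Y$ to confine the interaction to two adjacent degrees, and push the genuine counting into $\operatorname{mod}A$ where Ringel's Hall polynomials apply --- is exactly the route Peng--Xiao take in the cited Propositions 3.1 and 3.2, and your setup for $D^b(A)$ is sound.

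However, your Hom-space description for $\mathcal{R}=D^b(A)/T^2$ is wrong, and this is a load-bearing formula in your argument. You wrote $\operatorname{Hom}_{\mathcal{R}}(T^iM,T^jN)=\operatorname{Hom}_A(M,N)\oplus\operatorname{Ext}^1_A(M,N)$ for $j-i$ even, with ``the swapped version'' for $j-i$ odd. But by definition of the orbit category, $\operatorname{Hom}_{\mathcal{R}}(T^iM,T^jN)=\bigoplus_{k\in\mathbb{Z}}\operatorname{Hom}_{D^b(A)}(M,T^{\,j+2k-i}N)$, and for hereditary $A$ the summand is nonzero only when $j+2k-i\in\{0,1\}$. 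Since $0$ and $1$ have opposite parities while $2k$ is always even, at most one value of $k$ contributes for any fixed $i,j$: the correct answer is $\operatorname{Hom}_A(M,N)$ when $j\equiv i\pmod 2$ and $\operatorname{Ext}^1_A(M,N)$ when $j\not\equiv i\pmod 2$, a single summand, never both. What you wrote is the Hom of $D^b(A)/T$, which is not $\mathcal{R}$ (and is not even triangulated). Carrying your formula into the stratification systematically over-counts the triples $(f,g,h)$, hence over-counts $|V(X^E,Y^E;M^E)|$, and would produce an incorrect polynomial already in small examples. You do correctly flag the other delicate point --- that $V$ is an orbit space under $\operatorname{Aut}(X)\times\operatorname{Aut}(Y)$ only, with no $\operatorname{Aut}(M)$ available, so the ``split plus interesting'' decomposition of a triangle is not an orbit invariant and one must control stabilizers and relate the orbit count to Ringel's $F^M_{XY}$ --- but you defer that step entirely to the cited source, so even with the Hom-space formula repaired the sketch does not yet establish the proposition independently.
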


The following lemma shows that the definition given by Peng and Xiao indeed extend the definition given by Ringel.
\begin{lemma}\cite[Lemma 3.1]{1997ROOT}
    For objects $X,Y,M\in \mathcal{R}$ with $X,Y$ indecomposable, if there exists a hereditary subcategory containing $X,Y$ and $M$, then the Hall polynomial $\varphi_{XY}^M$ defined for $\mathcal{R}$ coincide with that defined by Ringel. 
\end{lemma}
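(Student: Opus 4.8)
The plan is to reduce the equality of the two polynomials to a purely numerical identity over finite fields, and then to establish that identity by translating triangles in $\mathcal{R}$ whose three vertices all lie in the given hereditary subcategory into short exact sequences. Both $\varphi^M_{XY}$ defined for $\mathcal{R}$ and $\varphi^M_{XY}$ defined by Ringel lie in $\mathbb{Z}[T]$, so it suffices to show that they take the same value at infinitely many integers. By the two defining identities, this amounts to proving
\[
|V(X^E,Y^E;M^E)| \;=\; F^{M^E}_{X^E Y^E}
\]
for every conservative finite field extension $E$ of $k$. By hypothesis there is a hereditary subcategory $\mathcal{B}\subseteq\mathcal{R}$ containing $X,Y,M$; by the structure results of \cite{1997ROOT} one may take $\mathcal{B}$ to be (equivalent to) $\operatorname{mod}H$ for a representation-finite hereditary $k$-algebra $H$ obtained from $A$ by a sequence of BGP reflections, and this identification is compatible with base change, so for conservative $E$ the objects $X^E,Y^E,M^E$ lie in the hereditary subcategory $\mathcal{B}^E\cong\operatorname{mod}H^E$ of $\mathcal{R}^E$. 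We are thus reduced to a single statement over a finite field: \emph{for a hereditary subcategory $\mathcal{B}\cong\operatorname{mod}H$ of a root category and $X,Y,M\in\mathcal{B}$, the set $V(X,Y;M)$ is in bijection with the set of subobjects $U\subseteq M$ with $U\cong X$ and $M/U\cong Y$}, which is precisely the filtration set that Ringel's polynomial counts.

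First I would identify $\operatorname{Hom}_{\mathcal{R}}(Y,TX)$ with $\operatorname{Ext}^1_{\mathcal{B}}(Y,X)$. Using that $\mathcal{R}\cong D^b(H)/T^2$ and that $\mathcal{B}$ is the heart of a bounded $t$-structure on $D^b(H)$, we have
\[
\operatorname{Hom}_{\mathcal{R}}(Y,TX)=\bigoplus_{n\in\mathbb{Z}}\operatorname{Hom}_{D^b(H)}(Y,T^{2n+1}X)=\bigoplus_{n\in\mathbb{Z}}\operatorname{Ext}^{2n+1}_{H}(Y,X),
\]
and since $H$ is hereditary only the $n=0$ summand $\operatorname{Ext}^1_H(Y,X)=\operatorname{Ext}^1_{\mathcal{B}}(Y,X)$ survives. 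Next I would show that a triangle $X\xrightarrow{f}M\xrightarrow{g}Y\xrightarrow{h}TX$ in $\mathcal{R}$ with $X,M,Y\in\mathcal{B}$ is exactly the image of a short exact sequence $0\to X\xrightarrow{f}M\xrightarrow{g}Y\to 0$ in $\mathcal{B}$: lift $h$ to the unique $\widetilde{h}\in\operatorname{Hom}_{D^b(H)}(Y,X[1])$ under the isomorphism above, complete $\widetilde{h}$ to a triangle $X\to\widetilde{M}\to Y\xrightarrow{\widetilde{h}}X[1]$ in $D^b(H)$, observe via the long exact cohomology sequence of the $t$-structure that $\widetilde{M}$ lies in $\mathcal{B}$ and that $0\to X\to\widetilde{M}\to Y\to 0$ is exact there, and then use uniqueness of the third vertex of a triangle together with full faithfulness of $\mathcal{B}\hookrightarrow\mathcal{R}$ to conclude $\widetilde{M}\cong M$ in $\mathcal{B}$. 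In particular $g$ is an epimorphism, so by the standard argument (if $g$ is epic, the only endomorphism of the triangle that is the identity on $X$ and $M$ is the identity) the map $h$ is determined by $(f,g)$. Hence $W(X,Y;M)$ is in bijection with the set of pairs $(f,g)$ fitting into a short exact sequence $0\to X\xrightarrow{f}M\xrightarrow{g}Y\to 0$ in $\mathcal{B}$.

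It then remains to carry out the orbit count. Under this bijection the action of $(\alpha,\beta)\in\operatorname{Aut}(X)\times\operatorname{Aut}(Y)$ replaces $f$ by $f$ precomposed with $\alpha$ and $g$ by $g$ postcomposed with $\beta$, so the subobject $U:=\operatorname{im}f\subseteq M$ is unchanged; moreover two pairs lie in the same orbit exactly when they have the same image $U$, because any two monomorphisms $X\to M$ onto $U$ differ by an automorphism of $X$ and any two epimorphisms $M\to Y$ with kernel $U$ differ by an automorphism of $Y$. Conversely, every subobject $U\subseteq M$ with $U\cong X$ and $M/U\cong Y$ arises from such a pair. Therefore $V(X,Y;M)=W(X,Y;M)/(\operatorname{Aut}(X)\times\operatorname{Aut}(Y))$ is in bijection with $\{U\subseteq M:U\cong X,\ M/U\cong Y\}$, whose cardinality is exactly the Hall number entering Ringel's definition of $\varphi^M_{XY}$ for $\operatorname{mod}H$ (these Hall numbers being insensitive to whether one computes inside $\mathcal{B}$ or inside a larger hereditary module category). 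Combined with the reduction of the first paragraph, this gives the equality of polynomials.

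I expect the main obstacle to be the middle step: verifying cleanly that triangles in the orbit category $\mathcal{R}$ with all three vertices in $\mathcal{B}$ are \emph{precisely} the images of short exact sequences in $\mathcal{B}$, with connecting maps precisely the $\operatorname{Ext}^1_{\mathcal{B}}$-classes and no extraneous triangles appearing. This is exactly where hereditariness of $A$ (hence of $H$) is used, through the vanishing $\operatorname{Ext}^{\ge 2}=0$ that collapses $\operatorname{Hom}_{\mathcal{R}}(Y,TX)$ to a single summand, and it requires invoking the precise properties of hereditary subcategories of $\mathcal{R}$ from \cite{1997ROOT} (fullness and exact-compatibility of $\mathcal{B}\hookrightarrow\mathcal{R}$, and $\mathcal{R}\cong D^b(H)/T^2$). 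A secondary, purely bookkeeping point is to match the normalization of the $\operatorname{Aut}(X)\times\operatorname{Aut}(Y)$-action with the ordering of indices in Ringel's filtration convention for $F^M_{XY}$.
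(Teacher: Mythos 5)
Your proposal follows essentially the same route one expects Peng--Xiao's Lemma 3.1 to take: use hereditariness of $\mathcal{B}$ to collapse $\operatorname{Hom}_{\mathcal{R}}(X,M)$, $\operatorname{Hom}_{\mathcal{R}}(M,Y)$, and $\operatorname{Hom}_{\mathcal{R}}(Y,TX)=\bigoplus_n\operatorname{Ext}^{2n+1}_{\mathcal B}(Y,X)$ onto their degree-zero (resp.\ $\operatorname{Ext}^1$) pieces, so that triangles in $\mathcal R$ with all three vertices in $\mathcal B$ are in bijection with short exact sequences in $\mathcal B$, then identify orbits under $\operatorname{Aut}(X)\times\operatorname{Aut}(Y)$ with subobjects $U\subseteq M$. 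The argument is sound, and your remark that $h$ is forced by $(f,g)$ is correct (the endomorphism argument you sketch works because $\operatorname{End}_{\mathcal R}(Y)=\operatorname{End}_{\mathcal B}(Y)$, again by hereditariness). The one point you flag but leave open is not entirely cosmetic and is worth making explicit: with your correspondence the triangle $X\to M\to Y\to TX$ yields $U=\operatorname{im}f\cong X$ and $M/U\cong Y$, whereas the present paper's definition of $F^M_{XY}$ takes $U\cong Y$ and $M/U\cong X$; so as stated your bijection identifies $|V(X,Y;M)|$ with $F^M_{YX}$, and one must either adjust to Peng--Xiao's actual filtration convention (which is ordered the other way, as in the paper's commented-out passage) or record the index transposition explicitly, since this is exactly the ordering that feeds into the skew-symmetry $\gamma^L_{XY}=\varphi^L_{XY}(1)-\varphi^L_{YX}(1)$.
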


Note that if there doesn't exist any hereditary subcategory containing $X,Y$ and $M$, we have $\varphi^M_{XY}\equiv 0$.

\section{Root Category and Simple Lie Algebras}
Let $A$ be a finite dimensional associative representation-finite hereditary algebra over some base field $k$, and mod$A$ be the category of finite dimensional $A$ modules.
Let $D^b(A)$ be the bounded derived category of mod$A$, which is a triangulated category with translation functor $T$.
Let $\mathcal{R}$ be the root category, i.e. the orbit category $D^b(A)/T^2$. 
We have mentioned that $\mathcal{R}$ is also a triangulated category.
Let $\operatorname{ind} \mathcal{R}$ be the set of all the indecomposable objects in $\mathcal{R}$.
For any object $X$ of $\mathcal{R}$, we denote its isomorphism class by $[X]$. 

Given objects $X,Y,Z \in \mathcal{R}$, we have defined a set $W(X,Y;Z)$ in the previous section.
For any objects $X,Y,Z,L,M \in \mathcal{R}$, we define an action of $Aut L$ on $W(X,Y;L)\times W(Z,L;M)$ by 
\begin{align*}
\alpha ((f,g,h),(l,m,k))=((f\alpha,\alpha^{-1}g,h),(l,m\alpha,\alpha^{-1}k)),
\end{align*}
for any $\alpha \in AutL$,
and denote the orbit space by $W(X,Y;L)\times W(Z,L;M)/Aut L$, and the orbit of  $((f,g,h),(l,m,k))$ by $\overline{((f,g,h),(l,m,k))}$.

The orbit space $W(Z,X;L)\times W(L,Y;M)/Aut L$ is similarly defined.

We have the following proposition, which can be deduced from the octahedral axiom and diagram chasing.

\begin{proposition}\cite[Prop 2.1]{1997ROOT}\label{bmt}
    Given $X,Y,Z,M \in \operatorname{ind} \mathcal{R}$ and assume that $Y \ncong TX, Y \ncong TZ, X \ncong TZ$. 
    Then we have a bijection
    \begin{align*}
        \bigcup\limits_{[L],L \in \mathcal{R} } W(X,Y;L)\times W(Z,L;M)/Aut L &\rightarrow \bigcup\limits_{[L'],L' \in \mathcal{R}} W(Z,X;L')\times W(L',Y;M)/Aut L'. \\
            \overline{((f,g,h),(l,m,k))} &\mapsto \overline{((l',m',k'),(f',g',h'))}
    \end{align*}
     The morphisms are given by the following commutative diagram, whth all rows and columns being triangles.
\[
\xymatrix{
                                          & Z \ar[d]_{l'} \ar@2{-}[r]  & Z \ar[d]_{l}             &            \\
   T^{-1}Y \ar[r]^{-T^{-1}h'} \ar@2{-}[d] & L' \ar[r]^{f'} \ar[d]_{m'} & M \ar[r]^{g'} \ar[d]_{m} & Y \ar@2{-}[d] \\
   T^{-1}Y \ar[r]^{-T^{-1}h}              & X \ar[r]^{f} \ar[d]_{k'}   & L \ar[r]^{g} \ar[d]_{k}  & Y          \\
                                          & TZ \ar@2{-}[r]             & TZ                       &        
}
\]

\end{proposition}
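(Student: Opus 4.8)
The plan is to realize the desired bijection by building, for each term of the left-hand disjoint union, a genuine $3\times 3$ commutative diagram of triangles using the octahedral axiom, and then to read off the right-hand term from the new row and column that appear. Concretely, start with a class $\overline{((f,g,h),(l,m,k))}$ on the left. The data $(f,g,h)\in W(X,Y;L)$ gives a triangle $X\xrightarrow{f}L\xrightarrow{g}Y\xrightarrow{h}TX$, which I rotate to $T^{-1}Y\xrightarrow{-T^{-1}h}X\xrightarrow{f}L\xrightarrow{g}Y$; the data $(l,m,k)\in W(Z,L;M)$ gives a triangle $Z\xrightarrow{l}M\xrightarrow{m}L\xrightarrow{k}TZ$. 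The composite $M\xrightarrow{m}L\xrightarrow{g}Y$ then has a cone, and the octahedral axiom applied to the pair of composable maps (or rather to $m$ and $g$, after suitable rotation) produces an object $L'$ together with morphisms $l',m',f',g',h',k'$ fitting into exactly the displayed $3\times 3$ diagram, in which every row and every column is a triangle and every square commutes. Reading the middle row gives $T^{-1}Y\xrightarrow{-T^{-1}h'}L'\xrightarrow{f'}M\xrightarrow{g'}Y$, i.e. (after rotating back) a triangle witnessing $(f',g',h')\in W(L',Y;M)$; reading the left column gives $Z\xrightarrow{l'}L'\xrightarrow{m'}X\xrightarrow{k'}TZ$, i.e. $(l',m',k')\in W(Z,X;L')$. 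This defines the map on representatives.

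Next I would verify well-definedness: I must check that the octahedral completion, though not unique as a choice of $L'$ and arrows, produces a well-defined $Aut\,L'$-orbit, and that replacing the left representative by another in the same $Aut\,L$-orbit changes the right representative only within its $Aut\,L'$-orbit. The first point uses that any two octahedral completions of the same composable pair are isomorphic via an isomorphism that is the identity on the outer objects (here $X$, $Z$, $Y$, $M$); such an isomorphism is precisely an element of $Aut\,L'$ under the action defining $W(Z,X;L')\times W(L',Y;M)/Aut\,L'$. The second point is a diagram chase: an element $\alpha\in Aut\,L$ acts on the left data by $((f,g,h),(l,m,k))\mapsto((f\alpha,\alpha^{-1}g,h),(l,m\alpha,\alpha^{-1}k))$, and one checks that feeding this altered data into the same construction yields the original $L'$-diagram unchanged on the outer objects, hence the same $Aut\,L'$-orbit. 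Finally, to see the map is a bijection I would construct the inverse in exactly the symmetric way — starting from a right-hand class, apply the octahedral axiom to the appropriate composable pair ($m'$ and $g'$, say) to recover an $L$ and the original diagram — and check the two constructions are mutually inverse, again up to the respective automorphism actions. Since the $3\times 3$ diagram is manifestly symmetric under the evident transposition swapping $(X,L',Z)$-data with $(Z,L,X)$-data, this symmetry is what makes the inverse construction identical in form to the forward one.

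The main obstacle I expect is the bookkeeping in the well-definedness step: matching the precise sign conventions and rotations so that the triangles one extracts from the rows and columns of the $3\times 3$ diagram literally land in $W(Z,X;L')$ and $W(L',Y;M)$ as those sets are defined (with the $h\in\Hom(Y,TX)$-type third component in the correct slot and with the correct sign, e.g. the $-T^{-1}h$ and $-T^{-1}h'$ appearing in the diagram), and then tracking how the two automorphism actions interact through the construction. The existence of the $3\times 3$ diagram itself is the standard consequence of the octahedral axiom (this is essentially the $3\times 3$ or "Verdier" lemma for triangulated categories), and the hypotheses $Y\ncong TX$, $Y\ncong TZ$, $X\ncong TZ$ are there to guarantee the relevant $\Hom$-groups behave well (e.g. that certain triangles do not split off spurious summands, so that indecomposability of $X,Y,Z,M$ is preserved and the orbit counting is the honest one); I would invoke these exactly where a diagram chase needs $\Hom_{\mathcal R}(Y,TX)$ or similar to not force a degenerate situation. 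I would present the forward construction and the $3\times 3$ diagram in detail, then state that well-definedness and the inverse are routine diagram chases of the same type, referring to \cite[Prop 2.1]{1997ROOT} for the complete verification.
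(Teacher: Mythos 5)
Your proposal is correct and takes essentially the approach the paper intends: the paper itself does not spell out a proof but states that the proposition "can be deduced from the octahedral axiom and diagram chasing" and cites Peng--Xiao \cite[Prop 2.1]{1997ROOT}. Your outline --- rotate $(f,g,h)$, apply the octahedral axiom to the composable pair $m\colon M\to L$, $g\colon L\to Y$ to manufacture $L'$ and the $3\times 3$ diagram, read off the left column and middle row, then verify independence of choices up to $\operatorname{Aut}L'$ and construct the inverse by the transposed argument --- is exactly that deduction.
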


Now we introduce the Lie algebra arising from the root category $\mathcal{R}$ following \cite{1997ROOT}.

   Let $K$ be the Grothendieck group of the root category $\mathcal{R}$, i.e. the quotient of a free abelian group generated by $\{ H_{[M]}| M \in \mathcal{R}\}$, and subject to the relations $H_{[X]}+H_{[Z]}=H_{[Y]}$, if there exists a triangle $X \rightarrow Y \rightarrow Z \rightarrow TX$.
Denote the image of $H_{[M]}$ in $K$ still by $H_{[M]}$.
Let $\mathcal{N}$ be a free abelian group with a basis $\{ u_{[X]} \}_{X \in \operatorname{ind}\mathcal{R}}$. 
By abuse of notations, we denote $H_M$ instead of $H_{[M]}$ and $u_X$ instead of $u_{[X]}$.

Define the symmetric Euler form of $\mathcal{R}$ by $( - | - ):K\times K \rightarrow \mathbb{Z}$.
For $X,Y \in \mathcal{R}$, 
\begin{align*}
(H_X|H_Y)=&dim_k \operatorname{Hom}_{\mathcal{R}}(X,Y)-dim_k \operatorname{Hom}_{\mathcal{R}}(X,TY)\\
        &+dim_k \operatorname{Hom}_{\mathcal{R}}(Y,X)-dim_k \operatorname{Hom}_{\mathcal{R}}(Y,TX).
\end{align*}
Note that $(H_X|H_Y)=(H_Y|H_X)$ and $(H_X|H_Y)=-(H_X|H_{TY})$. 

Define $d(X)=dim_k \operatorname{End}_{\mathcal{R}}X$ for any $X\in \operatorname{ind}\mathcal{R}$. 
Then $(H_X|H_X)=2d(X)$.

\begin{definition}\label{PXg}
    Let $\mathfrak{g}=(K\oplus \mathcal{N}) \otimes_{\mathbb{Z}} \mathbb{Q}$. Define a bilinear operation $[-,-]$ on $\mathfrak{g}$ as follows:

(1) For any $X,Y \in \operatorname{ind} \mathcal{R}$,
\begin{equation*}
  [u_X,u_Y]=
    \begin{cases}
        \sum\limits_{[L],L \in \operatorname{ind} \mathcal{R}} \gamma^L_{XY} u_L  \qquad &\text{if} \quad Y\ncong TX ,\\
        \frac{H_X}{d(X)}  &\text{otherwise},
    \end{cases}
\end{equation*}
where $\gamma^L_{XY}=\varphi^L_{XY}(1)-\varphi^L_{YX}(1)$ is given by evaluations of the Hall polynomials.

(2)For any $X \in \mathcal{R}$ and $Y \in \operatorname{ind}\mathcal{R}$,
\begin{align*}
    [H_X,u_Y]=-(H_X|H_Y)u_Y , \\
    [u_Y,H_X]=-[H_X,u_Y].
\end{align*}

(3)For any $X,Y \in \mathcal{R}$,
\begin{align*}
    [H_X,H_Y]=0.
\end{align*}

Extend $[-,-]$ bilinearly to the whole $\mathfrak{g}$.
\end{definition}

\begin{theorem}\cite[Thm 4.1]{1997ROOT}
The vector space $\mathfrak{g}$ together with $[-,-]$ is a Lie algebra.
\end{theorem}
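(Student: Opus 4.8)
The plan is to verify the two defining properties of a Lie algebra, antisymmetry and the Jacobi identity, on the basis $\{H_{[M]}\}\cup\{u_X\}_{X\in\operatorname{ind}\mathcal{R}}$; since $\mathfrak{g}$ is a $\mathbb{Q}$-vector space, $[x,x]=0$ will follow from antisymmetry. Antisymmetry is quick: $[H_X,H_Y]=0$ is antisymmetric, $[u_Y,H_X]=-[H_X,u_Y]$ holds by definition, and for $[u_X,u_Y]$ with $Y\ncong TX$ one has $\gamma^L_{XY}=\varphi^L_{XY}(1)-\varphi^L_{YX}(1)=-\gamma^L_{YX}$. In the remaining case $Y\cong TX$ (hence also $X\cong TY$, as $T^2=\mathrm{id}$ on $\mathcal{R}$) I would use the triangle $X\to 0\to TX\to TX$, which forces $H_{TX}=-H_X$ in $K$, together with $d(TX)=d(X)$, to get $[u_{TX},u_X]=H_{TX}/d(TX)=-H_X/d(X)=-[u_X,u_{TX}]$. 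Well-definedness of the brackets is automatic because $A$ is representation-finite, so $\operatorname{ind}\mathcal{R}$ is a finite set.

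For the Jacobi identity I would split into cases according to how many of the three arguments lie in $K$. If all three lie in $K$, every bracket is zero. If exactly two lie in $K$, the bracket of those two vanishes and the remaining two summands cancel, using only $[H_X,u_Z]=-(H_X|H_Z)u_Z$. If exactly one argument, say $H_X$, lies in $K$, the identity reduces to the assertion that $\operatorname{ad}H_X$ acts on $[u_Y,u_Z]$ with the expected eigenvalue: whenever $\gamma^L_{YZ}\neq 0$ there is a triangle $Y\to L\to Z\to TY$ (the Hall polynomial vanishes otherwise), so $H_L=H_Y+H_Z$ in $K$ and $(H_X|H_L)=(H_X|H_Y)+(H_X|H_Z)$ by bilinearity, which makes the three terms cancel; the degenerate subcase $Z\cong TY$ is disposed of exactly as in the antisymmetry argument, via $H_{TY}=-H_Y$.

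The crucial case is when all three arguments are $u_X,u_Y,u_Z$. Assuming first the non-degenerate configuration in which no two of $X,Y,Z$, and no intermediate indecomposable occurring in the brackets, are related by $T$, one expands $[[u_X,u_Y],u_Z]=\sum_{L,M}\gamma^L_{XY}\gamma^M_{LZ}u_M$ and its cyclic analogues, so that Jacobi becomes, for each indecomposable $M$, the numerical identity $\sum_L\bigl(\gamma^L_{XY}\gamma^M_{LZ}+\gamma^L_{YZ}\gamma^M_{LX}+\gamma^L_{ZX}\gamma^M_{LY}\bigr)=0$. To establish it I would work over a finite field $\mathbb{F}_q$, rewrite the relevant Hall polynomial values in terms of the sets $W(-,-;-)$ and their $\operatorname{Aut}$-orbits, and apply the bijection of Proposition \ref{bmt}, which encodes the associativity of the Hall-type product and is the output of the octahedral axiom, relating $\bigcup_L W(X,Y;L)\times W(Z,L;M)/\operatorname{Aut}L$ with $\bigcup_{L'}W(Z,X;L')\times W(L',Y;M)/\operatorname{Aut}L'$; counting cardinalities over $\mathbb{F}_q$ and its conservative extensions yields a polynomial identity in $q$, which after antisymmetrizing the arguments and specializing at $q=1$ gives the required identity for the $\gamma$'s. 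The degenerate subcases, where some of $X,Y,Z$ or of the intermediate objects are $T$-translates of one another, are then treated separately: there the $H$-valued branch of the bracket intervenes and several of the triangles in the octahedral diagram collapse, but each such configuration reduces to the one-$H$ computations of the previous paragraph.

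The main obstacle is precisely this last step: converting the set-theoretic bijection of Proposition \ref{bmt} into the numerical identity for $\gamma^L_{XY}=\varphi^L_{XY}(1)-\varphi^L_{YX}(1)$. The delicate points are (i) the $\operatorname{Aut}L$-action on $W(X,Y;L)\times W(Z,L;M)$ need not be free, so one must track stabilizers when passing from cardinalities to Hall-polynomial values and check that the resulting rational functions of $q$ are genuine polynomials admitting evaluation at $q=1$; and (ii) an exhaustive but careful enumeration of the degenerate configurations of $T$-translates, since these are exactly where the bracket leaves the Hall-algebra regime and the octahedral diagram degenerates. Everything else is bilinear bookkeeping with the Euler form and the Grothendieck-group relations.
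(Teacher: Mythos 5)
Your outline matches the approach that Peng and Xiao actually take in \cite{1997ROOT} (the paper here merely cites their Theorem~4.1 without reproducing a proof): antisymmetry is verified on the basis, the two- and one-$K$ Jacobi cases are handled by bilinearity of the Euler form, and the crucial three-$u$ case is attacked via Proposition~\ref{bmt}, i.e.\ the octahedral axiom. Your computations in the easy cases are correct — in particular the use of the triangle $X\to 0\to TX\to TX$ to get $H_{TX}=-H_X$, the fact that $d(TX)=d(X)$, and the observation that $\gamma^L_{YZ}\neq 0$ forces $H_L=H_Y+H_Z$ in $K$.

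There is, however, a genuine gap at exactly the point you flag, and it is sharper than you describe. The identity you need,
\begin{equation*}
\sum_{L}\bigl(\gamma^L_{XY}\gamma^M_{LZ}+\gamma^L_{YZ}\gamma^M_{LX}+\gamma^L_{ZX}\gamma^M_{LY}\bigr)=0 ,
\end{equation*}
runs over \emph{indecomposable} $L$ only (the bracket $[u_X,u_Y]$ is by definition supported on indecomposables), and each $\gamma^L_{XY}$ already involves a subtraction $\varphi^L_{XY}(1)-\varphi^L_{YX}(1)$ and an evaluation at $q=1$. By contrast, the bijection of Proposition~\ref{bmt} is taken over \emph{all} $L\in\mathcal{R}$, decomposable $L$ included, and is a set-theoretic bijection between $\operatorname{Aut}L$-orbit spaces, which does not by itself produce an identity among the $\varphi$'s, let alone among the antisymmetrized, specialized $\gamma$'s. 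Bridging this requires showing that the contributions from decomposable middle terms cancel after antisymmetrization and specialization at $q=1$, together with the orbit/stabilizer bookkeeping you mention in passing; this, plus the enumeration of the degenerate $T$-translate configurations where the bracket switches to its $K$-valued branch, is precisely where the Peng–Xiao argument spends most of its effort (via a Riedtmann–Peng type counting formula and a case analysis over the rank-two subcategories $A_1\times A_1$, $A_2$, $B_2$, $G_2$). As written, your proposal is a correct roadmap that names the right tool, but stops short of the computation that actually closes the Jacobi identity.
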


A complete section of $\mathcal{R}$ is a connected full subquiver of the AR-quiver of $\mathcal{R}$, which contains one vertex for each $\tau$-orbit.
Choosing an arbitrary complete section of $\mathcal{R}$, we get a hereditary subcategory $\mathcal{B}$ of $\mathcal{R}$. 
Denote the corresponding root system by $\Phi (\mathcal{R})$, whose Dynkin graph is the graph of this complete section. (Since the root system is independent of the complete section chosen, the notation is reasonable.)
As pointed out in \cite{Happel}, the map $\underline{dim}_{\mathcal{B}}$ induces a bijection between the isomorphism class of indecomposables of $\mathcal{R}$ and the root system $\Phi(\mathcal{R})$.
According to $\mathcal{B}$, $\Phi (\mathcal{R})=\Phi=\Phi^+(\mathcal{B}) \cup \Phi^-(\mathcal{B})$ splits into disjoint union of positive roots and negative roots.
Precisely, for $M\in \operatorname{ind}\mathcal{R}$, $\underline{dim}_{\mathcal{B}} M \in \Phi^+(\mathcal{B}) $ if $M \in \operatorname{ind} \mathcal{B}$, and $\underline{dim}_{\mathcal{B}} M \in \Phi^-(\mathcal{B}) $ if $M \in \operatorname{ind} T\mathcal{B}$.   

Let $\mathfrak{g}(\Phi(\mathcal{R}))$ be the simple Lie algebra corresponding to the root system $\Phi(\mathcal{R})$.

\begin{theorem}\cite[Thm 4.2]{1997ROOT} \label{PXgiso}
 We have an isomorphism $\mathfrak{g} \otimes _{\mathbb{Q}} \mathbb{C} \cong \mathfrak{g}(\Phi(\mathcal{R}))$.
\end{theorem}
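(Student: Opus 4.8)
**Proof proposal for Theorem \ref{PXgiso} ($\mathfrak{g} \otimes_{\mathbb{Q}} \mathbb{C} \cong \mathfrak{g}(\Phi(\mathcal{R}))$).**

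The plan is to produce an explicit isomorphism by matching the combinatorial data on both sides: Cartan subalgebra with Cartan subalgebra, and root vectors with root vectors, then checking that the bracket is preserved. First I would fix a complete section of $\mathcal{R}$, hence a hereditary subcategory $\mathcal{B}$, the associated simple roots $\Pi = \{\alpha_i\}_{i \in I}$ (the dimension vectors of the simple objects $S_i$ of $\mathcal{B}$), and the resulting identification $\underline{\dim}_{\mathcal{B}}\colon \operatorname{ind}\mathcal{R} \xrightarrow{\sim} \Phi$ from Happel's theorem. On the Lie algebra side I would use the Chevalley-basis presentation of $\mathfrak{g}(\Phi(\mathcal{R}))$: a Cartan subalgebra $\mathfrak{h}$ with coroots $h_\alpha$, root vectors $e_\alpha$ ($\alpha \in \Phi$), normalized so that $[e_\alpha, e_{-\alpha}] = h_\alpha$, $[h, e_\alpha] = \langle \alpha, h\rangle e_\alpha$, and $[e_\alpha, e_\beta] = \pm(p+1) e_{\alpha+\beta}$ when $\alpha + \beta \in \Phi$. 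The candidate map $\psi$ sends $H_M \otimes 1 \mapsto$ the coroot-type element of $\mathfrak{h}$ determined by $\underline{\dim}_{\mathcal{B}} M$ under the Euler form (using that $(H_\cdot \mid H_\cdot)$ is the symmetrized Cartan form), and $u_M \otimes 1 \mapsto c_M\, e_{\underline{\dim}_{\mathcal{B}} M}$ for suitable nonzero scalars $c_M$ to be pinned down.

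The key steps, in order: (1) Check $\psi$ restricted to $K \otimes \mathbb{C}$ is an isomorphism onto $\mathfrak{h}$ — this is essentially linear algebra, since $K$ has rank $|I|$, the $H_{S_i}$ form a basis, and the Euler form identifies $K$ with the root lattice equipped with the Cartan pairing. (2) Verify the $[H,u]$-relations: by Definition \ref{PXg}(2), $[H_X, u_Y] = -(H_X \mid H_Y) u_Y$, and $-(H_X \mid H_Y)$ is exactly the pairing of $\underline{\dim}_{\mathcal{B}} Y$ against the coroot attached to $\underline{\dim}_{\mathcal{B}} X$, matching $[h, e_\alpha] = \langle \alpha, h \rangle e_\alpha$ on the nose (up to the already-chosen normalization). (3) Handle the case $Y \cong TX$, i.e. $\underline{\dim}_{\mathcal{B}} Y = -\underline{\dim}_{\mathcal{B}} X$: there $[u_X, u_Y] = H_X/d(X)$, which must land on $\pm h_{\underline{\dim}_{\mathcal{B}} X}$; since $d(X) = \dim_k \operatorname{End}_{\mathcal{R}} X$ is $1$ in the simply-laced case (and in general accounts for root lengths), this is where the scalars $c_M$ get constrained. (4) Verify the structure-constant relation $[u_X, u_Y] = \sum_L \gamma^L_{XY} u_L$ matches $[e_\alpha, e_\beta] = N_{\alpha\beta} e_{\alpha+\beta}$: this is the substantive point. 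I would invoke that $\mathfrak{g}$ is already known to be a Lie algebra (cited Thm 4.1) with a root-space decomposition under $K$, so $\gamma^L_{XY} = 0$ unless $\underline{\dim}_{\mathcal{B}} L = \underline{\dim}_{\mathcal{B}} X + \underline{\dim}_{\mathcal{B}} Y$; thus at most one term survives. Then compare $|\gamma^L_{XY}|$ with the Chevalley integer $p+1$ (where $p$ is the largest integer with $\beta - p\alpha \in \Phi$), using that the Hall numbers $\varphi^L_{XY}(1)$ count, for indecomposables in a hereditary subcategory, extensions/filtrations, and in the representation-finite hereditary setting reduce to Ringel's explicit computations — so $|\gamma^L_{XY}| = p+1$; the sign is absorbed into the $c_M$'s consistently, which is possible because the sign ambiguity in a Chevalley basis is exactly a choice of $c_M \in \{\pm 1\}$ (an extraspecial-pair-type argument, or simply citing that any two collections of structure constants satisfying the Jacobi identity and the $|N_{\alpha\beta}| = p+1$ condition are related by such a rescaling — this is Chevalley's theorem on uniqueness, available via \cite{carter}).

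I expect Step (4), and within it the sign coherence, to be the main obstacle. Matching absolute values is the combinatorial heart: one must know that $\varphi^L_{XY}(1) - \varphi^L_{YX}(1)$ has absolute value $p+1$ for a multiplicative pair of roots, which for Dynkin-type hereditary categories follows from Ringel's tables in \cite{RINGEL1990137} (or, more conceptually, from the fact that in a hereditary category $\dim \operatorname{Ext}^1(S_i, M)$-type quantities recover the Cartan pairing and the exceptional-sequence combinatorics encode the $\mathfrak{sl}_2$-strings). The sign coherence then does not need to be checked bracket-by-bracket: once $\mathfrak{g}$ is known to be a Lie algebra isomorphic over $\mathbb{C}$ to \emph{some} semisimple Lie algebra with the prescribed root datum (which follows from Steps (1)–(3) plus the root-space decomposition and the nondegeneracy of the Euler form identifying the Killing form, forcing $\mathfrak{g} \otimes \mathbb{C}$ to be semisimple with root system $\Phi$), Chevalley's uniqueness theorem for simple Lie algebras attached to a root system gives the isomorphism with $\mathfrak{g}(\Phi(\mathcal{R}))$ for free. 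So in fact I would organize the proof as: establish the root-space decomposition and nondegeneracy to conclude $\mathfrak{g} \otimes \mathbb{C}$ is semisimple of type $\Phi(\mathcal{R})$, then appeal to the classification — deferring all delicate sign bookkeeping to the uniqueness theorem rather than proving it by hand.
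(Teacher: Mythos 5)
Your proposal is correct in broad strokes, but it takes a genuinely different route from the one Peng--Xiao actually use (and which this paper later relies on). The paper's Remark after Theorem~\ref{PXgiso} reveals that Peng--Xiao construct a \emph{specific} isomorphism $\phi_{\mathcal{A}}$ by sending the Chevalley generators $u_{S_j}\mapsto x_{\alpha_j}$, $u_{TS_j}\mapsto -x_{-\alpha_j}$, $H_{S_j}'\mapsto -h_j$, checking that the Serre relations hold in $\mathfrak{g}$, and then concluding by a dimension count. This explicit generator-matching is not just for the proof of the isomorphism --- it is used downstream: the construction of $n_{[X]}=\phi_{\mathcal{A}'}^{-1}\phi_{\mathcal{A}}$ via BGP reflection functors requires having the concrete map $\phi_{\mathcal{A}}$ in hand for each hereditary torsion pair $\mathcal{A}$. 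Your classification route (establish the abelian Cartan, the one-dimensional root-space decomposition indexed by $\Phi$ via Happel's bijection, and $[\mathfrak{g}_\alpha,\mathfrak{g}_{-\alpha}]\neq 0$, then invoke the uniqueness of the semisimple Lie algebra with root system $\Phi$) does give the abstract isomorphism with less sign bookkeeping --- indeed, as you note, it lets you bypass comparing $|\gamma^L_{XY}|$ with $p+1$ entirely, since once you have an $\mathfrak{sl}_2$ for each root and unbroken $\alpha$-strings, nonvanishing of $[\mathfrak{g}_\alpha,\mathfrak{g}_\beta]$ for $\alpha+\beta\in\Phi$ is automatic. That trade-off is real: your approach is shorter but yields an unnamed isomorphism, whereas the paper needs the named $\phi_{\mathcal{A}}$.

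One step you should tighten: the claim that ``the nondegeneracy of the Euler form identifying the Killing form'' forces semisimplicity is circular as stated, because the Euler form is only \emph{known} to agree with the Killing form on $K$ after one already knows $\mathfrak{g}\otimes\mathbb{C}$ is semisimple with the expected invariant form. The cleaner route is the ideal-theoretic one: any ideal is $\mathfrak{h}$-stable, hence a sum of weight spaces; if it meets some $\mathfrak{g}_\alpha$ then the $\mathfrak{sl}_2$-copies and irreducibility of $\Phi$ force it to be everything; if it lies in $\mathfrak{h}$ then, since the roots span $\mathfrak{h}^*$ (from nondegeneracy of the Euler pairing between $K$ and $\mathcal{N}$ in Definition~\ref{PXg}(2)), it is zero. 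Also note a small sign point your steps (2)--(3) will meet: the paper's bracket $[H_X,u_Y]=-(H_X\mid H_Y)u_Y$ puts $u_Y$ in a weight space with an extra sign relative to $\underline{\dim}Y$, which is why Peng--Xiao's $\phi_{\mathcal{A}}$ carries the minus signs on $u_{TS_j}$ and $H_{S_j}'$; your scalars $c_M$ must absorb this, and under the classification approach you need to check that the map $K\otimes\mathbb{C}\to\mathfrak{h}^*$ you use to identify the roots is $-(\,\cdot\mid H_Y)$ rather than $(\,\cdot\mid H_Y)$.
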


Notice that $\mathfrak{g}$ is not a $\mathbb{Z}$-form of $\mathfrak{g}(\Phi(\mathcal{R}))$, since the coefficient $\frac{1}{d(X)}$ appears in Def \ref{PXg}(1) is not necessarily in $\mathbb{Z}$.

To fix this, we define $H_{[X]}'=\frac{H_{[X]}}{d(X)}$. 
Let $K'$ be the quotient of a free abelian group generated by $\{ H_{[M]}'|M\in \mathcal{R}\}$, and subject to the relations  $d(X)H_{[X]}'+d(Z)H_{[Z]}'=d(Y)H_{[Y]}'$, if there exists a triangle $X \rightarrow Y \rightarrow Z \rightarrow TX$.
Again, by abuse of notations, we denote $H_{[M]}'$ by $H_M'$.

The operation $[-,-]$ on $\{ H_X' \}_{X\in \mathcal{R}} \cup \{ u_Y \}_{Y\in \operatorname{ind}\mathcal{R}}$ is summarize as follows:

(1) For any $X,Y \in \operatorname{ind} \mathcal{R}$,
\begin{equation*}
  [u_X,u_Y]=
    \begin{cases}
        \sum\limits_{[L],L \in \operatorname{ind} \mathcal{R}} \gamma^L_{XY} u_L  \qquad &\text{if} \quad Y\ncong TX, \\
        H_X'  &\text{otherwise}.
    \end{cases}
\end{equation*}

(2)For any $X \in \mathcal{R}$ and $Y \in \operatorname{ind} \mathcal{R}$,
\begin{align*}
    [H_X',u_Y]=-\frac{(H_X|H_Y)}{d(X)}u_Y ,\\
    [u_Y,H_X']=-[H_X',u_Y].
\end{align*}

(3)For any $X,Y \in \mathcal{R}$,
\begin{align*}
    [H_X',H_Y']=0.
\end{align*}

It's well-known that the Killing form $(-,-)$ of the Lie algebra $\mathfrak{g}$ coincide with the Euler form $(-|-)$ on the Grothendieck group $K$.
Hence
\begin{align*}
    \frac{(H_X|H_Y)}{d(X)} = \frac{2(H_X|H_Y)}{(H_X|H_X)} = \frac{2(H_X,H_Y)}{(H_X,H_X)} \in \mathbb{Z}.
\end{align*}
We denote this integer by $A_{XY}$. 

Let $\mathfrak{g}_{\mathbb{Z}}=K' \oplus \mathcal{N}$.
Then $\mathfrak{g}_{\mathbb{Z}}$ is the $\mathbb{Z}$-span of the set $\mathbb{S}=\{ H_X' \}_{X\in \mathcal{R}} \cup \{ u_Y \}_{Y\in \operatorname{ind}\mathcal{R}}$,
and $\mathfrak{g}_{\mathbb{Z}}$ is a $\mathbb{Z}$-form of $\mathfrak{g}(\Phi(\mathcal{R}))$.

\section{Construction of Chevalley Groups}

In this section, we construct the Chevalley group arising from the root category.
We follow the construction of Chevalley, and refer the reader to Carter's book\cite{carter} for details.

\subsection{Definitions and Basic Properties}
\subsubsection{Generators and the Commutator Formula}

With notations as the previous section, 
we first define $E_{[X]}(t)=exp(t \mathbf{ad} u_X)$ for $X \in \operatorname{ind}\mathcal{R}$ and $t \in \mathbb{C}$. 
We can calculate the action of $E_{[X]}(t)$ on $\mathbb{S}$ explicitly:
\begin{align}
    &E_{[X]}(t)u_X=u_X .\\
    &E_{[X]}(t)u_{TX}=u_{TX}+tH_X'+t^2u_X .\\
    &E_{[X]}(t)H_Y'=H_Y'+A_{YX}tu_X   \qquad \text{for any }  Y \in \mathcal{R} .
\end{align}
For $Y\in \operatorname{ind}\mathcal{R}$ such that $Y \ncong X$ and $Y \ncong TX$, we have 
\begin{align*}
    E_{[X]}(t)u_Y=u_Y+t\sum\limits_{[L_1]} \gamma^{L_1}_{XY} u_{L_1} + \frac{t^2}{2!}\sum\limits_{[L_1],[L_2]} \gamma^{L_1}_{XY} \gamma^{L_2}_{X,L_1} u_{L_2} + \cdots.
\end{align*}
The sum on the right hand side is finite, since repeatedly doing extension by $X$ for enough times, the extensions will finally be decomposable.
Given $X,Y \in \operatorname{ind}\mathcal{R}$, we may use the AR-quiver to see that there is at most one isomorphism class $[L]$ of indecomposable objects in $\mathcal{R}$ such that $\gamma^L_{XY} \neq 0$, taken any representative $L$ of $[L]$.
Thus we can get rid of $\sum$ in the rest of the artice.
Also notice that $\gamma^L_{XY}$ depends on the isomorphism classes $[X],[Y]$ and $[L]$, instead of the representatives $X,Y$ and $L$.
For convenience, we arbitrarily fix a representative for each isomorphism class.

For $X,Y\in \operatorname{ind}\mathcal{R}$ such that $X\ncong Y$ and $X \ncong TY$, integers $i,j\geq 0$, we inductively define isomorphism classes $[L_{X,Y,i,j}]$ (and their representatives $L_{X,Y,i,j}$) as follows.
Let $[L_{X,Y,1,0}]=[X]$ and $[L_{X,Y,0,1}]=[Y]$. 
If $[L_{X,Y,i,j}]$ is defined, and $L_{X,Y,i,j}$ and $X$ have indecomposable extension, then let $[L_{X,Y,i+1,j}]$ be the isomorphism class of this extension. 
Otherwise, we say $[L_{X,Y,i+1,j}]$ doesn't exist.
Similarly, if  $L_{X,Y,i,j}$ and $Y$ have indecomposable extension, then let $[L_{X,Y,i,j+1}]$ be the isomorphism class of this extension.
Otherwise, we say $[L_{X,Y,i,j+1}]$ doesn't exist.

For convenience, we stipulate that
if $[L_{X,Y,i,j}]$ doesn't exist but appears in some $\gamma_{MN}^L$, then $\gamma_{MN}^L = 0$.

We consider the possible extensions (or equivalently, triangles) in $\mathcal{R}$.
For $\mathcal{R}$ of type ADE, and for any $X,Y\in \mathcal{R}$, at most $[L_{X,Y,1,1}]$ exists.
For $\mathcal{R}$ of type BC or $F_4,G_2$, as an application of Thm \ref{deng}, we can obtain their AR-quiver from that of type ADE.
Moreover, since the valuation on the arrows are less than 4, we deduce that the $i,j$ such that $[L_{X,Y,i,j}]$ exists are both less than 4. 
Clearly $[L_{X,Y,2,2}]$ and $[L_{X,Y,3,3}]$ don't exist.

For example, we draw the AR-quiver for $\mathcal{R}$ of type $E_6$.
The left hand side coincide with the right hand side in the following quiver, i.e. we glue $i$ with $i'$ via shift functor $T^2$. 

\[
    \resizebox{15cm}{!}{
\xymatrix{
 1 \ar[rd]  &                           & \bullet \ar[rd]                     &                           & \bullet \ar[rd]                     &                           & \bullet \ar[rd]                     &                           & \bullet \ar[rd]                     &                           & \bullet \ar[rd]                     &                           & \bullet \ar[rd]                     &                           & \bullet \ar[rd]                     &                           & \bullet \ar[rd]                     &                           & \bullet \ar[rd]                      &                           & \bullet \ar[rd]                      &                           & 1' \ar[rd]                     &                  &   \\
                  & 2 \ar[rd] \ar[ru]   &                                     & \bullet \ar[ru] \ar[rd]   &                                     & \bullet \ar[ru] \ar[rd]   &                                     & \bullet \ar[ru] \ar[rd]   &                                     & \bullet \ar[ru] \ar[rd]   &                                     & \bullet \ar[ru] \ar[rd]   &                                     & \bullet \ar[ru] \ar[rd]   &                                     & \bullet \ar[ru] \ar[rd]   &                                     & \bullet \ar[ru] \ar[rd]   &                                      & \bullet \ar[ru] \ar[rd]   &                                      & \bullet \ar[ru] \ar[rd]   &                                     & 2' \ar[rd]  &   \\
                  &                           & 3 \ar[ru] \ar[rd] \ar[rdd]    &                          & \bullet \ar[ru] \ar[rd] \ar[rdd]    &                           & \bullet \ar[ru] \ar[rd] \ar[rdd]    &                           & \bullet \ar[ru] \ar[rd] \ar[rdd]    &                           & \bullet \ar[ru] \ar[rd] \ar[rdd]    &                           & \bullet \ar[ru] \ar[rd] \ar[rdd]    &                           & \bullet \ar[ru] \ar[rd] \ar[rdd]    &                            & \bullet \ar[ru] \ar[rd] \ar[rdd]    &                           & \bullet \ar[ru] \ar[rd] \ar[rdd]     &                           & \bullet \ar[ru] \ar[rd] \ar[rdd]     &                           & \bullet \ar[ru] \ar[rd] \ar[rdd]    &                  & 3' \\
                  & 4 \ar[ru]           &                                     & \bullet \ar[ru]           &                                     & \bullet \ar[ru]           &                                     & \bullet \ar[ru]           &                                     & \bullet \ar[ru]           &                                     & \bullet \ar[ru]           &                                     & \bullet \ar[ru]           &                                     & \bullet \ar[ru]           &                                     & \bullet \ar[ru]           &                                      & \bullet \ar[ru]           &                                      & \bullet \ar[ru]           &                                     & 4' \ar[ru]  &   \\
                  & 5 \ar[ruu] \ar[rdd] &                                     & \bullet \ar[ruu] \ar[rdd] &                                     & \bullet \ar[ruu] \ar[rdd] &                                     & \bullet \ar[ruu] \ar[rdd] &                                     & \bullet \ar[ruu] \ar[rdd] &                                     & \bullet \ar[ruu] \ar[rdd] &                                     & \bullet \ar[ruu] \ar[rdd] &                                     & \bullet \ar[ruu] \ar[rdd] &                                     & \bullet \ar[ruu] \ar[rdd] &                                      & \bullet \ar[ruu] \ar[rdd] &                                      & \bullet \ar[ruu] \ar[rdd] &                                     & 5' \ar[ruu] &   \\
                  &                           &                                     &                           &                                     &                           &                                     &                           &                                     &                           &                                     &                           &                                     &                           &                                     &                           &                                     &                           &                                      &                           &                                      &                           &                                     &                  &   \\
 6 \ar[ruu] &                           & \bullet \ar[ruu]                    &                           & \bullet \ar[ruu]                    &                           & \bullet \ar[ruu]                    &                           & \bullet \ar[ruu]                    &                           & \bullet \ar[ruu]                    &                           & \bullet \ar[ruu]                    &                           & \bullet \ar[ruu]                    &                           & \bullet \ar[ruu]                    &                           & \bullet \ar[ruu]                     &                           & \bullet \ar[ruu]                     &                           & 6' \ar[ruu]                    &                  &  
}
    }
\]

Use these notations, we have:
%
%
%
\begin{equation}
    \begin{aligned}
    E_{[X]}(t)u_Y = & u_Y+t \gamma^{L_{X,Y,1,1}}_{XY} u_{L_{X,Y,1,1}} + \frac{t^2}{2!} \gamma^{L_{X,Y,1,1}}_{XY} \gamma^{L_{X,Y,2,1}}_{X,L_{X,Y,1,1}} u_{L_{X,Y,2,1}} \\
    &+ \frac{t^3}{3!} \gamma^{L_{X,Y,1,1}}_{XY} \gamma^{L_{X,Y,2,1}}_{X,L_{X,Y,1,1}} \gamma^{L_{X,Y,3,1}}_{X,L_{X,Y,2,1}} u_{L_{X,Y,3,1}}\\
    =& \sum\limits_{i=0}^3 C_{X,Y,i,1} t^i u_{L_{X,Y,i,1}},
    \end{aligned}
\end{equation}
where we denote the coefficients by $C_{X,Y,i,1}$ for $i=0,1,2,3$.
Note that if $[L_{X,Y,i,1}]$ doesn't exist for some $X,Y$ and $i\leq 3$, then as stipulated, we have $\gamma_{X,L_{X,Y,i-1,1}}^{L_{X,Y,i,1}} = 0$, which means the term $u_{L_{X,Y,i,1}}$ actually doesn't appear in the above formula.  

Ringel calculated all the possible values of $\gamma_{XY}^{L}$ in \cite{RINGEL1990137}.
Precisely, for $X,Y,Z\in \operatorname{ind}\mathcal{R}$ with $\gamma_{XY}^Z\neq 0$, if $d(X)=d(Y)=1$ and $d(Z)=2$, then $\gamma_{XY}^Z=\pm 2$.
If $d(X)=d(Y)=1$ and $d(Z)=3$, then $\gamma_{XY}^Z=\pm 3$.
If $d(X)=d(Y)=d(Z)=1$ and there exists a indecomposable object $M$ in $\mathcal{R}$ such that $d(M)=3$, then $\gamma_{XY}^Z=\pm 2$.
Otherwise, $\gamma_{XY}^Z =\pm 1$.

Using his result, for $i=0,1,2,3$, we have $C_{X,Y,i,1}\in \{ 0,1,-1\}$, thus are all integers.

Now let $\mathbb{K}$ be any field, and let $\mathfrak{g}_{\mathbb{K}} = \mathbb{K}\otimes \mathfrak{g}_{\mathbb{Z}} $.
Then $\mathfrak{g}_{\mathbb{K}}$ is a $\mathbb{K}$-vector space spanned by $1\otimes \mathbb{S}$.
For any $x,y\in  \mathbb{S}$, define $[-,-]$ on $1\otimes \mathbb{S}$ by 
\begin{align*}
    [1\otimes x,1\otimes y] = 1\otimes [x,y]
\end{align*}
and linearly extend it to the whole $\mathfrak{g}_{\mathbb{K}}$. 
Then the vector space $\mathfrak{g}_{\mathbb{K}}$ together with $[-,-]$ is a Lie algebra over $\mathbb{K}$.

We denote the image of any $a\in \mathbb{Z}$ in $\mathbb{K}$ still by $a$.
Since the coefficients in (1)$\sim $(4) are of the form $at^i,a\in \mathbb{Z},i\geq 0$, for each $X\in \operatorname{ind}\mathcal{R}$ and $t\in \mathbb{K}$, we define linear maps $\overline{E}_{[X]}(t)$ of $\mathfrak{g}_{\mathbb{K}}$ into itself by
\begin{align*}
    &\overline{E}_{[X]}(t) (1\otimes u_X)=1\otimes u_X .\\
    &\overline{E}_{[X]}(t) (1\otimes u_{TX})=1\otimes u_{TX}+t1\otimes H_X'+t^21\otimes u_X .\\
    &\overline{E}_{[X]}(t)(1\otimes H_Y')=1\otimes H_Y'+A_{YX}t1\otimes u_X,   \qquad \text{for any }  Y \in \mathcal{R} .
\end{align*}
and for any $ Y\in \operatorname{ind}\mathcal{R}$ such that $ Y\ncong X,Y\ncong TX$,
\begin{align*}
    \overline{E}_{[X]}(t)(1\otimes u_Y) = \sum\limits_{i: [L_{X,Y,i,1}] \text{ exists}} C_{X,Y,i,1} t^i 1\otimes u_{L_{X,Y,i,1}}.
\end{align*}

For each $X\in \operatorname{ind}\mathcal{R}$ and $t\in \mathbb{K}$, to show $ \overline{E}_{[X]}(t)$ is a Lie algebra endomorphism of $\mathfrak{g}_{\mathbb{K}}$, we consider the action of it on $1\otimes \mathbb{S}$, and calculate some polynomials in $\mathbb{Z}[t]$.
Since $E_{[X]}(s)$ is an automorphism of $\mathfrak{g}_{\mathbb{Z}}\otimes_{\mathbb{Z}}\mathbb{C}$ for all $s\in \mathbb{C}$, the same polynomials in $\mathbb{Z}[s]$ vanish for all $s\in \mathbb{C}$. 
Hence the polynomials in $\mathbb{K}[t]$ are identically zero and $ \overline{E}_{[X]}(t)$ is a Lie algebra endomorphism.
Since $ \overline{E}_{[X]}(t)$ is nonsingular and $E_{[X]}(s)E_{[X]}(-s) = 1$ holds for all $s\in \mathbb{C}$, we have $ \overline{E}_{[X]}(t) \overline{E}_{[X]}(-t) = 1$, and $ \overline{E}_{[X]}(-t)$ is the inverse of $\overline{E}_{[X]}(t)$.
Thus $\overline{E}_{[X]}(t)$ are automorphisms of $\mathfrak{g}_{\mathbb{K}}$.

From now on, we simply denote $1\otimes \mathbb{S}$ by $\mathbb{S}$, the elements $1\otimes H_X'$ by $H_X'$, $1\otimes u_Y$ by $u_Y$, and $\overline{E}_{[X]}(t)$ by $E_{[X]}(t)$.

Let $G=G(\mathcal{R})$ be the group generated by $E_{[X]}(t), X \in \operatorname{ind}\mathcal{R}, t\in \mathbb{K}$,and let $E_{[X]}$ be the subgroup of $G$ generated by $E_{[X]}(t), t \in \mathbb{K}$, for each $X\in \operatorname{ind}\mathcal{R}$.

\begin{lemma}\label{ABG}
    For any $X,Y\in \operatorname{ind}\mathcal{R}$ such that $X\ncong Y$ and $X\ncong TY$, the subcategory of $\mathcal{R}$ generated by $X,Y$ is of type $A_1\times A_1,A_2,B_2$ or $G_2$.
\end{lemma}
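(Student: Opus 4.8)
The plan is to reduce the statement to a classification of rank-$2$ root subsystems, using the dictionary between indecomposables of $\mathcal{R}$ and roots of $\Phi(\mathcal{R})$ established in Section 3. Fix $X, Y \in \operatorname{ind}\mathcal{R}$ with $X \ncong Y$ and $X \ncong TY$, and let $\alpha = \underline{dim}_{\mathcal{B}} X$ and $\beta = \underline{dim}_{\mathcal{B}} Y$ be the corresponding roots, computed with respect to some fixed hereditary subcategory $\mathcal{B}$ (a complete section of $\mathcal{R}$). First I would observe that the full subcategory $\mathcal{S}$ of $\mathcal{R}$ additively generated by all indecomposables $L$ with $\underline{dim}_{\mathcal{B}} L$ lying in the rank-$\leq 2$ root subsystem $\Psi := (\mathbb{Z}\alpha + \mathbb{Z}\beta) \cap \Phi(\mathcal{R})$ is itself a root category of the form described in Section 3: indeed $\Psi$ is a (finite, hence $ADE/BCF_4G_2$-type) root subsystem, and one checks that the indecomposables whose dimension vectors lie in $\Psi$ form a triangulated subcategory closed under $T$ (this uses that $X \to Y \to Z \to TX$ forces $\underline{dim}\,Y = \underline{dim}\,X + \underline{dim}\,Z$ on the Grothendieck group, so extensions stay inside $\Psi$). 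The simple Lie algebra attached to $\mathcal{S}$ is then the rank-$\leq 2$ semisimple Lie algebra with root system $\Psi$.

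The next step is to rule out $\Psi$ having rank $> 2$ or being reducible of the wrong shape: since $\Psi \subseteq \mathbb{Z}\alpha + \mathbb{Z}\beta$, it has rank at most $2$, so $\Psi$ is one of $\emptyset$ (impossible, as $\alpha, \beta \in \Psi$), $A_1$, $A_1 \times A_1$, $A_2$, $B_2 = C_2$, or $G_2$ — the complete list of root systems of rank $\leq 2$. The case where $\Psi$ has rank $1$, i.e. $\beta = \pm\alpha$, must be excluded: $\beta = \alpha$ would force $[X] = [Y]$, contradicting $X \ncong Y$, and $\beta = -\alpha$ corresponds (under the dimension-vector bijection) to $[Y] = [TX]$, contradicting $X \ncong TY$ after applying $T$. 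Hence $\Psi$ has rank exactly $2$ and is one of $A_1 \times A_1, A_2, B_2, G_2$; the subcategory of $\mathcal{R}$ generated by $X$ and $Y$ is contained in $\mathcal{S}$, and since $\mathcal{S}$ is precisely the root category of type $\Psi$, this gives the claim.

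For the BCF$_4$G$_2$ cases one should double-check that a rank-$2$ subsystem of $F_4$ can indeed be of type $B_2$ (it is — take two orthogonal-ish adjacent nodes) and of $G_2$ only inside $G_2$ itself, and that no $BC_n$-like non-reduced configuration arises, but this is standard root-system combinatorics. The main obstacle I anticipate is the first step: verifying carefully that the indecomposables with dimension vectors in $\Psi$ really do close up into a triangulated subcategory stable under $T$ that is again of "root category" type, i.e. that one does not accidentally pick up an extra $\tau$-orbit or lose connectedness. One clean way around this is to argue more concretely via the AR-quiver: by the description in Section 4 (and Theorem \ref{deng} for the non-simply-laced cases), the possible indecomposable extensions of $X$ by $Y$ are governed by the valuations on arrows, which are bounded by $3$, so only finitely many $[L_{X,Y,i,j}]$ exist; feeding these dimension vectors into the Cartan-matrix formula of Lemma 2.2 and matching against the (finite) list of rank-$2$ symmetrizable Cartan matrices pins down the type as one of $A_1 \times A_1, A_2, B_2, G_2$ directly, without needing the full subcategory argument.
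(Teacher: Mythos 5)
Your argument is correct in its essentials but takes a genuinely different route from the paper's. The paper argues inside the category: since $X \ncong TY$, there is a hereditary abelian subcategory $\mathcal{B}$ of $\mathcal{R}$ containing both $X$ and $Y$, and the intersection $\mathcal{B}'$ of $\mathcal{B}$ with the subcategory generated by $X,Y$ is closed under subobjects, quotients and extensions, hence is itself a hereditary abelian category of rank at most $2$, giving the four listed types. Your proof works on the Grothendieck-group side instead: you pass to the dimension vectors $\alpha,\beta$, form the root subsystem $\Psi = (\mathbb{Z}\alpha+\mathbb{Z}\beta)\cap\Phi$, rule out rank $1$ using $X\ncong Y$, $X\ncong TY$ and $T^2 = \operatorname{id}$, and classify rank-$2$ root systems. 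Both accomplish the same reduction to rank $2$. An advantage of the paper's route is that closure under subobjects and quotients in $\mathcal{B}'$ directly handles the point you yourself flag as the main obstacle: a triangle with a decomposable middle term cannot push indecomposable summands outside $\mathcal{B}'$, because those summands are subobjects of the middle term. Your dimension-vector argument alone does not quite close this (a middle term whose class lies in $\mathbb{Z}\alpha+\mathbb{Z}\beta$ need not have every indecomposable summand with class in $\Psi$), and your AR-quiver/bounded-valuation fallback is a reasonable but heavier substitute. On the other hand, your route is elementary and makes explicit the role of both non-isomorphism hypotheses, which the paper's proof leaves tacit.
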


\begin{proof}
    Since $X\ncong TY$, there exists a hereditary category $\mathcal{B}$ containing $X$ and $Y$.
    Let $\mathcal{B}'$ be the intersection of $\mathcal{B}$ and the subcategory of $\mathcal{R}$ generated by $X,Y$.
    Then $\mathcal{B}'$ is closed under subobjects, quotients and extensions.
    Hence $\mathcal{B}'$ is a hereditary abelian category, and the subcategory generated by $X,Y$ is of type $A_1\times A_1,A_2,B_2$ or $G_2$.
\end{proof}

Then we develop the commutator formula for $E_{[X]}(t),E_{[Y]}(s)$, for those $X,Y\in \operatorname{ind}\mathcal{R}$ such that $X\ncong Y$ and $X \ncong TY$, $ t,s \in \mathbb{K}$. 
Denote the commutator of $E_{[X]}(t)$ and $E_{[Y]}(s)$ by
\begin{align*}
    (E_{[X]}(t),E_{[Y]}(s)) = E_{[X]}(t) E_{[Y]}(s) E_{[X]}(t)^{-1} E_{[Y]}(s)^{-1} .
\end{align*}

\begin{proposition}\label{commutator}
    For $X,Y\in \operatorname{ind}\mathcal{R}$ such that $X\ncong Y$ and $X \ncong TY$, $ t,s\in \mathbb{K}$, we have
    \begin{equation*}
        (E_{[X]}(t),E_{[Y]}(s))= \prod\limits_{i,j>0} E_{[L_{X,Y,i,j}]}(C_{X,Y,i,j}t^i s^j),
    \end{equation*}
where the product is taken over all pairs of $i,j>0$ such that $[L_{X,Y,i,j}]$ exists, and in order of increasing $i+j$. 
(Actually there's one case in which there're two terms in the product with the same value $i+j$, but these two terms are commutative.)

The constants $C_{X,Y,i,j}$ are given as follows (denote $L_{X,Y,i,j}$ simply by $L_{i,j}$)
\begin{align*}
    &C_{X,Y,1,1} = \gamma_{XY}^{L_{1,1}}, \qquad C_{X,Y,2,1} = \frac{1}{2!}\gamma_{XY}^{L_{1,1}}\gamma_{XL_{1,1}}^{L_{2,1}}, \qquad C_{X,Y,1,2} = -\frac{1}{2!}\gamma_{YX}^{L_{1,1}}\gamma_{YL_{1,1}}^{L_{1,2}}, \\
    &C_{X,Y,3,1} = \frac{1}{3!}\gamma_{XY}^{L_{1,1}}\gamma_{XL_{1,1}}^{L_{2,1}}\gamma_{XL_{2,1}}^{L_{3,1}},\qquad C_{X,Y,3,2} = \frac{1}{3} \gamma_{XY}^{L_{1,1}} \gamma_{XL_{1,1}}^{L_{2,1}} \gamma_{XL_{2,1}}^{L_{3,1}} \gamma_{YL_{3,1}}^{L_{3,2}},\\
    &C_{X,Y,1,3} = - \frac{1}{3!}\gamma_{YX}^{L_{1,1}}\gamma_{YL_{1,1}}^{L_{1,2}}\gamma_{YL_{1,2}}^{L_{1,3}} ,\qquad C_{X,Y,2,3} = \frac{1}{6} \gamma_{YX}^{L_{1,1}} \gamma_{YL_{1,1}}^{L_{1,2}} \gamma_{YL_{1,2}}^{L_{1,3}} \gamma_{XL_{1,3}}^{L_{2,3}}  . 
\end{align*}

\end{proposition}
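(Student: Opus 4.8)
The plan is to reduce the commutator formula to a computation in the rank-two subcategory generated by $X$ and $Y$, exactly as in Chevalley's original approach (see Carter, Chapter 4), and then to identify the structure constants $C_{X,Y,i,j}$ with the ones listed. By Lemma \ref{ABG}, the subcategory of $\mathcal{R}$ generated by $X,Y$ is of type $A_1\times A_1$, $A_2$, $B_2$, or $G_2$, so the only indecomposables that can appear as extensions built from $X$ and $Y$ are the $L_{X,Y,i,j}$ with $i+j\leq 5$ (and $(i,j)\neq(2,2),(3,3)$); this is precisely why the product in the statement is finite and why, when a given $[L_{X,Y,i,j}]$ fails to exist, the corresponding $\gamma$-factor vanishes by our stipulation. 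First I would work inside the complex Lie algebra $\mathfrak{g}\otimes_{\mathbb{Q}}\mathbb{C}$, where $E_{[X]}(t)=\exp(t\,\mathbf{ad}\,u_X)$ genuinely, and establish the identity there; the passage to an arbitrary field $\mathbb{K}$ is then automatic by the usual specialization argument already used in the excerpt: both sides act on $1\otimes\mathbb{S}$ with matrix entries that are integer-coefficient polynomials in $t,s$, and an integer polynomial vanishing for all complex $t,s$ vanishes identically, hence over $\mathbb{K}$.

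For the complex computation the strategy is the standard one. Write $Z=(E_{[X]}(t),E_{[Y]}(s))$ and note that $Z$ is an automorphism of $\mathfrak{g}_\mathbb{C}$ fixing the Cartan part pointwise modulo the root spaces attached to $X$ and $Y$, and more precisely $Z-1$ shifts weights strictly into the "positive cone" spanned by the roots of $X$ and $Y$. One shows that $Z$ lies in the unipotent subgroup generated by the $E_{[L_{X,Y,i,j}]}$ with $i,j>0$, so $Z=\prod_{i,j>0}E_{[L_{X,Y,i,j}]}(c_{ij}(t,s))$ for suitable functions $c_{ij}$; substituting $E_{[L]}(u)=\exp(u\,\mathbf{ad}\,u_L)$ and comparing the two sides order by order in the grading by $i+j$ (the weight $i\cdot\underline{\dim}X+j\cdot\underline{\dim}Y$) forces $c_{ij}(t,s)=C_{X,Y,i,j}\,t^is^j$ for a constant $C_{X,Y,i,j}$, since the weight is bihomogeneous. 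The actual values of the $C_{X,Y,i,j}$ are then read off by expanding $E_{[X]}(t)E_{[Y]}(s)u_X$ (and similar), using formula (5) for the one-sided action $E_{[X]}(t)u_Y$: the leading term $t^is^j$ contribution of the commutator in the direction $u_{L_{X,Y,i,j}}$ is a product of the $\gamma$-coefficients along a path in the AR-quiver from the initial vertex up through repeated extensions by $X$ and $Y$, divided by the appropriate factorials coming from the exponential. This is exactly the bookkeeping that produces the formulas $C_{X,Y,1,1}=\gamma^{L_{1,1}}_{XY}$, $C_{X,Y,2,1}=\frac{1}{2!}\gamma^{L_{1,1}}_{XY}\gamma^{L_{2,1}}_{XL_{1,1}}$, and so on.

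The remaining point is the commutativity assertion in the parenthetical remark: in type $G_2$ there is a pair $(i,j)$ and $(i',j')$ with $i+j=i'+j'$ (namely the two "middle" roots, $(3,1)$ and $(2,1)$ — more precisely the two pairs with $i+j=4$ are $(3,1)$ and, in the normalization used here, one reaching $L_{2,2}$, which does not exist — so the genuine collision is among $(3,2)$ and $(2,3)$ type terms, or the two roots at a given height). The two corresponding root subgroups commute because the sum of their labeling roots is not a root (the relevant $[L]$ does not exist), so $[u_{L_{i,j}},u_{L_{i',j'}}]=0$ and the product is unambiguous; this is just an instance of the commutator formula applied one level down, which is legitimate since those two indecomposables again generate a rank-two subcategory of smaller "reach."

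The main obstacle I anticipate is precisely the careful identification of the numerical coefficients in the $B_2$ and especially $G_2$ cases: one must track which $[L_{X,Y,i,j}]$ actually exist (using Theorem \ref{deng} to pull the AR-quiver of type $B,C,F_4,G_2$ back from the simply-laced cover), handle the factorial denominators so that the final $C_{X,Y,i,j}$ come out integral (this uses Ringel's classification of the possible values of $\gamma^L_{XY}$, quoted in the excerpt, e.g. $\gamma^L_{XY}=\pm 3$ when $d(L)=3$, which is what makes $C_{X,Y,3,1}=\frac{1}{3!}(\pm 3)(\pm 1)(\pm 1)$ work out), and verify that the ordering by increasing $i+j$ in the product is forced. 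None of this is conceptually deep — it is the same casework as in Carter, Section 4.2, transported to the Hall-algebra language — but it is the part that requires genuine attention rather than a formal argument.
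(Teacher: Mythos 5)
Your overall strategy matches the paper's: reduce to a rank-two subcategory via Lemma \ref{ABG}, establish the identity over $\mathbb{C}$ by expanding $\exp(s\,\mathbf{ad}\,E_{[X]}(t)u_Y)$, and then pass to an arbitrary field by the integrality of the coefficients. The paper is more explicitly computational — it repeatedly applies the identity $\exp(\varphi+\psi)=\exp\varphi\,\exp\psi\,\exp(-\tfrac12[\varphi,\psi])$ for commuting-commutator nilpotents (quoting \cite[Lemma 5.1.2]{carter}) to split the conjugated exponential into the claimed product, whereas you argue more abstractly that the commutator lies in the unipotent group generated by the $E_{[L_{i,j}]}$ and that bihomogeneity forces $c_{ij}(t,s)=C_{ij}t^is^j$; both routes are legitimate and lead to the same casework.

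That said, there are two concrete gaps. First, you misidentify the collision in the parenthetical remark: the unique case with two factors of equal height $i+j$ is the third case treated in the paper, where \emph{both} $[L_{1,2}]$ and $[L_{2,1}]$ exist (and then $[L_{1,3}],[L_{3,1}]$ do not, by Lemma \ref{ABG}), giving the two terms $(1,2)$ and $(2,1)$ at height $3$ — not $(3,2)$ and $(2,3)$, which never coexist for a single pair $(X,Y)$. Your reason why they commute (no indecomposable extension of $L_{1,2}$ by $L_{2,1}$, since $3X+3Y$ is never a root) is correct once the right pair is named. Second, and more substantively, you say the constants are ``read off'' by expanding the one-sided action, but this does not directly yield the stated $C_{3,2}$ and $C_{2,3}$. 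In the paper's computation, the raw BCH expansion produces \emph{two} distinct factors in the $E_{[L_{3,2}]}$ direction, and consolidating them into the single stated expression $C_{3,2}=\tfrac13\gamma_{XY}^{L_{1,1}}\gamma_{XL_{1,1}}^{L_{2,1}}\gamma_{XL_{2,1}}^{L_{3,1}}\gamma_{YL_{3,1}}^{L_{3,2}}$ requires the relation $\gamma_{XL_{2,1}}^{L_{3,1}}\gamma_{YL_{3,1}}^{L_{3,2}}=\gamma_{YX}^{L_{1,1}}\gamma_{L_{1,1}L_{2,1}}^{L_{3,2}}$, which comes from Proposition \ref{bmt} (the octahedral-axiom bijection, equivalently the Jacobi identity), together with $\gamma_{YX}^{L_{1,1}}=-\gamma_{XY}^{L_{1,1}}$. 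An analogous reduction is needed for $C_{2,3}$. Your proposal never invokes Proposition \ref{bmt}; without it you would be left with a more complicated expression that does not visibly equal the claimed one, so this step needs to be added.
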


\begin{proof}
    For $X,Y\in \operatorname{ind}\mathcal{R}$ such that $X\ncong Y$ and $X \ncong TY$, we denote $L_{X,Y,i,j}$ simply by $L_{i,j}$. Then
    \begin{align*}
        &E_{[X]}(t) E_{[Y]}(s) E_{[X]}(t)^{-1} = E_{[X]}(t) exp (s \mathbf{ad}u_Y)  E_{[X]}(t)^{-1}  = exp(s \mathbf{ad} E_{[X]}(t)u_Y)\\
         &= exp(s\mathbf{ad}u_Y + st\gamma_{XY}^{L_{1,1}}\mathbf{ad}u_{L_{1,1}} + \frac{st^2}{2!}\gamma_{XY}^{L_{1,1}}\gamma_{XL_{1,1}}^{L_{2,1}}\mathbf{ad}u_{L_{2,1}} + \frac{st^3}{3!}\gamma_{XY}^{L_{1,1}}\gamma_{XL_{1,1}}^{L_{2,1}}\gamma_{XL_{2,1}}^{L_{3,1}}\mathbf{ad}u_{L_{3,1}} ) .
    \end{align*}

Firstly, we assume that $[L_{1,2}]$ doesn't exist. Then 
\begin{align*}
       & E_{[X]}(t) E_{[Y]}(s) E_{[X]}(t)^{-1} \\
       &= exp(  s\mathbf{ad}u_Y + \frac{st^3}{3!}\gamma_{XY}^{L_{1,1}}\gamma_{XL_{1,1}}^{L_{2,1}}\gamma_{XL_{2,1}}^{L_{3,1}}\mathbf{ad}u_{L_{3,1}}    ) exp(st\gamma_{XY}^{L_{1,1}}\mathbf{ad}u_{L_{1,1}} +  \frac{st^2}{2!}\gamma_{XY}^{L_{1,1}}\gamma_{XL_{1,1}}^{L_{2,1}}\mathbf{ad}u_{L_{2,1}})  \\
       &= exp(st\gamma_{XY}^{L_{1,1}}\mathbf{ad}u_{L_{1,1}} +  \frac{st^2}{2!}\gamma_{XY}^{L_{1,1}}\gamma_{XL_{1,1}}^{L_{2,1}}\mathbf{ad}u_{L_{2,1}})  exp(  s\mathbf{ad}u_Y + \frac{st^3}{3!}\gamma_{XY}^{L_{1,1}}\gamma_{XL_{1,1}}^{L_{2,1}}\gamma_{XL_{2,1}}^{L_{3,1}}\mathbf{ad}u_{L_{3,1}}    ) .
    \end{align*}
    To calculate this, we use the following trick(see proof in \cite[Lemma 5.1.2]{carter}). 
    If two nilpotent linear andomorphisms $\varphi,\psi $ of $\mathfrak{g}\otimes_{\mathbf{Q}} \mathbf{C}$ satisfy $[\varphi,\psi]=\varphi\psi-\psi\varphi$ is nilpotent and $[\varphi,\psi]$ commutes with $\varphi$ and $\psi$, then 
    \begin{align*}
        exp(\varphi + \psi) = exp(\varphi) exp(\psi) exp(-\frac{1}{2}[\varphi,\psi]).
    \end{align*}
    
    For $\mathbb{K} = \mathbb{C}$, by applying this lemma, we have 
    \begin{align*}
        &exp(st\gamma_{XY}^{L_{1,1}}\mathbf{ad}u_{L_{1,1}} +  \frac{st^2}{2!}\gamma_{XY}^{L_{1,1}}\gamma_{XL_{1,1}}^{L_{2,1}}\mathbf{ad}u_{L_{2,1}})  \\
    =   &exp(st\gamma_{XY}^{L_{1,1}}\mathbf{ad}u_{L_{1,1}})  exp( \frac{st^2}{2!}\gamma_{XY}^{L_{1,1}}\gamma_{XL_{1,1}}^{L_{2,1}}\mathbf{ad}u_{L_{2,1}}) exp(-\frac{s^2 t^3}{4} (\gamma_{XY}^{L_{1,1}})^2 \gamma_{XL_{1,1}}^{L_{2,1}} \gamma_{L_{1,1}L_{2,1}}^{L_{3,2}} \mathbf{ad} u_{L_{3,2}}) \\
    =   &E_{[L_{1,1}]}(st\gamma_{XY}^{L_{1,1}}) E_{[L_{2,1}]}(\frac{st^2}{2!}\gamma_{XY}^{L_{1,1}}\gamma_{XL_{1,1}}^{L_{2,1}}) E_{[L_{3,2}]}(-\frac{s^2 t^3}{4} (\gamma_{XY}^{L_{1,1}})^2 \gamma_{XL_{1,1}}^{L_{2,1}} \gamma_{L_{1,1}L_{2,1}}^{L_{3,2}})
    \end{align*}
   and
   \begin{align*}
     &  exp(  s\mathbf{ad}u_Y + \frac{st^3}{3!}\gamma_{XY}^{L_{1,1}}\gamma_{XL_{1,1}}^{L_{2,1}}\gamma_{XL_{2,1}}^{L_{3,1}}\mathbf{ad}u_{L_{3,1}}  )\\
    =&  E_{[L_{3,1}]}( \frac{st^3}{3!}\gamma_{XY}^{L_{1,1}}\gamma_{XL_{1,1}}^{L_{2,1}}\gamma_{XL_{2,1}}^{L_{3,1}})   E_{[Y]}(s)  E_{[L_{3,2}]}(\frac{s^2 t^3}{12} \gamma_{XY}^{L_{1,1}} \gamma_{XL_{1,1}}^{L_{2,1}} \gamma_{XL_{2,1}}^{L_{3,1}} \gamma_{YL_{3,1}}^{L_{3,2}}    ).
   \end{align*}

   Putting them together, we have
   \begin{align*}
      &(E_{[X]}(t),E_{[Y]}(s)) \\
    = &E_{[L_{1,1}]}(st\gamma_{XY}^{L_{1,1}}) E_{[L_{2,1}]}(\frac{st^2}{2!}\gamma_{XY}^{L_{1,1}}\gamma_{XL_{1,1}}^{L_{2,1}}) E_{[L_{3,2}]}(-\frac{s^2 t^3}{4} (\gamma_{XY}^{L_{1,1}})^2 \gamma_{XL_{1,1}}^{L_{2,1}} \gamma_{L_{1,1}L_{2,1}}^{L_{3,2}}) \\
      & \times  E_{[L_{3,1}]}( \frac{st^3}{3!}\gamma_{XY}^{L_{1,1}}\gamma_{XL_{1,1}}^{L_{2,1}}\gamma_{XL_{2,1}}^{L_{3,1}})  E_{[L_{3,2}]}(\frac{s^2 t^3}{12} \gamma_{XY}^{L_{1,1}} \gamma_{XL_{1,1}}^{L_{2,1}} \gamma_{XL_{2,1}}^{L_{3,1}} \gamma_{YL_{3,1}}^{L_{3,2}}  ) \\
    = &   E_{[L_{1,1}]}(st\gamma_{XY}^{L_{1,1}}) E_{[L_{2,1}]}(\frac{st^2}{2!}\gamma_{XY}^{L_{1,1}}\gamma_{XL_{1,1}}^{L_{2,1}}) E_{[L_{3,1}]}( \frac{st^3}{3!}\gamma_{XY}^{L_{1,1}}\gamma_{XL_{1,1}}^{L_{2,1}}\gamma_{XL_{2,1}}^{L_{3,1}}) \\
      & \times  E_{[L_{3,2}]}(\frac{s^2 t^3}{12} \gamma_{XY}^{L_{1,1}} \gamma_{XL_{1,1}}^{L_{2,1}} (\gamma_{XL_{2,1}}^{L_{3,1}} \gamma_{YL_{3,1}}^{L_{3,2}} - 3 \gamma_{XY}^{L_{1,1}} \gamma_{L_{1,1}L_{2,1}}^{L_{3,2}} ) ).
   \end{align*}
  
   As a consequence of Prop \ref{bmt} (or Jacobi identity), we have 
   \begin{align*}
    \gamma_{XL_{2,1}}^{L_{3,1}} \gamma_{YL_{3,1}}^{L_{3,2}} = \gamma_{YX}^{L_{1,1}}\gamma_{L_{1,1}L_{2,1}}^{L_{3,2}}.
   \end{align*}

   Since $\gamma_{YX}^{L_{1,1}} = - \gamma_{XY}^{L_{1,1}}$, 
   \begin{align*}
    \frac{s^2 t^3}{12} \gamma_{XY}^{L_{1,1}} \gamma_{XL_{1,1}}^{L_{2,1}} (\gamma_{XL_{2,1}}^{L_{3,1}} \gamma_{YL_{3,1}}^{L_{3,2}} - 3 \gamma_{XY}^{L_{1,1}} \gamma_{L_{1,1}L_{2,1}}^{L_{3,2}} ) = \frac{s^2 t^3}{3} \gamma_{XY}^{L_{1,1}} \gamma_{XL_{1,1}}^{L_{2,1}} \gamma_{XL_{2,1}}^{L_{3,1}} \gamma_{YL_{3,1}}^{L_{3,2}},
   \end{align*}
   and thus 
   \begin{align*}
     &(E_{[X]}(t),E_{[Y]}(s))\\
    =&  E_{[L_{1,1}]}(st\gamma_{XY}^{L_{1,1}}) E_{[L_{2,1}]}(\frac{st^2}{2!}\gamma_{XY}^{L_{1,1}}\gamma_{XL_{1,1}}^{L_{2,1}}) E_{[L_{3,1}]}( \frac{st^3}{3!}\gamma_{XY}^{L_{1,1}}\gamma_{XL_{1,1}}^{L_{2,1}}\gamma_{XL_{2,1}}^{L_{3,1}}) \\
     & \times  E_{[L_{3,2}]}( \frac{s^2 t^3}{3} \gamma_{XY}^{L_{1,1}} \gamma_{XL_{1,1}}^{L_{2,1}} \gamma_{XL_{2,1}}^{L_{3,1}} \gamma_{YL_{3,1}}^{L_{3,2}}).
   \end{align*}

   Notice that the coefficients of $s^it^j$ in $E_{[L_{j,i}]}(\cdot)$, for various $i,j$ appearing in the above formula, are integers. 
   Thus the above formula holds for arbitrary field $\mathbb{K}$.



Secondly, if $[L_{2,1}]$ doesn't exist, then by the above result, we have
    \begin{align*}
        &(E_{[Y]}(s),E_{[X]}(t))\\
        =&  E_{[L_{1,1}]}(ts\gamma_{YX}^{L_{1,1}}) E_{[L_{1,2}]}(\frac{ts^2}{2!}\gamma_{YX}^{L_{1,1}}\gamma_{YL_{1,1}}^{L_{1,2}}) E_{[L_{1,3}]}( \frac{ts^3}{3!}\gamma_{YX}^{L_{1,1}}\gamma_{YL_{1,1}}^{L_{1,2}}\gamma_{YL_{1,2}}^{L_{1,3}}) \\
         & \times  E_{[L_{2,3}]}( \frac{t^2 s^3}{3} \gamma_{YX}^{L_{1,1}} \gamma_{YL_{1,1}}^{L_{1,2}} \gamma_{YL_{1,2}}^{L_{1,3}} \gamma_{XL_{1,3}}^{L_{2,3}})     .
    \end{align*}

    Thus
    \begin{align*}
        &(E_{[X]}(t),E_{[Y]}(s)) = (E_{[Y]}(s),E_{[X]}(t))^{-1} \\
       =&  E_{[L_{2,3}]}( -\frac{t^2 s^3}{3} \gamma_{YX}^{L_{1,1}} \gamma_{YL_{1,1}}^{L_{1,2}} \gamma_{YL_{1,2}}^{L_{1,3}} \gamma_{XL_{1,3}}^{L_{2,3}})  E_{[L_{1,3}]}( - \frac{ts^3}{3!}\gamma_{YX}^{L_{1,1}}\gamma_{YL_{1,1}}^{L_{1,2}}\gamma_{YL_{1,2}}^{L_{1,3}}) \\
        & \times E_{[L_{1,2}]}(-\frac{ts^2}{2!}\gamma_{YX}^{L_{1,1}}\gamma_{YL_{1,1}}^{L_{1,2}})  E_{[L_{1,1}]}(-ts\gamma_{YX}^{L_{1,1}}) \\
       =&  E_{[L_{1,1}]}(-ts\gamma_{YX}^{L_{1,1}}) E_{[L_{1,2}]}(-\frac{ts^2}{2!}\gamma_{YX}^{L_{1,1}}\gamma_{YL_{1,1}}^{L_{1,2}}) E_{[L_{1,3}]}( - \frac{ts^3}{3!}\gamma_{YX}^{L_{1,1}}\gamma_{YL_{1,1}}^{L_{1,2}}\gamma_{YL_{1,2}}^{L_{1,3}}) \\
        & \times  E_{[L_{2,3}]}(\frac{t^2 s^3}{6} \gamma_{YX}^{L_{1,1}} \gamma_{YL_{1,1}}^{L_{1,2}}(3\gamma_{YX}^{L_{1,1}} \gamma_{L_{1,2}L_{1,1}}^{L_{2,3}}-2\gamma_{YL_{1,2}}^{L_{1,3}} \gamma_{XL_{1,3}}^{L_{2,3}}  ) ),
    \end{align*}
    where the last equality follows from
    \begin{align*}
        (E_{[L_{1,2}]}(-\frac{ts^2}{2!}\gamma_{YX}^{L_{1,1}}\gamma_{YL_{1,1}}^{L_{1,2}}), E_{[L_{1,1}]}(-ts\gamma_{YX}^{L_{1,1}})) = E_{[L_{2,3}]}(\frac{t^2 s^3}{2}(\gamma_{YX}^{L_{1,1}})^2 \gamma_{YL_{1,1}}^{L_{1,2}} \gamma_{L_{1,2}L_{1,1}}^{L_{2,3}}).
    \end{align*}

    Again, use Prop \ref{bmt} (or Jacobi identity) we have 
    \begin{align*}
        \gamma_{YL_{1,2}}^{L_{1,3}} \gamma_{XL_{1,3}}^{L_{2,3}} = \gamma_{XY}^{L_{1,1}} \gamma_{L_{1,1}L_{1,2}}^{L_{2,3}} = \gamma_{YX}^{L_{1,1}} \gamma_{L_{1,2}L_{1,1}}^{L_{2,3}} .
    \end{align*}
    
    Thus
    \begin{align*}
        & (E_{[X]}(t),E_{[Y]}(s)) \\
       =&  E_{[L_{1,1}]}(-ts\gamma_{YX}^{L_{1,1}}) E_{[L_{1,2}]}(-\frac{ts^2}{2!}\gamma_{YX}^{L_{1,1}}\gamma_{YL_{1,1}}^{L_{1,2}}) E_{[L_{1,3}]}( - \frac{ts^3}{3!}\gamma_{YX}^{L_{1,1}}\gamma_{YL_{1,1}}^{L_{1,2}}\gamma_{YL_{1,2}}^{L_{1,3}}) \\
        & \times  E_{[L_{2,3}]}(\frac{t^2 s^3}{6} \gamma_{YX}^{L_{1,1}} \gamma_{YL_{1,1}}^{L_{1,2}} \gamma_{YL_{1,2}}^{L_{1,3}} \gamma_{XL_{1,3}}^{L_{2,3}}   ).
    \end{align*}


Finally, we assume that both $[L_{1,2}]$ and $[L_{2,1}]$ exist.
By Lemma \ref{ABG}, in this case $[L_{1,3}]$ and $[L_{3,1}]$ don't exist.
Then we have
    \begin{align*}
      &  E_{[X]}(t) E_{[Y]}(s) E_{[X]}(t)^{-1} \\
    = & exp(s\mathbf{ad}u_Y + st\gamma_{XY}^{L_{1,1}}\mathbf{ad}u_{L_{1,1}} + \frac{st^2}{2!}\gamma_{XY}^{L_{1,1}}\gamma_{XL_{1,1}}^{L_{2,1}}\mathbf{ad}u_{L_{2,1}} ) \\
    = & exp(s\mathbf{ad}u_Y + st\gamma_{XY}^{L_{1,1}}\mathbf{ad}u_{L_{1,1}}) exp( \frac{st^2}{2!}\gamma_{XY}^{L_{1,1}}\gamma_{XL_{1,1}}^{L_{2,1}}\mathbf{ad}u_{L_{2,1}} ) \\
    = & exp(st\gamma_{XY}^{L_{1,1}}\mathbf{ad}u_{L_{1,1}}) exp(s\mathbf{ad}u_Y) exp(-\frac{s^2 t}{2!} \gamma_{XY}^{L_{1,1}} \gamma_{L_{1,1}Y}^{L_{1,2}} \mathbf{ad} u_{L_{1,2}} )exp( \frac{st^2}{2!}\gamma_{XY}^{L_{1,1}}\gamma_{XL_{1,1}}^{L_{2,1}}\mathbf{ad}u_{L_{2,1}} ) .
    \end{align*}

    Thus
    \begin{align*}
        (E_{[X]}(t),E_{[Y]}(s)) = E_{[L_{1,1}]}(st\gamma_{XY}^{L_{1,1}}) E_{[L_{1,2}]}(-\frac{s^2 t}{2!} \gamma_{YX}^{L_{1,1}} \gamma_{YL_{1,1}}^{L_{1,2}}) E_{[L_{2,1}]}(\frac{st^2}{2!}\gamma_{XY}^{L_{1,1}}\gamma_{XL_{1,1}}^{L_{2,1}}),
    \end{align*}
    where the last two terms in the product are commutative.
   The proof is finished.
\end{proof}

\begin{remark}
    For a pair of objects $X,Y$ in $\mathcal{R}$ and $X\ncong TY$, we can define a relative position on them.
    Since $X\ncong TY$, there exists an abelian subcategory $\mathcal{A}$ of $\mathcal{R}$ containing $X,Y$.
    We denote $X>Y$ if $\operatorname{Ext}_{\mathcal{A}}^1(Y,X)\neq 0$ and $\operatorname{Ext}_{\mathcal{A}}^1(X,Y)=0$.
    Since $\operatorname{Ext}_{\mathcal{A}}^1(Y,X)$ is nonzero if and only if there exists a triangle $X\rightarrow L \rightarrow Y \rightarrow TX$ in $\mathcal{R}$ for some $L$, this relative position doesn't depend on which abelian subcategory (containing $X,Y$) is chosen.
    Note that this definition doesn't give a partial order, since in general $X>Y,Y>Z$ do not imply $X>Z$.

    For $X,Y,L\in \mathcal{R}$, if $\gamma_{XY}^L\neq 0$, we have $X>Y$ implies $\gamma_{XY}^L>0$ and $X<Y$ implies $\gamma_{XY}^L<0$.
    Thus the relative position provides a way to determine the signs of the structure constants.
    For example, we assume $X>Y$ and denote $L_{X,Y,i,j}$ (resp. $C_{X,Y,i,j}$) by $L_{i,j}$ (resp. $C_{i,j}$).
    The structure constants are as follows.

    (1) If $[L_{1,2}]$ doesn't exist, then 
    \begin{align*}
        C_{1,1}=1, \quad C_{2,1}=1, \quad C_{3,1}=1, \quad C_{3,2}=-2,
    \end{align*}
    if nonzero.

    (2) If $[L_{2,1}]$ doesn't exist, then 
    \begin{align*}
        C_{1,1}=1,\quad C_{1,2}=-1,\quad C_{1,3}=1, \quad C_{2,3}=-1,
    \end{align*}
    if nonzero.

    (3) If $[L_{1,2}],[L_{2,1}]$ exist, then 
    \begin{align*}
        C_{1,1}=2, \quad C_{1,2}=-3, \quad C_{2,1}=3,
    \end{align*}
    if nonzero.
\end{remark}

\subsubsection{BGP Reflection Functors and Automorphisms}

For $X,Y \in \operatorname{ind}\mathcal{R}$ such that $Y \ncong X, Y \ncong TX$, 
let $p_{XY}$ be the largest integer $r$ such that $[L_{TX,Y,r,1}]$ exists, 
and let $q_{XY}$ be the largest integer $s$ such that $[L_{X,Y,s,1}]$ exists.

Define automorphism $h_{[X]}(t) $ of $\mathfrak{g}_{\mathbb{K}}$ for each $X\in \operatorname{ind} \mathcal{R}$ and $t\in \mathbb{K}^{\times}$, whose action on $\mathbb{S}$ is given by 
\begin{align*}
   &h_{[X]}(t)u_{Y} = t^{A_{XY}} u_{Y}, \\
   &h_{[X]}(t)H_{S}' = H_{S}'.
\end{align*}

Following Xiao-Zhang-Zhu\cite{BGP}, we use BGP reflection functors to define automorphisms $n_{[X]}$ of $\mathfrak{g}_{\mathbb{K}}$, for $X \in \operatorname{ind} \mathcal{R}$.
For a given indecomposable object $X$ in $\mathcal{R}$, there exists a hereditary subcategory $\mathcal{A}$ of $\mathcal{R}$ such that $X$ is a projective simple object in $\mathcal{A}$.
Suppose $X$ corresponds to some sink $i$ and denote the corresponding BGP reflection functor by $R(S_i^+)$.
Let $\mathcal{A}'=R(S_i^+) \mathcal{A}$. 
Suppose $\Pi(\mathcal{A})=\{S_1,\cdots ,S_m\}$ is a complete set of pairwise nonisomorphic simple objects in $\mathcal{A}$.
We can take $\mathbb{B} = \{ u_M \}_{M\in \operatorname{ind}\mathcal{R}} \cup \{ H_{S_j}' \}_{j=1,\cdots,m} \subseteq \mathbb{S}$,which is a basis of $\mathfrak{g}_{\mathbb{K}}$.
In the proof of Thm \ref{PXgiso} (\cite[Thm 4.2]{1997ROOT}), Peng and Xiao have showed that there exists a unique Lie algebra isomorphism $\phi_{\mathcal{A}}:\mathfrak{g}\otimes_{\mathbb{Q}}\mathbb{C} \rightarrow \mathfrak{g}(\Phi(\mathcal{R}))$ such that
\begin{align*}
    u_{S_j} \mapsto x_{\underline{dim}_{\mathcal{A}}S_j}, \qquad u_{TS_j} \mapsto -x_{\underline{dim}_{\mathcal{A}}TS_j}, \qquad H_{S_j}' \mapsto -h_{S_j}.
\end{align*}

\begin{remark}
    Notice that here we only give the explicit correspondence for simple objects (which is enough because $\phi_{\mathcal{A}}$ is a Lie algebra isomorphism), since the signs $\pm$ of coefficients for other $\phi_{\mathcal{A}}(u_Y)$ depend on $\mathcal{A}$ and structure constants, and we can't write them down in a uniform way.
\end{remark}

We generalize these to arbitrary field $\mathbb{K}$.
Note that $\{ x_{\underline{dim}_{\mathcal{A}}M} \}_{M\in \operatorname{ind}\mathcal{R}} \cup \{ h_{S_j} \}_{j=1,\cdots,m}$ form a Chevalley basis of $\mathfrak{g}(\Phi(\mathcal{R}))$. 
Denote the $\mathbb{Z}$-span of this Chevalley basis by $\mathfrak{g}(\Phi(\mathcal{R}))_{\mathbb{Z}}$, and let $\mathfrak{g}(\Phi(\mathcal{R}))_{\mathbb{K}} = \mathbb{K} \otimes \mathfrak{g}(\Phi(\mathcal{R}))_{\mathbb{Z}}$.
Then we have a Lie algebra isomorphism $\mathfrak{g}_{\mathbb{K}}\rightarrow \mathfrak{g}(\Phi(\mathcal{R}))_{\mathbb{K}}$, which is still denoted by $\phi_{\mathcal{A}}$.
Similar for $\phi_{\mathcal{A}'}$. 

Let $n_{[X]} = \phi_{\mathcal{A}'}^{-1} \phi_{\mathcal{A}}:\mathfrak{g}_{\mathbb{K}} \rightarrow \mathfrak{g}_{\mathbb{K}}$.
We can calculate its action on $\mathbb{B}$ by definition:
\begin{align*}
    &n_{[X]}u_{X} = u_{TX}. \\
    &n_{[X]}u_{TX} = u_X .
\end{align*}
For $Y\in \Pi(\mathcal{A}), Y \ncong X$,
\begin{equation*}
    n_{[X]}u_Y =  u_{L_{X,Y,q_{XY},1}} = C_{X,Y,q_{XY},1}   u_{L_{X,Y,q_{XY},1}} .
\end{equation*}
The last equality holds since $X$ is projective simple in $\mathcal{A}$, which means
 \begin{align*}
     \gamma_{XY}^{L_{X,Y,1,1}} = 1, \quad \gamma_{XL_{X,Y,1,1}}^{L_{X,Y,2,1}} = 2, \quad \gamma_{XL_{X,Y,2,1}}^{L_{X,Y,3,1}} = 3,
 \end{align*}
if they're nonzero.

For simplicity, we introduce a symbol $\omega_M(N)$ for each pair $M,N \in \operatorname{ind}\mathcal{R}$.
If $M \cong N$ or $M \cong TN$, let $\omega_M(N) = TN$.
Otherwise, define
\begin{equation*}
    \omega_M(N) = 
    \begin{cases}
        L_{TM,N,p_{MN}-q_{MN},1}  &  \text{if } p_{MN}-q_{MN} >0 \\
        L_{M,N,q_{MN}-p_{MN},1}  &  \text{if } p_{MN}-q_{MN} \leqslant 0
    \end{cases}
    .
\end{equation*}
Notice that (considering the correspondence between isomorphism class of indecomposables of $\mathcal{R}$ and the root system $\Phi = \Phi(\mathcal{R})$)
\begin{align*}
  \underline{dim} \omega_M(N) &= \underline{dim}N - (p_{MN}-q_{MN}) \underline{dim}M \\
  &= \underline{dim}N - (p_{\underline{dim}M,\underline{dim}N}-q_{\underline{dim}M,\underline{dim}N}) \underline{dim}M \\
  &=\underline{dim}N - A_{\underline{dim}M,\underline{dim}N}\underline{dim}M\\
  &= \omega_{\underline{dim}M}(\underline{dim}N) ,   
\end{align*}
where for any linearly independent $\alpha,\beta \in \Phi$, we let $p_{\alpha,\beta}$ be the largest integer $r$ such that $\beta - r\alpha\in \Phi$, 
and let $q_{\alpha,\beta}$ be the largest integer $s$ such that $\beta + s\alpha\in \Phi$.
The second equality follows from $p_{MN} = p_{\underline{dim}M,\underline{dim}N},q_{MN} = q_{\underline{dim}M,\underline{dim}N}$, which can be deduced by definition.
For $\alpha,\beta \in \Phi$, it's well-known that $A_{\alpha,\beta} = p_{\alpha,\beta} - q_{\alpha,\beta}$, 
and the last $\omega_r$ is the reflection corresponds to $r\in \Phi$ in the Weyl group.
Moreover, we have 
\begin{align*}
    A_{MN} = \frac{2(H_M,H_N)}{(H_M,H_M)} = \frac{2(\underline{dim}M,\underline{dim}N)}{(\underline{dim}M,\underline{dim}M)} = A_{\underline{dim}M,\underline{dim}N}.
\end{align*}
Hence $A_{MN} = p_{MN} - q_{MN}$.

Use this symbol, we have 
\begin{align*}
    n_{[X]}u_Y = u_{\omega_X(Y)}.
\end{align*}
for $Y \in \Pi(\mathcal{A}), Y \ncong X$.

Similarly, we have
\begin{align*}
    &n_{[X]}u_{TY} = u_{\omega_X(TY)}. \\
    &n_{[X]} H_{S_j}' = H_{\omega_X(S_j)}'.
\end{align*}
where the second equality holds since $\underline{dim} \omega_M(N) = \omega_{\underline{dim}M}(\underline{dim}N)$ and $(-,-)$ is $W$-invariant.

Now we calculate the explicit action of $E_{[X]}(t) E_{[TX]}(t^{-1}) E_{[X]}(t)$ on $\mathbb{S}$, 
from which we want to show that the action of $n_{[X]}$ coincide with the action of $E_{[X]}(1) E_{[TX]}(1) E_{[X]}(1)$ on $\{u_Y| Y \in \Pi(\mathcal{A}) \text{ or } TY \in \Pi(\mathcal{A}) \} \bigcup \{H_{S_j}'|j=1,\cdots,m \}$. 
\begin{align*}
    &E_{[X]}(t) E_{[TX]}(t^{-1}) E_{[X]}(t) u_X = t^{-2} u_{TX} .\\
    &E_{[X]}(t) E_{[TX]}(t^{-1}) E_{[X]}(t) u_{TX} = t^2 u_X .\\
    &E_{[X]}(t) E_{[TX]}(t^{-1}) E_{[X]}(t) H_S' = H_S' -A_{SX}H_X'  = H_{\omega_X(S)}'.
\end{align*}

For $Y \ncong X,Y \ncong TX$ and $i,j \geq 0$, denote $p=p_{XY}$, $q=q_{XY}$, $L_{i,j}=L_{X,Y,i,j}$, $C_{i,j} = C_{X,Y,i,j}$, $L_{-i,j}=L_{TX,Y,i,j}$. 
Then
\begin{align*}
    E_{[X]}(t) u_Y = \sum_{i=0}^{q} C_{i,1} &t^i u_{L_{i,1}},\\
    E_{[TX]}(t^{-1}) E_{[X]}(t) u_Y &= \sum_{i=0}^{q} \sum_{j=0}^{i+p} C_{i,1} C_{TX,L_{i,1},j,1} t^{i-j} u_{L_{i-j,1}}\\
    &= \sum_{k=-p}^{q} ( \sum_{i=0}^{q} C_{i,1} C_{TX,L_{i,1},i-k,1}  ) t^k u_{L_{k,1}}.
\end{align*}

Since
\begin{align*}
    C_{i,1} C_{TX,L_{i,1},j,1} = \frac{1}{i!j!}\gamma_{XY}^{L_{1,1}} \cdots \gamma_{XL_{i-1,1}}^{L_{i,1}}  \gamma_{TX,L_{i,1}}^{L_{i-1,1}} \cdots \gamma_{TX,L_{i-j+1,1}}^{L_{i-j,1}},
\end{align*}
we can calculate the coefficients case by case and simplify the formula.
For example, if $p=0,q=2$, then 
\begin{align*}
    E_{[TX]}(t^{-1}) E_{[X]}(t)u_Y &=(1+\gamma_{XY}^{L_{1,1}}\gamma_{TX,L_{1,1}}^Y+\frac{1}{4}\gamma_{XY}^{L_{1,1}}\gamma_{XL_{1,1}}^{L_{2,1}}\gamma_{TX,L_{2,1}}^{L_{1,1}}\gamma_{TX,L_{1,1}}^Y) u_Y \\
    &+ (t\gamma_{XY}^{L_{1,1}} + \frac{t}{2}\gamma_{XY}^{L_{1,1}}\gamma_{XL_{1,1}}^{L_{2,1}}\gamma_{TX,L_{2,1}}^{L_{1,1}} ) u_{L_{1,1}} + \frac{t^2}{2!} \gamma_{XY}^{L_{1,1}}\gamma_{XL_{1,1}}^{L_{2,1}} u_{L_{2,1}}.
\end{align*}
In this case, 
\begin{align*}
    \gamma_{XY}^{L_{1,1}} = \pm 1, \quad \gamma_{XL_{1,1}}^{L_{2,1}} = \pm 2, \quad \gamma_{TX,L_{1,1}}^Y = -2\gamma_{XY}^{L_{1,1}} , \quad \gamma_{TX,L_{2,1}}^{L_{1,1}} = -\frac{1}{2} \gamma_{XL_{1,1}}^{L_{2,1}}.
\end{align*}
Thus 
\begin{align*}
    E_{[TX]}(t^{-1}) E_{[X]}(t)u_Y = \frac{t^2}{2!} \gamma_{XY}^{L_{1,1}}\gamma_{XL_{1,1}}^{L_{2,1}} u_{L_{2,1}} .
\end{align*}

Finally apply $E_{[X]}(t)$ to the formula, and obtain that 
\begin{align*}
    E_{[X]}(t) E_{[TX]}(t^{-1}) E_{[X]}(t) u_Y = \eta_{XY} t^{q-p} u_{\omega_X(Y)},
\end{align*}
where $\eta_{XY}$ is a product of some $\gamma$'s and some rational number, and $\eta_{XY} = \pm 1$ can be checked case by case.
Let $\eta_{XX} =  \eta_{X,TX} = 1$.

We can easily see that the action of $n_{[X]}$ coincide with the action of $E_{[X]}(1) E_{[TX]}(1) E_{[X]}(1)$ on $\{u_Y| Y \in \Pi(\mathcal{A}) \text{ or } TY \in \Pi(\mathcal{A}) \} \bigcup \{H_{S_j}'|j=1,\cdots,m \}$. 
Since they are both Lie algebra isomorphisms, we have 
\begin{align*}
    n_{[X]} = E_{[X]}(1) E_{[TX]}(1) E_{[X]}(1).
\end{align*}
As a corollary, we can write down the action of $n_{[X]}$ on all the element of $\mathbb{S}$ explicitly.

\begin{remark}
    When $\mathbb{K}=\mathbb{F}_2$, we can define uniformly $\phi_{\mathcal{A}}: u_M \mapsto x_{\underline{dim}_{\mathcal{A}}M}, M\in \operatorname{ind}\mathcal{R}$, saving the trouble of defining the sign $\pm$.
    Moreover, we have 
    \begin{align*}
        n_{[X]}u_Y &= u_{\omega_X(Y)}. \\
        n_{[X]}H_S' &= H_{\omega_X(S)}'.
    \end{align*}
    and since $ \underline{dim} \omega_M(N) =  \omega_{\underline{dim}M}(\underline{dim}N)  $, our $\{n_{[X]}\}_{X \in \operatorname{ind}\mathcal{R}}$ generate the Weyl group.
\end{remark}

Define $n_{[X]}(t) = h_{[X]}(t) n_{[X]}$ for all $X \in \operatorname{ind}\mathcal{R}$ and $t \in \mathbb{K}^{\times}$.
\begin{lemma}\label{nEEE}
    We have
    \begin{align*}
        &n_{[X]}(t) = E_{[X]}(t) E_{[TX]}(t^{-1}) E_{[X]}(t). \\
        &n_{[X]}(1) = n_{[X]} .\\
        &n_{[X]}(-1) = n_{[X]}^{-1}.
    \end{align*}
\end{lemma}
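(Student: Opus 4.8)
The plan is to reduce all three identities to the explicit actions that were computed just before the statement. By definition $n_{[X]}(t) = h_{[X]}(t)\,n_{[X]}$, and recall that $n_{[X]} = E_{[X]}(1)E_{[TX]}(1)E_{[X]}(1)$ has already been established. Both sides of the first identity are therefore automorphisms of $\mathfrak{g}_{\mathbb{K}}$, so it suffices to check that they agree on the basis $\mathbb{B} = \{u_M\}_{M\in\operatorname{ind}\mathcal{R}}\cup\{H_{S_j}'\}_{j=1,\dots,m}$.

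First I would treat $u_M$ with $M\ncong X$ and $M\ncong TX$. The formula already recorded reads $E_{[X]}(t)E_{[TX]}(t^{-1})E_{[X]}(t)\,u_M = \eta_{XM}\,t^{\,q_{XM}-p_{XM}}\,u_{\omega_X(M)}$, which at $t=1$ gives $n_{[X]}u_M = \eta_{XM}u_{\omega_X(M)}$; hence the right-hand side of the target identity is $t^{\,q_{XM}-p_{XM}}\,n_{[X]}u_M$. On the other hand $h_{[X]}(t)\,n_{[X]}u_M = h_{[X]}(t)\big(\eta_{XM}u_{\omega_X(M)}\big) = t^{\,A_{X,\omega_X(M)}}\,n_{[X]}u_M$, since $h_{[X]}(t)$ scales $u_{\omega_X(M)}$ by $t^{A_{X,\omega_X(M)}}$. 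So the two sides agree exactly when $q_{XM}-p_{XM} = A_{X,\omega_X(M)}$. This numerical identity follows from $A_{XM} = p_{XM}-q_{XM}$ (already shown) together with the $W$-invariance of the Euler form: $\omega_X(M)$ corresponds to $\omega_{\underline{\dim}X}(\underline{\dim}M)$, and since $\omega_{\underline{\dim}X}$ is self-adjoint and sends $\underline{\dim}X$ to $-\underline{\dim}X$,
\[
    A_{X,\omega_X(M)} = \frac{2\big(\underline{\dim}X,\ \omega_{\underline{\dim}X}(\underline{\dim}M)\big)}{(\underline{\dim}X,\underline{\dim}X)} = \frac{2\big(-\underline{\dim}X,\ \underline{\dim}M\big)}{(\underline{\dim}X,\underline{\dim}X)} = -A_{XM} = q_{XM}-p_{XM}.
\]
For the remaining basis vectors the matching is immediate from the actions already computed: on $u_X$ both sides equal $t^{-2}u_{TX}$, using $A_{X,TX} = (H_X\,|\,H_{TX})/d(X) = -(H_X\,|\,H_X)/d(X) = -2$; on $u_{TX}$ both equal $t^{2}u_X$, using $A_{XX}=2$; and on $H_{S_j}'$ both equal $H_{\omega_X(S_j)}'$, since $h_{[X]}(t)$ fixes every $H'$ while $E_{[X]}(t)E_{[TX]}(t^{-1})E_{[X]}(t)H_{S_j}' = H_{S_j}'-A_{S_jX}H_X' = H_{\omega_X(S_j)}'$.

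Given the first identity, the other two are formal. Since each $A_{XM}$ is an integer, $h_{[X]}(1)$ scales every $u_M$ by $1$ and fixes each $H'$, so $h_{[X]}(1)=\mathrm{id}$ and $n_{[X]}(1) = h_{[X]}(1)n_{[X]} = n_{[X]}$. For $t=-1$, since $(-1)^{-1}=-1$, the first identity gives $n_{[X]}(-1) = E_{[X]}(-1)E_{[TX]}(-1)E_{[X]}(-1)$; using $E_{[Z]}(s)^{-1} = E_{[Z]}(-s)$ for every indecomposable $Z$, the right-hand side is $\big(E_{[X]}(1)E_{[TX]}(1)E_{[X]}(1)\big)^{-1} = n_{[X]}^{-1}$.

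I do not expect a real obstacle here: the heavy lifting — computing the action of $E_{[X]}(t)E_{[TX]}(t^{-1})E_{[X]}(t)$ on all of $\mathbb{S}$ through the case analysis in $(p_{XM},q_{XM})$, checking $\eta_{XM}=\pm1$, and proving $n_{[X]} = E_{[X]}(1)E_{[TX]}(1)E_{[X]}(1)$ — has already been done before the statement. What remains is only the exponent-matching identity $q_{XM}-p_{XM} = A_{X,\omega_X(M)}$ (plus the trivial observation that the scalar $\eta_{XM}$ is literally the same on both sides, being inherited from $n_{[X]}u_M$), so the mildest point is simply keeping the bookkeeping of the three types of basis vectors straight.
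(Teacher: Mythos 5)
Your proof is correct and takes the same route as the paper, which simply says ``check the action on $\mathbb{S}$''; you have just spelled out the bookkeeping, in particular the exponent-matching identity $q_{XM}-p_{XM}=A_{X,\omega_X(M)}=-A_{XM}$, which is exactly what the verification comes down to.
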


\begin{proof}
    Check the action of them on $\mathbb{S}$ .  
\end{proof}

By this Lemma, we have $n_{[X]}(t),h_{[X]}(t) \in G$, for any $ X\in \operatorname{ind}\mathcal{R}$ and $ t\in \mathbb{K}^{\times}$.

Now we calculate some conjugate relations which we will use later.

\begin{lemma}\label{nEn}
    For any $ X,Y \in \operatorname{ind}\mathcal{R},  t \in \mathbb{K}^{\times}, s \in \mathbb{K}$,
    \begin{align*}
        n_{[X]}(t) E_{[Y]}(s) n_{[X]}(t)^{-1} = E_{[\omega_X(Y)]}(\eta_{XY}t^{-A_{XY}}s).
    \end{align*}
\end{lemma}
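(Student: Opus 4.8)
The plan is to turn the group identity into an identity of automorphisms of $\mathfrak{g}_{\mathbb{K}}$ and check it on the spanning set $\mathbb{S}$. Both sides of the asserted equality lie in $\operatorname{Aut}(\mathfrak{g}_{\mathbb{K}})$: the $E_{[Z]}(u)$ are automorphisms by construction, and $n_{[X]}(t) = E_{[X]}(t)E_{[TX]}(t^{-1})E_{[X]}(t)$ by Lemma \ref{nEEE}. Hence it suffices to show that $n_{[X]}(t)E_{[Y]}(s)n_{[X]}(t)^{-1}$ and $E_{[\omega_X(Y)]}(\eta_{XY}t^{-A_{XY}}s)$ agree on every element of $\mathbb{S}$. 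First I would do this over $\mathbb{K}=\mathbb{C}$: there $E_{[Y]}(s)=\exp(s\,\mathbf{ad}\,u_Y)$, and for any Lie algebra automorphism $\theta$ and any ad-nilpotent $v$ one has $\theta\exp(\mathbf{ad}\,v)\theta^{-1}=\exp(\mathbf{ad}\,\theta(v))$. Taking $\theta=n_{[X]}(t)$ reduces everything to computing $n_{[X]}(t)u_Y$.

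The value $n_{[X]}(t)u_Y$ is already in hand: the explicit computation of $E_{[X]}(t)E_{[TX]}(t^{-1})E_{[X]}(t)$ carried out just before Lemma \ref{nEEE} gives $n_{[X]}(t)u_Y=\eta_{XY}t^{\,q_{XY}-p_{XY}}u_{\omega_X(Y)}$ for $Y\ncong X,\ Y\ncong TX$, and $q_{XY}-p_{XY}=-A_{XY}$ since $A_{MN}=p_{MN}-q_{MN}$; for the remaining cases one uses $n_{[X]}(t)u_X=t^{-2}u_{TX}$ and $n_{[X]}(t)u_{TX}=t^{2}u_X$ together with $\eta_{XX}=\eta_{X,TX}=1$, $A_{XX}=2$, $A_{X,TX}=-2$, $\omega_X(X)=TX$, $\omega_X(TX)=X$ (here $T^2=\mathrm{id}$ in $\mathcal{R}$). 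Thus over $\mathbb{C}$, $n_{[X]}(t)E_{[Y]}(s)n_{[X]}(t)^{-1}=\exp\!\bigl(\mathbf{ad}(\eta_{XY}t^{-A_{XY}}s\,u_{\omega_X(Y)})\bigr)=E_{[\omega_X(Y)]}(\eta_{XY}t^{-A_{XY}}s)$, using $\eta_{XY}^2=1$ so that $\eta_{XY}t^{-A_{XY}}s$ is the honest coefficient of $u_{\omega_X(Y)}$.

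Finally I would transfer to an arbitrary field by the same spreading-out argument used earlier to show the $\overline{E}_{[X]}(t)$ are Lie algebra automorphisms: for each fixed basis vector in $\mathbb{S}$, the action of either side is given, via formulas (1)--(4) and the definitions of $h_{[X]}(t)$ and $n_{[X]}$, by a Laurent polynomial in $t,s$ with integer coefficients (negative powers of $t$ are harmless since $t\in\mathbb{K}^{\times}$); an identity of such Laurent polynomials valid for all $t,s\in\mathbb{C}$ holds coefficientwise over $\mathbb{Z}$, hence over $\mathbb{K}$. The step I expect to be the only real obstacle is the bookkeeping of the sign $\eta_{XY}$ and the exponent $-A_{XY}$ across the case division of Lemma \ref{ABG}, and in particular verifying the degenerate cases $Y\cong X$ and $Y\cong TX$ by hand; once $n_{[X]}(t)u_Y=\eta_{XY}t^{-A_{XY}}u_{\omega_X(Y)}$ is secured uniformly, the rest is formal.
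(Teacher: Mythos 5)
Your proposal is correct and follows essentially the same route as the paper: both reduce the claim to the standard identity $\theta\exp(\mathbf{ad}\,v)\theta^{-1}=\exp(\mathbf{ad}\,\theta(v))$ for a Lie algebra automorphism $\theta$, applied with $\theta$ built from $n_{[X]}$ and $h_{[X]}(t)$. The only cosmetic difference is that the paper conjugates in two stages via $n_{[X]}(t)=h_{[X]}(t)n_{[X]}$, using $n_{[X]}u_Y=\eta_{XY}u_{\omega_X(Y)}$, then $h_{[X]}(t)u_Z=t^{A_{XZ}}u_Z$, and finally $A_{X,\omega_X(Y)}=-A_{XY}$, whereas you apply the already-established formula $n_{[X]}(t)u_Y=\eta_{XY}t^{-A_{XY}}u_{\omega_X(Y)}$ in one step; you are also more explicit than the paper about the transfer from $\mathbb{C}$ to arbitrary $\mathbb{K}$, which is a reasonable piece of bookkeeping to spell out.
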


\begin{proof}
    By definition, 
    \begin{align*}
        n_{[X]}(t) E_{[Y]}(s) n_{[X]}(t)^{-1} &= h_{[X]}(t) n_{[X]} E_{[Y]}(s) n_{[X]}^{-1} h_{[X]}(t)^{-1} \\
            &= h_{[X]}(t)  exp(s \eta_{XY} \mathbf{ad} u_{\omega_X(Y)})  h_{[X]}(t)^{-1} \\
            &= exp(s \eta_{XY}t^{A_{X,\omega_X(Y)}} \mathbf{ad}  u_{\omega_X(Y)})  \\
            &= E_{[\omega_X(Y)]}(\eta_{XY}t^{-A_{XY}}s),
    \end{align*}
    where the last equality follows from $A_{X,\omega_X(Y)} = -A_{XY}$.
\end{proof}

\begin{lemma} \label{nnn}
    For any $X,Y \in \operatorname{ind}\mathcal{R},  t,s \in \mathbb{K}^{\times}$,
    \begin{align*}
        n_{[X]}(t) n_{[Y]}(s) n_{[X]}(t)^{-1} = n_{[\omega_X(Y)]}(\eta_{XY}t^{-A_{XY}}s) .
    \end{align*}
\end{lemma}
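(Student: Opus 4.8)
The plan is to reduce the statement to two results already proved in the excerpt: the factorization $n_{[Y]}(s) = E_{[Y]}(s)\,E_{[TY]}(s^{-1})\,E_{[Y]}(s)$ from Lemma \ref{nEEE}, and the conjugation formula $n_{[X]}(t)E_{[Z]}(u)n_{[X]}(t)^{-1} = E_{[\omega_X(Z)]}(\eta_{XZ}t^{-A_{XZ}}u)$ from Lemma \ref{nEn}. First I would write $n_{[X]}(t)\,n_{[Y]}(s)\,n_{[X]}(t)^{-1}$ as the product of the three conjugates $n_{[X]}(t)E_{[Y]}(s)n_{[X]}(t)^{-1}$, $n_{[X]}(t)E_{[TY]}(s^{-1})n_{[X]}(t)^{-1}$, $n_{[X]}(t)E_{[Y]}(s)n_{[X]}(t)^{-1}$, and apply Lemma \ref{nEn} to each factor (with $Y$, then $TY$, then $Y$; both $Y$ and $TY$ lie in $\operatorname{ind}\mathcal{R}$). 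Writing $r = \eta_{XY}t^{-A_{XY}}s \in \mathbb{K}^{\times}$, the outer two factors become $E_{[\omega_X(Y)]}(r)$, and the middle factor becomes $E_{[\omega_X(TY)]}(\eta_{X,TY}\,t^{-A_{X,TY}}\,s^{-1})$.

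Next I would establish the three auxiliary identities $\omega_X(TY) = T\omega_X(Y)$, $A_{X,TY} = -A_{XY}$, and $\eta_{X,TY} = \eta_{XY}$, which together force the middle factor to equal $E_{[T\omega_X(Y)]}(r^{-1})$. The first follows from $\underline{dim}\,\omega_X(TY) = \omega_{\underline{dim}X}(-\underline{dim}Y) = -\omega_{\underline{dim}X}(\underline{dim}Y) = \underline{dim}\,T\omega_X(Y)$ and the bijection between $\operatorname{ind}\mathcal{R}$ and $\Phi(\mathcal{R})$. The second follows from $H_{TY} = -H_Y$ together with $A_{XY} = \tfrac{2(H_X,H_Y)}{(H_X,H_X)}$. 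For the third I would apply the Lie algebra isomorphism $n_{[X]}$ to the relation $[u_Y,u_{TY}] = H_Y'$: the left side gives $\eta_{XY}\eta_{X,TY}\,[u_{\omega_X(Y)},u_{T\omega_X(Y)}] = \eta_{XY}\eta_{X,TY}\,H'_{\omega_X(Y)}$ (using the first identity), while the right side gives $n_{[X]}H_Y' = H'_{\omega_X(Y)}$, so $\eta_{XY}\eta_{X,TY} = 1$, hence $\eta_{X,TY} = \eta_{XY}$ since each is $\pm 1$. Combining, $\eta_{X,TY}t^{-A_{X,TY}}s^{-1} = \eta_{XY}t^{A_{XY}}s^{-1} = \eta_{XY}^{-1}t^{A_{XY}}s^{-1} = r^{-1}$.

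Finally I would recognize the product $E_{[\omega_X(Y)]}(r)\,E_{[T\omega_X(Y)]}(r^{-1})\,E_{[\omega_X(Y)]}(r)$ as $n_{[\omega_X(Y)]}(r)$ by Lemma \ref{nEEE} applied to the indecomposable object $\omega_X(Y)$ with parameter $r = \eta_{XY}t^{-A_{XY}}s$, which is precisely the claimed right-hand side. The argument is essentially formal once the three auxiliary identities are in place, so the only real care is the sign-and-exponent bookkeeping — in particular checking that the middle parameter inverts to $r^{-1}$, which is exactly where $A_{X,TY} = -A_{XY}$ and $\eta_{X,TY} = \eta_{XY}$ are used. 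I do not expect a genuine obstacle; the step most likely to need a short separate argument is $\eta_{X,TY} = \eta_{XY}$, handled as above via applying $n_{[X]}$ to $[u_Y,u_{TY}] = H_Y'$.
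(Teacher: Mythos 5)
Your proposal follows the paper's proof exactly in its main structure: decompose $n_{[Y]}(s)$ via Lemma \ref{nEEE}, conjugate each factor via Lemma \ref{nEn}, then reassemble. The only difference is your treatment of the auxiliary identity $\eta_{X,TY}=\eta_{XY}$: the paper dismisses it (together with $\omega_X(TY)=T\omega_X(Y)$) as something to be ``checked case by case'', whereas you derive it conceptually by applying the Lie algebra isomorphism $n_{[X]}$ to the relation $[u_Y,u_{TY}]=H_Y'$ and comparing both sides, which forces $\eta_{XY}\eta_{X,TY}=1$ and hence $\eta_{X,TY}=\eta_{XY}$ since both are $\pm 1$. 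This is a cleaner justification than a case check and is worth retaining, but it does not change the route of the argument.
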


\begin{proof}
   By Lemma \ref{nEEE} and Lemma \ref{nEn}, we have 
    \begin{align*}
        &n_{[X]}(t) n_{[Y]}(s) n_{[X]}(t)^{-1} \\
        &= n_{[X]}(t) E_{[Y]}(s) E_{[TY]}(s^{-1}) E_{[Y]}(s) n_{[X]}(t)^{-1} \\
              &=  E_{[\omega_X(Y)]}(\eta_{XY}t^{-A_{XY}}s) E_{[\omega_X(TY)]}(\eta_{X,TY}t^{-A_{X,TY}}s^{-1}) E_{[\omega_X(Y)]}(\eta_{XY}t^{-A_{XY}}s) \\
              &= E_{[\omega_X(Y)]}(\eta_{XY}t^{-A_{XY}}s) E_{[T\omega_X(Y)]}(\eta_{XY}^{-1}t^{A_{XY}}s^{-1}) E_{[\omega_X(Y)]}(\eta_{XY}t^{-A_{XY}}s) \\
              &= n_{[\omega_X(Y)]}(\eta_{XY}t^{-A_{XY}}s),
    \end{align*}
    where $\omega_X(TY) = T\omega_X(Y), \eta_{X,TY} = \eta_{XY} = \eta_{XY}^{-1}$ can be checked case by case.
\end{proof}

\begin{lemma} \label{nhn}
    For any $X,Y \in \operatorname{ind}\mathcal{R},  t,s \in \mathbb{K}^{\times}$,
    \begin{align*}
        n_{[X]}(t) h_{[Y]}(s) n_{[X]}(t)^{-1} = h_{[\omega_X(Y)]}(s).  
    \end{align*}
    
\end{lemma}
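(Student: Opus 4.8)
The plan is to deduce the identity from Lemma~\ref{nnn} by first rewriting $h_{[Y]}(s)$ in terms of the elements $n_{[Y]}(\cdot)$. Since $n_{[Y]}(s)=h_{[Y]}(s)n_{[Y]}$ and $n_{[Y]}(1)=n_{[Y]}$ by Lemma~\ref{nEEE}, we have $h_{[Y]}(s)=n_{[Y]}(s)\,n_{[Y]}^{-1}$ for every $s\in\mathbb{K}^{\times}$, and therefore
\[
 n_{[X]}(t)\,h_{[Y]}(s)\,n_{[X]}(t)^{-1}
 =\bigl(n_{[X]}(t)\,n_{[Y]}(s)\,n_{[X]}(t)^{-1}\bigr)\bigl(n_{[X]}(t)\,n_{[Y]}(1)\,n_{[X]}(t)^{-1}\bigr)^{-1}.
\]
Both inner conjugates fall under Lemma~\ref{nnn} (all scalars involved, in particular $\eta_{XY}=\pm1$, lie in $\mathbb{K}^{\times}$). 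Writing $c=\eta_{XY}t^{-A_{XY}}\in\mathbb{K}^{\times}$, the right-hand side becomes $n_{[\omega_X(Y)]}(cs)\,n_{[\omega_X(Y)]}(c)^{-1}$.

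It then remains to simplify a product $n_{[Z]}(a)\,n_{[Z]}(b)^{-1}$ with $Z\in\operatorname{ind}\mathcal{R}$ and $a,b\in\mathbb{K}^{\times}$. From $n_{[Z]}(a)=h_{[Z]}(a)n_{[Z]}$ we get $n_{[Z]}(b)^{-1}=n_{[Z]}^{-1}h_{[Z]}(b)^{-1}$, hence $n_{[Z]}(a)\,n_{[Z]}(b)^{-1}=h_{[Z]}(a)\,h_{[Z]}(b)^{-1}$. Reading off the action on $\mathbb{S}$ — where $h_{[Z]}(a)$ scales $u_W$ by $a^{A_{ZW}}$ and fixes each $H_S'$ — one sees $h_{[Z]}(a)h_{[Z]}(b)=h_{[Z]}(ab)$ and $h_{[Z]}(a)^{-1}=h_{[Z]}(a^{-1})$, so $n_{[Z]}(a)\,n_{[Z]}(b)^{-1}=h_{[Z]}(ab^{-1})$. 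Applying this with $Z=\omega_X(Y)$, $a=cs$, $b=c$ yields
\[
 n_{[X]}(t)\,h_{[Y]}(s)\,n_{[X]}(t)^{-1}=h_{[\omega_X(Y)]}(csc^{-1})=h_{[\omega_X(Y)]}(s),
\]
as claimed.

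There is no serious obstacle here once Lemma~\ref{nnn} is in hand; the only point requiring care is that every argument fed to $n_{[\cdot]}(\cdot)$ and $h_{[\cdot]}(\cdot)$ is a unit of $\mathbb{K}$ (this is why $\eta_{XY}=\pm1$ is used), together with the elementary multiplicativity $h_{[Z]}(a)h_{[Z]}(b)=h_{[Z]}(ab)$. Alternatively, the lemma can be checked directly on the basis $\mathbb{B}$: $n_{[X]}(t)^{\pm1}$ sends $u_W$ to a nonzero scalar multiple of $u_{\omega_X(W)}$ and $H_S'$ to $H_{\omega_X(S)}'$, so the conjugate of $h_{[Y]}(s)$ scales $u_W$ by $s^{A_{Y,\omega_X(W)}}$ and fixes each $H_S'$; the statement then reduces to $A_{Y,\omega_X(W)}=A_{\omega_X(Y),W}$, which is exactly the $W$-invariance of the Cartan pairing recorded through $A_{MN}=2(\underline{dim}M,\underline{dim}N)/(\underline{dim}M,\underline{dim}M)$ and $\underline{dim}\,\omega_M(N)=\omega_{\underline{dim}M}(\underline{dim}N)$.
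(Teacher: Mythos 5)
Your proof is correct and is essentially the paper's proof: the paper also rewrites $h_{[Y]}(s)=n_{[Y]}(s)\,n_{[Y]}(-1)$ (which by Lemma~\ref{nEEE} equals your $n_{[Y]}(s)\,n_{[Y]}^{-1}$), conjugates each factor via Lemma~\ref{nnn}, and then peels off the $h$-factors from the resulting $n_{[\omega_X(Y)]}(\cdot)$'s to cancel the extraneous $\eta_{XY}t^{-A_{XY}}$. The only cosmetic difference is that you isolate the multiplicativity $h_{[Z]}(a)h_{[Z]}(b)=h_{[Z]}(ab)$ as an explicit step, which the paper uses silently.
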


\begin{proof}
    By definition and Lemma \ref{nnn}, we have 
    \begin{align*}
       & n_{[X]}(t) h_{[Y]}(s) n_{[X]}(t)^{-1} \\
       &= n_{[X]}(t) n_{[Y]}(s) n_{[Y]}(-1) n_{[X]}(t)^{-1} \\
       &= n_{[\omega_X(Y)]}(\eta_{XY}t^{-A_{XY}}s) n_{[\omega_X(Y)]}(-\eta_{XY}t^{-A_{XY}}) \\
       &= h_{[\omega_X(Y)]}(\eta_{XY}t^{-A_{XY}}s) n_{[\omega_X(Y)]} n_{[\omega_X(Y)]}^{-1}  h_{[\omega_X(Y)]}(\eta_{XY}t^{-A_{XY}})^{-1} \\
       &= h_{[\omega_X(Y)]}(s).
    \end{align*}
\end{proof}

\begin{lemma} \label{hEh}
    For any  $ X,Y \in \operatorname{ind}\mathcal{R},  t \in \mathbb{K}^{\times}, s \in \mathbb{K}$,
    \begin{align*}
        h_{[X]}(t) E_{[Y]}(s) h_{[X]}(t)^{-1} = E_{[Y]}(t^{A_{XY}}s).
    \end{align*} 
\end{lemma}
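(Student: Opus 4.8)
The plan is to prove this identity in the same way as Lemma~\ref{nEEE}, namely by comparing the actions of both sides on the spanning set $\mathbb{S}$ of $\mathfrak{g}_{\mathbb{K}}$. Both $h_{[X]}(t)E_{[Y]}(s)h_{[X]}(t)^{-1}$ and $E_{[Y]}(t^{A_{XY}}s)$ are linear automorphisms of $\mathfrak{g}_{\mathbb{K}}$ (the $E$'s are automorphisms and $h_{[X]}(t)$ is an automorphism, and a conjugate of an automorphism is again one), so it is enough to check that they agree on each element of $\mathbb{S}$; recall also that $h_{[X]}(t)^{-1}u_W = t^{-A_{XW}}u_W$ for $W\in\operatorname{ind}\mathcal{R}$ and $h_{[X]}(t)^{-1}H_S' = H_S'$. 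Four types of elements occur: $u_Y$, $u_{TY}$, the elements $H_S'$, and $u_Z$ with $Z\in\operatorname{ind}\mathcal{R}$, $Z\ncong Y$ and $Z\ncong TY$.

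The first three cases are an immediate unwinding of formulas (1)--(3) and the definition of $h_{[X]}(t)$: on $u_Y$ both sides fix $u_Y$; on $u_{TY}$, using $A_{X,TY}=-A_{XY}$, applying $h_{[X]}(t)^{-1}$, then $(2)$, then $h_{[X]}(t)$ gives $u_{TY}+t^{A_{XY}}sH_Y'+t^{2A_{XY}}s^2u_Y = E_{[Y]}(t^{A_{XY}}s)u_{TY}$; on $H_S'$ one gets $H_S'+A_{SY}t^{A_{XY}}s\,u_Y = E_{[Y]}(t^{A_{XY}}s)H_S'$. For the main case, recall $E_{[Y]}(s)u_Z=\sum_i C_{Y,Z,i,1}s^i u_{L_{Y,Z,i,1}}$, the sum over those $i$ with $[L_{Y,Z,i,1}]$ existing and $C_{Y,Z,0,1}=1$. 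The key point is that, for each such $i$, $L_{Y,Z,i+1,1}$ fits in a triangle together with $Y$ and $L_{Y,Z,i,1}$, so that $H_{L_{Y,Z,i+1,1}}=H_{L_{Y,Z,i,1}}+H_Y$ in the Grothendieck group $K$; hence $H_{L_{Y,Z,i,1}}=H_Z+iH_Y$, and by bilinearity of the Euler form together with $A_{X,W}=(H_X|H_W)/d(X)$ one obtains $A_{X,L_{Y,Z,i,1}}=A_{XZ}+iA_{XY}$. Consequently
\[
h_{[X]}(t)E_{[Y]}(s)h_{[X]}(t)^{-1}u_Z = t^{-A_{XZ}}\sum_i C_{Y,Z,i,1}s^i t^{A_{XZ}+iA_{XY}}u_{L_{Y,Z,i,1}} = \sum_i C_{Y,Z,i,1}(t^{A_{XY}}s)^i u_{L_{Y,Z,i,1}} = E_{[Y]}(t^{A_{XY}}s)u_Z,
\]
which finishes the verification.

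The only step that is not routine bookkeeping is the identity $A_{X,L_{Y,Z,i,1}}=A_{XZ}+iA_{XY}$, and even that reduces at once to additivity of $H_{(-)}$ along the chain of extensions by $Y$ and to bilinearity of $(-|-)$; everything else is substitution into the explicit formulas (1)--(4) and the $E_{[X]}$-action formula. As a conceptual cross-check one may also note that, since $h_{[X]}(t)$ is a Lie algebra automorphism, $h_{[X]}(t)\,\mathbf{ad}(u_Y)\,h_{[X]}(t)^{-1}=\mathbf{ad}(h_{[X]}(t)u_Y)=t^{A_{XY}}\mathbf{ad}(u_Y)$, so over $\mathbb{C}$ one has $h_{[X]}(t)E_{[Y]}(s)h_{[X]}(t)^{-1}=\exp(st^{A_{XY}}\mathbf{ad}\,u_Y)=E_{[Y]}(t^{A_{XY}}s)$; since the matrix entries of both sides in the basis $\mathbb{S}$ are polynomials in $s$ and Laurent polynomials in $t$ with integer coefficients, the identity passes to arbitrary $\mathbb{K}$. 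I would present the direct check and keep this remark as an aside.
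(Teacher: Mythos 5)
Your proof is correct and takes a genuinely different route from the one in the paper. The paper's proof expresses $h_{[X]}(t) = n_{[X]}(t)n_{[X]}(-1)$ and conjugates twice using Lemma~\ref{nEn}, arriving at $E_{[Y]}(\eta_{X,\omega_X(Y)}\eta_{XY}t^{-A_{X,\omega_X(Y)}}(-1)^{A_{XY}}s)$, and then closes the argument with the case-by-case sign identity $\eta_{XY}\eta_{X,\omega_X(Y)}=(-1)^{A_{XY}}$. You instead verify the identity directly on the spanning set $\mathbb{S}$, with the only structural input being $A_{X,L_{Y,Z,i,1}} = A_{XZ} + iA_{XY}$, which you correctly derive from additivity of classes in the Grothendieck group along the chain of extensions by $Y$ together with bilinearity of the Euler form. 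This buys you a cleaner argument: it sidesteps the case analysis of $\eta$'s entirely, needs none of the conjugation lemmas \ref{nEn}--\ref{nhn}, and isolates the real reason the identity holds (namely that $A_{X,-}$ is an additive character of the root lattice). Your one-line cross-check $h_{[X]}(t)\,\mathbf{ad}(u_Y)\,h_{[X]}(t)^{-1}=\mathbf{ad}(h_{[X]}(t)u_Y)$ over $\mathbb{C}$, followed by specialization of the polynomial matrix entries, is also valid and is arguably the most conceptual version; it matches the strategy the paper already uses to pass from $\mathbb{C}$ to arbitrary $\mathbb{K}$ when defining $\overline{E}_{[X]}(t)$. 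The trade-off against the paper's route is that the paper's proof is designed to reuse the uniform chain of conjugation lemmas (Lemmas \ref{nEn}--\ref{hnh}) and so fits more naturally into the surrounding development, while yours is self-contained but re-derives facts the paper has already packaged.
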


\begin{proof}
By definition and Lemma \ref{nEn}, we have 
    \begin{align*}
        h_{[X]}(t) E_{[Y]}(s) h_{[X]}(t)^{-1} &= n_{[X]}(t) n_{[X]}(-1) E_{[Y]}(s) n_{[X]}(-1)^{-1} n_{[X]}(t)^{-1} \\
            &= n_{[X]}(t)  E_{[\omega_X(Y)]}(\eta_{XY}(-1)^{A_{XY}}s) n_{[X]}(t)^{-1} \\
            &= E_{[Y]}(\eta_{X,\omega_X(Y)}t^{-A_{X,\omega_X(Y)}} \eta_{XY} (-1)^{A_{XY}} s).
    \end{align*}
    Check case by case, we notice that 
    \begin{align*}
        \eta_{XY} \eta_{X,\omega_X(Y)} = (-1)^{A_{XY}},
    \end{align*}
    and the Lemma follows.
\end{proof}

\begin{lemma} \label{hhh}
    For any $X,Y \in \operatorname{ind}\mathcal{R},  t,s \in \mathbb{K}^{\times}$,
    \begin{align*}
        h_{[X]}(t) h_{[Y]}(s) h_{[X]}(t)^{-1} = h_{[Y]}(s).
    \end{align*}

\end{lemma}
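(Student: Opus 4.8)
The plan is to use that every element $h_{[Z]}(r)$ is a \emph{diagonal} automorphism of $\mathfrak{g}_{\mathbb{K}}$ with respect to a fixed basis, so that any two such elements commute and the conjugation in the statement is trivial.

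First I would recall the definition of the $h$-operators: for $Z\in\operatorname{ind}\mathcal{R}$ and $r\in\mathbb{K}^{\times}$ one has $h_{[Z]}(r)u_M = r^{A_{ZM}}u_M$ for every $M\in\operatorname{ind}\mathcal{R}$ and $h_{[Z]}(r)H_S' = H_S'$ for every $S$. Fixing a basis $\mathbb{B}=\{u_M\}_{M\in\operatorname{ind}\mathcal{R}}\cup\{H_{S_j}'\}_{j=1,\dots,m}$ of $\mathfrak{g}_{\mathbb{K}}$, this says that both $h_{[X]}(t)$ and $h_{[Y]}(s)$ are diagonal with respect to $\mathbb{B}$. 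Hence they commute: on $u_M$ the composite $h_{[X]}(t)h_{[Y]}(s)$ acts by the scalar $t^{A_{XM}}s^{A_{YM}}$, which is symmetric in the two factors, and on each $H_{S_j}'$ both composites act as the identity. Therefore $h_{[X]}(t)h_{[Y]}(s) = h_{[Y]}(s)h_{[X]}(t)$, and rearranging gives $h_{[X]}(t)h_{[Y]}(s)h_{[X]}(t)^{-1} = h_{[Y]}(s)$, which is the claim.

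I do not expect any real obstacle here; the statement is essentially the assertion that the ``torus'' generated by the $h_{[Z]}(r)$ is abelian, which is immediate from their explicit diagonal action. The only minor point is that one should compare the two automorphisms on the basis $\mathbb{B}$ rather than on the merely spanning set $\mathbb{S}$, but since the action of each $h_{[Z]}(r)$ is prescribed (and diagonal) on all of $\mathbb{S}$, this causes no difficulty. Alternatively, one can derive the identity from the already established conjugation formulas: writing $h_{[X]}(t) = n_{[X]}(t)n_{[X]}(-1)$ and applying Lemma~\ref{nhn} twice yields $h_{[X]}(t)h_{[Y]}(s)h_{[X]}(t)^{-1} = h_{[\omega_X(\omega_X(Y))]}(s) = h_{[Y]}(s)$, where the last equality uses that $\omega_X$ is an involution on $\operatorname{ind}\mathcal{R}$ (since $\underline{dim}\,\omega_X$ is the reflection attached to $\underline{dim}X$, and $\underline{dim}$ is injective on $\operatorname{ind}\mathcal{R}$).
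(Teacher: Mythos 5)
Your proposal is correct, and your primary argument is genuinely different from the paper's. The paper proves this lemma by reusing the chain of conjugation identities already established: it writes $h_{[X]}(t) = n_{[X]}(t)\,n_{[X]}(-1)$ (using $n_{[X]}(-1)=n_{[X]}^{-1}$ from Lemma~\ref{nEEE}) and then applies Lemma~\ref{nhn} twice, obtaining $h_{[\omega_X(\omega_X(Y))]}(s)=h_{[Y]}(s)$. Your main argument sidesteps that machinery entirely by observing that every $h_{[Z]}(r)$ is, by its very definition, a diagonal linear map in the basis $\mathbb{B}=\{u_M\}\cup\{H_{S_j}'\}$ (indeed in any basis built from the $u_M$'s and a basis of $K'$), so any two of them commute, and the conjugation identity is a trivial consequence; this is more elementary and makes the abelian-ness of the ``torus'' $H$ manifest at a glance without invoking Lemma~\ref{nhn}. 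The trade-off is that the paper's route fits a uniform pattern with Lemmas~\ref{nEn}--\ref{hnh}, which is useful for the analogous computations in the abstract presentation of Theorem~\ref{steinberg} where one does not have an explicit matrix realization to hand. Your alternative route at the end is exactly the paper's proof, including the (correct) appeal to $\omega_X$ being an involution on isomorphism classes via injectivity of $\underline{\dim}$ on $\operatorname{ind}\mathcal{R}$. One small remark: your caveat about comparing on the basis $\mathbb{B}$ rather than the spanning set $\mathbb{S}$ is unnecessary --- two linear maps that agree on a spanning set agree everywhere, so checking on $\mathbb{S}$ would be equally sufficient.
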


\begin{proof}
    By definition and Lemma \ref{nhn}, we have 
    \begin{align*}
        &h_{[X]}(t) h_{[Y]}(s) h_{[X]}(t)^{-1} \\
        &= n_{[X]}(t) n_{[X]}(-1) h_{[Y]}(s) n_{[X]}(-1)^{-1} n_{[X]}(t)^{-1} \\
        &=  n_{[X]}(t) h_{[\omega_X(Y)]}(s) n_{[X]}(t)^{-1}\\
        &= h_{[Y]}(s).
    \end{align*}
\end{proof}

\begin{lemma}\label{hnh}
    For any  $X,Y \in \operatorname{ind}\mathcal{R},  t,s \in \mathbb{K}^{\times}$,
    \begin{align*}
        h_{[X]}(t) n_{[Y]}(s) h_{[X]}(t)^{-1} = n_{[Y]}(t^{A_{XY}}s).
    \end{align*}
    
\end{lemma}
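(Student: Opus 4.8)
The plan is to reduce everything to the decomposition of $n_{[Y]}(s)$ into a product of three $E$-type generators and then apply Lemma \ref{hEh} termwise. By Lemma \ref{nEEE} we have $n_{[Y]}(s) = E_{[Y]}(s) E_{[TY]}(s^{-1}) E_{[Y]}(s)$, so conjugation by $h_{[X]}(t)$ distributes over the three factors:
\begin{align*}
    h_{[X]}(t) n_{[Y]}(s) h_{[X]}(t)^{-1} = \bigl(h_{[X]}(t) E_{[Y]}(s) h_{[X]}(t)^{-1}\bigr)\bigl(h_{[X]}(t) E_{[TY]}(s^{-1}) h_{[X]}(t)^{-1}\bigr)\bigl(h_{[X]}(t) E_{[Y]}(s) h_{[X]}(t)^{-1}\bigr).
\end{align*}

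Next I would invoke Lemma \ref{hEh} on each factor: the outer two become $E_{[Y]}(t^{A_{XY}}s)$ and the middle one becomes $E_{[TY]}(t^{A_{X,TY}}s^{-1})$. The only arithmetic point is to identify the exponent in the middle term. From the earlier remark that $(H_X|H_{TY}) = -(H_X|H_Y)$ together with $A_{XY} = (H_X|H_Y)/d(X)$, one gets $A_{X,TY} = -A_{XY}$, hence $t^{A_{X,TY}}s^{-1} = t^{-A_{XY}}s^{-1} = (t^{A_{XY}}s)^{-1}$. So the middle factor is $E_{[TY]}\bigl((t^{A_{XY}}s)^{-1}\bigr)$.

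Assembling the three factors and applying Lemma \ref{nEEE} in the reverse direction with parameter $t^{A_{XY}}s \in \mathbb{K}^{\times}$ yields
\begin{align*}
    h_{[X]}(t) n_{[Y]}(s) h_{[X]}(t)^{-1} = E_{[Y]}(t^{A_{XY}}s) E_{[TY]}\bigl((t^{A_{XY}}s)^{-1}\bigr) E_{[Y]}(t^{A_{XY}}s) = n_{[Y]}(t^{A_{XY}}s),
\end{align*}
which is the claim. I do not anticipate a genuine obstacle here; the argument is a routine composition of the already-established conjugation formulas, and the single computational input $A_{X,TY} = -A_{XY}$ is immediate from the definition of $A_{XY}$ and the basic identity $(H_X \mid H_{TY}) = -(H_X \mid H_Y)$ recorded in Section 3. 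If one wished to avoid Lemma \ref{hEh}, an alternative would be to write $h_{[X]}(t) = n_{[X]}(t) n_{[X]}(-1)$ and use Lemmas \ref{nnn} and \ref{nhn}, but the route through Lemma \ref{hEh} is shorter and cleaner.
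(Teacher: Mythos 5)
Your proof is correct and follows essentially the same route as the paper: decompose $n_{[Y]}(s)$ via Lemma \ref{nEEE}, apply Lemma \ref{hEh} to each of the three factors, use $A_{X,TY}=-A_{XY}$, and reassemble. No issues.
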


\begin{proof}
    By Lemma \ref{nEEE} and Lemma \ref{hEh}, we have 
    \begin{align*}
        &h_{[X]}(t) n_{[Y]}(s) h_{[X]}(t)^{-1} \\
        &= h_{[X]}(t)  E_{[Y]}(s) E_{[TY]}(s^{-1}) E_{[Y]}(s)  h_{[X]}(t)^{-1} \\
        &= E_{[Y]}(t^{A_{XY}}s) E_{[TY]}(t^{A_{X,TY}}s^{-1}) E_{[Y]}(t^{A_{XY}}s) \\
        &= E_{[Y]}(t^{A_{XY}}s) E_{[TY]}(t^{-A_{XY}}s^{-1}) E_{[Y]}(t^{A_{XY}}s) \\
        &= n_{[Y]}(t^{A_{XY}}s).
    \end{align*}
\end{proof}

\subsubsection{Properties of Some Subgroups}

From now on we arbitrarily fix a complete section of the root category $\mathcal{R}$, and hence we get a hereditary subcategory $\mathcal{B}$ of $\mathcal{R}$.
This complete section gives a basis of $K'$, and let $\underline{dim}_{\mathcal{B}}M$ be the image of $M$ in $K'$ with respect to this basis.  
We denote $\underline{dim}_{\mathcal{B}}$ simply by $\underline{dim}$.
The length of an object $M$ in $\mathcal{B}$ is denoted by $l(M)$, and for $M\in T\mathcal{B}$, we define the length of $M$ by $l(M) = -l(TM)$.
Note that if we fix a set of representatives $\{ S_1,S_2,\cdots,S_m \}$ of isomorphism classes of simple objects in $\mathcal{B}$, and write 
$\underline{dim}M = \sum\limits_{i=1}^{m} a_i \underline{dim}S_i$, then $l(M)=  \sum\limits_{i=1}^{m} a_i $.
We can fix an order on $\operatorname{ind}\mathcal{R}$ compatible with the length, i.e. $X \succ  Y$ implies $l(X) \geq l(Y)$. 

Let $U = U(\mathcal{B})$ be the subgroup of $G$ generated by $E_{[M]}(t)$, for any $M \in \operatorname{ind}\mathcal{B}$, $ t \in \mathbb{K}$.
Let $V = V(\mathcal{B})$ be the subgroup of $G$ generated by $E_{[M]}(t)$, for any $M \in \operatorname{ind}T\mathcal{B}$, $ t \in \mathbb{K}$.

\begin{proposition}
    We have $U \cap V = \{1\}.$
\end{proposition}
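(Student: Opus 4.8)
The plan is to use that $G$ is, by its very construction, a group of linear automorphisms of $\mathfrak{g}_{\mathbb{K}}$, so that it suffices to show every $g\in U\cap V$ acts as the identity on $\mathfrak{g}_{\mathbb{K}}$. The mechanism is the triangularity of the generators with respect to the length function $l$ on $\operatorname{ind}\mathcal{R}$, exactly in the spirit of Carter's treatment \cite{carter}.

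First I would fix a convenient ordered basis. Recall that $\mathbb{B}=\{u_M\}_{M\in\operatorname{ind}\mathcal{R}}\cup\{H_{S_j}'\}_{j=1,\dots,m}$ is a basis of $\mathfrak{g}_{\mathbb{K}}$. Assign weights $l(u_M)=l(M)$ and $l(H_{S_j}')=0$, and list the elements of $\mathbb{B}$ in non-decreasing order of weight, refining the order $\succ$ on $\operatorname{ind}\mathcal{R}$ already fixed above; since no root of a Dynkin diagram has length $0$, the weight-$0$ block is precisely $\{H_{S_1}',\dots,H_{S_m}'\}$, which I place between the weight-negative and weight-positive blocks. Recall also that $l(M)\geq 1$ for $M\in\operatorname{ind}\mathcal{B}$ and $l(M)\leq -1$ for $M\in\operatorname{ind}T\mathcal{B}$.

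Next I would check that each generator of $U$ is unipotent upper triangular for this ordering: for $M\in\operatorname{ind}\mathcal{B}$ and $t\in\mathbb{K}$, and for every $v\in\mathbb{B}$, one has $E_{[M]}(t)v=v+w$ with $w$ a linear combination of basis vectors of weight strictly larger than that of $v$. This is read off case by case from the displayed action formulas (1)--(4): $E_{[M]}(t)u_M=u_M$; $E_{[M]}(t)u_{TM}=u_{TM}+tH_M'+t^2u_M$, where $l(u_{TM})=-l(M)<0<l(M)$; $E_{[M]}(t)H_Y'=H_Y'+A_{YM}t\,u_M$, where $l(u_M)=l(M)\geq 1$; and for $Y\ncong M,TM$ the terms $u_{L_{M,Y,i,1}}$ with $i\geq 1$ satisfy $\underline{dim}\,L_{M,Y,i,1}=\underline{dim}\,Y+i\,\underline{dim}\,M$, hence $l(L_{M,Y,i,1})=l(Y)+i\,l(M)>l(Y)$. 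A product of unipotent upper triangular matrices is again one, so every element of $U$ acts on $\mathbb{B}$ by a unipotent upper triangular matrix. Running the same computation for $M\in\operatorname{ind}T\mathcal{B}$, where $l(M)\leq-1$ forces all displacements to strictly smaller weight, shows that every element of $V$ acts by a unipotent lower triangular matrix with respect to the same ordered basis.

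Finally, if $g\in U\cap V$ then $g$ acts on $\mathfrak{g}_{\mathbb{K}}$ simultaneously by a unipotent upper triangular and a unipotent lower triangular matrix relative to $\mathbb{B}$, hence by the identity matrix; since $G$ is a group of automorphisms of $\mathfrak{g}_{\mathbb{K}}$ whose identity element is the identity map, $g=1$. The only mildly delicate point is the bookkeeping of weights in the cases $v=u_{TM}$ and $v=H_Y'$, where the image mixes weights of both signs and weight $0$; the inequalities above settle these, so I expect no real obstacle beyond organizing this short case check.
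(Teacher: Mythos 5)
Your proof is correct and takes essentially the same approach as the paper's: both observe that for $\theta\in U\cap V$, the image of each basis vector of $\mathbb{S}$ is simultaneously displaced in two opposite ``triangular'' directions (towards $\operatorname{ind}\mathcal{B}$ and towards $\operatorname{ind}T\mathcal{B}$, respectively towards larger and smaller length), forcing $\theta$ to fix $\mathbb{S}$ pointwise. Your formulation via a single weight function placing $K'$ at weight $0$ is a clean way to organize the bookkeeping — in particular it handles the $H'$-components that $E_{[M]}(t)u_{TM}$ produces, a case which the paper's displayed claim $\theta u_M = u_M + x'$ with $x'\in\bigoplus_{L\succ M}\mathfrak{g}_L$ glosses over slightly.
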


\begin{proof}
    Let $\theta \in U \cap V$. Consider the action of $\theta$ on $\mathbb{S}$.
    We write the weight space decomposition of $\mathfrak{g}_{\mathbb{K}}$ as 
    \begin{align*}
        \mathfrak{g}_{\mathbb{K}}=K' \oplus  (\bigoplus\limits_{M \in \operatorname{ind}\mathcal{R}} \mathfrak{g}_{M} ) ,
    \end{align*}
    where $\mathfrak{g}_M$ is the $\mathbb{K}$-subspace spanned by $u_M$.
    
    Since $\theta \in U$, we have 
    \begin{align*}
        \theta H_S' = H_S' + x ,\quad x\in \bigoplus\limits_{M\in \operatorname{ind}\mathcal{B}} \mathfrak{g}_{M}.
    \end{align*}
    However, since $\theta \in V$, we also have $x\in \bigoplus\limits_{M\in \operatorname{ind}T\mathcal{B}} \mathfrak{g}_{M}$. 
    Thus $x=0$ and $\theta H_S' = H_S'$.

    Since $\theta \in U$, we have 
    \begin{align*}
        \theta u_M = u_M + x', \quad x' \in \bigoplus\limits_{L\succ M} \mathfrak{g}_{L},
    \end{align*}
   while $\theta \in V$ implies that $x' \in \bigoplus\limits_{L \prec M} \mathfrak{g}_{L}$. 
   Thus $x' = 0$ and $\theta u_M =u_M$.

   Since $\theta$ acts trivially on $\mathbb{S}$, we have $\theta = 1$ and the Proposition follows.
\end{proof}

\begin{proposition}\label{Uunique}
    Every element of $U$ can be uniquely expressed of the form 
    \begin{align*}
        \prod\limits_{M\in \operatorname{ind}\mathcal{B}} E_{[M]}(t_M),
    \end{align*}
    where the product is taken over all indecomposable objects in $\mathcal{B}$ in increasing order (with respect to the chosen order $\succ$).
\end{proposition}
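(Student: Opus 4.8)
The plan is to filter $U$ by length and use the commutator formula of Proposition \ref{commutator} to control how out-of-order factors interact. For $j\geq 1$ let $U_j$ be the subgroup of $G$ generated by all $E_{[M]}(t)$ with $M\in\operatorname{ind}\mathcal{B}$ and $l(M)\geq j$; then $U=U_1\supseteq U_2\supseteq\cdots$, and since $\mathcal{R}$ is representation-finite there is an $N$ with $l(M)\leq N$ for every $M\in\operatorname{ind}\mathcal{B}$, so $U_{N+1}=\{1\}$. The first point I would establish is that each $U_{j+1}$ is normal in $U$ and that the generators $E_{[M]}(t),E_{[M']}(s)$ with $l(M)=l(M')=j$ commute modulo $U_{j+1}$. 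Both follow from Proposition \ref{commutator}: for $M,N\in\operatorname{ind}\mathcal{B}$ one has $M\ncong TN$ (as $\operatorname{ind}\mathcal{R}=\operatorname{ind}\mathcal{B}\sqcup\operatorname{ind}T\mathcal{B}$), so if $M\ncong N$ then $(E_{[M]}(t),E_{[N]}(s))$ is a product of terms $E_{[L_{M,N,i,j}]}(\cdot)$ with $i,j\geq 1$; since $\mathcal{B}$ is closed under extensions each $L_{M,N,i,j}\in\operatorname{ind}\mathcal{B}$, and $\underline{dim}L_{M,N,i,j}=i\,\underline{dim}M+j\,\underline{dim}N$ forces $l(L_{M,N,i,j})=i\,l(M)+j\,l(N)\geq l(M)+l(N)$. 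Hence commutators of a generator of $U$ with a generator of $U_{j+1}$ lie in $U_{j+2}$ (giving normality), and commutators of two generators of length $j$ lie in $U_{2j}\subseteq U_{j+1}$. Consequently $U_j/U_{j+1}$ is abelian and generated by the images of the one-parameter subgroups $t\mapsto E_{[M]}(t)$, $l(M)=j$.

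Existence of the normal form then follows by descending through the filtration. Given $g\in U=U_1$, its image in $U_1/U_2$ is, by the previous paragraph, a product of the commuting images of suitable $E_{[M]}(t_M)$ with $l(M)=1$; lifting this to $\prod_{l(M)=1}E_{[M]}(t_M)$ taken in the chosen increasing order leaves a remainder in $U_2$, to which the same argument applies with $U_2/U_3$, and so on. After at most $N$ steps one reaches $U_{N+1}=\{1\}$, and concatenating the blocks yields $g=\prod_{M\in\operatorname{ind}\mathcal{B}}E_{[M]}(t_M)$; because the fixed order $\succ$ is compatible with length, this product is in increasing $\succ$-order (reading $E_{[M]}(0)=1$ for the $M$ that do not occur).

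For uniqueness I would prove by induction on $j$ that $g=g'$ forces $t_M=t'_M$ for all $M$ with $l(M)\leq j$. Assuming the statement for $j-1$, the equal initial segments $\prod_{l(M)<j}E_{[M]}(t_M)$ cancel (legitimate since $\succ$ refines length), and after cancelling the common tail lying in $U_{j+1}$ and using that the length-$j$ generators commute modulo $U_{j+1}$, one reduces to showing: if $x:=\prod_{l(M)=j}E_{[M]}(r_M)\in U_{j+1}$ then all $r_M=0$. Here I compare the action on each $H_S'$, $S$ a simple object of $\mathcal{B}$: on one hand $x\in U_{j+1}$ sends $H_S'$ into $H_S'+\bigoplus_{l(L)\geq j+1}\mathbb{K}u_L$; on the other hand, using $E_{[M]}(t)H_S'=H_S'+A_{SM}t\,u_M$ and the fact that subsequent factors add only terms of length $>j$, a direct computation shows the weight-$\underline{dim}M$ component of $x(H_S')$ is $A_{SM}r_M\,u_M$ for each $M$ with $l(M)=j$. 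Hence $A_{SM}r_M=0$ for every simple $S$; since $\underline{dim}M$ is a nonzero root and the Euler/Killing form is nondegenerate, $A_{SM}\neq 0$ for some $S$, so $r_M=0$. This completes the induction, and since $l(M)\leq N$ for all $M$, uniqueness follows.

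The main obstacle, I expect, is the uniqueness step — more precisely, pinning down that the graded pieces $U_j/U_{j+1}$ are \emph{freely} parametrized by $\mathbb{K}^{\{M\,:\,l(M)=j\}}$; this is where one must combine the structural input of the commutator formula with the non-degeneracy of the Euler form, so that each length-$j$ generator is detected by some $A_{SM}$. The existence half and the filtration bookkeeping are routine once Proposition \ref{commutator} and the length-additivity $l(L_{M,N,i,j})=i\,l(M)+j\,l(N)$ are in hand.
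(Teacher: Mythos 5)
Your proof is correct and follows essentially the same plan as the paper: filter $U$ by length, observe that $U_j/U_{j+1}$ is abelian thanks to the commutator formula, establish existence by passing down the filtration, and establish uniqueness by induction. The one place you diverge is the detection step that shows the graded pieces are freely parametrized. The paper isolates a single $E_{[N]}(t'_N-t_N)$ with $l(N)=t$ on the left-hand side, applies both sides to $u_{TN}$, and observes that the left produces an $H_N'$-component $(t'_N-t_N)H_N'$ while the right produces none (since no other length-$t$ object $M$ has an extension with $TN$, the resulting length being $0$, and subsequent factors only add $u_L$-terms); linear independence of $H_N'$ from the $u_L$'s then gives $t_N=t'_N$. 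You instead apply $x=\prod_{l(M)=j}E_{[M]}(r_M)\in U_{j+1}$ to $H_S'$, read off that the length-$j$ component of $x(H_S')-H_S'$ is $\sum_{l(M)=j}A_{SM}r_M\,u_M$, and note it must vanish since $x\in U_{j+1}$; nondegeneracy of the Euler form then gives $r_M=0$. Both routes are legitimate and of comparable length: the paper's trick avoids any appeal to nondegeneracy of the form but requires the small observation about extensions of length $0$; your version is slightly more systematic in that it treats all length-$j$ coefficients at once through the weight-space decomposition. One small point of precision: your phrase about ``cancelling the common tail lying in $U_{j+1}$'' should be read as passing to the quotient $U_j/U_{j+1}$, since the two tails are not yet known to be equal at that stage — but the subsequent sentence makes clear this is what you intend.
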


\begin{proof}
    By Prop \ref{commutator}, every element in $U$ can be written of the form $ \prod\limits_{M\in \operatorname{ind}\mathcal{B}} E_{[M]}(t_M)$ with the product being taken over all objects in $\operatorname{ind}\mathcal{B}$ in increasing order.
    Now we prove the uniqueness.

    For any integer $t\geq 1$, define $U_t = \prod\limits_{l(M)\geq t} E_{[M]}$, the product being taken in any order.
    It can be easily shown that $U_t$ is a subgroup of $U$, $U_{t+1}$ is a normal subgroup of $U_t$ and $U_t/U_{t+1}$ is abelian.
    We prove the uniqueness by descending induction on $t$.

    For $t$ sufficiently large, we have $U_t = 1$ and the uniqueness is obvious.

    Assume that the uniqueness holds for $U_{t+1}$, and we prove for $U_t$. Suppose
    \begin{align*}
        \prod\limits_{l(M)=t} E_{[M]}(t_M)   \prod\limits_{l(M)\geq t+1} E_{[M]}(t_M) =  \prod\limits_{l(M)=t} E_{[M]}(t_M')    \prod\limits_{l(M)\geq t+1} E_{[M]}(t_M').
    \end{align*}

    Denote the natural map $\bar{ } : U_t \rightarrow U_t/U_{t+1}$. Thus we have
    \begin{align*}
        \prod\limits_{l(M)=t} \overline{E_{[M]}(t_M)} =  \prod\limits_{l(M)=t} \overline{E_{[M]}(t_M')}.
    \end{align*} 

    Since $U_t/U_{t+1}$ is abelian, 
    \begin{align*}
        \prod\limits_{l(M)=t} \overline{E_{[M]}(t_M - t_M')} = 1.
    \end{align*}

    Now we choose an arbitrary $N$ with $l(N) = t$. Then  
    \begin{align*}
        \prod_{\substack{l(M) = t \\ M\ncong N}} \overline{E_{[M]}(t_M - t_M')} =   \overline{E_{[N]}(t_N' - t_N) }.
    \end{align*}
    
    Thus
    \begin{align*}
        E_{[N]}(t_N' - t_N) =  \prod_{\substack{l(M) = t \\ M\ncong N}} E_{[M]}(t_M - t_M') \prod\limits_{l(M)\geq t+1} E_{[M]}(a_M),
    \end{align*}
    for some $a_M$ for each $M$ satisfying $l(M)\geq t+1$.

    On the one hand, 
    \begin{align*}
        E_{[N]}(t_N' - t_N)u_{TN} = u_{TN} + (t_N'-t_N) H_N' + (t_N'-t_N)^2u_N.
    \end{align*}

    On the other hand, for each $M$ such that $l(M)\geq t+1$, we have
    \begin{align*}
        E_{[M]}(a_M) u_{TN} = u_{TN} + \sum\limits_{l(L)\geq 1} b_{ML} u_L,
    \end{align*}
    and for each $M$ such that $l(M) = t$, 
    \begin{align*}
        E_{[M]}(t_M-t_M') u_{TN} = u_{TN}.
    \end{align*}
    This holds because there's no extension of $M$ and $TN$, otherwise its length would be zero, which is impossible.
    Put them together, we have
    \begin{align*}
        \prod_{\substack{l(M) = t \\ M\ncong N}} E_{[M]}(t_M - t_M') \prod\limits_{l(M)\geq t+1} E_{[M]}(a_M) u_{TN} = u_{TN} + \sum\limits_{l(L)\geq 1}b_L u_L.
    \end{align*}
    Since $H_N'$ and $u_M, M \in \operatorname{ind}\mathcal{R}$ are linearly independent, we have
    \begin{align*}
        t_N'-t_N = 0.
    \end{align*}

    Because $N$ is arbitrarily chosen, $t_M=t_M'$ holds for all $M$ such that $l(M)=t$. Thus
    \begin{align*}
        \prod\limits_{l(M)\geq t+1} E_{[M]}(t_M) =  \prod\limits_{l(M)\geq t+1} E_{[M]}(t_M')  .
    \end{align*}
    By induction hypothesis, we have $t_M=t_M'$ holds for all $M$ such that $l(M)\geq t$.
    The uniqueness follows.
\end{proof}

Let $H$ be the subgroup of $G$ generated by $h_{[M]}(t), M \in \operatorname{ind}\mathcal{R}, t \in \mathbb{K}^{\times}$.
By Lemma \ref{hhh}, $H$ is abelian.
Let $B$ be the subgroup of $G$ generated by $U$ and $H$.
By Lemma \ref{hEh}, $U$ is a normal subgroup of $B$ and thus $B=UH$.

\begin{lemma}
    We have $U\cap H = \{1\}$.
\end{lemma}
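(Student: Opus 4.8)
The plan is to show that any element of $U \cap H$ acts trivially on the weight-space decomposition of $\mathfrak{g}_{\mathbb{K}}$, hence equals the identity. The key observation is that elements of $U$ and elements of $H$ act in incompatible ways on the Cartan part $K'$ and on the root spaces $\mathfrak{g}_M$, so their intersection can only contain the identity. This mirrors the argument already used for $U \cap V = \{1\}$ and for $U \cap H = \{1\}$-type facts in Carter's book.

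First I would take $\theta \in U \cap H$ and consider its action on the basis $\mathbb{S}$. Writing the weight-space decomposition $\mathfrak{g}_{\mathbb{K}} = K' \oplus \bigl(\bigoplus_{M\in\operatorname{ind}\mathcal{R}} \mathfrak{g}_M\bigr)$ as before, I observe that since $\theta \in H$, by the defining action of the $h_{[X]}(t)$ (which scale each $u_M$ and fix each $H_S'$) the operator $\theta$ acts diagonally on $\mathbb{S}$: there are scalars $\mu_M \in \mathbb{K}^\times$ with $\theta u_M = \mu_M u_M$ for all $M \in \operatorname{ind}\mathcal{R}$, and $\theta H_S' = H_S'$ for all $S$. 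On the other hand, since $\theta \in U$, by Proposition \ref{Uunique} we may write $\theta = \prod_{M\in\operatorname{ind}\mathcal{B}} E_{[M]}(t_M)$ in increasing order with respect to $\succ$.

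Next I would extract $t_M = 0$ for every $M \in \operatorname{ind}\mathcal{B}$ by descending induction on the length $l(M)$, exactly as in the uniqueness part of Proposition \ref{Uunique}. Concretely, for $N$ of maximal length among those with $t_N \neq 0$, applying $\theta$ to $u_{TN}$ and using that $E_{[M]}(t_M)u_{TN} = u_{TN}$ whenever $l(M) = l(N)$ and $M \ncong N$ (no extension of $M$ with $TN$ can be nonzero, since its length would be $0$), together with the formula $E_{[N]}(t_N)u_{TN} = u_{TN} + t_N H_N' + t_N^2 u_N$, one sees that $\theta u_{TN}$ has a nonzero $H_N'$-component whenever $t_N \neq 0$. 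But $\theta \in H$ forces $\theta u_{TN} = \mu_{TN} u_{TN}$, which has zero $H_N'$-component; comparing and using linear independence of $H_N', \{u_M\}$ gives $t_N = 0$, a contradiction. Hence all $t_M = 0$ and $\theta = 1$.

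**The main obstacle** I expect is handling the interaction between the "higher" factors $E_{[M]}(t_M)$ with $l(M) > l(N)$ in the product: these do not fix $u_{TN}$ in general, so I must argue that they cannot cancel the $H_N'$-component produced by $E_{[N]}(t_N)$. This follows because $E_{[M]}(a_M)u_{TN} = u_{TN} + \sum_{l(L)\geq 1} b_{ML}u_L$ contributes only to root spaces $\mathfrak{g}_L$ with $l(L) \geq 1$, never to $K'$ — the $H_N'$-component can only come from the single factor $E_{[N]}(t_N)$ acting on the $u_{TN}$ that has propagated unchanged through the lower-length factors. This is precisely the bookkeeping carried out in the proof of Proposition \ref{Uunique}, so the argument is essentially a reuse of that computation, now combined with the diagonal action coming from membership in $H$.
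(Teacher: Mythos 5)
Your high-level plan genuinely differs from the paper's proof. The paper exploits the abelianness of $H$: for $h\in U\cap H$, writing $h=E_{[M_1]}(t_1)\cdots E_{[M_l]}(t_l)$ and conjugating by $h_{[N]}(s)$ (Lemma \ref{hEh}) rescales each parameter to $s^{A_{NM_i}}t_i$, and then the uniqueness in Proposition \ref{Uunique} forces $s^{A_{NM_i}}t_i=t_i$ for all $N,s$, hence $t_i=0$ when $\mathbb{K}\neq\mathbb{F}_2$ (and $H=\{1\}$ directly when $\mathbb{K}=\mathbb{F}_2$). Your route instead compares the diagonal action coming from $\theta\in H$ with the explicit expansion coming from $\theta\in U$ applied to some $u_{TN}$ — closer in spirit to the proof of $U\cap V=\{1\}$ than to the paper's proof here.

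However, there is a genuine gap in the way you set up the induction. You take $N$ of \emph{maximal} length with $t_N\neq 0$, and then assert that ``the $H_N'$-component can only come from the single factor $E_{[N]}(t_N)$ acting on the $u_{TN}$ that has propagated unchanged through the lower-length factors.'' This last claim is false: for $M\in\operatorname{ind}\mathcal{B}$ with $l(M)<l(N)$ and $\operatorname{Hom}_{\mathcal{R}}(N,M)\neq 0$ (for instance $M$ a simple quotient of $N$), there is a nonsplit triangle $M\to L\to TN\to TM$ with $l(L)=l(M)-l(N)<0$, so $E_{[M]}(t_M)u_{TN}=u_{TN}+t_M\gamma_{M,TN}^{L}u_L+\cdots$ does not fix $u_{TN}$. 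These escaped terms $u_L$ with $L\in T\mathcal{B}$ can later be carried into $K'$ by further short factors, so the $K'$-component of $\theta u_{TN}$ is $t_NH_N'$ plus extra terms depending on the shorter $t_M$'s, and one cannot read off $t_N=0$ directly. (Already in type $A_2$ with $\theta=E_{[S_1]}(t_1)E_{[S_2]}(t_2)E_{[P]}(t_P)$ and $N=P$ the $K'$-part of $\theta u_{TP}$ is $t_PH_P'+t_1t_2\gamma\,H_{S_1}'$.) The bookkeeping in Proposition \ref{Uunique} does not rescue this: there, after the reduction modulo $U_{t+1}$ and lifting, \emph{all} factors in the resulting identity have length $\geq t=l(N)$, so there are no shorter factors to worry about — which is precisely the feature missing in your setup.

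The fix is to reverse the choice: take $N$ of \emph{minimal} length with $t_N\neq 0$, or equivalently run the induction through the filtration $U_t=\prod_{l(M)\geq t}E_{[M]}$ as in Proposition \ref{Uunique}, showing $U_t\cap H=\{1\}$ for $t$ large and descending. Then $\theta\in U_t\cap H$ has no factors of length $<t$; for $N$ with $l(N)=t$ the factors with $l(M)>t$ send $u_{TN}$ into $u_{TN}+\bigoplus_{l(L)\geq 1}\mathfrak{g}_L$ (never into $K'$), the factors with $l(M)=t$, $M\ncong N$, fix $u_{TN}$, and $E_{[N]}(t_N)$ alone produces $t_NH_N'$, so the $K'$-component of $\theta u_{TN}$ is exactly $t_NH_N'$, forcing $t_N=0$. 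With this correction your argument goes through and gives a valid alternative to the paper's conjugation proof.
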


\begin{proof}
    For any $ h \in U \cap H$, we can write $h=E_{[M_1]}(t_1)\cdots E_{[M_l]}(t_l)$ for some $M_i\in \operatorname{ind}\mathcal{B},t_i \in \mathbb{K}, i=1,\cdots ,l$.
    Since $H$ is abelian, we have $h_{[N]}(s) h h_{[N]}(s)^{-1} = h$ for any $ N\in \operatorname{ind}\mathcal{R}, s \in \mathbb{K}^{\times}$. Since
    \begin{align*}
        h_{[N]}(s) h h_{[N]}(s)^{-1} = E_{[M_1]}(s^{A_{NM_1}}t_1) \cdots E_{[M_l]}(s^{A_{NM_l}}t_l),
    \end{align*} 
    by Prop \ref{Uunique}, 
    \begin{align*}
        s^{A_{NM_i}}t_i = t_i \qquad  i=1,\cdots ,l.
    \end{align*}
    Since $N,s$ are arbitrarily chosen, 
    if $\mathbb{K} \neq \mathbb{F}_2 $, then $t_i = 0 ,  i=1,\cdots , l$. 
    Then it follows that $h=1$ and   $U\cap H = \{1\}$.

    In the case $\mathbb{K}=\mathbb{F}_2$, by definition we have $h_{[X]}(1)=1$ (the identity map) for any $ X \in \operatorname{ind}\mathcal{R}$. 
    Thus $H=1$ and obviously   $U\cap H = \{1\}$.
\end{proof}

Let $N$ be the subgroup of $G$ generated by $H$ and $n_{[M]}, M \in \operatorname{ind}\mathcal{R}$.
By Lemma \ref{nhn}, $H$ is a normal subgroup of $N$.
Let $W$ be the Weyl group associated to the root system $\Phi$. 

\begin{proposition}
    We have an isomorphism $N/H \cong W$.
\end{proposition}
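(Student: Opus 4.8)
The plan is to construct a surjection $\rho\colon N\twoheadrightarrow W$ which kills $H$, and then to show $\ker\rho=H$ by lifting the Coxeter presentation of $W$ into $N$ modulo $H$. Since $G$ is by construction a group of automorphisms of $\mathfrak{g}_{\mathbb{K}}$, the action of $N$ on $\mathfrak{g}_{\mathbb{K}}$ is faithful, and this is what will let the needed relations in $N/H$ be checked by explicit computation on the basis $\mathbb{S}$. To build $\rho$, observe that the formulas $n_{[X]}H_S'=H_{\omega_X(S)}'$ and $h_{[X]}(t)H_S'=H_S'$ show that the Cartan part $\mathfrak{h}_{\mathbb{K}}:=\mathbb{K}\otimes K'$, spanned by the $H_S'$, is stable under $N$; restriction gives a homomorphism $\rho\colon N\to\mathrm{GL}(\mathfrak{h}_{\mathbb{K}})$ with $H\subseteq\ker\rho$. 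Fixing representatives $S_1,\dots,S_m$ of the simple objects of $\mathcal{B}$, so that $\{H_{S_i}'\}$ is a basis of $\mathfrak{h}_{\mathbb{K}}$ and $\alpha_i:=\underline{dim}S_i$ are the simple roots, the identification $H_S'\leftrightarrow\underline{dim}_{\mathcal{B}}S$ (which is the very definition of $\underline{dim}_{\mathcal{B}}$) together with the identity $\underline{dim}\,\omega_M(N)=\omega_{\underline{dim}M}(\underline{dim}N)$ established above shows that $\rho(n_{[X]})$ is the reflection $\omega_{\underline{dim}X}\in W$; in particular $\rho(n_{[S_i]})=s_i$.

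Next I would reduce to the simple generators, proving $N=\langle H,n_{[S_1]},\dots,n_{[S_m]}\rangle$. Taking $t=s=1$ in Lemma \ref{nnn} gives $n_{[X]}n_{[Y]}n_{[X]}^{-1}=h_{[\omega_X(Y)]}(\eta_{XY})\,n_{[\omega_X(Y)]}$, and $h_{[\omega_X(Y)]}(\eta_{XY})\in H$ since $\eta_{XY}=\pm1$; as $H$ is normal in $N$ (Lemma \ref{nhn}), this reads modulo $H$ as $\overline{n_{[X]}}\,\overline{n_{[Y]}}\,\overline{n_{[X]}}^{-1}=\overline{n_{[\omega_X(Y)]}}$, with $\underline{dim}\,\omega_X(Y)=s_{\underline{dim}X}(\underline{dim}Y)$. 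Given $X\in\operatorname{ind}\mathcal{R}$, write $\underline{dim}X=w(\alpha_k)$ for some $w\in W$ and some simple $\alpha_k$ (every root of $\Phi$ is $W$-conjugate to a simple root), pick any expression $w=s_{j_1}\cdots s_{j_r}$, and conjugate $\overline{n_{[S_k]}}$ successively by $\overline{n_{[S_{j_r}]}},\dots,\overline{n_{[S_{j_1}]}}$; the object reached has dimension vector $w(\alpha_k)=\underline{dim}X$, hence is isomorphic to $X$ by Happel's bijection \cite{Happel}, so $\overline{n_{[X]}}$ lies in $\langle\overline{n_{[S_1]}},\dots,\overline{n_{[S_m]}}\rangle$. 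Thus $N/H$ is generated by the $\bar n_i:=\overline{n_{[S_i]}}$, and $\rho$ descends to a surjection $\bar\rho\colon N/H\twoheadrightarrow W$ with $\bar\rho(\bar n_i)=s_i$.

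It remains to show $\bar\rho$ is injective, for which it suffices to produce a homomorphism $\psi\colon W\to N/H$ with $\psi(s_i)=\bar n_i$: then $\bar\rho\circ\psi$ fixes each $s_i$, hence equals $\mathrm{id}_W$, so $\psi$ is a two-sided inverse of $\bar\rho$. Such a $\psi$ exists once the $\bar n_i$ satisfy the Coxeter relations of $W$. The relation $\bar n_i^2=1$ holds because $n_{[S_i]}^{-1}=n_{[S_i]}(-1)=h_{[S_i]}(-1)\,n_{[S_i]}$ by Lemma \ref{nEEE}, whence $n_{[S_i]}^2=h_{[S_i]}(-1)^{-1}\in H$. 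For the braid relation between $\bar n_i$ and $\bar n_j$ I would work inside the subgroup generated by $H$ and the $E_{[M]}(t)$ with $M$ in the subcategory of $\mathcal{R}$ generated by $S_i$ and $S_j$; by Lemma \ref{ABG} this subcategory is of type $A_1\times A_1$, $A_2$, $B_2$ or $G_2$, so the alternating products $\underbrace{n_{[S_i]}n_{[S_j]}\cdots}_{m_{ij}}$ and $\underbrace{n_{[S_j]}n_{[S_i]}\cdots}_{m_{ij}}$ can be compared by a finite computation of their action on $\mathbb{S}$ (using $n_{[X]}u_Y=\eta_{XY}u_{\omega_X(Y)}$, $n_{[X]}H_S'=H_{\omega_X(S)}'$ and the conjugation Lemmas \ref{nEn}--\ref{nnn}), and one checks they agree up to an element of $H$. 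This is exactly the rank-two verification in the classical construction of Chevalley groups, and may be imported from \cite[\S7.2]{carter}.

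The main obstacle is this braid computation: the construction of $\rho$, the inclusion $H\subseteq\ker\rho$, surjectivity onto $W$, and the reduction to the generators $\bar n_i$ are all formal consequences of the lemmas already proved. Carrying out the type-by-type check for $A_2$ ($m_{ij}=3$), $B_2$ ($m_{ij}=4$) and $G_2$ ($m_{ij}=6$) — the $A_1\times A_1$ case being trivial since then $n_{[S_i]}$ and $n_{[S_j]}$ commute — and keeping track of the exact element of $H$ by which the two alternating products differ is the only genuine work, and it is a bounded, explicit calculation matching the corresponding step of \cite{carter}.
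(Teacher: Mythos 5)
Your proposal is correct, but it takes a genuinely different route from the paper at the crucial step of proving injectivity. Both proofs build the surjection $N \to W$ sending $n_{[X]}\mapsto \omega_{\underline{dim}X}$, observe $H\subseteq\Ker$, and then exhibit a section $\psi\colon W\to N/H$; the difference lies in which presentation of $W$ is used to construct $\psi$. You use the Coxeter presentation on the simple generators $s_1,\dots,s_m$, which forces you first to reduce $N/H$ to the simple $\overline{n_{[S_i]}}$ and then to verify the braid relations $(\overline{n_{[S_i]}}\,\overline{n_{[S_j]}})^{m_{ij}}=1$ by a rank-two, type-by-type computation (imported from Carter). The paper instead uses the presentation of $W$ on the set of \emph{all} reflections $\omega_\alpha$, $\alpha\in\Phi$, with defining relations $\omega_\alpha^2=1$ and $\omega_\alpha\omega_\beta\omega_\alpha^{-1}=\omega_{\omega_\alpha(\beta)}$. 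Under that presentation, the required relations in $N/H$ are exactly $\overline{n_{[X]}}^2=1$ (from $n_{[X]}^2=h_{[X]}(-1)^{-1}\in H$) and $\overline{n_{[X]}}\,\overline{n_{[Y]}}\,\overline{n_{[X]}}^{-1}=\overline{n_{[\omega_X(Y)]}}$ (from Lemma \ref{nnn} with $t=s=1$), both of which are already proved; no braid-relation verification is needed and no reduction to simple generators is needed. Your argument works and the braid computation is indeed a bounded finite check, but the paper's choice of presentation makes $\psi$ an immediate formal consequence of the conjugation lemmas, so it is strictly less work. It is worth noticing, when you have the conjugation relation $nEn^{-1}=E_{[\omega_X(Y)]}(\cdots)$ and $n^2\in H$ already in hand, that the reflection presentation of $W$ is tailor-made for that situation.
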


\begin{proof}
    Define a group homomorphism $\varphi:N\rightarrow W$ such that $\varphi(n_{[X]}) = \omega_{\underline{dim}X}$ for each $X \in \operatorname{ind}\mathcal{R}$.
    We have shown that $n_{[X]}H_S' = H_{\omega_X(S)}', n_{[X]}u_Y=\eta_{XY}u_{\omega_X(Y)}$ and $\underline{dim}\omega_X(Y) = \omega_{\underline{dim}X}(\underline{dim}Y)$, 
    thus $n_{[X]}$ permute the root spaces in the same way that the element $\omega_{\underline{dim}X}$ of $W$ permutes the roots, while $h \in H$ stabilize each root space.
    So $\varphi $ is a well-defined surjective group homomorphism and $H \subseteq Ker\varphi$.
    $\varphi$ induces a surjective group homomorphism from $N/H$ to $W$ as the composition $N/H \rightarrow N/Ker\varphi \rightarrow W$.

    Denote by $\overline{n}$ the image of $n\in N$ in $N/H$.
    Since $n_{[X]}^2 = h_{[X]}(-1)^{-1} \in H$ and by Lemma \ref{nnn} $n_{[X]}n_{[Y]}n_{[X]}^{-1} = n_{[\omega_X(Y)]}(\eta_{XY})$, we have
    \begin{align*}
        \overline{n_{[X]}}^2 = 1, \qquad (\overline{n_{[X]}})(\overline{n_{[Y]}})(\overline{n_{[X]}}^{-1}) = \overline{n_{[\omega_X(Y)]}}.
    \end{align*}

    Notice that $W$ can be identified with the abstract group generated by symbols $\omega_{\alpha}, \alpha \in \Phi$ subject to relations 
    \begin{align*}
        \omega_{\alpha}^2 = 1, \qquad \omega_{\alpha} \omega_{\beta} \omega_{\alpha}^{-1} = \omega_{\omega_{\alpha}(\beta)},
    \end{align*}
    where $\omega_{\alpha}(\beta) = \beta - A_{\alpha\beta}\alpha = \beta - <\beta,\alpha>\alpha$ with $<\beta,\alpha>=A_{\alpha\beta} = \frac{2(\alpha,\beta)}{(\alpha,\alpha)}$.

    Hence there exists a surjective group homomorphism $\psi:W \rightarrow N/H$, $\psi(\omega_{\alpha}) = \overline{n_{[M]}}$, where $M$ is such that $\underline{dim}M = \alpha \in \Phi$.
    
    Obviously, $\varphi,\psi$ are inverse to each other and thus isomorphisms.
\end{proof}

\subsection{The Bruhat Decomposition}

We keep the notations in the previous subsection.

\begin{lemma}\label{nBn}
    We have the following formula.

    (1) For any $M\in \operatorname{ind}\mathcal{R}$, $Bn_{[M]}B = Bn_{[M]}^{-1}B$.

    (2) For any $i=1,\cdots,m$, 
    \begin{align*}
        n_{[S_i]}Bn_{[S_i]} = n_{[S_i]}Bn_{[S_i]}^{-1} \subseteq  E_{[TS_i]}B \subseteq  B\cup Bn_{[S_i]}B,
    \end{align*}
    where $\{ [S_1],\cdots,[S_m]\}$ corresponds to the basis of $K'$ given by the complete section.
    
\end{lemma}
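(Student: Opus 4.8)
The plan is to reduce both parts of the statement to the conjugation formulas of Lemmas \ref{nEn} and \ref{nhn}, the factorisation $B = UH$, and the identity $n_{[X]}(t) = E_{[X]}(t)E_{[TX]}(t^{-1})E_{[X]}(t)$ of Lemma \ref{nEEE}. Part (1) is immediate: from $n_{[M]}^{-1} = n_{[M]}(-1) = h_{[M]}(-1)\,n_{[M]}$ with $h_{[M]}(-1) \in H \subseteq B$, we get $Bn_{[M]}^{-1}B = B\,h_{[M]}(-1)\,n_{[M]}\,B = Bn_{[M]}B$. The same rewriting, applied to the right-hand copy of $B$ using $n_{[S_i]}^{-1} = h_{[S_i]}(-1)\,n_{[S_i]}$ and $Bh_{[S_i]}(-1) = B$, gives $n_{[S_i]}Bn_{[S_i]}^{-1} = n_{[S_i]}Bn_{[S_i]}$, the first equality of (2).

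Next I would prove $n_{[S_i]}Bn_{[S_i]}^{-1} \subseteq E_{[TS_i]}B$. Write $\alpha_i = \underline{dim}S_i$, a simple root of $\Phi$. By $B = UH$ and Lemma \ref{nhn} (with $t=1$), which gives $n_{[S_i]}Hn_{[S_i]}^{-1} = H$, it suffices to control $n_{[S_i]}Un_{[S_i]}^{-1}$. Since $U$ is generated by the $E_{[M]}(t)$ with $M \in \operatorname{ind}\mathcal{B}$, Lemma \ref{nEn} (with conjugating element $n_{[S_i]} = n_{[S_i]}(1)$) gives $n_{[S_i]}E_{[M]}(t)n_{[S_i]}^{-1} = E_{[\omega_{S_i}(M)]}(\pm t)$. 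Let $U_i'$ be the subgroup generated by those $E_{[N]}$ with $N \in \operatorname{ind}\mathcal{B}$, $N \ncong S_i$. Combining $\underline{dim}\,\omega_{S_i}(M) = \omega_{\alpha_i}(\underline{dim}M)$ with the classical fact that the simple reflection $\omega_{\alpha_i}$ permutes $\Phi^+(\mathcal{B}) \setminus \{\alpha_i\}$ and sends $\alpha_i$ to $-\alpha_i$, one sees that $\omega_{S_i}(M) \in \operatorname{ind}\mathcal{B} \setminus \{S_i\}$ when $M \ncong S_i$, whereas $\omega_{S_i}(S_i) = TS_i$ (with sign $+1$, since $\eta_{XX}=1$); hence $n_{[S_i]}Un_{[S_i]}^{-1} = \langle E_{[TS_i]},\, U_i'\rangle$. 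I would then verify, using Proposition \ref{commutator} (applicable because $S_i \ncong N$ and $TS_i \ncong N$), that $E_{[TS_i]}$ normalises $U_i'$: any $L_{TS_i,N,a,b}$ occurring in $(E_{[TS_i]}(s),E_{[N]}(t))$ has $\underline{dim}L_{TS_i,N,a,b} = b\,\underline{dim}N - a\alpha_i$ with $a,b \ge 1$, and since $\underline{dim}N$ is a positive root $\ne \alpha_i$, this vector has a strictly positive coefficient on some simple root other than $\alpha_i$; being a root, it must then be a positive root $\ne \alpha_i$, i.e.\ $L_{TS_i,N,a,b} \in \operatorname{ind}\mathcal{B} \setminus \{S_i\}$. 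Therefore $\langle E_{[TS_i]}, U_i'\rangle = E_{[TS_i]}U_i'$, so $n_{[S_i]}Bn_{[S_i]}^{-1} = E_{[TS_i]}U_i'H \subseteq E_{[TS_i]}B$.

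For the last inclusion $E_{[TS_i]}B \subseteq B \cup Bn_{[S_i]}B$, note that both $B$ and $Bn_{[S_i]}B$ absorb $B$ on the right, so it is enough to check $E_{[TS_i]}(s) \in B \cup Bn_{[S_i]}B$ for every $s \in \mathbb{K}$. For $s = 0$ this reads $1 \in B$. For $s \ne 0$ set $t = s^{-1} \in \mathbb{K}^{\times}$; rearranging Lemma \ref{nEEE} gives $E_{[TS_i]}(s) = E_{[S_i]}(-t)\,n_{[S_i]}(t)\,E_{[S_i]}(-t)$, and since $E_{[S_i]}(-t) \in U \subseteq B$ and $n_{[S_i]}(t) = h_{[S_i]}(t)\,n_{[S_i]} \in Hn_{[S_i]} \subseteq Bn_{[S_i]}$, this element lies in $Bn_{[S_i]}B$. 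Chaining $n_{[S_i]}Bn_{[S_i]}^{-1} \subseteq E_{[TS_i]}B \subseteq B \cup Bn_{[S_i]}B$ then finishes (2).

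The one genuinely substantive point is the middle step: showing that $E_{[TS_i]}$ normalises the ``missing simple root'' subgroup $U_i'$, which is what collapses $\langle E_{[TS_i]}, U_i'\rangle$ to $E_{[TS_i]}U_i'$. This requires unwinding Proposition \ref{commutator} and tracking coordinates of roots carefully, in particular the higher multiplicities $b \ge 1$ that can arise in the non-simply-laced types $B_2$ and $G_2$; once it is in hand, everything else is formal manipulation with the conjugation lemmas and with $B = UH$.
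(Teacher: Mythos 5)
Your proof is correct and, where the paper gives details, follows the same route: both proofs use $n_{[M]}^2 \in H$ (equivalently $n_{[M]}^{-1} = h_{[M]}(-1)n_{[M]}$) for part (1), and both establish the last inclusion by inverting Lemma~\ref{nEEE} to write $E_{[TS_i]}(s) = E_{[S_i]}(-s^{-1})\,n_{[S_i]}(s^{-1})\,E_{[S_i]}(-s^{-1}) \in Bn_{[S_i]}B$ for $s \neq 0$. The genuine difference is one of completeness rather than method: the paper's proof of (2) begins with ``We only need to show that $E_{[TS_i]}\subseteq B\cup Bn_{[S_i]}B$,'' silently treating the first equality and the inclusion $n_{[S_i]}Bn_{[S_i]}^{-1} \subseteq E_{[TS_i]}B$ as evident, whereas you actually prove the middle inclusion by showing $n_{[S_i]}Un_{[S_i]}^{-1} = \langle E_{[TS_i]}, U_i'\rangle$ (with $U_i'$ generated by the $E_{[N]}$, $N\in\operatorname{ind}\mathcal{B}$, $N\ncong S_i$) and then verifying via the commutator formula and the positivity of $b\,\underline{dim}N - a\alpha_i$ that $E_{[TS_i]}$ normalises $U_i'$, which collapses the generated subgroup to $E_{[TS_i]}U_i'H \subseteq E_{[TS_i]}B$. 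That normalising argument is exactly the one the paper invokes later (in the proof that $(BnB)(Bn_{[S_i]}B) \subseteq Bnn_{[S_i]}B \cup BnB$), so your supplement is both correct and consistent with the paper's subsequent development; it simply surfaces earlier a justification the authors deferred.
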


\begin{proof}
    (1) follows from $n_{[M]}^2 \in H \subseteq  B$.

    (2) We only need to show that $E_{[TS_i]}\subseteq  B\cup Bn_{[S_i]}B$.

    If $t=0$, $E_{[TS_i]}(0) = 1 \in B$.

    If $t \neq 0$, 
    \begin{align*}
        E_{[TS_i]}(t) &= E_{[S_i]}(-t^{-1}) n_{[S_i]}(t^{-1}) E_{[S_i]}(-t^{-1}) \\ 
              &= E_{[S_i]}(-t^{-1}) h_{[S_i]}(t^{-1}) n_{[S_i]}  E_{[S_i]}(-t^{-1}) \in Bn_{[S_i]}B.
    \end{align*}
\end{proof}

For any $n\in N$, consider $\overline{n}\in N/H$, we define its length
\begin{align*}
    l(\overline{n}) = min\{ t \geq 0 | \overline{n} = \overline{n_{[S_{i_1}]}} \cdots \overline{n_{[S_{i_t}]}}, S_{i_1},\cdots,S_{i_t} \text{ are simple objects in } \mathcal{B} \}.
\end{align*}
A reduced expression of $\overline{n}$ is a sequence $(i_1,\cdots,i_t)$ such that $\overline{n} = \overline{n_{[S_{i_1}]}} \cdots \overline{n_{[S_{i_t}]}}$ and $t=l(\overline{n})$.

Let  $(i_1,\cdots,i_t)$ be a reduced expression of $\overline{n}$, define the set
\begin{align*}
    R(\overline{n}) = \{  [M] | M\in \operatorname{ind}\mathcal{B}, \omega_{S_{i_1}}\cdots \omega_{S_{i_t}}(M) \in \operatorname{ind}T\mathcal{B}    \}.
\end{align*}

Denote $\alpha_i = \underline{dim} S_i$ for each simple object $S_i$ in $\mathcal{B}$.
Since $N/H\cong W$, let $w\in W$ corresponds to $\overline{n}$, then a reduced expression $\overline{n} = \overline{n_{[S_{i_1}]}} \cdots \overline{n_{[S_{i_t}]}}$ is equivalent to a reduced expression $w=\omega_{\alpha_{i_1}}\cdots \omega_{\alpha_{i_t}}$.
We have 
\begin{align*}
    \underline{dim} \omega_{S_{i_1}} \cdots \omega_{S_{i_t}}(M) = \omega_{\alpha_{i_1}}\cdots \omega_{\alpha_{i_t}}(\underline{dim}M) = w(\underline{dim}M).
\end{align*}
Thus $[\omega_{S_{i_1}} \cdots \omega_{S_{i_t}}(M)]$ is determined by $\overline{n}$ and $M$, regardless of the reduced expression we have chosen. 
So the set $R(\overline{n})$ is well-defined.

\begin{lemma}\label{Rn}\cite[Lemma 8.3.2(i)]{Springer}
    For any $n\in N$ and simple object $S_i$ in $\mathcal{B}$, we have the following properties.

    (1) $R(\overline{n_{[S_i]}}) = \{ [S_i] \}$.

    (2)Let  $(i_1,\cdots,i_t)$ be a reduced expression of $\overline{n}$.
     \begin{equation*}
        R(\overline{n}\overline{n_{[S_i]}}) = 
        \begin{cases}
            \{ [S_i] \} \cup \omega_{S_i}(R(\overline{n}))   \qquad &\text{ if } \omega_{S_{i_1}} \cdots \omega_{S_{i_t}}(S_i) \in \operatorname{ind}\mathcal{B} \\
            \omega_{S_i}(R(\overline{n})\setminus \{[S_i]\})  \qquad \quad &\text{ if } \omega_{S_{i_1}} \cdots \omega_{S_{i_t}}(S_i) \in \operatorname{ind}T\mathcal{B}
        \end{cases}
        .
    \end{equation*}

    (3)$R(\overline{n}) = \{ [S_{i_t}], [\omega_{S_{i_t}}(S_{i_{t-1}})], \cdots,[\omega_{S_{i_t}} \cdots \omega_{S_{i_2}}(S_{i_1})]   \}$.
\end{lemma}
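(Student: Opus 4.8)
The plan is to transport the statement to the Weyl group $W$ and the root system $\Phi = \Phi(\mathcal{R})$ via the dictionary already set up. Let $w \in W$ denote the image of $\overline{n}$ under $N/H \cong W$. Since $\underline{dim}\,\omega_{S_{i_1}}\cdots\omega_{S_{i_t}}(M) = w(\underline{dim}M)$ and $\operatorname{ind}\mathcal{B}$, $\operatorname{ind}T\mathcal{B}$ correspond under $\underline{dim}$ to $\Phi^+$, $\Phi^-$, we have
\[
R(\overline{n}) = \{ [M] : M \in \operatorname{ind}\mathcal{B},\ w(\underline{dim}M) \in \Phi^- \},
\]
i.e. $R(\overline{n})$ is the inversion set of $w$ read through the bijection $\operatorname{ind}\mathcal{R} \leftrightarrow \Phi$; moreover $\omega_{S_i}$ acts on isomorphism classes exactly as the simple reflection $\omega_{\alpha_i}$ acts on roots, and $\overline{n}\,\overline{n_{[S_i]}}$ corresponds to $w\omega_{\alpha_i}$. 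Thus (1)--(3) become the standard facts about inversion sets of Coxeter group elements, which I would establish in that order (one could alternatively just cite \cite[Lemma 8.3.2]{Springer}, but the argument is short in this notation).

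For (1) I would take $w = \omega_{\alpha_i}$ and invoke the elementary fact that a simple reflection sends $\alpha_i$ to $-\alpha_i$ and permutes $\Phi^+ \setminus \{\alpha_i\}$: if $\alpha = \sum_j c_j\alpha_j \in \Phi^+$ with $\alpha \neq \alpha_i$ then $c_j > 0$ for some $j \neq i$, and $\omega_{\alpha_i}$ alters only the $\alpha_i$-coefficient, so $\omega_{\alpha_i}(\alpha) \in \Phi^+$. Categorically, $\omega_{S_i}$ permutes $\{ [M] : M \in \operatorname{ind}\mathcal{B},\ M \ncong S_i \}$, while $\omega_{S_i}([S_i]) = [TS_i]$. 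Hence $w(\underline{dim}M) \in \Phi^-$ with $M \in \operatorname{ind}\mathcal{B}$ forces $M \cong S_i$, so $R(\overline{n_{[S_i]}}) = \{[S_i]\}$.

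For (2), I would fix $M \in \operatorname{ind}\mathcal{B}$ and note that $[M] \in R(\overline{n}\,\overline{n_{[S_i]}}) \iff (w\omega_{\alpha_i})(\underline{dim}M) \in \Phi^-$, since $R$ depends only on the underlying group element. If $M \cong S_i$ then $(w\omega_{\alpha_i})(\alpha_i) = -w(\alpha_i)$, which lies in $\Phi^-$ exactly when $w(\alpha_i) \in \Phi^+$, i.e. exactly in the case $\omega_{S_{i_1}}\cdots\omega_{S_{i_t}}(S_i) \in \operatorname{ind}\mathcal{B}$. If $M \ncong S_i$, then by (1) the class $M' := \omega_{S_i}(M)$ is again indecomposable in $\mathcal{B}$ with $M' \ncong S_i$, and $[M] \in R(\overline{n}\,\overline{n_{[S_i]}}) \iff w(\underline{dim}M') \in \Phi^- \iff [\omega_{S_i}(M)] \in R(\overline{n})$. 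Splitting into the two cases: in the first, $w(\alpha_i) \in \Phi^+$ means $[S_i] \notin R(\overline{n})$, so $\omega_{S_i}(R(\overline{n}))$ already consists of classes of objects in $\operatorname{ind}\mathcal{B}$ not isomorphic to $S_i$, whence $R(\overline{n}\,\overline{n_{[S_i]}}) = \{[S_i]\} \cup \omega_{S_i}(R(\overline{n}))$; in the second, $[S_i] \in R(\overline{n})$ but $[S_i] \notin R(\overline{n}\,\overline{n_{[S_i]}})$, and the remaining classes are exactly $\omega_{S_i}$ applied to the classes of $R(\overline{n})$ other than $[S_i]$, giving $R(\overline{n}\,\overline{n_{[S_i]}}) = \omega_{S_i}(R(\overline{n}) \setminus \{[S_i]\})$. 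These are precisely the two asserted formulas.

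For (3), I would induct on $t = l(\overline{n})$; for $t \le 1$ it is (1) (the empty set when $t = 0$). Writing $\overline{n} = \overline{n'}\,\overline{n_{[S_{i_t}]}}$ with $\overline{n'}$ carrying the reduced expression $(i_1,\dots,i_{t-1})$, the fact that $(i_1,\dots,i_t)$ is reduced forces $\omega_{S_{i_1}}\cdots\omega_{S_{i_{t-1}}}(S_{i_t}) \in \operatorname{ind}\mathcal{B}$ (otherwise $\overline{n}$ would admit a shorter expression, by $|R(\overline{n'})| = l(\overline{n'})$), so the first case of (2) applies and $R(\overline{n}) = \{[S_{i_t}]\} \cup \omega_{S_{i_t}}(R(\overline{n'}))$; substituting the inductive description of $R(\overline{n'})$ and applying $\omega_{S_{i_t}}$ term by term gives the list in (3). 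The only real care needed is bookkeeping with conventions (the side on which the inversion set is recorded, and the alignment of the case split in (2) with the sign of $w(\alpha_i)$); the root-theoretic inputs used — a simple reflection permutes the positive roots other than its own, and $|R(\overline{n})| = l(\overline{n})$ — are entirely standard, so I do not expect a genuine obstacle here.
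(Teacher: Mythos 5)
Your proof is essentially correct and follows the same overall structure as the paper's. For (1) and (2) you are filling in the routine definitional checks that the paper dismisses with ``follows from definition,'' and your translation through $\underline{dim}$ to $W$ and $\Phi$ is exactly the intended mechanism (a simple reflection permutes $\Phi^+\setminus\{\alpha_i\}$ and negates $\alpha_i$, plus the observation that $R$ depends only on the coset $\overline{n}$). For (3) both you and the paper induct on $t$ and apply (2); the one genuine divergence is in ruling out the bad case $\omega_{S_{i_1}}\cdots\omega_{S_{i_{t-1}}}(S_{i_t})\in\operatorname{ind}T\mathcal{B}$. You appeal to the standard Coxeter identity $|R(\overline{n})|=l(\overline{n})$ (for $\overline{n}$ and $\overline{n'}$), whereas the paper gives a self-contained contradiction: if the bad case held, then $[S_{i_t}]\in R(\overline{n'})$, so by the inductive formula $[S_{i_t}]=[\omega_{S_{i_{t-1}}}\cdots\omega_{S_{i_{j+1}}}(S_{i_j})]$ for some $j$; Lemma~\ref{nnn} then rewrites $\overline{n_{[S_{i_t}]}}$ as $\overline{n_{[S_{i_{t-1}}]}}\cdots\overline{n_{[S_{i_j}]}}\cdots\overline{n_{[S_{i_{t-1}}]}}$, and substituting this into $\overline{n}=\overline{n'}\,\overline{n_{[S_{i_t}]}}$ and cancelling yields an expression of length $t-2$, contradicting $l(\overline{n})=t$. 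That explicit deletion argument is precisely what underlies the identity $|R|=l$ you invoke, so while your appeal is not wrong (it is a standard fact about Coxeter groups, and indeed the lemma cites \cite{Springer}), it is not established within the paper up to this point; if you want the argument to be self-contained you should either carry the distinctness of the elements listed in (3) through the induction so that $|R(\overline{n'})|=t-1$ is available without circularity, or replace the cardinality appeal with the explicit shortening as the paper does.
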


\begin{proof}
    (1),(2) follows from definition.

    We prove (3) by induction on $l=l(\overline{n})$.

    If $l=1$, it follows from (1).

    If $l>1$, let $\overline{n'} = \overline{n_{[S_{i_1}]}} \cdots \overline{n_{[S_{i_{t-1}}]}}$. 
    By induction hypothesis, we have 
    \begin{align*}
        R(\overline{n'}) = \{ [S_{i_{t-1}}], [\omega_{S_{i_{t-1}}}(S_{i_{t-2}})], \cdots,[\omega_{S_{i_{t-1}}} \cdots \omega_{S_{i_2}}(S_{i_1})]   \}.
    \end{align*}
    
    If $\omega_{S_{i_1}} \cdots \omega_{S_{i_{t-1}}}(S_{i_t}) \in \operatorname{ind}\mathcal{B}$, by (2), we're done.
    
    Otherwise, $\omega_{S_{i_1}} \cdots \omega_{S_{i_{t-1}}}(S_{i_t}) \in \operatorname{ind}T\mathcal{B} $.
    By definition, we have $[S_{i_t}]\in R(\overline{n'})$.
    Thus $[S_{i_t}] = [\omega_{S_{i_{t-1}}} \cdots \omega_{S_{i_{j+1}}}(S_{i_j})] $ for some $j$.
    Then 
    \begin{align*}
         \overline{n_{[S_{i_{t-1}}]}} \cdots \overline{n_{[S_{i_{j+1}}]}} \overline{n_{[S_{i_{j}}]}} \overline{n_{[S_{i_{j+1}}]}} \cdots \overline{n_{[S_{i_{t-1}}]}} = \overline{n_{[ \omega_{S_{i_{t-1}}} \cdots \omega_{S_{i_{j+1}}}(S_{i_j}) ]}} = \overline{n_{[S_{i_t}]}}.
    \end{align*}
    Hence 
    \begin{align*}
        \overline{n}& = \overline{n_{[S_{i_1}]}} \cdots \overline{n_{[S_{i_t}]}} \\
             &= \overline{n_{[S_{i_1}]}} \cdots \overline{n_{[S_{i_{t-1}}]}} (  \overline{n_{[S_{i_{t-1}}]}} \cdots \overline{n_{[S_{i_{j}}]}} \cdots \overline{n_{[S_{i_{t-1}}]}})\\
             &=  \overline{n_{[S_{i_1}]}} \cdots \overline{n_{[S_{i_{j-1}}]}} \overline{n_{[S_{i_{j+1}}]}}  \cdots \overline{n_{[S_{i_{t-1}}]}}.
    \end{align*}
    This contradicts to $l(\overline{n}) = t$, thus this case doesn't exist.
    The Lemma is proved.
\end{proof}

\begin{lemma}\label{lastj}\cite[Lemma 8.3.2(iv)]{Springer}
    For any simple object $S_j$ in $\mathcal{B}$, any $\overline{n} \in N/H$ and let $(i_1,\cdots,i_t)$ be a reduced expression of $\overline{n}$.
    If $\omega_{S_{i_1}} \cdots \omega_{S_{i_{t}}}(S_j) \in \operatorname{ind}T\mathcal{B} $, then there exists a reduced expression of  $\overline{n}$ with last element $j$.
\end{lemma}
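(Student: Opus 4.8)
The plan is to read off from the given reduced word which letter is ``responsible'' for the hypothesis, using Lemma~\ref{Rn}(3), and then carry out one exchange move that pushes that letter to the end of the word.

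First I would restate the hypothesis in terms of the set $R(\overline{n})$: by its very definition, $\omega_{S_{i_1}}\cdots\omega_{S_{i_t}}(S_j)\in\operatorname{ind}T\mathcal{B}$ is exactly the assertion $[S_j]\in R(\overline{n})$. Applying Lemma~\ref{Rn}(3) to the given reduced expression $(i_1,\dots,i_t)$, there is an index $k$ with $1\le k\le t$ such that
\begin{align*}
[S_j]=[\omega_{S_{i_t}}\omega_{S_{i_{t-1}}}\cdots\omega_{S_{i_{k+1}}}(S_{i_k})],
\end{align*}
where for $k=t$ this simply reads $[S_j]=[S_{i_t}]$, i.e.\ $j=i_t$ and the given word already ends in $j$.

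Next I would transport this equality of isomorphism classes into the group $N/H$. Iterating the conjugation relation $\overline{n_{[X]}}\,\overline{n_{[Y]}}\,\overline{n_{[X]}}^{-1}=\overline{n_{[\omega_X(Y)]}}$ (established in the proof of $N/H\cong W$) and using $\overline{n_{[S_{i_\ell}]}}^{2}=1$ for every $\ell$, one obtains
\begin{align*}
\overline{n_{[S_j]}}=\overline{n_{[S_{i_t}]}}\cdots\overline{n_{[S_{i_{k+1}}]}}\,\overline{n_{[S_{i_k}]}}\,\overline{n_{[S_{i_{k+1}}]}}\cdots\overline{n_{[S_{i_t}]}}.
\end{align*}
Multiplying $\overline{n}=\overline{n_{[S_{i_1}]}}\cdots\overline{n_{[S_{i_t}]}}$ on the right by this element and cancelling the middle block $\overline{n_{[S_{i_{k+1}}]}}\cdots\overline{n_{[S_{i_t}]}}\cdot\overline{n_{[S_{i_t}]}}\cdots\overline{n_{[S_{i_{k+1}}]}}$, followed by $\overline{n_{[S_{i_k}]}}^{2}=1$, gives
\begin{align*}
\overline{n}\,\overline{n_{[S_j]}}=\overline{n_{[S_{i_1}]}}\cdots\overline{n_{[S_{i_{k-1}}]}}\,\overline{n_{[S_{i_{k+1}}]}}\cdots\overline{n_{[S_{i_t}]}},
\end{align*}
a word of length $t-1$. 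Since $\overline{n_{[S_j]}}^{2}=1$, I would rewrite this as
\begin{align*}
\overline{n}=\overline{n_{[S_{i_1}]}}\cdots\overline{n_{[S_{i_{k-1}}]}}\,\overline{n_{[S_{i_{k+1}}]}}\cdots\overline{n_{[S_{i_t}]}}\,\overline{n_{[S_j]}},
\end{align*}
an expression of $\overline{n}$ in at most $t$ generators whose last letter is $j$. As $l(\overline{n})=t$, no expression of $\overline{n}$ can be shorter, so this one has length exactly $t$ and is therefore a reduced expression of $\overline{n}$ ending in $j$, as required.

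The argument is entirely formal once Lemma~\ref{Rn}(3) is available; the only points requiring care are the orientation of the conjugation relation (so that the telescoping block genuinely cancels) and the degenerate case $k=t$. I do not expect a substantive obstacle.
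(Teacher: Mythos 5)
Your argument is correct and follows exactly the route the paper takes: identify $[S_j]\in R(\overline{n})$, locate the responsible letter via Lemma \ref{Rn}(3), express $\overline{n_{[S_j]}}$ as a palindromic conjugate, telescope in $\overline{n}\,\overline{n_{[S_j]}}$, and conclude by comparing with $l(\overline{n})=t$. You spell out a few steps (the iterated conjugation, the length count) that the paper leaves implicit, but the proof is substantively the same.
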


\begin{proof}
    This proof is similar to the proof of Lemma \ref{Rn}(3).
    
    Since $\omega_{S_{i_1}} \cdots \omega_{S_{i_{t}}}(S_j) \in \operatorname{ind}T\mathcal{B} $, we have by definition $[S_j] \in R(\overline{n})$.
    By Lemma \ref{Rn}(3), $[S_j] = [\omega_{S_{i_{t}}} \cdots \omega_{S_{i_{r+1}}}(S_{i_r})] $ for some $r$.
    Similarly, we have 
    \begin{align*}
        &\overline{n_{[S_{i_{t}}]}} \cdots  \overline{n_{[S_{i_{r}}]}} \cdots \overline{n_{[S_{i_{t}}]}}  = \overline{n_{[S_{j}]}} \\
        &\overline{n}\overline{n_{[S_j]}} = \overline{n_{[S_{i_1}]}} \cdots \overline{n_{[S_{i_{r-1}}]}} \overline{n_{[S_{i_{r+1}}]}}  \cdots \overline{n_{[S_{i_{t}}]}}\\
        &\overline{n} = \overline{n_{[S_{i_1}]}} \cdots \overline{n_{[S_{i_{r-1}}]}} \overline{n_{[S_{i_{r+1}}]}}  \cdots \overline{n_{[S_{i_{t}}]}} \overline{n_{[S_j]}}.
    \end{align*}
    Thus $(i_1,\cdots,i_{r-1},i_{r+1},\cdots,i_t,j)$ is a reduced expression of $\overline{n}$ as required.
\end{proof}

\begin{proposition}
    For all $n\in N$ and simple object $S_i$ in $\mathcal{B}$, we have 
    \begin{align*}
       & (BnB)(Bn_{[S_i]}B) \subseteq  Bnn_{[S_i]}B \cup BnB, \\
       & (Bn_{[S_i]}B)(BnB) \subseteq  Bn_{[S_i]}nB \cup BnB.
    \end{align*}
\end{proposition}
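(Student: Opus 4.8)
The statement is the standard multiplication relation for a Tits system, and the plan is to prove the first inclusion by induction on $l(\overline{n})$, the length of $\overline{n}$ in $N/H\cong W$, and then to deduce the second from the first by taking inverses. Throughout put $s=n_{[S_i]}$, write $w$ for the image of $n$ in $W$, let $U_s=E_{[S_i]}$, and let $U_s'$ be the subgroup of $U$ generated by all $E_{[M]}(t)$ with $M\in\operatorname{ind}\mathcal{B}$, $M\ncong S_i$, $t\in\mathbb{K}$. We use freely the standard Coxeter fact that $l(ws)=l(w)\pm1$, the sign being $+$ precisely when $w(\underline{dim}S_i)\in\Phi^+$.

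First I would record the structural facts behind the reduction. By Prop \ref{commutator}, commuting root groups $E_{[M]},E_{[M']}$ with $M,M'\ncong S_i$ produces only root groups $E_{[L_{M,M',i,j}]}$ with $i,j>0$, and $\underline{dim}L_{M,M',i,j}=i\,\underline{dim}M+j\,\underline{dim}M'$ is a positive root distinct from $\alpha_i:=\underline{dim}S_i$; hence $U_s'$ is normal in $U$ and $U=U_sU_s'$. By Lemma \ref{hEh}, $H$ normalizes both $U_s$ and $U_s'$, so $B=UH=U_sHU_s'$. By Lemma \ref{nEn}, $s$ conjugates each $E_{[M]}$ with $M\in\operatorname{ind}\mathcal{B}$, $M\ncong S_i$, to $E_{[\omega_{S_i}(M)]}$, and $\omega_{S_i}$ permutes $\Phi^+\setminus\{\alpha_i\}$, so $s$ normalizes $U_s'$; it normalizes $H$ by Lemma \ref{nhn}. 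Therefore $n(U_sHU_s')s=nU_s\,s\,(s^{-1}U_s'Hs)\subseteq nU_s\,s\,B$, and expanding the middle factor $B$ in $(BnB)(Bn_{[S_i]}B)=BnBsB$ gives
\begin{align*}
(BnB)(Bn_{[S_i]}B)=BnU_s\,s\,B=\bigcup_{t\in\mathbb{K}}BnE_{[S_i]}(t)\,s\,B,
\end{align*}
the reverse inclusion being clear since $U_s\subseteq B$.

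Next I treat $BnE_{[S_i]}(t)\,s\,B$ for each $t$. For $t=0$ this equals $BnsB=Bnn_{[S_i]}B$. For $t\neq0$, iterating Lemma \ref{nEn} (and using Lemma \ref{hEh} for the $H$-part of $n$) gives $nE_{[S_i]}(t)n^{-1}=E_{[M']}(ct)$ for some $c\in\mathbb{K}^{\times}$ and $M'\in\operatorname{ind}\mathcal{R}$ with $\underline{dim}M'=w(\alpha_i)$. If $l(ws)=l(w)+1$, then $w(\alpha_i)\in\Phi^+$, so $M'\in\operatorname{ind}\mathcal{B}$, whence $E_{[M']}(ct)\in U\subseteq B$ and $nE_{[S_i]}(t)\in Bn$; thus $BnE_{[S_i]}(t)\,s\,B=BnsB$ for every $t$, and $(BnB)(Bn_{[S_i]}B)=Bnn_{[S_i]}B$, which lies in the asserted union. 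If $l(ws)=l(w)-1$, I instead push $s$ to the left: Lemma \ref{nEn} (with parameter $-1$, using $\omega_{S_i}(S_i)=TS_i$ and $A_{S_iS_i}=2$) gives $E_{[S_i]}(t)\,s=s\,E_{[TS_i]}(t)$, so $BnE_{[S_i]}(t)\,s\,B=Bns\,E_{[TS_i]}(t)\,B$. Since $t\neq0$, the identity in the proof of Lemma \ref{nBn}(2) shows $E_{[TS_i]}(t)\in BsB$, so this set is contained in $Bns\,BsB=Bn_1Bn_{[S_i]}B$ with $n_1=ns$. As $l(\overline{n_1})=l(ws)=l(\overline{n})-1$, the induction hypothesis applies to $n_1$ and gives $Bn_1Bn_{[S_i]}B\subseteq Bn_1n_{[S_i]}B\cup Bn_1B$; here $Bn_1n_{[S_i]}B=B(ns^2)B=BnB$ because $s^2=n_{[S_i]}^2\in H$, while $Bn_1B=Bnn_{[S_i]}B$, so altogether $BnE_{[S_i]}(t)\,s\,B\subseteq BnB\cup Bnn_{[S_i]}B$. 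The base case $l(\overline{n})=0$ is immediate (then $n\in H\subseteq B$, so $(BnB)(Bn_{[S_i]}B)=Bn_{[S_i]}B=Bnn_{[S_i]}B$), and the first inclusion follows.

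For the second inclusion, I would apply the first with $n$ replaced by $n^{-1}$ and invert: from $(Bn^{-1}B)(Bn_{[S_i]}B)\subseteq Bn^{-1}n_{[S_i]}B\cup Bn^{-1}B$ and $(Bn_{[S_i]}B)^{-1}=Bn_{[S_i]}^{-1}B=Bn_{[S_i]}B$ (Lemma \ref{nBn}(1)) one obtains $(Bn_{[S_i]}B)(BnB)\subseteq Bn_{[S_i]}^{-1}nB\cup BnB$, and $Bn_{[S_i]}^{-1}nB\subseteq Bn_{[S_i]}nB$ since $n_{[S_i]}^{-1}n\in n_{[S_i]}Hn=n_{[S_i]}n(n^{-1}Hn)\subseteq n_{[S_i]}nH$, using $n_{[S_i]}^2\in H$ and the normality of $H$ in $N$ (Lemma \ref{nhn}). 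The main obstacle is the reduction in the second paragraph: it rests on the decomposition $B=U_sHU_s'$ together with the fact that $s$ normalizes the complementary factor $U_s'$, i.e. the bookkeeping that $U_s'$ is closed under the commutator formula and that $\omega_{S_i}$ fixes $\Phi^+\setminus\{\alpha_i\}$ setwise. Once the problem is reduced to the single root group $E_{[S_i]}$ attached to the simple reflection $s$, the remaining case analysis is the routine rank-one computation already encoded in Lemmas \ref{nEn} and \ref{nBn}.
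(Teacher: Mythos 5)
Your proof is correct, and it reaches the result by a genuinely different route in the delicate half of the case analysis. Both you and the paper begin with the same reduction: factor $U=E_{[S_i]}U_i$ (your $U_sU_s'$), use the commutator formula and Lemma~\ref{nEn} to see that $n_{[S_i]}$ normalizes the complementary factor, and so boil $(BnB)(Bn_{[S_i]}B)$ down to the sets $BnE_{[S_i]}(t)n_{[S_i]}B$. Both then split on the sign of $w(\alpha_i)$, and the positive case is handled identically. The divergence is in the negative case: the paper invokes Lemma~\ref{lastj} (an Exchange-Condition argument) to replace the given reduced expression of $\overline n$ with one ending in $n_{[S_i]}$, writes $n=n'n_{[S_i]}h$, applies the positive case to the prefix $n'$, and finishes with Lemma~\ref{nBn}; you instead commute $E_{[S_i]}(t)$ past $n_{[S_i]}$ using $E_{[S_i]}(t)n_{[S_i]}=n_{[S_i]}E_{[TS_i]}(t)$, use $E_{[TS_i]}(t)\in Bn_{[S_i]}B$ from Lemma~\ref{nBn}, pass to $n_1=nn_{[S_i]}$, and close the loop with an explicit induction on $l(\overline n)$. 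Your version is the streamlined Tits-system induction (as in Bourbaki/Springer) and has the small advantage of not needing Lemma~\ref{lastj} at all; the paper's version is more of a direct two-step reduction via the Exchange Condition. Both ultimately rest on the same rank-one computation $E_{[TS_i]}\subseteq B\cup Bn_{[S_i]}B$ and the normality of $U_i$ under $n_{[S_i]}$-conjugation. Your treatment of the second inclusion is also slightly more careful than the paper's, making explicit the step $Bn_{[S_i]}^{-1}nB=Bn_{[S_i]}nB$, which the paper leaves tacit.
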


\begin{proof}
    We prove the first statement.
    For $n\in N$, there exists $M_1,\cdots, M_l \in \operatorname{ind}\mathcal{R}$ and $h\in H$ such that $n=n_{[M_1]}\cdots n_{[M_l]}h$.
    Then 
    \begin{align*}
        (BnB)(Bn_{[S_i]}B) &= B n_{[M_1]}\cdots n_{[M_l]} Bn_{[S_i]}B \\
               &= Bn_{[M_1]}\cdots n_{[M_l]} U n_{[S_i]}B.
    \end{align*}
    Let 
    \begin{align*}
        U_i=\prod_{\substack{M \in \operatorname{ind}\mathcal{B} \\ M\ncong S_i}} E_{[M]} ,
    \end{align*}
  the product being taken in any order.
    Then $U=E_{[S_i]}U_i$.
    \begin{align*}
        (BnB)(Bn_{[S_i]}B) &= Bn_{[M_1]}\cdots n_{[M_l]} E_{[S_i]}U_i n_{[S_i]}B\\
         &= B(n_{[M_1]}\cdots n_{[M_l]}E_{[S_i]}n_{[M_l]}^{-1} \cdots n_{[M_1]}^{-1} )(n_{[M_1]}\cdots n_{[M_l]}n_{[S_i]})(n_{[S_i]}^{-1}U_i n_{[S_i]})B.
    \end{align*}
    Apply the commutator formula, we have $E_{[S_i]}(t),E_{[TS_i]}(t)$ normalize $U_i$.
    
    Thus $n_{[S_i]}^{-1} U_i n_{[S_i]}\subseteq  B$ and
    \begin{align*}
        (BnB)(Bn_{[S_i]}B) = B(n_{[M_1]}\cdots n_{[M_l]}E_{[S_i]}n_{[M_l]}^{-1} \cdots n_{[M_1]}^{-1} ) (nn_{[S_i]})B.
    \end{align*}
    
    If $\omega_{M_1}\cdots\omega_{M_l}(S_i)\in \operatorname{ind}\mathcal{B}$, then
    \begin{align*}
        n_{[M_1]}\cdots n_{[M_l]}E_{[S_i]}n_{[M_l]}^{-1} \cdots n_{[M_1]}^{-1}  \subseteq  B.
    \end{align*}
    Thus $(BnB)(Bn_{[S_i]}B) = Bnn_{[S_i]}B$.

    If $\omega_{M_1}\cdots\omega_{M_l}(S_i)\in \operatorname{ind}T\mathcal{B}$. 
    By the same argument after the definition of $R(\overline{n})$, $[\omega_{M_1}\cdots\omega_{M_l}(S_i)]$ depends on $\overline{n}$ and $S_i$.
    Thus we may choose $M_1,\cdots, M_l $ corresponds to a reduced expression.
    Moreover, by Lemma \ref{lastj}, we may assume that $n=n_{[S_{i_1}]}\cdots n_{[S_{i_{l-1}}]} n_{[S_i]}h$ with $(i_1,\cdots,i_{l-1},i)$ being a reduced expression of $\overline{n}$.

    Let $n'=n_{[S_{i_1}]}\cdots n_{[S_{i_{l-1}}]} $, then $n=n'n_{[S_i]}h,n'=nn_{[S_i]}h'$ for some $h'\in H$.
    Since $\omega_{S_{i_1}}\cdots\omega_{S_{i_{l-1}}}(S_i)\in \operatorname{ind}\mathcal{B}$, we have proved that 
    \begin{align*}
        Bn'BBn_{[S_i]}B = Bn'n_{[S_i]}B = BnB.
    \end{align*}
    Thus
    \begin{align*}
        (BnB)(Bn_{[S_i]}B) &= (Bn'B)(Bn_{[S_i]}B)(Bn_{[S_i]}B) \\
        & \subseteq  Bn'B(B\cup Bn_{[S_i]}B)\\
        &= Bn'B \cup Bn'BBn_{[S_i]}B \\
        & = Bnn_{[S_i]}B \cup BnB,
    \end{align*}
    where the first inclusion follows from Lemma \ref{nBn}. 
    This finishes the proof of the first statement.

    The second statement follows easily from the first: 

    For any $ x \in (Bn_{[S_i]}B)(BnB)$, then 
    \begin{align*}
        x^{-1} \in Bn^{-1}BBn_{[S_i]}B \subseteq  Bn^{-1}n_{[S_i]}B \cup Bn^{-1}B.
    \end{align*}
    Thus $x\in Bn_{[S_i]}nB \cup BnB$.
\end{proof}

\begin{proposition}\label{GBNB}
    $G=BNB$.
\end{proposition}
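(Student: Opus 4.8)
The plan is to prove $G=BNB$ by showing first that $G$ is generated by $B$ together with the finitely many elements $n_{[S_1]},\dots,n_{[S_m]}$ attached to the simple objects of $\mathcal{B}$, and then verifying that the subset $\Omega:=BNB$ is stable under left multiplication by each of these generators. Since $1=1\cdot 1\cdot 1\in BNB=\Omega$, the second fact gives $G=G\cdot 1\subseteq G\Omega\subseteq\Omega\subseteq G$, i.e. $G=BNB$.

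For the generation step I would argue as follows. Whenever $M\in\operatorname{ind}\mathcal{B}$ we already have $E_{[M]}(t)\in U\subseteq B$, so it remains to place $E_{[X]}(t)$, for $X\in\operatorname{ind}T\mathcal{B}$, inside $\langle B,n_{[S_1]},\dots,n_{[S_m]}\rangle$. Using the isomorphism $N/H\cong W$, I would write the longest element $w_0\in W$ as a product of simple reflections $w_0=\omega_{\alpha_{i_1}}\cdots\omega_{\alpha_{i_r}}$ and set $n=n_{[S_{i_1}]}\cdots n_{[S_{i_r}]}$. Because $\underline{dim}\,\omega_M(N)=\omega_{\underline{dim}M}(\underline{dim}N)$ — and this propagates through a product of reflections — applying $\omega_{S_{i_1}},\dots,\omega_{S_{i_r}}$ in turn to $X$ produces an indecomposable $Y$ with $\underline{dim}Y=w_0(\underline{dim}X)\in\Phi^+$ (since $\underline{dim}X\in\Phi^-$ and $w_0$ carries $\Phi^-$ onto $\Phi^+$), so $Y\in\operatorname{ind}\mathcal{B}$. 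Iterating Lemma \ref{nEn} with parameter $1$ (recall $n_{[S_{i_k}]}(1)=n_{[S_{i_k}]}$) then yields $n\,E_{[X]}(t)\,n^{-1}=E_{[Y]}(\pm t)$, the sign being a product of the $\eta$'s (hence $\pm1$); consequently $E_{[X]}(t)=n^{-1}E_{[Y]}(\pm t)\,n\in\langle B,n_{[S_1]},\dots,n_{[S_m]}\rangle$ because $E_{[Y]}(\pm t)\in U\subseteq B$. The reverse inclusion being obvious, $G=\langle B,n_{[S_1]},\dots,n_{[S_m]}\rangle$. I expect this to be the main obstacle: one needs to know that every object of $\operatorname{ind}T\mathcal{B}$ can be carried into $\operatorname{ind}\mathcal{B}$ by a product of simple reflections, and one must track the $\omega$-action on indecomposables carefully through the repeated conjugations of Lemma \ref{nEn}.

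For the stability of $\Omega=BNB$, invariance under left multiplication by $B$ is immediate, and for each simple $S_i$ and each $n\in N$ the Proposition just proved gives
$$ n_{[S_i]}(BnB)\subseteq (Bn_{[S_i]}B)(BnB)\subseteq Bn_{[S_i]}nB\cup BnB\subseteq BNB, $$
since $n_{[S_i]}n\in N$. Hence left multiplication by any element of $\langle B,n_{[S_1]},\dots,n_{[S_m]}\rangle=G$ sends $\Omega$ into $\Omega$, that is $G\Omega\subseteq\Omega$, and the conclusion $G=BNB$ follows as indicated above. Once the generation step is established, this final part is purely formal and uses only the double-coset inclusions already available.
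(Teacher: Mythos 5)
Your argument is correct, and it takes a genuinely different route from the paper. The paper proves $G=BNB$ in two steps: (i) $BNB$ is a subgroup of $G$, shown by absorbing the double coset $BnB$ one simple reflection at a time using a reduced expression $\overline{n}=\overline{n_{[S_{i_1}]}}\cdots\overline{n_{[S_{i_l}]}}$ together with the preceding proposition; (ii) $BNB$ contains every generator $E_{[X]}(t)$ of $G$, which for $X\in\operatorname{ind}T\mathcal{B}$ is handled by the \emph{single} conjugation $E_{[TM]}(t)=n_{[M]}E_{[M]}(t)n_{[M]}^{-1}$ with $M=TX\in\operatorname{ind}\mathcal{B}$, since $n_{[M]}\in N$ for arbitrary indecomposable $M$. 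You instead prove that $G$ is generated by $B$ and the finite set $\{n_{[S_1]},\dots,n_{[S_m]}\}$, using the longest element $w_0$ of $W$ to drag a negative root back to a positive one via repeated applications of Lemma \ref{nEn}; you then observe that $BNB$ is stable under left multiplication by each of these generators. Your route is valid, but the generation step is more work than the single conjugation trick the paper uses, and you should also note explicitly that stability holds under left multiplication by $n_{[S_i]}^{-1}$ — this is easy, since $n_{[S_i]}^{-1}=h_{[S_i]}(-1)\,n_{[S_i]}$ with $h_{[S_i]}(-1)\in H\subseteq B$, so $n_{[S_i]}^{-1}\Omega\subseteq B\,n_{[S_i]}\,\Omega\subseteq\Omega$ — otherwise the claim $G\Omega\subseteq\Omega$ does not formally follow from closure under the generators alone. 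One modest payoff of your approach is that it isolates the (slightly stronger-looking but ultimately equivalent) statement that $G=\langle B,n_{[S_1]},\dots,n_{[S_m]}\rangle$; one modest payoff of the paper's approach is that it establishes $BNB$ is a subgroup as a reusable intermediate fact.
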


\begin{proof}
    $BNB \subseteq  G$, obviously closed under inversion. 
    To show $BNB$ is a subgroup of $G$, we need to show that it's closed under multiplication.
    For any $n \in N$, like the proof of the previous proposition, we can write $n=n_{[S_{i_1}]}\cdots n_{[S_{i_l}]}h$ with $(i_1,\cdots,i_l)$ being a reduced expression of $\overline{n}$ and $h\in H$. 
    Then
    \begin{align*}
        (BNB)(BnB) &= BNBBn_{[S_{i_1}]}\cdots n_{[S_{i_l}]}B \\
            &\subseteq  BNBn_{[S_{i_2}]}\cdots n_{[S_{i_l}]}B \subseteq  \cdots \subseteq  BNB.
    \end{align*}
    Thus $(BNB)(BNB)\subseteq  BNB$ and $BNB$ is a subgroup of $G$.

    For $M \in \operatorname{ind}\mathcal{R}$ such that $M \in \operatorname{ind}\mathcal{B}$, we have $E_{[M]}(t) \in B$ and 
    \begin{align*}
        E_{[TM]}(t) = n_{[M]}E_{[M]}(t)n_{[M]}^{-1} \in BNBBNB \subseteq  BNB.
    \end{align*}
    Thus $BNB$ contains all the generators of $G$, hence $G=BNB$.
\end{proof}

\begin{lemma}
    We have 
    $U\cap N = \{1\}$ and $B\cap N = H$.
\end{lemma}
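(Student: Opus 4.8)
I would prove $U\cap N = \{1\}$ first and then obtain $B\cap N = H$ as a short formal consequence.

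To show $U\cap N = \{1\}$, I would take $\theta\in U\cap N$ and compare the two descriptions of how $\theta$ acts on the weight-space decomposition $\mathfrak{g}_{\mathbb{K}} = K' \oplus (\bigoplus_{M\in\operatorname{ind}\mathcal{R}}\mathfrak{g}_M)$ used in the proof that $U\cap V = \{1\}$. Let $w$ denote the image of $\theta$ under the isomorphism $N/H\cong W$. Since $\theta\in N$ and each generator $n_{[X]}$, $h_{[X]}(t)$ sends $u_Y$ to a scalar multiple of a basis vector $u_{Y'}$ with $\underline{dim}Y'$ obtained from $\underline{dim}Y$ by the corresponding reflection, we get $\theta u_M = c_M u_{w(M)}$ with $c_M\neq 0$ for each $M\in\operatorname{ind}\mathcal{R}$. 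On the other hand, since $\theta\in U$, one has $\theta u_M = u_M + x_M'$ with $x_M'$ in the span of those $u_L$ with $L\succ M$ (equivalently, of strictly larger length). Taking $M = S_i$, a simple object of $\mathcal{B}$, and comparing the coefficient of $u_{S_i}$ in $c_{S_i} u_{w(S_i)} = u_{S_i} + x_{S_i}'$ forces $w(S_i) = S_i$ for every $i$ (and incidentally $c_{S_i} = 1$, $x_{S_i}' = 0$). A Weyl group element fixing all simple roots is the identity, so $w = 1$, i.e. $\theta\in H$; then $\theta\in U\cap H = \{1\}$ by the earlier lemma, so $\theta = 1$.

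For $B\cap N = H$: the inclusion $H\subseteq B\cap N$ is immediate from $H\subseteq B$ and $H\subseteq N$. Conversely, given $b\in B\cap N$, I would use $B = UH$ to write $b = uh$ with $u\in U$ and $h\in H$; then $u = bh^{-1}\in N$, whence $u\in U\cap N = \{1\}$ and $b = h\in H$.

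I do not expect a serious obstacle: the real content lies in the facts already in hand (the weight decomposition of $\mathfrak{g}_{\mathbb{K}}$, the length-compatible order and filtration structure of $U$, $N/H\cong W$, and $U\cap H = \{1\}$), and the statement is essentially bookkeeping once those are available. The one point to handle cleanly is the leading-term comparison $c_{S_i}u_{w(S_i)} = u_{S_i} + (\text{strictly higher-length terms})$: it relies on $\succ$ refining the length filtration so that $u_{S_i}$ really is the unique lowest-length term, which is what pins down $w(S_i) = S_i$. If one prefers to avoid this, an alternative route is to first show $\theta$ fixes $K'$ pointwise — using that $\theta\in N$ preserves $K'$ while $\theta\in U$ forces $\theta H_{S_i}' \in H_{S_i}' + \bigoplus_{M\in\operatorname{ind}\mathcal{B}}\mathfrak{g}_M$, together with directness of the sum — and then invoke faithfulness of the reflection representation of $W$ on $K'\otimes_{\mathbb{Z}}\mathbb{Q}$.
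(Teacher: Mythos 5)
Your proof is correct, and it takes a genuinely different route from the paper's on the first assertion. The paper argues at the level of root \emph{subgroups}: writing $n = n_{[S_{i_1}]}\cdots n_{[S_{i_l}]}h$, it uses the conjugation formulas (Lemmas \ref{nEn}, \ref{hEh}) to see $nE_{[M]}n^{-1} = E_{[\omega_{S_{i_1}}\cdots\omega_{S_{i_l}}(M)]}$, then invokes $U\cap V = \{1\}$ to force $\omega_{S_{i_1}}\cdots\omega_{S_{i_l}}(M)\in\operatorname{ind}\mathcal{B}$ for every $M\in\operatorname{ind}\mathcal{B}$, so $R(\overline{n}) = \emptyset$, hence $l(\overline{n}) = |R(\overline{n})| = 0$ by Lemma \ref{Rn}(3). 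You instead argue at the level of the Lie algebra representation, comparing the two shapes of $\theta u_{S_i}$: a single scaled basis vector $c u_{w(S_i)}$ from $\theta\in N$, versus $u_{S_i}$ plus strictly higher-length terms from $\theta\in U$, and concluding $w(S_i)=S_i$ for every simple $S_i$. Both methods show the associated Weyl element is trivial (yours via fixing simple roots, the paper's via $l(w)=0$), after which the reduction to $U\cap H = \{1\}$ and the derivation of $B\cap N = H$ from $B = UH$ and $H\subseteq N$ are identical in the two proofs. Your approach is in spirit closer to the paper's own proof that $U\cap V = \{1\}$, and is a little more self-contained since it does not route through the $R(\overline{n})$ machinery.

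One small imprecision in wording: $L\succ M$ does not literally mean $l(L) > l(M)$ (the paper only requires $\succ$ to be \emph{compatible} with length, i.e.\ $L\succ M$ implies $l(L)\geq l(M)$). What is actually true, and what your argument needs, is the stronger fact that for $\theta\in U$ the correction term $x_M'$ lives in $\bigoplus_{l(L)>l(M)}\mathfrak{g}_L$, because each extension by an object of $\operatorname{ind}\mathcal{B}$ strictly increases length. You flag exactly this point at the end ("$u_{S_i}$ really is the unique lowest-length term"), so the substance is right; just replace "(equivalently, of strictly larger length)" with "(in fact, of strictly larger length)" to avoid the misstatement about $\succ$.
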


\begin{proof}
    Let $n\in U\cap N$. Then $n=n_{[S_{i_1}]}\cdots n_{[S_{i_l}]}h$ with $(i_1,\cdots,i_l)$ being a reduced expression of $\overline{n}$ and $h\in H$.
    Then for any $ M \in \operatorname{ind}\mathcal{R}$, we have
    \begin{align*}
        nE_{[M]}n^{-1} = E_{[\omega_{S_{i_1}}\cdots \omega_{S_{i_l} } (M) ]}.
    \end{align*}
    Hence for any $M \in \operatorname{ind}\mathcal{B}$, we have $\omega_{S_{i_1}}\cdots \omega_{S_{i_l} } (M)\in \operatorname{ind}\mathcal{B}$.
    By definition, we have $R(\overline{n}) =  \emptyset $.
    By Lemma \ref{Rn}, we have $l(\overline{n}) = |R(\overline{n}) |= 0$.
    Thus $n=h\in H$. 
    Since $U\cap H = \{1\}$, we have $n=1$ and $U\cap N=\{ 1\}$. Then
    \begin{align*}
        B\cap N = UH\cap N = (U\cap N)H=H.
    \end{align*}
\end{proof}

\begin{proposition}
    For $n,n'\in N$, $BnB=Bn'B$ if and only if $\overline{n}=\overline{n'} $ in $N/H$.
\end{proposition}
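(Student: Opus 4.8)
The plan is as follows. The ``if'' direction is immediate: if $\overline n=\overline{n'}$ then $n'=hn$ for some $h\in H\subseteq B$, so $Bn'B=BhnB=BnB$. For the converse I would induct on $l(\overline n)$, and by symmetry in $n,n'$ there is no loss in assuming $l(\overline n)\le l(\overline{n'})$. In the base case $l(\overline n)=0$ one has $n\in H\subseteq B$, so $B=BnB=Bn'B$; hence $n'\in B\cap N=H$ and $\overline{n'}=\overline 1=\overline n$.

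For the inductive step, let $l(\overline n)=t\ge1$, fix a reduced expression $\overline n=\overline{n_{[S_{i_1}]}}\cdots\overline{n_{[S_{i_t}]}}$, and set $S=S_{i_t}$ and $n''=n_{[S_{i_1}]}\cdots n_{[S_{i_{t-1}}]}\in N$, so that $l(\overline{n''})\le t-1<t$ and $\overline{n''}\,\overline{n_{[S]}}=\overline n$. The crucial observation is that $n''\in(Bn'B)(Bn_{[S]}B)$: write $n''=(n''n_{[S]})\,n_{[S]}^{-1}$, note that $n''n_{[S]}\in Bn''n_{[S]}B=BnB=Bn'B$ since $\overline{n''n_{[S]}}=\overline n$ and $BnB=Bn'B$, and that $n_{[S]}^{-1}\in Bn_{[S]}^{-1}B=Bn_{[S]}B$ by Lemma~\ref{nBn}(1). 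Applying the product bound for double cosets proved just before Proposition~\ref{GBNB}, namely $(Bn'B)(Bn_{[S]}B)\subseteq Bn'n_{[S]}B\cup Bn'B$, and using that the $(B,B)$-double cosets partition $G$ while $n''\in Bn''B$, we conclude that $Bn''B$ is either $Bn'B$ or $Bn'n_{[S]}B$.

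In the first case, the induction hypothesis applied to $\overline{n''}$ and $\overline{n'}$ --- legitimate since $l(\overline{n''})<t\le l(\overline{n'})$ --- gives $\overline{n''}=\overline{n'}$, hence $l(\overline{n'})=l(\overline{n''})<l(\overline n)$, which contradicts $l(\overline n)\le l(\overline{n'})$; so this case cannot occur. In the second case, put $m=n'n_{[S]}$; since $\overline{n_{[S]}}$ is a simple reflection in $N/H\cong W$, right multiplication by it changes length by exactly $1$, so $l(\overline m)\ge l(\overline{n'})-1\ge l(\overline n)-1\ge l(\overline{n''})$, and the induction hypothesis applied to $\overline{n''}$ and $\overline m$ gives $\overline{n''}=\overline m=\overline{n'}\,\overline{n_{[S]}}$; multiplying on the right by $\overline{n_{[S]}}$ and using $\overline{n_{[S]}}^{2}=\overline 1$ then yields $\overline n=\overline{n''}\,\overline{n_{[S]}}=\overline{n'}$, completing the induction. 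I expect the only delicate point to be this inductive bookkeeping: one must always apply the hypothesis to a pair whose first member ($\overline{n''}$) is strictly shorter than $\overline n$ while verifying that its second member ($\overline{n'}$ or $\overline m$) is at least as long, the required Coxeter-theoretic inputs (existence of reduced expressions, $\overline{n_{[S]}}^{2}=\overline 1$, and the $\pm1$ change of $l$ under a simple reflection) all being furnished by $N/H\cong W$.
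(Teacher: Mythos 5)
Your proof is correct and follows essentially the same strategy as the paper's: induction on $l(\overline n)$ (with $l(\overline n)\le l(\overline{n'})$), peeling off a simple reflection and invoking the double-coset multiplication lemma proved just before Proposition~\ref{GBNB} to land in $Bn'n_{[S]}B\cup Bn'B$, then ruling out the wrong case by a length count. The only cosmetic difference is that you peel $\overline{n_{[S]}}$ off the \emph{right} (and use the right-multiplication half of that lemma), while the paper writes $\overline n=\overline{n_{[S_i]}}\,\overline{n''}$ and peels off the left; you are also a bit more careful in explicitly verifying the length inequalities needed to legitimize each application of the inductive hypothesis.
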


\begin{proof}
    If $\overline{n}=\overline{n'} $ in $N/H$, obviously $BnB=Bn'B$.
    
    Conversely, if $BnB=Bn'B$, we may assume that $l(\overline{n})\leq l(\overline{n'})$ and prove by induction on $l(\overline{n})$.

    If $l(\overline{n})=0$, then $\overline{n} = 1$ and $n\in H$. 
    Thus $Bn'B=B$ and $n'\in B\cap N = H$. 
    Hence we have $\overline{n'} = 1 = \overline{n}$.

    If $l(\overline{n})>0$, then we can write $\overline{n}=\overline{n_{[S_i]}}\overline{n''}$ for some $i$ and $n''=n_{[S_i]}^{-1}n$. 
    We have $l(\overline{n''}) = l(\overline{n}) - 1$, $Bn_{[S_i]}n''B = BnB = Bn'B$.
    Thus
    \begin{align*}
        Bn''B\subseteq  Bn_{[S_i]}Bn'B \subseteq  Bn_{[S_i]}n'B \cup Bn'B.
    \end{align*}
    Hence $n''\in Bn_{[S_i]}n'B$ or $n''\in Bn'B$, which means 
    \begin{align*}
        Bn''B\subseteq  Bn_{[S_i]}n'B \quad \text{  or  } \quad Bn''B\subseteq  Bn'B.
    \end{align*}
    Since they are all double cosets, we have $ Bn''B= Bn_{[S_i]}n'B$ or $Bn''B= Bn'B$.
    By induction hypothesis, $\overline{n''}=\overline{n_{[S_i]}}\overline{n'}$ or $\overline{n''}=\overline{n'}$.
    The later case is impossible because $l(\overline{n''})=l(\overline{n})-1 < l(\overline{n'})$.
    Thus $\overline{n'} = \overline{n_{[S_i]}}\overline{n''} = \overline{n}$.
\end{proof}

As a corollary, we obtain the Bruhat decomposition.

\begin{corollary}
    (Bruhat decomposition)
    \begin{align*}
         G = \bigsqcup\limits_{\overline{n}\in N/H} BnB .
    \end{align*}
   
\end{corollary}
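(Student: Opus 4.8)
The plan is to deduce this immediately from the two results that precede it, namely Proposition~\ref{GBNB} ($G = BNB$) and the proposition characterizing equality of double cosets ($BnB = Bn'B$ if and only if $\overline{n} = \overline{n'}$ in $N/H$). First I would observe that $G = BNB = \bigcup_{n \in N} BnB$, so the double cosets $BnB$ with $n$ ranging over $N$ cover $G$. Next, since $\{BnB\}_{n \in N}$ is a family of double cosets of the subgroup $B$, any two of them are either equal or disjoint, which is the general fact that distinct $(B,B)$-double cosets partition the group; this is what makes the union into a disjoint union once we identify the distinct members.

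The remaining point is to index the distinct double cosets correctly. I would argue that the assignment $\overline{n} \mapsto BnB$ is a well-defined map from $N/H$ to the set of double cosets: if $\overline{n} = \overline{n'}$ then by the preceding proposition $BnB = Bn'B$, so the coset $BnB$ depends only on $\overline{n}$. This map is surjective onto the set of double cosets appearing in the cover by the displayed formula for $G$, and it is injective again by the preceding proposition, since $BnB = Bn'B$ forces $\overline{n} = \overline{n'}$. Putting these together gives $G = \bigsqcup_{\overline{n} \in N/H} BnB$, where the union is over a complete set of representatives of $N/H$ (the notation $BnB$ being unambiguous by well-definedness).

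I do not expect any genuine obstacle here; the corollary is a purely formal consequence of the two propositions, and the only thing worth being careful about is to phrase the disjoint union over $N/H$ rather than over $N$, so that the indexing set really parametrizes distinct cosets. If one wished to be fully explicit, one could remark that the well-definedness statement (equal $\overline{n}$ gives equal double coset) is exactly the easy direction of the preceding proposition and the injectivity is the hard direction, whose proof used the induction on $l(\overline{n})$ together with Lemma~\ref{lastj} and the multiplication rules $(BnB)(Bn_{[S_i]}B) \subseteq Bnn_{[S_i]}B \cup BnB$; but for the corollary itself nothing new needs to be proved.
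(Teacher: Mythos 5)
Your proposal is correct and matches the paper's intent exactly: the corollary is stated in the paper as an immediate consequence of Proposition~\ref{GBNB} and the preceding double-coset proposition, with no further argument given, and your explicit formalization (cover by $BnB$, pairwise equal-or-disjoint, well-definedness and injectivity of $\overline{n}\mapsto BnB$) is precisely the routine deduction being invoked.
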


For any $ n\in N$, let $(i_1,\cdots,i_t)$ be a reduced expression of $\overline{n}$. 
Define 
\begin{align*}
    R'(\overline{n}) = \{  [M] | M\in \operatorname{ind}\mathcal{B}, \omega_{S_{i_1}}\cdots \omega_{S_{i_t}}(M) \in \operatorname{ind}\mathcal{B}   \}
\end{align*}
and
\begin{align*}
    U_{\overline{n}}^+ = \prod\limits_{M\in  R'(\overline{n})} E_{[M]}, \qquad  U_{\overline{n}}^- = \prod\limits_{M\in  R(\overline{n})} E_{[M]},
\end{align*}
where the products are taken in any order.

\begin{lemma}
    For any $n\in N$, we have $U=U_{\overline{n}}^+ U_{\overline{n}}^- $.
\end{lemma}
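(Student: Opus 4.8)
The plan is to induct on $\ell:=l(\overline n)$, the length of $\overline n$ in $N/H\cong W$, after preparing a recursion under which the asserted factorization telescopes. Two preliminary observations are needed. First, each of $R(\overline n)$ and $R'(\overline n)$ is \emph{closed}: if $[X],[Y]$ both lie in one of these sets and $[L_{X,Y,a,b}]$ exists for some $a,b\geq1$, then $[L_{X,Y,a,b}]$ lies in the same set. Indeed $\underline{dim}L_{X,Y,a,b}=a\,\underline{dim}X+b\,\underline{dim}Y$ (immediate from the inductive definition), and since $\underline{dim}X,\underline{dim}Y\in\Phi^{+}$ this is again a positive root, so $[L_{X,Y,a,b}]\in\operatorname{ind}\mathcal B$; moreover $w(\underline{dim}X)$ and $w(\underline{dim}Y)$ (with $w$ the Weyl element attached to $\overline n$) share a sign — negative for $R(\overline n)$, positive for $R'(\overline n)$ — and hence so does $w(\underline{dim}L_{X,Y,a,b})=a\,w(\underline{dim}X)+b\,w(\underline{dim}Y)$, which returns $[L_{X,Y,a,b}]$ to the same set. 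Exactly as with the subgroups $U_t$ and $U_i$, closedness lets the commutator formula (Prop.~\ref{commutator}) show that $U_{\overline n}^{+}$ and $U_{\overline n}^{-}$ are well-defined subgroups of $U$, independent of the order of the product, and that a partition of a closed index set into two closed subsets factors the corresponding product (order the first subset before the second). Second, $\operatorname{ind}\mathcal B=R(\overline n)\sqcup R'(\overline n)$, because $\omega_{S_{i_1}}\cdots\omega_{S_{i_t}}(M)$ has dimension vector $w(\underline{dim}M)$ and so lies in $\operatorname{ind}\mathcal B$ or $\operatorname{ind}T\mathcal B$ according as $w(\underline{dim}M)$ is positive or negative.

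For the base case $\ell=0$ we have $\overline n=\overline 1$, so $R(\overline n)=\emptyset$ and $R'(\overline n)=\operatorname{ind}\mathcal B$, whence $U_{\overline n}^{-}=\{1\}$ and $U_{\overline n}^{+}=U$ by Prop.~\ref{Uunique}. For the inductive step, fix $\ell\geq1$ and a reduced expression $(i_1,\dots,i_\ell)$ of $\overline n$, put $i:=i_1$, and let $\overline{n'}$ have reduced expression $(i_2,\dots,i_\ell)$, so $l(\overline{n'})=\ell-1$ and $\overline n=\overline{n_{[S_i]}}\,\overline{n'}$; writing $w=\omega_{\alpha_i}w'$ for the associated Weyl elements, $l(w)=l(w')+1$ gives $(w')^{-1}(\alpha_i)\in\Phi^{+}$. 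The combinatorial core is the identity
\[
R(\overline n)=R(\overline{n'})\ \sqcup\ \{[M_0]\},\qquad R'(\overline n)=R'(\overline{n'})\setminus\{[M_0]\},\qquad [M_0]\in R'(\overline{n'}),
\]
where $M_0\in\operatorname{ind}\mathcal B$ has $\underline{dim}M_0=(w')^{-1}(\alpha_i)$. I would verify it by tracking how $\omega_{\alpha_i}$ acts on $w'(\underline{dim}M)$ for $M\in\operatorname{ind}\mathcal B$: if $w'(\underline{dim}M)\in\Phi^{-}$ it cannot be $-\alpha_i$ (else $\underline{dim}M=-(w')^{-1}(\alpha_i)\in\Phi^{-}$, impossible), so it stays negative and $[M]\in R(\overline n)$; if $w'(\underline{dim}M)\in\Phi^{+}$, it becomes negative under $\omega_{\alpha_i}$ exactly when it equals $\alpha_i$, i.e. exactly when $[M]=[M_0]$. (This is the standard description of the inversion set of $\omega_{\alpha_i}w'$ in terms of that of $w'$; compare Lemma~\ref{Rn}.)

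Granting the identity, the factorization telescopes: the sets $\{[M_0]\}$, $R(\overline{n'})$, $R'(\overline n)$ are closed, $R(\overline n)=\{[M_0]\}\sqcup R(\overline{n'})$ and $R'(\overline{n'})=R'(\overline n)\sqcup\{[M_0]\}$, so the first observation gives
\[
U_{\overline n}^{-}=E_{[M_0]}\,U_{\overline{n'}}^{-},\qquad U_{\overline{n'}}^{+}=U_{\overline n}^{+}\,E_{[M_0]}
\]
(placing $[M_0]$ first, resp. last, in the two products). Therefore
\[
U_{\overline n}^{+}\,U_{\overline n}^{-}=U_{\overline n}^{+}\bigl(E_{[M_0]}\,U_{\overline{n'}}^{-}\bigr)=\bigl(U_{\overline n}^{+}\,E_{[M_0]}\bigr)U_{\overline{n'}}^{-}=U_{\overline{n'}}^{+}\,U_{\overline{n'}}^{-}=U
\]
by the induction hypothesis, which finishes the proof.

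I expect the main obstacle to be the combinatorial identity for $R(\overline n)$ and $R'(\overline n)$ under prepending the simple reflection $\omega_{\alpha_i}$ — in particular, ruling out the value $-\alpha_i$, which is precisely where reducedness of the chosen expression (equivalently $l(w)=l(w')+1$) is used. The remaining ingredients — the closedness of $R(\overline n)$ and $R'(\overline n)$ and the resulting order-independence and splitting of the products $U_{\overline n}^{\pm}$ — are routine applications of the commutator formula that run parallel to the earlier treatment of $U_t$ and $U_i$.
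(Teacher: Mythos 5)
Your proof is correct, but it uses a genuinely different inductive scheme from the paper's. The paper performs a \emph{descending} induction on the module-length filtration: setting $U_t^{\pm} = U_t \cap U_{\overline{n}}^{\pm}$, it shows $U_t = U_t^+ U_t^-$ by descending induction on $t$, using normality of $U_{t+1}$ in $U_t$ and commutativity of $U_t/U_{t+1}$ (facts already invoked in Prop.~\ref{Uunique}). You instead induct on the Weyl-group length $l(\overline{n})$, via the standard inversion-set recursion $R(\overline{n}) = R(\overline{n'}) \sqcup \{[M_0]\}$, $R'(\overline{n'}) = R'(\overline{n}) \sqcup \{[M_0]\}$ with $\underline{\dim}M_0 = (w')^{-1}(\alpha_{i_1})$, and telescope. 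Your combinatorial identity is correct and matches Lemma~\ref{Rn}(3), and your use of reducedness to exclude $w'(\underline{\dim}M) = -\alpha_i$ is the right point. What you gain is a proof that makes the role of the inversion sets transparent and requires nothing about length filtrations beyond order-independence of products over closed sets; what the paper's proof gains is that it reuses machinery already established and avoids the need for the inversion-set recursion as a separate step.

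One presentational caution: you assert, as your ``first observation,'' the general principle that a partition of a closed set of roots into two closed subsets factors the corresponding unipotent product. If that were granted in full generality, the lemma would follow at once from $\operatorname{ind}\mathcal{B} = R'(\overline{n}) \sqcup R(\overline{n})$ with no induction at all, so your induction would be vacuous. In practice your argument only invokes this principle in the singleton case (peeling $E_{[M_0]}$ off one end of the product), which is essentially just order-independence of the product over a closed set — the standing assumption behind the paper's phrase ``the products are taken in any order'' — and is strictly weaker than the general splitting claim. You should phrase the observation in that weaker form to avoid giving the impression of circularity; with that adjustment, the argument is sound.
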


\begin{proof}
    Clearly $U_{\overline{n}}^+ \cap U_{\overline{n}}^- = 1$ and $U_{\overline{n}}^+ ,U_{\overline{n}}^-$ are both subgroups of $U$.
    Recall that for integers $ t\geq 1$, we have defined $U_t = \prod\limits_{l(M)\geq t} E_{[M]}$. 
    Let $U_t^{\pm} = U_t \cap U_{\overline{n}}^{\pm}$.

    We claim that $U_t=U_t^+ U_t^-$ holds for all integers $t\geq 1$, and prove by descending induction on $t$.
    For $t$ sufficiently large, it's clear.
    Assume the claim holds for $t+1$, i.e. $U_{t+1} = U_{t+1}^+ U_{t+1}^-$.
    Since $U_{t+1}$ is a normal subgroup of $U_t$ and $U_t/U_{t+1}$ is abelian, we have
    \begin{align*}
        U_t = U_t^+ U_t^- U_{t+1} = U_t^+ U_{t+1}  U_t^- = U_t^+ U_{t+1}^+ U_{t+1}^- U_t^- = U_t^+ U_t^-.
    \end{align*}
    Take $t=1$, we have $U=U_{\overline{n}}^+ U_{\overline{n}}^- $.
\end{proof}

\begin{lemma}
    For any $M \in \operatorname{ind}T\mathcal{B}$, we have $E_{[M]}\cap B = \{1\}$.
\end{lemma}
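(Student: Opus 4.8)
The plan is to use the $\bbZ$-grading of $\mathfrak{g}_{\mathbb{K}}$ by length together with the explicit action of $G$ on a single basis vector. Recall that $\mathbb{B}=\{u_N\}_{N\in\operatorname{ind}\mathcal{R}}\cup\{H_{S_j}'\}_{j=1,\dots,m}$ is a basis of $\mathfrak{g}_{\mathbb{K}}$; set $\deg u_N=l(N)$, $\deg H_{S_j}'=0$, let $\mathfrak{g}_{\mathbb{K}}^{(d)}$ be the span of the basis vectors of degree $d$, and $\mathfrak{g}_{\mathbb{K}}^{\geq e}=\bigoplus_{d\geq e}\mathfrak{g}_{\mathbb{K}}^{(d)}$. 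The first step is to check this is a Lie-algebra grading: if $\gamma_{NX}^{L}\neq 0$ then $L$ is an extension of $N$ and $X$ (a triangle $X\to L\to N\to TX$ or $N\to L\to X\to TN$ in $\mathcal{R}$), so $\underline{dim} L=\underline{dim} N+\underline{dim} X$ and hence $l(L)=l(N)+l(X)$; combined with $[u_N,u_{TN}]=H_N'$, which has degree $l(N)+l(TN)=0$, and $[H_Y',u_N]=-A_{YN}u_N$, this shows that $\mathbf{ad}\,u_N$ is homogeneous of degree $l(N)$. For $N\in\operatorname{ind}\mathcal{B}$ one has $l(N)\geq 1$ (since $\underline{dim} N\in\Phi^{+}(\mathcal{B})$), so $E_{[N]}(s)$ differs from the identity by an operator strictly raising degrees; consequently every $u\in U$ preserves each $\mathfrak{g}_{\mathbb{K}}^{\geq e}$ and satisfies $u(v)\equiv v \pmod{\mathfrak{g}_{\mathbb{K}}^{\geq\deg v+1}}$ for homogeneous $v$. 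Finally, each $h_{[N]}(t)$ scales $u_{TM}$ by $t^{A_{N,TM}}\in\mathbb{K}^{\times}$, so every $h\in H$ acts on $u_{TM}$ by a nonzero scalar.

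Next I would take an arbitrary $x\in E_{[M]}\cap B$. Since $E_{[M]}(t)E_{[M]}(s)=E_{[M]}(t+s)$ (a direct computation on $\mathbb{S}$ using $A_{MM}=2$), the subgroup $E_{[M]}$ consists exactly of the elements $E_{[M]}(t)$, $t\in\mathbb{K}$, so $x=E_{[M]}(t)$ for some $t$. As $M\in\operatorname{ind}T\mathcal{B}\subseteq\operatorname{ind}\mathcal{R}$, its defining action gives
\[
x\,u_{TM}=u_{TM}+tH_M'+t^{2}u_M .
\]
Now $TM\in\operatorname{ind}\mathcal{B}$, so $l(TM)\geq 1$: thus $u_{TM}$ is homogeneous of degree $l(TM)>0$, while $\deg u_M=l(M)=-l(TM)<0<l(TM)$. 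Writing $x=uh$ with $u\in U$, $h\in H$ (possible since $B=UH$), we get $h\,u_{TM}=c\,u_{TM}$ with $c\in\mathbb{K}^{\times}$ and hence $x\,u_{TM}=c\,u(u_{TM})\in\mathfrak{g}_{\mathbb{K}}^{\geq l(TM)}$. So, expanding $x\,u_{TM}$ in the basis $\mathbb{B}$, the coefficient of $u_M$ vanishes, because $\deg u_M<l(TM)$. Comparing with the displayed formula forces $t^{2}=0$, hence $t=0$ since $\mathbb{K}$ is a field, and $x=E_{[M]}(0)=1$.

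The only genuine work is the bookkeeping in the first paragraph; conceptually, the point is simply that $M\notin\operatorname{ind}\mathcal{B}$, so $E_{[M]}(\cdot)$ is not one of the generators of $U$, and acting by $U$ and $H$ on $u_{TM}$ can never lower its length to that of $u_M$. As an alternative that avoids the grading entirely, one may use the Bruhat decomposition established above: for $t\neq 0$, Lemma \ref{nEEE} (applied to $TX$ with $X=M$, together with $E_{[TM]}(t)^{-1}=E_{[TM]}(-t)$) gives
\[
E_{[M]}(t)=E_{[TM]}(-t^{-1})\,n_{[TM]}(t^{-1})\,E_{[TM]}(-t^{-1})\in B\,n_{[TM]}\,B ,
\]
while $\overline{n_{[TM]}}\in N/H\cong W$ is the nontrivial reflection $\omega_{\underline{dim}(TM)}$; hence $B\,n_{[TM]}\,B\neq B$ as double cosets, so $E_{[M]}(t)\notin B$.
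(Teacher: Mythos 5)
Your proof is correct, but it departs from the argument given in the paper. The paper's own proof passes to the quotient $B/U\cong H$: if $E_{[M]}(t)\in B$, its image in $B/U$ must commute with every $h\in H$, so $h_{[X]}(s)E_{[M]}(t)h_{[X]}(s)^{-1}E_{[M]}(t)^{-1}=E_{[M]}(s^{A_{XM}}t-t)\in U$; since $M\in\operatorname{ind}T\mathcal{B}$, that element also lies in $V$, and $U\cap V=\{1\}$ forces $s^{A_{XM}}t=t$ for all $X,s$. One then needs to choose $X,s$ with $s^{A_{XM}}\neq 1$, which fails over $\mathbb{F}_2$, so the paper handles $\mathbb{K}=\mathbb{F}_2$ (where $H=\{1\}$ and $B=U$) as a separate case. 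Your main argument instead uses a length grading of $\mathfrak{g}_{\mathbb{K}}$: $\mathbf{ad}\,u_N$ is homogeneous of degree $l(N)$, so every element of $U$ preserves the filtration $\mathfrak{g}_{\mathbb{K}}^{\geq e}$ and fixes leading terms, while $H$ acts by nonzero scalars on each basis vector; applying an element $E_{[M]}(t)\in E_{[M]}\cap B$ to the single vector $u_{TM}$ and reading off the coefficient of $u_M$ (or of $H_M'$) forces $t=0$, uniformly in the field. This is both shorter and avoids the $\mathbb{F}_2$ case distinction, at the cost of first verifying the grading compatibility of the bracket; it also uses only the explicit action on one vector, not the earlier fact $U\cap V=\{1\}$. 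Your alternative via the Bruhat decomposition is likewise valid at this point in the paper, since the disjointness of the double cosets $BnB$ for distinct $\overline{n}\in N/H$ has already been established; it is in effect the computation in the proof of Lemma \ref{nBn}(2), promoted from simple objects $S_i$ to arbitrary $TM\in\operatorname{ind}\mathcal{B}$.
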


\begin{proof}
    Since $U$ is a normal subgroup of $B$, $B= UH$ and $U\cap H = \{1\}$, we have isomorphism
    \begin{align*}
        UH/U \cong H/(U\cap H) \cong H.
    \end{align*}
    
    If there exists $t\in \mathbb{K}$ such that $E_{[M]}(t) \in B$, then the image of $E_{[M]}(t)$ under the canonical homomorphism $B\twoheadrightarrow B/U$ should be commutative with any $h\in H$.
    Thus for any $ h_{[X]}(s) \in H$, we have
    \begin{align*}
        h_{[X]}(s) E_{[M]}(t) h_{[X]}(s)^{-1} E_{[M]}(t)^{-1}  \in U,
    \end{align*}
    i.e.
    \begin{align*}
        E_{[M]}(s^{A_{XM}}t-t) \in U, \qquad  X\in \operatorname{ind}\mathcal{R},  s\in \mathbb{K}^{\times}.
    \end{align*}
    
    If $\mathbb{K}\neq \mathbb{F}_2$ and $t\neq 0$, we can take $X,s$ such that $s^{A_{XM}} \neq 1$.
    Then $M \in \operatorname{ind}T\mathcal{B}$ leads to a contradiction.
    Thus $t=0$ and $E_{[M]}\cap B = \{1\}$.

    For $\mathbb{K} = \mathbb{F}_2$, we have $H=1$ and $B=U$. 
    Hence $E_{[M]}\cap B \subseteq  V\cap U =\{1\}$.
\end{proof}

\begin{proposition}
    Arbitrarily fix an $n\in N$, any $x\in BnB=BnU_{\overline{n}}^-$ can be uniquely expressed as $bnu$ for some $b\in B,u\in U_{\overline{n}}^-$.
\end{proposition}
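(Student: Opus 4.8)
The plan is to split the claim into existence and uniqueness, the existence being essentially formal and the uniqueness resting on the identity $V\cap B=\{1\}$.

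\emph{Existence.} Since membership of $x$ in $BnU_{\overline{n}}^-$ literally provides a factorization $x=bnu$ with $b\in B$ and $u\in U_{\overline{n}}^-$, it is enough to justify the equality $BnB=BnU_{\overline{n}}^-$ already written in the statement. First, $BnB=BnU$: using $B=UH$, the fact that $n$ normalizes $H$, and that $H$ normalizes every $E_{[M]}$ (Lemma \ref{hEh}) hence normalizes $U$ and the products $U_{\overline{n}}^{\pm}$, one pushes all $H$-factors to the right. Then insert the factorization $U=U_{\overline{n}}^+U_{\overline{n}}^-$ (established just above) and move $U_{\overline{n}}^+$ across $n$: fix a reduced expression $(i_1,\dots,i_t)$ of $\overline{n}$, so that iterating Lemma \ref{nEn} gives $nE_{[M]}n^{-1}=E_{[\omega_{S_{i_1}}\cdots\omega_{S_{i_t}}(M)]}$ as subgroups; for $[M]\in R'(\overline{n})$ the target object lies in $\operatorname{ind}\mathcal{B}$, so $nU_{\overline{n}}^+n^{-1}\subseteq U\subseteq B$ and therefore $BnU=B\,(nU_{\overline{n}}^+n^{-1})\,nU_{\overline{n}}^-=BnU_{\overline{n}}^-$.

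\emph{Uniqueness, reduction.} Suppose $b_1nu_1=b_2nu_2$ with $b_i\in B$, $u_i\in U_{\overline{n}}^-$. Then $n(u_2u_1^{-1})n^{-1}=b_2^{-1}b_1\in B$. Recall $U_{\overline{n}}^-$ is a subgroup of $U$ (this is part of the proof of $U=U_{\overline{n}}^+U_{\overline{n}}^-$), so $u_2u_1^{-1}\in U_{\overline{n}}^-$. Conjugation by $n$ distributes over the defining product $U_{\overline{n}}^-=\prod_{[M]\in R(\overline{n})}E_{[M]}$, and for $[M]\in R(\overline{n})$ the object $\omega_{S_{i_1}}\cdots\omega_{S_{i_t}}(M)$ lies in $\operatorname{ind}T\mathcal{B}$ by the very definition of $R(\overline{n})$; hence, again by Lemma \ref{nEn}, $nU_{\overline{n}}^-n^{-1}$ is a product of subgroups $E_{[L]}$ with $L\in\operatorname{ind}T\mathcal{B}$, so $n(u_2u_1^{-1})n^{-1}\in V$. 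Thus it suffices to prove $V\cap B=\{1\}$: this forces $n(u_2u_1^{-1})n^{-1}=1$, hence $u_1=u_2$ (conjugation by $n$ is injective) and then $b_1=b_2$.

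\emph{Proof that $V\cap B=\{1\}$.} This I would do by the same triangularity device used to prove $U\cap V=\{1\}$ and $E_{[M]}\cap B=\{1\}$. Grade $\mathfrak{g}_{\mathbb{K}}$ by $\deg H_S'=0$ and $\deg u_M=l(M)$, so that $\mathfrak{g}_{\mathbb{K}}[0]=K'$ and $\mathfrak{g}_{\mathbb{K}}[d]=\bigoplus_{l(M)=d}\mathbb{K}u_M$ for $d\neq 0$ (no indecomposable of $\mathcal{R}$ has length $0$). Inspecting the formulas $(1)$--$(4)$ and the action of $E_{[X]}(t)$ on $u_Y$, one checks on generators: each $E_{[M]}(t)$ with $M\in\operatorname{ind}\mathcal{B}$ sends $\mathfrak{g}_{\mathbb{K}}[d]$ into $\bigoplus_{d'\ge d}\mathfrak{g}_{\mathbb{K}}[d']$ with trivial associated graded action; dually each $E_{[M]}(t)$ with $M\in\operatorname{ind}T\mathcal{B}$ sends $\mathfrak{g}_{\mathbb{K}}[d]$ into $\bigoplus_{d'\le d}\mathfrak{g}_{\mathbb{K}}[d']$ with trivial associated graded action; and each $h_{[X]}(t)$ preserves every $\mathfrak{g}_{\mathbb{K}}[d]$ (acting diagonally, trivially on $K'$). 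Hence every $u\in U$ preserves the ascending filtration $F^{\ge d}=\bigoplus_{d'\ge d}\mathfrak{g}_{\mathbb{K}}[d']$ with $\operatorname{gr}(u)=\mathrm{id}$, every $v\in V$ preserves the descending filtration $F^{\le d}$ with $\operatorname{gr}(v)=\mathrm{id}$, and every $h\in H$ is graded. Now let $v\in V\cap B$ and write $v=uh$ with $u\in U$, $h\in H$. Then $u=vh^{-1}$ preserves each $F^{\le d}$ (since $v\in V$ and $h$ is graded) as well as each $F^{\ge d}$ (since $u\in U$), hence preserves $\mathfrak{g}_{\mathbb{K}}[d]=F^{\ge d}\cap F^{\le d}$ for every $d$; its restriction there agrees with $\operatorname{gr}(u)$, namely the identity, so $u$ acts as the identity on $\mathfrak{g}_{\mathbb{K}}$ and $u=1$. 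Then $v=h$ is graded with $\operatorname{gr}(v)=\mathrm{id}$, so $v=1$, and $V\cap B=\{1\}$.

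The bookkeeping in the last paragraph is the only delicate part: the case $Y\cong TX$ of the generator formulas, where $u_{TM}\mapsto u_{TM}+tH_M'+t^2u_M$ mixes the three degrees $-l(M)$, $0$, $l(M)$, is where one must verify the degree inequalities by hand, and they hold precisely because $l(M)\neq 0$. Everything else — that the ``product in any order'' groups $U_t,U_{\overline{n}}^{\pm}$ are genuine subgroups behaving as expected, and the conjugation identities — is supplied by the lemmas leading up to this proposition, which I would simply cite.
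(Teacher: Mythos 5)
Your existence argument matches the paper's word for word, but your uniqueness argument takes a genuinely different route and it is worth comparing the two. The paper proves uniqueness by induction on $l(\overline{n})$: the base case $l(\overline{n})=1$ uses the preceding lemma $E_{[TS_i]}\cap B=\{1\}$, and the inductive step writes $n=n'n_{[S_i]}$, peels off a factor of $E_{[TS_i]}$, and invokes the already-established disjointness of Bruhat double cosets ($BnB\cap Bn'B=\emptyset$) to conclude. You instead observe that the defect $b_2^{-1}b_1=n(u_2u_1^{-1})n^{-1}$ lies in $nU_{\overline{n}}^-n^{-1}\cap B\subseteq V\cap B$ (since each $[M]\in R(\overline{n})$ has $\omega_{S_{i_1}}\cdots\omega_{S_{i_t}}(M)\in\operatorname{ind}T\mathcal{B}$), and then prove $V\cap B=\{1\}$ directly by a grading/filtration argument on $\mathfrak{g}_{\mathbb{K}}$, using the length $l(M)$ as degree. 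That argument is correct, and your caution about the degree bookkeeping for $E_{[X]}(t)u_{TX}$ is exactly the right thing to flag; it works because $l(M)\neq 0$ for indecomposables. Two remarks on the trade-offs. First, $B\cap V=\{1\}$ appears in the paper only as a corollary proved \emph{after} the proposition (using the longest element $n_0$ with $n_0U_{\overline{n_0}}^-n_0^{-1}=V$), so you have not introduced circularity — you have rather inverted the logical order by supplying an independent, direct proof of $B\cap V=\{1\}$, which in turn makes the paper's subsequent corollary redundant. Second, your filtration argument also subsumes the paper's earlier lemma $E_{[M]}\cap B=\{1\}$ for $M\in\operatorname{ind}T\mathcal{B}$ (which the paper proves separately via the torus action and a case split on $\mathbb{K}=\mathbb{F}_2$); your proof treats all fields uniformly. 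The cost is that you must verify the strict triangularity of $U$ and $V$ on the graded pieces, which is genuine bookkeeping, whereas the paper's route leans only on group-theoretic facts already in hand (Bruhat disjointness and the $l=1$ base case). Both are valid; yours is arguably cleaner and avoids the induction.
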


\begin{proof}
    Since $nU_{\overline{n}}^+ n^{-1} \subseteq  U$, we have
    \begin{align*}
        BnB = BnHU_{\overline{n}}^+U_{\overline{n}}^- = BnU_{\overline{n}}^+U_{\overline{n}}^- \subseteq  BnU_{\overline{n}}^-.
    \end{align*}
    On the other hand, $BnU_{\overline{n}}^- \subseteq  BnB$ is obvious, thus 
    \begin{align*}
        BnB = BnU_{\overline{n}}^-,
    \end{align*}
    and each $x\in BnB$ can be expressed as $bnu$ for some $b\in B,u\in U_{\overline{n}}^-$. 

    Now we prove the uniqueness by induction on $l(\overline{n})$.
    
    \textbf{Case} $l(\overline{n})=0$.  Then $R(\overline{n}) = \emptyset$ and $BnB = B$. There's nothing to prove. 

    \textbf{Case} $l(\overline{n})=1$.  Then $n=hn_{[S_i]}$ for some simple object $S_i$ and $h\in H$. 
    $R(\overline{n}) = \{ [S_i] \}$. 
    If $b_1 n_{[S_i]} u_1 = b_2 n_{[S_i]} u_2$, then 
    \begin{align*}
        b_2^{-1}b_1 = n_{[S_i]} u_2u_1^{-1} n_{[S_i]}^{-1}.
    \end{align*}
    Since $U_{\overline{n}}^- = E_{[S_i]}$, we have 
    \begin{align*}
        b_2^{-1}b_1 = n_{[S_i]} u_2u_1^{-1} n_{[S_i]}^{-1} \in E_{[TS_i]} \cap B = \{1\}.
    \end{align*}
    Thus $b_1=b_2,u_1=u_2$.

    \textbf{Case} $l(\overline{n})>1$.  Then write $n=n'n_{[S_i]}$ with $l(\overline{n'}) = l(\overline{n}) - 1$.
    Suppose $( i_1,\cdots,i_{l-1} )$ is a reduced expression of $\overline{n'}$.
    Since 
    \begin{align*}
      R(\overline{n}) = \{ [S_{i}], [\omega_{S_{i}}(S_{i_{l-1}})], \cdots,[\omega_{S_{i}} \cdots \omega_{S_{i_2}}(S_{i_1})]   \},
    \end{align*}
    we have 
    \begin{align*}
        U_{\overline{n}}^- = n_{[S_i]}^{-1} U_{\overline{n'}}^- n_{[S_i]} E_{[S_i]} 
    \end{align*}
    and then 
    \begin{align*}
        BnU_{\overline{n}}^- = Bn'n_{[S_i]} n_{[S_i]}^{-1} U_{\overline{n'}}^- n_{[S_i]} E_{[S_i]} = Bn' U_{\overline{n'}}^- n_{[S_i]} E_{[S_i]}.
    \end{align*}
    If $b_1nu_1 = b_2nu_2$, then $b_1nu_1n_{[S_i]}^{-1} = b_2nu_2n_{[S_i]}^{-1}$. 
    Hence
    \begin{align*}
        b_1n'u_1'E_{[TS_i]}(t_1) = b_2n'u_2' E_{[TS_i]}(t_2) ,
    \end{align*}
    for some $u_1',u_2' \in U_{\overline{n'}}^- $, i.e. $b_1n'u_1'E_{[TS_i]}(t_1-t_2) = b_2n'u_2' $.

    Recall the proof of Lemma \ref{nBn}.
    
    If $t_1-t_2 = 0$, then $E_{[TS_i]}(t_1-t_2) = 1\in B$.

    If $t_1-t_2 \neq 0$, then $E_{[TS_i]}(t_1-t_2) \in Bn_{[S_i]}B$ and 
    \begin{align*}
        b_1n'u_1'E_{[TS_i]}(t_1-t_2) \in Bn'BBn_{[S_i]}B.
    \end{align*} 
    Since $\omega_{S_{i_1}} \cdots \omega_{S_{i_{l-1}}}(S_{i}) \in \operatorname{ind}\mathcal{B} $, 
    \begin{align*}
        Bn'BBn_{[S_i]}B = Bn'n_{[S_i]}B = BnB.
    \end{align*}

    Hence $  b_1n'u_1'E_{[TS_i]}(t_1-t_2) \in BnB$, while $b_2n'u_2' \in Bn'B$.
    Since $\overline{n}\neq \overline{n'}$, $BnB\cap Bn'B = \emptyset$, contradiction.
    Hence $t_1 = t_2$ and $b_1n'u_1' = b_2n'u_2'$. 
    By induction hypothesis, $b_1=b_2,u_1'=u_2'$. Hence $u_1=u_2$.
\end{proof}

\begin{proposition}
    Any $x\in G$ has a unique expression of the form $x=u'hnu$ with $u'\in U,h\in H,n\in N,u\in U_{\overline{n}}^- $.
\end{proposition}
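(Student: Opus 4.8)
The statement is essentially the "refined Bruhat decomposition," and the plan is to assemble it from three facts already in hand: the Bruhat decomposition $G=\bigsqcup_{\overline n\in N/H}BnB$; the preceding proposition, which says that for a fixed representative $n$ every $x\in BnB=BnU_{\overline n}^-$ is \emph{uniquely} $x=bnu$ with $b\in B$ and $u\in U_{\overline n}^-$; and the semidirect structure $B=UH$ with $U\trianglelefteq B$ and $U\cap H=\{1\}$, so that every $b\in B$ is uniquely $b=u'h$ with $u'\in U$, $h\in H$. For existence: given $x\in G$, the Bruhat decomposition produces the (unique) double coset $BnB$ containing $x$; fix a representative $n$ of it, write $x=bnu$ with $b\in B$, $u\in U_{\overline n}^-$, and then $b=u'h$; this gives $x=u'hnu$ with $u'\in U$, $h\in H$, $n\in N$, $u\in U_{\overline n}^-$.

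\textbf{Uniqueness.} Suppose $x=u_1h_1n_1v_1=u_2h_2n_2v_2$ with $u_i\in U$, $h_i\in H$, $n_i\in N$, $v_i\in U_{\overline{n_i}}^-$. Since $u_ih_i\in B$ and $v_i\in U_{\overline{n_i}}^-\subseteq U\subseteq B$, we have $x\in Bn_1B\cap Bn_2B$, so disjointness in the Bruhat decomposition forces $\overline{n_1}=\overline{n_2}$; in particular $U_{\overline{n_1}}^-=U_{\overline{n_2}}^-$ and $n_2=n_1t$ for some $t\in H$. Using that $H$ is normal in $N$ (Lemma \ref{nhn}), $n_1t=(n_1tn_1^{-1})n_1=t'n_1$ with $t'\in H$, so the second expression becomes $x=(u_2h_2t')\,n_1\,v_2$ with $u_2h_2t'\in B$ and $v_2\in U_{\overline{n_1}}^-$. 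Comparing with $x=(u_1h_1)\,n_1\,v_1$ and applying the uniqueness of the $bnu$-form for the fixed representative $n_1$ yields $u_1h_1=u_2h_2t'$ and $v_1=v_2$. Then uniqueness of the $B=UH$ decomposition gives $u_1=u_2$ and $h_1=h_2t'$, whence $h_1n_1=h_2t'n_1=h_2n_1t=h_2n_2$. Thus $u'$, $u$, and the element $hn\in N$ are uniquely determined by $x$; once a set of representatives for $N/H$ is fixed (so $n$ is pinned down by $\overline n$), $h$ and $n$ themselves are unique.

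\textbf{Main obstacle.} There is no deep difficulty here — the proof is a formal consequence of the Bruhat decomposition, the preceding proposition, Lemma \ref{nhn}, and $B=U\rtimes H$. The one point requiring care is the bookkeeping around the coset representative: "$n\in N$" is not determined by $x$ (only $\overline n$ is), and what is genuinely canonical is the pair $(u',u)$ together with the product $hn\in N$; the step $n_1t=t'n_1$ via normality of $H$ in $N$ is exactly what lets the two raw expressions be compared against a single fixed representative so that the preceding proposition can be invoked. Equivalently, one may simply fix once and for all a representative of each coset of $N/H$ and read the statement with that convention, after which the uniqueness is immediate.
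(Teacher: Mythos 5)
Your proof is correct and follows the same route as the paper: Bruhat decomposition to pin down the coset, the preceding proposition for the unique $bnu$ form, and $B=UH$ with $U\cap H=\{1\}$ to split $b$. The extra bookkeeping about the coset representative (the step $n_1t=t'n_1$ via normality of $H$ in $N$) is a welcome clarification of a point the paper leaves implicit.
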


\begin{proof}
    For any $ x\in G$, by Bruhat decomposition, there exists a unique $\overline{n} \in N/H$ such that $x\in BnB=UHnU_{\overline{n}}^-$.
    By the previous proposition and $B=UH,U\cap H=1$, we get the desired result.
\end{proof}

\begin{corollary}
    $B\cap V = \{1\}$.
\end{corollary}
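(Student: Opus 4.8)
The plan is to analyse the action of a hypothetical $x\in B\cap V$ on the basis $\mathbb{B}=\{u_M : M\in\operatorname{ind}\mathcal{R}\}\cup\{H_{S_j}' : j=1,\dots,m\}$ of $\mathfrak{g}_{\mathbb{K}}$, in the spirit of the proof that $U\cap V=\{1\}$, but keeping careful track of the Cartan part. I would work in the decomposition $\mathfrak{g}_{\mathbb{K}}=K'\oplus\bigoplus_{M\in\operatorname{ind}\mathcal{R}}\mathfrak{g}_M$ together with the length function $l$ on $\operatorname{ind}\mathcal{R}$ (positive on $\operatorname{ind}\mathcal{B}$, negative on $\operatorname{ind}T\mathcal{B}$). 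The goal is to show $x$ fixes every element of $\mathbb{B}$, whence $x=1$. (In the exceptional case $\mathbb{K}=\mathbb{F}_2$ one has $H=\{1\}$, $B=U$, and the statement reduces to $U\cap V=\{1\}$, already proved.)

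First I would record two ``one-sided'' facts. (a) Since $x\in B=UH$, write $x=uh$ with $u\in U$, $h\in H$: the factor $h$ fixes every $H_S'$ and scales each $u_M$, while $u$ is a product of generators $E_{[M]}(t)$ with $M\in\operatorname{ind}\mathcal{B}$. As $\mathcal{B}$ is closed under extensions, each such generator maps $\bigoplus_{L\in\operatorname{ind}\mathcal{B}}\mathfrak{g}_L$ into itself and, by the explicit action formulas for the $E_{[M]}(t)$, sends $u_P$ (for $P\in\operatorname{ind}\mathcal{B}$) to $u_P$ plus a combination of $u_L$ with $L\in\operatorname{ind}\mathcal{B}$ and $l(L)>l(P)$, and sends $H_S'$ into $H_S'+\bigoplus_{L\in\operatorname{ind}\mathcal{B}}\mathfrak{g}_L$. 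Hence for $P\in\operatorname{ind}\mathcal{B}$ one gets $xu_P\in\mathbb{K}^{\times}u_P\oplus\bigoplus_{l(L)>l(P)}\mathfrak{g}_L$ with \textit{no} $K'$-component, for $P\in\operatorname{ind}T\mathcal{B}$ one only gets $xu_P\in\mathbb{K}^{\times}u_P\oplus K'\oplus\bigoplus_{l(L)>l(P)}\mathfrak{g}_L$, and always $xH_S'\in H_S'+\bigoplus_{L\in\operatorname{ind}\mathcal{B}}\mathfrak{g}_L$. (b) Since $x\in V$, the symmetric discussion with $T\mathcal{B}$ in place of $\mathcal{B}$ gives: $xu_P\in u_P\oplus\bigoplus_{l(L)<l(P)}\mathfrak{g}_L$ with no $K'$-component for $P\in\operatorname{ind}T\mathcal{B}$, $xu_P\in u_P\oplus K'\oplus\bigoplus_{l(L)<l(P)}\mathfrak{g}_L$ for $P\in\operatorname{ind}\mathcal{B}$, and $xH_S'\in H_S'+\bigoplus_{L\in\operatorname{ind}T\mathcal{B}}\mathfrak{g}_L$.

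Then I would intersect the two descriptions. Comparing the two expressions for $xH_S'$ and using $\operatorname{ind}\mathcal{B}\cap\operatorname{ind}T\mathcal{B}=\emptyset$ forces $xH_S'=H_S'$ for every simple $S$. For $P\in\operatorname{ind}\mathcal{B}$, fact (a) places $xu_P$ in $\mathbb{K}^{\times}u_P\oplus\bigoplus_{l(L)>l(P)}\mathfrak{g}_L$ and fact (b) places it in $u_P\oplus K'\oplus\bigoplus_{l(L)<l(P)}\mathfrak{g}_L$; comparing components in $\mathfrak{g}_P$, in $K'$, in $\bigoplus_{l(L)>l(P)}\mathfrak{g}_L$ and in $\bigoplus_{l(L)<l(P)}\mathfrak{g}_L$ (these lie in distinct summands of $\mathfrak{g}_{\mathbb{K}}=K'\oplus\bigoplus_M\mathfrak{g}_M$) forces the $u_P$-coefficient to be $1$, the $K'$-part to vanish (it is absent on the (a)-side), and both length-shifted parts to vanish, so $xu_P=u_P$. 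For $P\in\operatorname{ind}T\mathcal{B}$ the roles of (a) and (b) are interchanged and the same conclusion holds. Thus $x$ fixes $\mathbb{B}$, hence $x=1$.

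The only real obstacle is the bookkeeping of Cartan components: the relation $E_{[X]}(t)u_{TX}=u_{TX}+tH_X'+t^2u_X$ shows that the naive slogan ``$U$ raises weights, $V$ lowers weights'' fails on the nose, since a $K'$-contribution can be produced. The fix, which is precisely what makes the intersection argument go through, is the observation that $U$ preserves $\bigoplus_{L\in\operatorname{ind}\mathcal{B}}\mathfrak{g}_L$ while $V$ preserves $\bigoplus_{L\in\operatorname{ind}T\mathcal{B}}\mathfrak{g}_L$; so a Cartan contribution to $xu_P$ can arise from at most one of the two sides and is therefore zero. Everything else is a routine comparison of coefficients, modelled on the proof of $U\cap V=\{1\}$.
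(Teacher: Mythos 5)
Your proof is correct, but it is a genuinely different argument from the one the paper uses. The paper derives $B\cap V=\{1\}$ as an immediate corollary of the sharpened Bruhat decomposition just proved (uniqueness of the expression $x=bnu$ with $b\in B$, $n\in N$, $u\in U_{\overline{n}}^-$): it takes $n_0\in N$ with $R(\overline{n_0})=\{[M]\mid M\in\operatorname{ind}\mathcal{B}\}$, uses $n_0U_{\overline{n_0}}^- n_0^{-1}=V$, and observes that any $x\in B\cap V$ yields two expressions $x\cdot n_0\cdot 1 = 1\cdot n_0\cdot u$ of the same element of $Bn_0U_{\overline{n_0}}^-$, forcing $x=1$. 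You instead argue directly on the basis $\mathbb{S}$ of $\mathfrak{g}_{\mathbb{K}}$, generalising the paper's earlier proof of $U\cap V=\{1\}$ by carefully tracking the Cartan component alongside the length filtration. Your route is more elementary and self-contained — it uses nothing beyond the explicit action formulas for $E_{[X]}(t)$ and $h_{[X]}(t)$, so it would already be valid before the Bruhat decomposition is developed — whereas the paper's is a one-line corollary once that machinery is in place. You also correctly flag a point the paper's own proof of $U\cap V=\{1\}$ silently passes over: $E_{[X]}(t)u_{TX}=u_{TX}+tH_X'+t^2u_X$ does produce a $K'$-contribution when acting across the boundary $\mathcal{B}/T\mathcal{B}$, so the claim there that $\theta u_M-u_M$ lies purely in $\bigoplus_{L\succ M}\mathfrak{g}_L$ needs exactly the patch you supply (the $K'$-part can only arise from one of the two sides $U$, $V$, and is therefore zero). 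Both arguments are sound; yours gives a cleaner logical path if one wants this corollary early, while the paper's fits better as a dividend of the Bruhat decomposition it has just established.
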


\begin{proof}
    There exists $n_0\in N$ such that 
    \begin{align*}
        R(\overline{n_0}) = \{ [M]| M \in \operatorname{ind}\mathcal{B} \}.
    \end{align*} 
    Then we have $n_0U_{\overline{n_0}}^- n_0^{-1} = V$.
    If $x\in B\cap V = B\cap n_0U_{\overline{n_0}}^- n_0^{-1}$, then $xn_0\in Bn_0\cap n_0U_{\overline{n_0}}^- $.
    Since $xn_01=1n_0u$ for some $u\in U_{\overline{n_0}}^- $ are two expressions of an element in $Bn_0U_{\overline{n_0}}^- $, we have $x=1$ and $u=1$.
    Thus  $B\cap V = \{1\}$.
\end{proof}

\subsection{Parabolic Subgroups and Levi Subgroups}

In this subsection, we define some subgroups of $G$ arising from the category, and prove that they coincide with the parabolic subgroups or Levi subgroups of the Chevalley group $G$.
Moreover, we show that some of the properties of parabolic subgroups or Levi subgroups can be deduced from properties of the category.

Again, fix a complete section of $\mathcal{R}$, and hence also a hereditary subcategory $\mathcal{B}$ of $\mathcal{R}$.
Let $\Pi = \{  \alpha_1,\cdots,\alpha_m \}$ be the set of simple roots in $\Phi$, 
and fix representatives $S_1,\cdots,S_m$ of isomorphism classes of simple objects in $\mathcal{B}$, such that $\underline{dim}S_i = \alpha_i$.

For any subset $J\subseteq  \Pi$, write $J = \{ \alpha_{i_1}, \cdots, \alpha_{i_l} \}$.
Let $W_J$ be the subgroup of $W$ generated by $\omega_i,i\in J$.
Let $N_J$ be the subgroup of $N$ mapping to $W_J$ under the homomorphism $N \rightarrow N/H \cong W$. 
Then similar to the proof of Prop \ref{GBNB}, we have $BN_J B$ is a subgroup of $G$.
These $BN_J B$ with various subsets $J\subseteq  \Pi$ are so called parabolic subgroups of $G$.

Let $\mathcal{F}_J$ be the smallest extension closed full subcategory of $\mathcal{B}$, containing $[S_{i_1}],\cdots, [S_{i_l}]$.
Let $P_J$ be the subgroup of $G$ generated by $E_{[M]}(t)$, $ t\in \mathbb{K},  M\in \operatorname{ind}T\mathcal{F}_J$ and $B$.

We will show that this subgroup is a parabolic subgroup.

\begin{proposition}\label{PJBNB}
    For any subset $J\subseteq \Pi$, we have $P_J = BN_J B$.
\end{proposition}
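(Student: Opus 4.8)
**

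The plan is to prove the two inclusions $P_J \subseteq BN_JB$ and $BN_JB \subseteq P_J$ separately, using the fact that both sides are subgroups of $G$ (for $BN_JB$ this was observed just before the statement, mimicking Proposition~\ref{GBNB}; for $P_J$ it holds by definition as the subgroup generated by a specified set).

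First I would show $BN_JB \subseteq P_J$. Since $P_J$ is a subgroup containing $B$, it suffices to check that $N_J \subseteq P_J$, and since $N_J$ maps onto $W_J$ with kernel $H \subseteq B \subseteq P_J$, it is enough to produce, for each generator $\omega_{i}$ with $\alpha_i \in J$, a lift lying in $P_J$. The natural candidate is $n_{[S_i]} = E_{[S_i]}(1)E_{[TS_i]}(1)E_{[S_i]}(1)$ from Lemma~\ref{nEEE}: here $S_i \in \operatorname{ind}\mathcal{F}_J$ since $\mathcal{F}_J$ contains $[S_i]$, hence $E_{[S_i]}(1) \in B \subseteq P_J$ and $E_{[TS_i]}(1) \in P_J$ because $TS_i \in \operatorname{ind}T\mathcal{F}_J$, which is exactly one of the generating sets of $P_J$. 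Thus $n_{[S_i]} \in P_J$ for all $i \in J$, so $N_J \subseteq P_J$ and $BN_JB \subseteq P_J$.

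The reverse inclusion $P_J \subseteq BN_JB$ is the substantive direction. Since $BN_JB$ is a subgroup containing $B$, I need only check that each generator $E_{[M]}(t)$ with $M \in \operatorname{ind}T\mathcal{F}_J$ lies in $BN_JB$. Write $TM \in \operatorname{ind}\mathcal{F}_J$, so $\underline{dim}(TM)$ is a positive root in the root subsystem $\Phi_J$ spanned by $J$. The idea is to ``conjugate $M$ into a simple object'' using $W_J$: there exists $w \in W_J$ with $w(\underline{dim}(TM)) = \alpha_i$ for some $i \in J$ — equivalently there is $n \in N_J$ with $n_{[M']} \cdots$ such that $\omega_{S_{j_1}}\cdots\omega_{S_{j_r}}(TM) \cong S_i$ in $\mathcal{B}$ (using that $\underline{dim}$ intertwines $\omega_X$ with $\omega_{\underline{dim}X}$, shown in the excerpt, and that $\mathcal{F}_J$ together with $T\mathcal{F}_J$ is stable under the relevant reflection functors since these reflections preserve $\Phi_J$). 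Then by Lemma~\ref{nEn}, $n^{-1}E_{[S_i]}(s)n = E_{[M]}(\eta\, s)$ for a suitable unit $\eta$, i.e. $E_{[M]}(t) = n^{-1}E_{[S_i]}(\eta^{-1}t)n$ for appropriate $n \in N_J$ and with $E_{[S_i]}(\cdot) \in B$; hence $E_{[M]}(t) \in N_J B N_J \subseteq BN_JB$. The key point requiring care is that the intermediate objects $\omega_{S_{j_k}}\cdots(TM)$ stay inside $\mathcal{F}_J$ (so that $\omega_X(TM)$ is again governed by $T$ applied to an object of $\mathcal{F}_J$, never leaving the $J$-block), which reduces to the combinatorial fact that $W_J$ permutes $\Phi_J^+ \cup \Phi_J^-$ and that positive roots of $\Phi_J$ correspond to $\operatorname{ind}\mathcal{F}_J$.

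The main obstacle I expect is making precise and rigorous the ``conjugate into a simple object'' step for $M \in \operatorname{ind}T\mathcal{F}_J$: one must verify that the BGP-type reflections $\omega_{S_j}$, $j \in J$, restricted to objects whose dimension vectors lie in $\Phi_J$, behave exactly like the Weyl group $W_J$ acting on $\Phi_J$, and in particular that $n_{[S_j]}$ (for $j\in J$) sends $\{u_M : \underline{dim}M \in \Phi_J\}$ to itself up to sign, so the conjugation never escapes the generators available in $P_J$ or in $N_J$. This is morally clear from $\underline{dim}\,\omega_X(Y) = \omega_{\underline{dim}X}(\underline{dim}Y)$ combined with Happel's bijection between $\operatorname{ind}\mathcal{R}$ and $\Phi$, but spelling out that $\mathcal{F}_J$ is the exact subcategory realizing $\Phi_J$ (and is closed under subobjects/quotients/extensions inside $\mathcal{B}$, so that its indecomposables are precisely those $M$ with $\underline{dim}M \in \Phi_J^+$) is where the argument needs the most attention; once that is in place, the rest is a short application of Lemma~\ref{nEn} and the subgroup property of $BN_JB$.
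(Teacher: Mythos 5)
Your first inclusion $BN_JB\subseteq P_J$ follows the paper's argument exactly (both observe $n_{[S_i]}=E_{[S_i]}(1)E_{[TS_i]}(1)E_{[S_i]}(1)\in P_J$ because all three factors lie in $P_J$), so no comment is needed there.

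For the reverse inclusion you take a genuinely different route from the paper. The paper first proves Lemma~\ref{lemmaPJBNB} ($\underline{dim}M$ supported on $J$ iff $\omega_{\underline{dim}M}\in W_J$), deduces $n_{[M]}(t)\in BN_JB$ for every $M\in \operatorname{ind}\mathcal{F}_J$, and then uses the identity $E_{[TM]}(t)=E_{[M]}(-t^{-1})n_{[M]}(t^{-1})E_{[M]}(-t^{-1})$ together with $E_{[M]}(\cdot)\in B$ to conclude $E_{[TM]}(t)\in BN_JB$. You instead conjugate $M\in\operatorname{ind}T\mathcal{F}_J$ into a simple object by an element of $N_J$, using Lemma~\ref{nEn} and the fact that every root of $\Phi_J$ is $W_J$-conjugate to a simple root of $\Phi_J$; this is a legitimate alternative and gives a slightly shorter path once the group property of $BN_JB$ is in hand. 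However there is a sign slip in your write-up that you should fix: you ask for $w(\underline{dim}(TM))=\alpha_i$, i.e.\ $\omega_{\overline{n}}(TM)\cong S_i$, but then Lemma~\ref{nEn} gives $nE_{[TM]}(s)n^{-1}=E_{[S_i]}(\cdot)$, which says nothing new since $E_{[TM]}(\cdot)\in B$ already. What you need is $w(\underline{dim}M)=\alpha_i$, equivalently $\omega_{\overline{n}}(M)\cong S_i$ (so $\omega_{\overline{n}}(TM)\cong TS_i$); this is just as achievable, since $\underline{dim}M$ is a (negative) root of $\Phi_J$ and hence $W_J$-conjugate to a simple root. With that correction $nE_{[M]}(s)n^{-1}=E_{[S_i]}(\eta s)\in B$, giving $E_{[M]}(t)\in N_J B N_J\subseteq BN_JB$, and your argument goes through. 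The paper's route trades the conjugacy-to-a-simple-root fact for the explicit reflection identity, and it needs Lemma~\ref{lemmaPJBNB} anyway for later use (in the proof of Lemma~\ref{nBnPJ} and the simplicity theorem), which is why it presents things that way.
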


To prove this proposition, we need the following lemma.

\begin{lemma}\label{lemmaPJBNB}
    For $M\in \operatorname{ind}\mathcal{R}$, $\underline{dim}M $ is supported on $J= \{ \alpha_{i_1}, \cdots, \alpha_{i_l} \}\subseteq \Pi$ if and only if $\omega_{\underline{dim}M}$ is generated by $\omega_{\alpha},\alpha \in J$.
\end{lemma}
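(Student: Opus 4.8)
The plan is to treat this as a purely combinatorial statement about the root system $\Phi = \Phi(\mathcal{R})$. By the Happel bijection recalled in Section 3, $\beta := \underline{dim}M$ is a root; since $\Pi = \{\alpha_1,\dots,\alpha_m\}$ is a basis of $K'\otimes_{\mathbb{Z}}\mathbb{R}$ with respect to which every root has integral coordinates, ``$\beta$ is supported on $J$'' means precisely that the (unique) expansion $\beta = \sum_i a_i \alpha_i$ has $a_i = 0$ for all $\alpha_i \notin J$. I would also record at the outset that $(-,-)$ is $W$-invariant and, $\mathfrak{g}(\Phi)$ being simple, nondegenerate on $K'\otimes_{\mathbb{Z}}\mathbb{R}$.

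For the ``only if'' direction, if $\beta$ is supported on $J$ then $\beta$ lies in the subsystem $\Phi_J := \Phi \cap \sum_{\alpha\in J}\mathbb{Z}\alpha$, which is a root system having $J$ as a set of simple roots and whose Weyl group is exactly $W_J = \langle \omega_\alpha : \alpha \in J\rangle$. This is the standard fact about root subsystems generated by a subset of the simple roots, which I would simply cite; it gives $\omega_{\underline{dim}M} = \omega_\beta \in W_J$ at once.

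For the ``if'' direction I would use fixed subspaces. Put $V_J = \sum_{\alpha\in J}\mathbb{R}\alpha \subseteq K'\otimes_{\mathbb{Z}}\mathbb{R}$ and let $V_J^\perp$ be its orthogonal complement for $(-,-)$. Each generator $\omega_{\alpha_i}$ with $\alpha_i \in J$ fixes $V_J^\perp$ pointwise, since $\omega_{\alpha_i}(v) = v - \frac{2(v,\alpha_i)}{(\alpha_i,\alpha_i)}\alpha_i = v$ whenever $(v,\alpha_i) = 0$; hence every element of $W_J$ fixes $V_J^\perp$ pointwise, in particular $\omega_\beta$ if $\omega_\beta \in W_J$. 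But the fixed subspace of the reflection $\omega_\beta$ is exactly $\beta^\perp$, so $V_J^\perp \subseteq \beta^\perp$ and, taking orthogonal complements (the form being nondegenerate), $\beta \in (V_J^\perp)^\perp = V_J$. Comparing with the expansion of $\beta$ in the basis $\Pi$ forces $a_i = 0$ for $\alpha_i \notin J$, i.e. $\beta$ is supported on $J$.

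The only delicate point — the ``main obstacle'', such as it is — is invoking the two root-theoretic inputs correctly: that $\Phi_J$ really is a root system with Weyl group $W_J$ (needed in the forward direction), and that $(-,-)$ is nondegenerate so that $(V_J^\perp)^\perp = V_J$ (needed in the converse). Both are classical, and in particular neither uses the triangulated structure of $\mathcal{R}$ beyond the identification of $\operatorname{ind}\mathcal{R}$ with $\Phi$; so the lemma is really a statement about Weyl groups transported through that identification.
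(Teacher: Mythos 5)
Your proof is correct, but it routes both directions differently from the paper. For the forward implication, the paper gives a self-contained inductive argument: writing $r = \underline{dim}M = \sum a_j \alpha_{i_j}$, it shows there must exist some $t$ with $\omega_{\alpha_t}(r) \leq r$ (else $\langle r, \alpha_i\rangle \leq 0$ for all $i$ in the support, forcing $\langle r,r\rangle \leq 0$, a contradiction), then conjugates $\omega_r = \omega_{\alpha_t}^{-1}\omega_{\omega_{\alpha_t}(r)}\omega_{\alpha_t}$ and descends by induction. You instead invoke the standard fact that $\Phi_J$ is a parabolic subsystem with simple roots $J$ and Weyl group $W_J$; that is entirely legitimate and shorter, though less self-contained. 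For the converse, the paper picks a vector $\underline{dim}X$ with $A_{MX}\neq 0$, observes that $\omega_{\underline{dim}M}$ moves it by a nonzero multiple of $\underline{dim}M$, and that a word in the $\omega_\alpha$ ($\alpha\in J$) can only move it by something in the $J$-span, so $\underline{dim}M$ lies in that span. You instead use the dual fixed-subspace argument: every element of $W_J$ fixes $V_J^\perp$ pointwise, the fixed space of $\omega_\beta$ is $\beta^\perp$, hence $V_J^\perp \subseteq \beta^\perp$ and nondegeneracy of $(-,-)$ gives $\beta \in V_J$. These two converse arguments are, in effect, linear-algebra duals of one another; yours is a touch more conceptual, the paper's a touch more computational. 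Both are sound, and both buy the same lemma; the only trade-off is that the paper's version stays elementary throughout whereas your forward direction relies on an external result about parabolic subsystems.
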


\begin{proof}
    If $\underline{dim}M $ is supported on $J= \{ \alpha_{i_1}, \cdots, \alpha_{i_l} \}\subseteq \Pi$,
    we write  $r = \underline{dim}M  = \sum\limits_{j=1}^l a_j\alpha_{i_j}$. 
    If $r=0$, there's nothing to prove. For $r \neq 0$,
    we may assume that  $M \in \operatorname{ind}\mathcal{B}$, i.e. $a_j \geq 0$ for all $ j$.
    The proof for $M \in \operatorname{ind}T\mathcal{B}$ is similar.

    We claim that there exists $t\in \{ i_1,\cdots, i_l \}$ such that 
    \begin{align*}
        \omega_{\alpha_t}(r)-r \in \mathbb{Z}_{\leq 0}\Pi,
    \end{align*}
    which means $ \omega_{\alpha_t}(r)-r $ can be written in the form of $\mathbb{Z}_{\leq 0}$-linear combination of elements of $\Pi$.
    If the claim fails, then 
    \begin{align*}
        \omega_{\alpha_t}(r) = r-<r,\alpha_i>\alpha_i \geq r, \qquad  i\in \{ i_1,\cdots,i_l \}.
    \end{align*}
    Hence $<r,\alpha_i> \leq 0$, for all $i\in \{ i_1,\cdots,i_l \}$.
    Then 
    \begin{align*}
        <r,r> = \sum\limits_{j=1}^l <r,\alpha_{i_j}>a_j \leq 0,
    \end{align*}
    contradiction. 
    Hence the claim holds, i.e. there exists $t\in \{ i_1,\cdots, i_l \}$ such that $\omega_{\alpha_t}(r)\leq r$.
    
    Since $\omega_{\alpha_t} \omega_r \omega_{\alpha_t}^{-1} = \omega_{\omega_{\alpha_t}(r)}$, we have 
    \begin{align*}
        \omega_r = \omega_{\alpha_t}^{-1} \omega_{\omega_{\alpha_t}(r)} \omega_{\alpha_t} .
    \end{align*}
    By induction on the order of $\Phi$, we have $ \omega_{\omega_{\alpha_t}(r)} $ is generated by  $\omega_{\alpha},\alpha \in J$.
    Hence $\omega_r$ is generated by  $\omega_{\alpha},\alpha \in J$.

    Conversely, suppose $\omega_{\underline{dim}M}=\omega_{\alpha_1}\cdots \omega_{\alpha_t}$ for some $\alpha_1,\cdots,\alpha_t \in J\subseteq \Pi$.
    There exists $X\in \mathcal{R}$ such that $(H_X|H_M)\neq 0$.
    For such $X$, on the one hand, we have 
    \begin{align*}
        \omega_{\underline{dim}M}(\underline{dim}X) - \underline{dim}X = -A_{MX} \underline{dim}M 
    \end{align*}
    with $A_{MX}\neq 0$.
    On the other hand,
        $\omega_{\alpha_1}\cdots \omega_{\alpha_t}(\underline{dim}X)-\underline{dim}X$
    is a linear combination of $\alpha_1,\cdots,\alpha_t$.
    Thus $\underline{dim}M$ can be expressed as a linear combination of some elements in $J$, i.e. $\underline{dim}M$ is supported on $J$. 
\end{proof}

Now we prove Prop \ref{PJBNB}.

\begin{proof}
    On the one hand, since $E_{[M]}(t)\in P_J$ for each $M\in \operatorname{ind}T\mathcal{F}_J$ and $n_{[X]}(t) = E_{[X]}(t) E_{[TX]}(t^{-1}) E_{[X]}(t)$, we have 
    \begin{align*}
        n_{[M]}(t) \in P_J, \qquad \text{for any }  M\in \operatorname{ind}\mathcal{F}_J.
    \end{align*}
    In particular, $n_{[S_{i_1}]},\cdots,n_{[S_{i_l}]} \in P_J$.
    Hence 
    \begin{align*}
        BN_JB = <B,n_{[S_{i_1}]},\cdots,n_{[S_{i_l}]} > \subseteq  P_J,
    \end{align*}
    where we use the notation $<a,b,\cdots>$ to denote the subgroup of $G$ generated by $a,b,\cdots$.   

    On the other hand, for any $M\in \operatorname{ind}\mathcal{F}_J$, $\underline{dim}M$ is supported on $J$.
    By Lemma \ref{lemmaPJBNB}, $\omega_{\underline{dim}M}$ is generated by  $\omega_{\alpha},\alpha \in J$.
    Hence $n_{[M]}$ is generated by $n_{[S_{i_1}]},\cdots,n_{[S_{i_l}]}$ and some $h\in H$, which means $n_{[M]}\in BN_JB$.
    Also $n_{[M]}(t) = h_{[M]}(t)n_{[M]}\in BN_JB,  t\in \mathbb{K}^{\times}$.
    Thus
    \begin{align*}
        E_{[TM]}(t) = E_{[M]}(-t^{-1})n_{[M]}(t^{-1})E_{[M]}(-t^{-1}) \in BN_JB, \qquad t\in \mathbb{K}^{\times},
    \end{align*}
    i.e. $E_{[X]}(t) \in BN_JB$ for all $X\in \operatorname{ind}T\mathcal{F}_J$ and all $t\in \mathbb{K}$.
    Since $B\subseteq  BN_JB$, we have $P_J\subseteq  BN_JB$.

    Hence we have $P_J = BN_JB$.
\end{proof}

Now we use this definition to show some properties of the parabolic subgroups.

\begin{lemma}\label{nBnPJ}
    For any $n\in N$, let $\overline{n} = \overline{n_{[S_{i_1}]}} \cdots \overline{n_{[S_{i_l}]}}$, $(i_1,\cdots,i_l)$ being a reduced expression of $\overline{n}$.
    Let $J = \{ \alpha_{i_1}, \cdots, \alpha_{i_l} \}\subseteq \Pi$.
    Then 
    \begin{align*}
        <B,n> = <B,nBn^{-1}> = P_J.
    \end{align*}
\end{lemma}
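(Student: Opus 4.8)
The plan is to prove both equalities together by induction on $l=l(\overline n)$, after clearing away the easy inclusions. Since $H\subseteq B$, conjugation by any $h\in H$ fixes $B$, so both $\langle B,n\rangle$ and $\langle B,nBn^{-1}\rangle$ depend only on $\overline n\in N/H$; hence I may take $n=n_{[S_{i_1}]}\cdots n_{[S_{i_l}]}$ with $(i_1,\dots,i_l)$ reduced. Because $\overline n\in W_J$ we have $n\in N_J\subseteq BN_JB=P_J$, and $P_J$ is a subgroup of $G$ (Prop.~\ref{PJBNB} and the remark preceding it, compare Prop.~\ref{GBNB}); therefore $\langle B,nBn^{-1}\rangle\subseteq\langle B,n\rangle\subseteq P_J$. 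So at every stage it suffices to prove the single inclusion $P_J\subseteq\langle B,nBn^{-1}\rangle$, which then yields both asserted equalities.

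The base case $l=0$ is trivial: then $n\in H\subseteq B$, $J=\emptyset$, $P_\emptyset=B$, and all three groups are $B$. For the inductive step the key point is the inclusion $E_{[TS_{i_1}]}\subseteq nBn^{-1}$. To see it, note that $(i_l,i_{l-1},\dots,i_1)$ is a reduced expression for $\overline{n^{-1}}$, so by Lemma~\ref{Rn}(3) we have $[S_{i_1}]\in R(\overline{n^{-1}})$, i.e.\ $\omega_{S_{i_l}}\cdots\omega_{S_{i_1}}(S_{i_1})\in\operatorname{ind}T\mathcal B$. Since each $\omega$ commutes with $T$ (as in the proof of Lemma~\ref{nnn}) and $T$ interchanges $\operatorname{ind}\mathcal B$ with $\operatorname{ind}T\mathcal B$, applying $T$ gives $\omega_{S_{i_l}}\cdots\omega_{S_{i_1}}(TS_{i_1})\in\operatorname{ind}\mathcal B$. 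Iterating Lemma~\ref{nEn} we compute $n^{-1}E_{[TS_{i_1}]}n=E_{[\omega_{S_{i_l}}\cdots\omega_{S_{i_1}}(TS_{i_1})]}\subseteq U\subseteq B$, whence $E_{[TS_{i_1}]}\subseteq nBn^{-1}$. Combining this with $E_{[S_{i_1}]}\subseteq B$ (as $S_{i_1}\in\operatorname{ind}\mathcal B$) and Lemma~\ref{nEEE}, we get $n_{[S_{i_1}]}=E_{[S_{i_1}]}(1)E_{[TS_{i_1}]}(1)E_{[S_{i_1}]}(1)\in\langle B,nBn^{-1}\rangle$.

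Now set $n''=n_{[S_{i_1}]}^{-1}n=n_{[S_{i_2}]}\cdots n_{[S_{i_l}]}$; the tail $(i_2,\dots,i_l)$ is reduced of length $l-1$ with support $J''=\{\alpha_{i_2},\dots,\alpha_{i_l}\}$, and $J=J''\cup\{\alpha_{i_1}\}$. From $n_{[S_{i_1}]}\in\langle B,nBn^{-1}\rangle\subseteq\langle B,n\rangle$ and $n\in\langle B,n\rangle$ we obtain $n''\in\langle B,n\rangle$, so by the inductive hypothesis $\langle B,n\rangle\supseteq\langle B,n''\rangle=P_{J''}$; together with $n_{[S_{i_1}]}\in\langle B,n\rangle$ this forces $\langle B,n\rangle\supseteq\langle P_{J''},n_{[S_{i_1}]}\rangle=P_J$ (the last equality because $P_{J''}=BN_{J''}B$ is a group and $N_J$ is generated by $N_{J''}$ together with $n_{[S_{i_1}]}$), and with the reverse inclusion already noted, $\langle B,n\rangle=P_J$. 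For the other group, write $H_0=\langle B,nBn^{-1}\rangle$; conjugating $nBn^{-1}\subseteq H_0$ by $n_{[S_{i_1}]}\in H_0$ gives $n''Bn''^{-1}=n_{[S_{i_1}]}^{-1}(nBn^{-1})n_{[S_{i_1}]}\subseteq H_0$, hence $P_{J''}=\langle B,n''Bn''^{-1}\rangle\subseteq H_0$ by the inductive hypothesis, and then $P_J=\langle P_{J''},n_{[S_{i_1}]}\rangle\subseteq H_0$. This closes the induction.

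The main obstacle is establishing $E_{[TS_{i_1}]}\subseteq nBn^{-1}$: one must peel the reduced word from the correct end (the first letter $i_1$, equivalently the last letter of a reduced word for $\overline{n^{-1}}$) and keep careful track of the order in which the reflections $\omega_{S_{i_j}}$ compose when Lemma~\ref{nEn} is iterated, so that the membership $[S_{i_1}]\in R(\overline{n^{-1}})$ supplied by Lemma~\ref{Rn}(3) matches exactly the indecomposable index occurring in $n^{-1}E_{[TS_{i_1}]}n$. Once this is in hand, everything else is routine bookkeeping of generators of subgroups, the only remaining subtlety being that the induction for the two equalities must be carried out simultaneously, since the step for $\langle B,nBn^{-1}\rangle$ at level $l$ invokes the corresponding statement at level $l-1$.
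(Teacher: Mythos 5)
Your proof is correct. The key fact you isolate, $E_{[TS_{i_1}]}\subseteq nBn^{-1}$, is obtained from $[S_{i_1}]\in R(\overline{n^{-1}})$ (via Lemma~\ref{Rn}(3) applied to the reversed reduced word) together with the iterated conjugation formula of Lemma~\ref{nEn}; once you also check that $(i_2,\dots,i_l)$ is reduced of length $l-1$ and that $\langle P_{J''},n_{[S_{i_1}]}\rangle=P_J$ (which follows since $N_J=\langle N_{J''},n_{[S_{i_1}]}\rangle$), the double induction closes cleanly.

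Your route does differ from the paper's. The paper's proof is not inductive: it first writes $B=HU^+_{\overline n}U^-_{\overline n}$, observes $H(nU^+_{\overline n}n^{-1})\subseteq B$, then computes $nU^-_{\overline n}n^{-1}$ term by term via $R(\overline n)=\{[S_{i_l}],[\omega_{S_{i_l}}(S_{i_{l-1}})],\dots\}$ to show it equals $\prod E_{[X]}$ over $T\{[\omega_{S_{i_1}}\cdots\omega_{S_{i_{l-1}}}(S_{i_l})],\dots,[S_{i_1}]\}$, and finally recovers $E_{[TS_{i_1}]},E_{[TS_{i_2}]},\dots,E_{[TS_{i_l}]}$ one at a time by conjugating each newly-unlocked generator by the $n_{[S_{i_j}]}$'s already produced. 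You replace this explicit chain of conjugations by one extraction ($E_{[TS_{i_1}]}$) plus an induction on $l(\overline n)$, which applies to the subgroup $\langle B,nBn^{-1}\rangle$ as well after conjugating $nBn^{-1}$ by $n_{[S_{i_1}]}^{-1}$. The two arguments rest on the same conjugation computation and the same membership in $R$; the paper's version makes the full structure of $nBn^{-1}$ visible (which is of independent interest and is close in spirit to the preceding Bruhat-decomposition results), while yours has lighter bookkeeping at the cost of setting up a simultaneous induction for the two equalities. Either is a valid proof.
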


\begin{proof}
    Obviously, $<B,nBn^{-1}> \subseteq  <B,n> \subseteq  P_J$.
    
    Consider 
    \begin{align*}
        nBn^{-1} = nHU_{\overline{n}}^+ U_{\overline{n}}^- n^{-1} = H(nU_{\overline{n}}^+ n^{-1})(nU_{\overline{n}}^- n^{-1}).
    \end{align*}
    Firstly we have $ H(nU_{\overline{n}}^+ n^{-1})\subseteq  B$.
    Recall that $ U_{\overline{n}}^- = \prod\limits_{M\in R(\overline{n})} E_{[M]} $, 
    where $R(\overline{n}) = \{ [S_{i_l}], [\omega_{S_{i_l}}(S_{i_{l-1}})], \cdots,[\omega_{S_{i_l}} \cdots \omega_{S_{i_2}}(S_{i_1})]   \}$.
    For any $2\leq t \leq l$, we have
    \begin{align*}
        nE_{[\omega_{S_{i_l}} \cdots \omega_{S_{i_t}}(S_{i_{t-1}})]} n^{-1} &=  E_{[\omega_{S_{i_1}} \cdots \omega_{S_{i_l}} \omega_{S_{i_l}} \cdots \omega_{S_{i_t}}(S_{i_{t-1}})   ]} \\
          &= E_{[\omega_{S_{i_1}} \cdots \omega_{S_{i_{t-1}}}(S_{i_{t-1}})]} \\
          &= E_{[T\omega_{S_{i_1}} \cdots \omega_{S_{i_{t-2}}}(S_{i_{t-1}})]} \in V.
    \end{align*}
    Hence $nU_{\overline{n}}^- n^{-1} = \prod\limits_{X} E_{[X]} $, where $X$ runs through the set 
    \begin{align*}
        T \{ [\omega_{S_{i_1}} \cdots \omega_{S_{i_{l-1}}}(S_{i_{l}})], [\omega_{S_{i_1}} \cdots \omega_{S_{i_{l-2}}}(S_{i_{l-1}})] ,\cdots, [S_{i_1}] \}.
    \end{align*}
    Thus we have $E_{[TS_{i_1}]} \subseteq  <B,nBn^{-1}>$.
    Since $E_{[S_{i_1}]} \subseteq  <B,nBn^{-1}>$, we have
    \begin{align*}
        n_{[S_{i_1}]}(t) = E_{[S_{i_1}]}(t) E_{[TS_{i_1}]}(t^{-1}) E_{[S_{i_1}]}(t) \in <B,nBn^{-1}>, \qquad  t\in \mathbb{K}^{\times}.
    \end{align*}
    
    Since $E_{[T\omega_{S_{i_1}}(S_{i_2})]} \subseteq  <B,nBn^{-1}>$, 
    \begin{align*}
        E_{[TS_{i_2}]} = n_{[S_{i_1}]} E_{[T\omega_{S_{i_1}}(S_{i_2})]} n_{[S_{i_1}]}^{-1} \subseteq  <B,nBn^{-1}>.
    \end{align*}
    Hence $n_{[S_{i_2}]}\in <B,nBn^{-1}>$.

    Then consider $E_{[T\omega_{S_{i_1}} \omega_{S_{i_2}} (S_{i_3})]} \subseteq  <B,nBn^{-1}>$.
    Conjugate by $n_{[S_{i_2}]}n_{[S_{i_1}]}$, we have $E_{[TS_{i_3}]} \subseteq  <B,nBn^{-1}>$.
    Repeat this process, we have
    \begin{align*}
    E_{[TS_{i_1}]},\cdots,E_{[TS_{i_l}]}\subseteq  <B,nBn^{-1}>
    \end{align*} 
    and
    \begin{align*}
        n_{[S_{i_1}]},\cdots,n_{[S_{i_l}]}\in <B,nBn^{-1}>.
    \end{align*}
    Hence 
    \begin{align*}
        P_J = BN_JB = <B,n_{[S_{i_1}]},\cdots,n_{[S_{i_l}]}> \subseteq  <B,nBn^{-1}>.
    \end{align*}
    Thus $<B,n> = <B,nBn^{-1}> = P_J$.
\end{proof}

\begin{proposition}\label{PJonly}
    $P_J,J\subseteq \Pi$ are the only subgroups of $G$ containing $B$.
\end{proposition}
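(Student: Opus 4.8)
The plan is to show that every subgroup $P$ with $B\subseteq P\subseteq G$ coincides with $P_J$ for the subset
\[
J=\{\,\alpha_i\in\Pi : n_{[S_i]}\in P\,\}\subseteq\Pi,
\]
and the whole argument will rest only on the Bruhat decomposition and Lemma \ref{nBnPJ}, with no fresh computation. First I would note that $P$ is a union of $(B,B)$-double cosets: since $B\subseteq P$ and $P$ is a subgroup, $BpB\subseteq P$ for every $p\in P$, so the Bruhat decomposition $G=\bigsqcup_{\overline n\in N/H}BnB$ yields
\[
P=\bigsqcup_{\overline n:\,n\in P}BnB .
\]
(This is well-defined in $N/H$ because if $n\in P$ and $\overline n=\overline{n'}$ then $n'=nh$ with $h\in H\subseteq B\subseteq P$.)

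Next I would establish the two inclusions. For $P_J\subseteq P$: by Proposition \ref{PJBNB} and its proof, $P_J=BN_JB=\langle B,\ n_{[S_i]}:\alpha_i\in J\rangle$, and each of these generators lies in $P$ because $B\subseteq P$ and, by the definition of $J$, $n_{[S_i]}\in P$ whenever $\alpha_i\in J$. For $P\subseteq P_J$: I would take any $\overline n$ with $n\in P$, choose a reduced expression $\overline n=\overline{n_{[S_{i_1}]}}\cdots\overline{n_{[S_{i_l}]}}$, and set $J'=\{\alpha_{i_1},\dots,\alpha_{i_l}\}$. Since $n\in P$ and $B\subseteq P$, Lemma \ref{nBnPJ} gives $\langle B,n\rangle=P_{J'}\subseteq P$; in particular $n_{[S_{i_j}]}\in P_{J'}\subseteq P$ for each $j$, so $\alpha_{i_j}\in J$ and $J'\subseteq J$. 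Then $\overline n$ is a word in the standard generators $\overline{n_{[S_{i_j}]}}$ of $W_{J'}\subseteq W_J$, hence $n\in N_J$ and $BnB\subseteq BN_JB=P_J$. Taking the union over all $\overline n$ with $n\in P$ gives $P\subseteq P_J$, and combining the two inclusions, $P=P_J$.

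I do not expect a genuine obstacle here. The only slightly delicate point is the step that turns a reduced word for $\overline n$ into the membership statements $n_{[S_{i_j}]}\in P$; but this is exactly the content of Lemma \ref{nBnPJ}, whose conclusion $\langle B,n\rangle=P_{J'}$ forces the generators of $P_{J'}$ into $\langle B,n\rangle\subseteq P$. All of the substantive work has already been carried out in Lemma \ref{nBnPJ}, Proposition \ref{PJBNB}, and the Bruhat decomposition, so the proof itself is essentially just an assembly of these facts.
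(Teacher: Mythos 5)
Your proposal is correct and follows essentially the same route as the paper: both arguments combine the Bruhat decomposition (to see $P$ as a union of $(B,B)$-double cosets) with Lemma \ref{nBnPJ} (to recognize $\langle B,n\rangle$ as a standard parabolic). The only difference is cosmetic — the paper takes $J=\bigcup_\alpha J_\alpha$ while you define $J$ directly as the set of simple roots whose $n_{[S_i]}$ lie in $P$, and you spell out the final inclusion $P\subseteq P_J$ a bit more explicitly than the paper does.
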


\begin{proof}
    Let $P$ be a subgroup of $G$ containing $B$. 
    By Bruhat decomposition, we have 
    \begin{align*}
        P=\bigcup\limits_{n_{\alpha}\in N} Bn_{\alpha}B
    \end{align*}
    for a family of $n_{\alpha} \in N$.
    Hence $P$ is generated by $B$ and a subset of elements of $N$.
    By Lemma\ref{nBnPJ}, we have $<B,n_{\alpha}> = P_{J_{\alpha}}$ for some subset $J_{\alpha} \subseteq  \Pi$.
    Thus $P$ is generated by $P_{J_{\alpha}}$ for a family of  $J_{\alpha} \subseteq  \Pi$, 
    which means $P=P_J$ for $J=\bigcup\limits_{\alpha} J_{\alpha}$.
\end{proof}

\begin{proposition}\label{PJproperties}
    For $J,K\subseteq \Pi$, we have the following properties.

   (1) $P_J = N_G(P_J)$, i.e. $P_J$ is its own normalizer in $G$. 

   (2) Distinct $P_J,P_K$ can't be conjugate in $G$.

   (3) $P_J \cap P_K = P_{J\cap K}$.

   (4) $P_J$ for distinct $J\subseteq  \Pi $ are all distinct.
\end{proposition}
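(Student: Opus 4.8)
The plan is to reduce all four assertions to the Bruhat decomposition together with one structural fact that pins down exactly which root subgroups $E_{[M]}$ sit inside $P_J$. Write $\Phi_J\subseteq\Phi$ for the sub-root system with simple roots $J$ and put $D_J=\Phi^+\cup\Phi_J^-$. The first step is to prove: for $M\in\operatorname{ind}\mathcal{R}$, the whole subgroup $E_{[M]}$ is contained in $P_J$ if and only if $\underline{\dim}M\in D_J$. For $M\in\operatorname{ind}\mathcal{B}$ this is clear, since $E_{[M]}\subseteq U\subseteq B\subseteq P_J$ and $\underline{\dim}M\in\Phi^+$. For $M\in\operatorname{ind}T\mathcal{B}$ and $t\neq0$, Lemma \ref{nEEE} gives $E_{[M]}(t)=E_{[TM]}(-t^{-1})\,n_{[TM]}(t^{-1})\,E_{[TM]}(-t^{-1})$; since $TM\in\operatorname{ind}\mathcal{B}$ the outer factors lie in $U\subseteq B$ and $n_{[TM]}(t^{-1})\in H\,n_{[TM]}$, so $E_{[M]}(t)\in B\,n_{[TM]}\,B$. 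Because $\overline{n_{[TM]}}=\omega_{\underline{\dim}(TM)}$ and, by the Bruhat decomposition (a \emph{disjoint} union) and Prop \ref{PJBNB}, $P_J=\bigsqcup_{v\in W_J}BvB$, the double coset $B\,n_{[TM]}\,B$ meets $P_J$ exactly when $\omega_{\underline{\dim}(TM)}\in W_J$, which by Lemma \ref{lemmaPJBNB} happens exactly when $\underline{\dim}(TM)$ is supported on $J$, i.e.\ when $\underline{\dim}M\in\Phi_J^-$. As every indecomposable of $\mathcal{R}$ has dimension vector in $\Phi^+\cup\Phi^-$, this proves the claim; in particular $D_J$ is an invariant of the group $P_J$.

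Parts (4) and (3) then follow at once. If $P_J\subseteq P_K$ then $D_J\subseteq D_K$, hence $\Phi_J^-\subseteq\Phi_K^-$, hence $\Phi_J\subseteq\Phi_K$; intersecting with $\Pi$ gives $J\subseteq K$. Thus $P_J=P_K$ forces $J=K$, which is (4). For (3), $P_{J\cap K}\subseteq P_J\cap P_K$ is obvious; conversely $P_J\cap P_K$ is a subgroup containing $B$, so by Prop \ref{PJonly} it equals some $P_L$, and from $P_L\subseteq P_J$, $P_L\subseteq P_K$ we get $L\subseteq J\cap K$, so $P_J\cap P_K=P_L\subseteq P_{J\cap K}$.

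For (1) and (2), take $g\in N_G(P_J)$ (resp.\ $g$ with $gP_Jg^{-1}=P_K$); writing $g=b_1nb_2$ by Bruhat with $b_i\in B$, $n\in N$, and using $B\subseteq P_J,P_K$, conjugation reduces us to $n\in N$ with $nP_Jn^{-1}=P_J$ (resp.\ $=P_K$). Let $w=\overline n\in W$. As in the proof that $U\cap N=\{1\}$ (Lemmas \ref{nEn}, \ref{hEh}), conjugation by $n$ carries $E_{[M]}$ onto $E_{[M']}$ with $\underline{\dim}M'=w(\underline{\dim}M)$; combining with the structural fact of the first step gives $w(D_J)=D_J$ (resp.\ $w(D_J)=D_K$). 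Since $D_J\cap(-D_J)=\Phi_J$ and $D_J\setminus\Phi_J=\Phi^+\setminus\Phi_J^+$, we obtain $w(\Phi_J)=\Phi_J$ (resp.\ $=\Phi_K$) and $w(\Phi^+\setminus\Phi_J^+)=\Phi^+\setminus\Phi_J^+$ (resp.\ $=\Phi^+\setminus\Phi_K^+$). The second identity means the inversion set $\operatorname{Inv}(w)=\{\alpha\in\Phi^+:w\alpha\in\Phi^-\}$ is contained in $\Phi_J^+$, whence $w\in W_J$ by the standard characterisation of parabolic subgroups of a Weyl group ($\operatorname{Inv}(w)\subseteq\Phi_J^+\iff w\in W_J$, an easy induction on $\ell(w)$). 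Therefore $n\in N_J\subseteq P_J$ and $g\in B\,P_J\,B=P_J$, giving (1); and in the other case $\Phi_K=w(\Phi_J)=\Phi_J$, so $K=J$ and $P_K=P_J$, meaning distinct standard parabolics are never conjugate, which is (2). The only real obstacle is the structural fact of the first step — everything after it is bookkeeping with Bruhat double cosets and elementary root-system combinatorics — and its delicate point is excluding that $P_J$ contains some $E_{[M]}$ with $M\in\operatorname{ind}T\mathcal{B}$ and $\underline{\dim}M$ not supported on $J$, which is exactly where the disjointness of the Bruhat decomposition and Lemma \ref{lemmaPJBNB} are used.
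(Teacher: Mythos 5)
Your proof is correct, but it follows a genuinely different route from the paper's. The paper derives (1) and (2) in a few lines from Lemma~\ref{nBnPJ}: since $N_G(P_J)\supseteq B$, it is generated by $B$ and some $n\in N$, and for each such $n$ one has $\langle B,n\rangle = \langle B,nBn^{-1}\rangle\subseteq P_J$, so $n\in P_J$; the conjugacy statement is handled identically, and (4) comes straight from the bijectivity of $\overline n\mapsto BnB$. You instead first establish the root-subgroup criterion $E_{[M]}\subseteq P_J\iff \underline{\dim}M\in D_J:=\Phi^+\cup\Phi_J^-$ (using the disjointness of the Bruhat decomposition, $P_J=\bigsqcup_{v\in W_J}BvB$, and Lemma~\ref{lemmaPJBNB}), and then reduce everything to root-system combinatorics: $w(D_J)=D_J$ forces $\operatorname{Inv}(w)\subseteq\Phi_J^+$, hence $w\in W_J$. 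Part (3) is essentially identical in both treatments (both route through Prop~\ref{PJonly}). Your version is longer but more explicit: it recovers the standard algebraic-groups picture of a parabolic as the subgroup generated by $T$ and the root subgroups indexed by a parabolic subset $D_J$ of $\Phi$, and it makes the role of the Weyl-group combinatorics ($\operatorname{Inv}(w)\subseteq\Phi_J^+\iff w\in W_J$) visible, whereas the paper keeps that hidden inside Lemma~\ref{nBnPJ}. Both are valid; neither has a gap, though you do implicitly rely on the fact (established in the paper's proof that $U\cap N=\{1\}$, via Lemmas~\ref{nEn} and~\ref{hEh}) that conjugation by $n\in N$ maps $E_{[M]}$ onto $E_{[M']}$ with $\underline{\dim}M'=\overline n(\underline{\dim}M)$, and on the unproved but standard Weyl-group lemma about inversion sets -- both reasonable to cite.
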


\begin{proof}
    (1) Since $N_G(P_J) \supset P_J \supset B$, $N_G(P_J)$ is generated by $B$ and some elements of $N$.
    Let $n\in N\cap N_G(P_J)$, then 
    \begin{align*}
        <B,n> = <B,nBn^{-1}> \subseteq  P_J.
    \end{align*}
    Hence $n\in P_J$ and $P_J = N_G(P_J)$.

    (2) Suppose $xP_Jx^{-1} = P_K$, where $x=bnb'$ for some $b,b'\in B$ and $n\in N$.
    Then $nP_Jn^{-1} = P_K$.
    Hence $<B,n> = <B,nBn^{-1}> \subseteq  P_K$, which means $n\in P_K$ and $P_J=P_K$.

    (3) $P_J \cap P_K $ is a subgroup of $G$ and it contains $B$. 
    Hence $P_J \cap P_K = P_L$ for some $L\subseteq  \Pi$ and obviously $ P_{J\cap K} \subseteq  P_L$.
    On the other hand, since $P_L\subseteq  P_J$, i.e. $BN_LB\subseteq  BN_JB$, we have $N_L\subseteq  N_J, W_L \subseteq  W_J$.
    Thus $L\subseteq  J$.
    Similarly, $L\subseteq  K$. 
    Hence $L\subseteq  J\cap K$ and $P_L \subseteq  P_{J\cap K}$.
    Thus 
    \begin{align*}
        P_J \cap P_K = P_L = P_{J\cap K}.
    \end{align*}

    (4) Suppose $P_J=P_K$, then $N_J=N_K,W_J=W_K$. Thus $J=K$.
\end{proof}

For any $J\subseteq \Pi$, we define $U_J$ be the subgroup of $G$ generated by $E_{[M]}(t)$ for all $t\in \mathbb{K}, M\in \operatorname{ind}\mathcal{B}\setminus \operatorname{ind}\mathcal{F}_J$, 
and $L_J$ be the subgroup of $G$ generated by $H$ and $E_{[M]}(t)$ for all $t\in \mathbb{K}, M\in \operatorname{ind}\mathcal{F}_J \cup \operatorname{ind}T\mathcal{F}_J$.

\begin{proposition}
    For any $J\subseteq \Pi$, we have the following properties.

    (1) $U_J$ is a normal subgroup of $P_J$.

    (2) $P_J = U_J L_J, U_J \cap L_J = \{1\}$.

    (3) $P_J = N_G(U_J)$.

\end{proposition}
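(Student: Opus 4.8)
For part (1)--(3) the plan is to reduce everything to the faithful action of $G$ on $\mathfrak{g}_{\mathbb{K}}$ together with the combinatorics of dimension vectors under the Happel bijection between $\operatorname{ind}\mathcal{R}$ and $\Phi$. For a weight $\beta=\sum_i n_i\alpha_i\in\mathbb{Z}\Pi$ set $h_{\bar J}(\beta)=\sum_{\alpha_i\notin J}n_i$; this is additive, vanishes precisely on weights supported on $J$, is $\geq 1$ on every positive root not supported on $J$, and is $\leq 0$ on negative roots. Hence $\operatorname{ind}\mathcal{F}_J\cup\operatorname{ind}T\mathcal{F}_J$ is exactly the set of indecomposables $M$ with $h_{\bar J}(\underline{dim}M)=0$, while $\operatorname{ind}\mathcal{B}\setminus\operatorname{ind}\mathcal{F}_J$ is exactly the set of those with $\underline{dim}M\in\Phi^+$ and $h_{\bar J}(\underline{dim}M)\geq 1$.

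For part (1), since $U_J\subseteq U\subseteq B\subseteq P_J$ it suffices to check that each generator of $P_J$ -- the $E_{[N]}(t)$ with $N\in\operatorname{ind}\mathcal{B}$, the $h_{[X]}(t)$, and the $E_{[M']}(t)$ with $M'\in\operatorname{ind}T\mathcal{F}_J$ -- normalizes $U_J$. For the torus this is Lemma \ref{hEh}: $h_{[X]}(t)E_{[M]}(s)h_{[X]}(t)^{-1}=E_{[M]}(t^{A_{XM}}s)\in U_J$ for $M\in\operatorname{ind}\mathcal{B}\setminus\operatorname{ind}\mathcal{F}_J$. Now take a generator $E_{[N]}(t)$ of the first or third kind and $M\in\operatorname{ind}\mathcal{B}\setminus\operatorname{ind}\mathcal{F}_J$: if $N\cong M$ the commutator is trivial; otherwise comparing signs and supports of dimension vectors gives $N\ncong M$ and $N\ncong TM$, so Prop \ref{commutator} yields $(E_{[N]}(t),E_{[M]}(s))=\prod_{i,j>0}E_{[L_{N,M,i,j}]}(\cdots)$. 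Every occurring $L=L_{N,M,i,j}$ has $\underline{dim}L=i\,\underline{dim}N+j\,\underline{dim}M$, which carries a strictly positive coefficient at some $\alpha_k\notin J$ coming from $\underline{dim}M$; so if $L$ exists its dimension vector is a positive root, hence $L\in\operatorname{ind}\mathcal{B}$ with $h_{\bar J}(\underline{dim}L)\geq 1$, i.e. $E_{[L]}(\cdots)\in U_J$. Thus $(E_{[N]}(t),E_{[M]}(s))\in U_J$ and $E_{[N]}(t)$ normalizes $U_J$, proving $U_J\trianglelefteq P_J$.

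For part (2), the same commutator computation with $N\in\operatorname{ind}\mathcal{F}_J$ shows $U_J^{\mathcal{B}}:=\langle E_{[M]}(t)\mid M\in\operatorname{ind}\mathcal{F}_J\rangle$ normalizes $U_J$, so $U_JU_J^{\mathcal{B}}$ is a subgroup; it contains all generators $E_{[M]}(t)$, $M\in\operatorname{ind}\mathcal{B}$, of $U$, so it equals $U$. Hence $B=UH=U_JU_J^{\mathcal{B}}H\subseteq U_JL_J$, and since the remaining generators $E_{[M']}(t)$ ($M'\in\operatorname{ind}T\mathcal{F}_J$) lie in $L_J$ and $U_JL_J$ is a group (by (1), as $U_J\trianglelefteq P_J\supseteq L_J$), we get $P_J=U_JL_J$. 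For $U_J\cap L_J=\{1\}$, decompose $\mathfrak{g}_{\mathbb{K}}=\bigoplus_{k\in\mathbb{Z}}V_k$, with $V_k$ the sum of the weight spaces of $h_{\bar J}$-degree $k$ (and $K'$ placed in $V_0$). Every generator of $L_J$ preserves each $V_k$ (the $h_{[X]}(t)$ act diagonally on weight spaces; the $E_{[M]}(t)$ with $\underline{dim}M$ supported on $J$ change weights only by multiples of $\underline{dim}M$, hence within a fixed $h_{\bar J}$-degree), so every element of $L_J$ does. Every generator of $U_J$ maps $V_k$ into $\bigoplus_{k'\geq k}V_{k'}$ with $V_k$-component equal to the identity, so every element of $U_J$ does. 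An element of $U_J\cap L_J$ thus acts as the identity on each $V_k$, hence on $\mathfrak{g}_{\mathbb{K}}$, and by faithfulness of $G$ on $\mathfrak{g}_{\mathbb{K}}$ it is $1$.

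For part (3), $P_J\subseteq N_G(U_J)$ by (1) and $N_G(U_J)\supseteq B$, so by Prop \ref{PJonly} we have $N_G(U_J)=P_K$ for some $K\subseteq\Pi$; from $P_J\subseteq P_K$ and the Bruhat decomposition $W_J\subseteq W_K$, hence $J\subseteq K$. Conversely, if $\alpha_i\in K\setminus J$ then $n_{[S_i]}\in N_K\subseteq P_K=N_G(U_J)$ while $E_{[S_i]}(t)\in U_J$ (as $\underline{dim}S_i=\alpha_i\notin J$), so $n_{[S_i]}E_{[S_i]}(t)n_{[S_i]}^{-1}\in U_J$; but by Lemma \ref{nEn} (with $\omega_X(X)=TX$, $\eta_{XX}=1$) this conjugate equals $E_{[TS_i]}(t)\in V$, so $E_{[TS_i]}(t)\in U_J\cap V\subseteq U\cap V=\{1\}$, forcing $t=0$ -- a contradiction once $t\neq 0$. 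Hence $K=J$ and $N_G(U_J)=P_J$. The one genuinely delicate point throughout is the bookkeeping in (1) and (2): one must verify that whenever a factor $E_{[L_{N,M,i,j}]}(\cdots)$ appears in a commutator involving $M\in\operatorname{ind}\mathcal{B}\setminus\operatorname{ind}\mathcal{F}_J$, the object $L_{N,M,i,j}$, if it exists, again lies in $\operatorname{ind}\mathcal{B}\setminus\operatorname{ind}\mathcal{F}_J$. This is precisely the statement that a root with a strictly positive (and, when $N\in\operatorname{ind}T\mathcal{F}_J$, possibly also strictly negative) coefficient is necessarily positive, which is what makes $U_J$ closed and normal.
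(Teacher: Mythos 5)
Your proof is correct and follows essentially the same strategy as the paper: check normality of $U_J$ on generators via the commutator formula together with the extension-closedness of $\mathcal{F}_J$, deduce $P_J=U_JL_J$ formally from (1), obtain $U_J\cap L_J=\{1\}$ by inspecting the action on the weight decomposition of $\mathfrak{g}_{\mathbb{K}}$, and prove (3) by the same $J'\supseteq J$, $E_{[TS_i]}\subseteq U_J$ contradiction. The one genuine elaboration is your explicit functional $h_{\bar J}$ and the associated grading $\bigoplus_k V_k$; this is a clean way to make precise what the paper compresses into ``since $\mathcal{F}_J$ is extension-closed, the extension of $M\in\operatorname{ind}\mathcal{F}_J\cup\operatorname{ind}T\mathcal{F}_J$ and $X\in\operatorname{ind}\mathcal{B}\setminus\operatorname{ind}\mathcal{F}_J$ is still in $\operatorname{ind}\mathcal{B}\setminus\operatorname{ind}\mathcal{F}_J$'' and ``checking its action on $\mathbb{S}$,'' and it does the bookkeeping correctly in the case $N\in\operatorname{ind}T\mathcal{F}_J$, where the dimension vector of the extension has mixed-sign contributions.
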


\begin{proof}
    (1) Since $\mathcal{F}_J$ is extension-closed, the extension of any $ M \in \operatorname{ind}\mathcal{F}_J \cup \operatorname{ind}T\mathcal{F}_J$ and any $ X \in \operatorname{ind}\mathcal{B}\setminus \operatorname{ind}\mathcal{F}_J$ is still in $ \operatorname{ind}\mathcal{B}\setminus \operatorname{ind}\mathcal{F}_J$ (if exists).
    Thus by checking the conjugation of generators of $P_J$, we have $U_J$ is a normal subgroup of $P_J$.

    (2) By definition, $P_J$ is generated by $U_J$ and $L_J$. 
    By (1), $P_J = U_J L_J$.

    Let $\theta\in U_J \cap L_J$. 
    Checking its action on $\mathbb{S}$, we easily deduce that $\theta = 1$ and the statement follows.

    (3) Since $N_G(U_J) \supset P_J \supset B$, $N_G(U_J) = P_{J'}$ for some $J'\supset J$.
    If $J'\neq J$, choose an element $i\in J'\setminus J$, then $E_{[S_i]}\subseteq  U_J$.
    Since $n_{[S_i]}\in P_{J'} = N_G(U_J)$, we have $n_{[S_i]}E_{[S_i]}n_{[S_i]}^{-1} = E_{[TS_i]}\subseteq  U_J$, contradiction.
    Hence $J'=J$ and $N_G(U_J) = P_{J}$.
\end{proof}

$L_J$ and its conjugates by $U_J$ are the Levi subgroups of $P_J$.

\subsection{Simplicity of G}

In this subsection, we want to show that $G$ is simple except for some special cases.

\begin{proposition}
    Let $G'$ denote the commutator subgroup of $G$. 
    Then $G=G'$ except for cases $A_1(2),A_1(3),B_2(2),G_2(2)$ ( the notation $A_1(2)$ means $\mathcal{R}$ is of type $A_1$ and the field $\mathbb{K}=\mathbb{F}_2$. Others are similar).
\end{proposition}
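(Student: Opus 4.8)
The plan is to mimic the classical argument (Carter, Chapter 11): show that the relevant root $SL_2$'s are perfect, and that the full commutator formula forces all generators $E_{[X]}(t)$ into $G'$. The key realization is that for each indecomposable $X$ the subgroup $\langle E_{[X]}, E_{[TX]}\rangle$ is (the image of) $SL_2(\mathbb{K})$, which is known to be perfect whenever $|\mathbb{K}|>3$; and for the higher-valuation situations (types $B_2$, $G_2$) the commutator formula of Prop \ref{commutator} produces the missing generators even over $\mathbb{F}_2$ or $\mathbb{F}_3$, except precisely in the four listed small cases.

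First I would reduce to showing every generator $E_{[X]}(t)$ lies in $G'$. Fix $X\in\operatorname{ind}\mathcal R$. If the underlying root category (restricted to the subcategory generated by $X$ and some other $Y$) contains a pair with a nontrivial extension — equivalently, if there is $Y$ with $X\ncong Y$, $X\ncong TY$ and $[L_{X,Y,1,1}]$ exists — then the commutator formula
\begin{align*}
    (E_{[X]}(t),E_{[Y]}(s)) = \prod_{i,j>0} E_{[L_{X,Y,i,j}]}(C_{X,Y,i,j}t^i s^j)
\end{align*}
with $C_{X,Y,1,1}=\gamma_{XY}^{L_{1,1}}=\pm1,\pm2,\pm3$ shows $E_{[L_{1,1}]}(\gamma_{XY}^{L_{1,1}}ts)\in G'$ modulo higher-length terms; a downward induction on length (using that $U_t/U_{t+1}$ is abelian, as in the proof of Prop \ref{Uunique}) then puts $E_{[L_{1,1}]}(u)\in G'$ for all $u$ in the subgroup of $\mathbb{K}^+$ generated by the products $ts$ — which is all of $\mathbb{K}$ when $\gamma_{XY}^{L_{1,1}}=\pm1$, and in the $\pm2,\pm3$ cases one uses the other $C_{X,Y,i,j}$ (e.g. $C_{X,Y,2,1},C_{X,Y,1,2}$) which have a relatively prime coefficient, so that the additive group generated is still all of $\mathbb{K}$ — unless $\mathbb{K}=\mathbb{F}_2$ or $\mathbb{F}_3$ and the category is small enough that only such $\pm2,\pm3$ constants ever appear. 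That exceptional situation is exactly $B_2(2),G_2(2)$ (and $G_2(3)$ is handled because it still has a $\pm1$ constant, or absorbed — this is where I must be careful to match Carter's list) together with the rank-one cases $A_1(2),A_1(3)$ where there is no second root at all.

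Next, for the rank-one contribution: when $X$ sits in a type $A_1$ component (or more generally once we know $E_{[Y]}(u)\in G'$ for the neighbours $Y$), I would use the identity $h_{[X]}(t)=n_{[X]}(t)n_{[X]}(-1)$ together with Lemma \ref{hEh}, $h_{[X]}(t)E_{[X]}(s)h_{[X]}(t)^{-1}=E_{[X]}(t^{2}s)$, to get $E_{[X]}((t^2-1)s)=(h_{[X]}(t),E_{[X]}(s))\in G'$; since $\{t^2-1:t\in\mathbb{K}^\times\}$ generates $\mathbb{K}^+$ as soon as $|\mathbb{K}|>3$, this finishes those $X$. The cases left uncovered by both arguments are precisely $A_1(2),A_1(3),B_2(2),G_2(2)$, giving $G=G'$ otherwise. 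I would then assemble: since every generator lies in $G'$ in the non-exceptional cases, $G=G'$.

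The main obstacle I expect is the bookkeeping for the small fields $\mathbb{F}_2,\mathbb{F}_3$ in types $B_2$ and $G_2$: one must check, using Ringel's explicit list of the $\gamma_{XY}^{L}$ values quoted in the text ($\pm1,\pm2,\pm3$ according to the $d$-invariants) and the explicit $C_{X,Y,i,j}$ of Prop \ref{commutator}, that for $B_2(q)$ and $G_2(q)$ with $q\geq 3$ resp. $q\geq 2$ — except $B_2(2),G_2(2)$ — some commutator of generators produces $E_{[M]}(u)$ for $u$ ranging over an additive generating set of $\mathbb{K}$. Concretely: in $B_2$ over $\mathbb{F}_3$ the constant $C_{1,1}=\pm1$ appears (short root acting), and in $G_2$ over $\mathbb{F}_3$ likewise, so those are fine; only $q=2$ with all relevant constants even (in $B_2$) or the mix of $\pm2,\pm3$ collapsing mod $2$ (in $G_2$) is genuinely bad — matching the stated exceptions. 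This is a finite check but needs to be carried out against the tables, and it is the step where an error would most easily creep in; everything else is the standard perfectness-of-$G'$ machinery adapted verbatim from Carter using the formulas already established in this section.
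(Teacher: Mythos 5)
Your overall plan is the same as the paper's: get $E_{[X]}((t^2-1)s) = (h_{[X]}(t),E_{[X]}(s))\in G'$ for $|\mathbb{K}|>3$, and over $\mathbb{F}_2,\mathbb{F}_3$ manufacture the generators from the commutator formula of Prop.~\ref{commutator}.  The large-field half is essentially verbatim what appears in the paper (your "once we know $E_{[Y]}(u)\in G'$ for the neighbours" qualifier is unnecessary there --- the $(h,E)$ argument is unconditional).  The substance of the proposition, though, lies entirely in the small-field bookkeeping, and there your proposal stops at exactly the point where the work begins.

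Concretely, three gaps.  First, the "downward induction on length" that is supposed to isolate $E_{[L_{1,1}]}$ from the product $\prod_{i,j>0}E_{[L_{i,j}]}(\cdots)$ is asserted but not carried out; the paper does not rely on such an induction, and instead isolates the relevant factor directly by choosing the second pair in the commutator so that the unwanted factors are \emph{already} known to lie in $G'$ (see the two-step $B_2$ argument: first get $E_{[L_{2,1}]}\subseteq G'$ from $(E_{[M]},E_{[L_{1,1}]})$, then read off $E_{[M]}$ from $(E_{[L_{1,1}]},E_{[TX]})$ since its other factor is now absorbed).  Your induction would have to be made precise for each rank-two configuration, and as written it is a plan rather than a proof.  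Second, you do not address the $\mathbb{F}_2$ case beyond saying "the mix of $\pm2,\pm3$ collapsing mod $2$ ... is genuinely bad."  The paper's $\mathbb{F}_2$ argument is structurally different: it reduces via $n_{[X]}(1)$-conjugation to the simple objects $S_i$, observes that $d(\operatorname{ind}\mathcal{R})$ takes at most two values with each value forming a single $W$-orbit, and then exhibits one commutator per value.  Without some such reduction you would have to verify, for \emph{every} indecomposable $M$, that a suitable pair with $\pm1$ constant exists --- a claim you neither state nor prove.  Third, you flag your own uncertainty on $G_2(\mathbb{F}_3)$.  The paper handles $G_2$ over $\mathbb{F}_3$ not through the $C_{i,j}$'s at all but through $(h_{[X]}(t),E_{[M]}(s)) = E_{[M]}((t^{A_{XM}}-1)s)$, observing that for adjacent roots in $G_2$ one has $A_{XM}\in\{-1,-3\}$ (both odd), so $2^{A_{XM}}-1\neq 0$ in $\mathbb{F}_3$.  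This is the clean way out, and it is precisely what your sketch is missing.  In short: right strategy, correct endpoint, but the case analysis that actually earns the exceptional list $A_1(2),A_1(3),B_2(2),G_2(2)$ is left open.
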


\begin{proof}
    Firstly, consider $\mathbb{K}\neq \mathbb{F}_2,\mathbb{F}_3$. 
    Since $h_{[M]}(t) E_{[M]}(u) h_{[M]}(t)^{-1} = E_{[M]}(t^2u)$, we have
    \begin{align*}
        (h_{[M]}(t),E_{[M]}(u)) = E_{[M]}(t^2u-u) \in G'.
    \end{align*}
    Since $\mathbb{K}\neq \mathbb{F}_2,\mathbb{F}_3$, there exists $t\in \mathbb{K}^{\times}$ such that $t^2 - 1 \neq 0$.
    Take $u=\frac{w}{t^2-1}$, and thus $E_{[M]}(w)\in G'$ for any $w\in \mathbb{K}$.
   Since $G'$ contains all the generators of $G$, we have $G=G'$.

   Secondly, consider $\mathbb{K}=\mathbb{F}_3$ and $\mathcal{R}$ is not of type $A_1$.
   For any $M\in \operatorname{ind}\mathcal{R}$, there exists $X\in \operatorname{ind}\mathcal{R}$ such that $M,X$ have extension.
   For example, we may take $X=\tau^{-1}M $, where $\tau$ is the AR-translation functor.
   By Lemma \ref{ABG}, the subcategory $\mathcal{C}$ of $\mathcal{R}$ generated by $M,X$ is of type $A_2,B_2$ or $G_2$.
   We consider these cases respectively.

   \textbf{Case} type $A_2$
   \begin{align*}
    (E_{[TX]}(t),E_{[L_{M,X,1,1}]}(s)) = E_{[M]}(ts\gamma_{TX,L_{M,X,1,1}}^{M} ) \in G'.
   \end{align*} 
   Since $\gamma_{TX,L_{M,X,1,1}}^{M} = \pm 1$, we have $E_{[M]}\subseteq  G'$.

   \textbf{Case} type $B_2$

   In the subcategory $\mathcal{C}$, for all objects $Y$, $d(Y)$ only has two possible values.
   If $d(M)$ is the larger one,
   \begin{align*}
    (E_{[TX]}(t),E_{[L_{M,X,1,1}]}(s)) = E_{[M]}(ts\gamma_{TX,L_{M,X,1,1}}^{M} ) \in G'.
   \end{align*}
   Since $\gamma_{TX,L_{M,X,1,1}}^{M} = \pm 2$, we have $E_{[M]}\subseteq  G'$.

   If $d(M)$ is the smaller one, 
   \begin{align*}
    (E_{[M]}(t),E_{[L_{M,X,1,1}]}(s)) = E_{[L_{M,X,2,1}]}(ts\gamma_{M,L_{M,X,1,1}}^{L_{M,X,2,1}} ) \in G'.
   \end{align*}
   Since $\gamma_{M,L_{M,X,1,1}}^{L_{M,X,2,1}} = \pm 2$, we have $E_{[L_{M,X,2,1}]}\subseteq  G'$.
   Since
   \begin{align*}
    &(E_{[L_{M,X,1,1}]}(a),E_{[TX]}(b)) \\
    =& E_{[M]}(ab\gamma_{L_{M,X,1,1},TX}^{M}) E_{[L_{M,X,2,1}]}(\frac{a^2b}{2!}\gamma_{L_{M,X,1,1},TX}^{M}\gamma_{L_{M,X,1,1},M}^{L_{M,X,2,1}} ) \in G'
   \end{align*}
   and $\gamma_{L_{M,X,1,1},TX}^{M} = \pm 1$, we have $E_{[M]}\subseteq  G'$.

   \textbf{Case} type $G_2$
   \begin{align*}
    (h_{[X]}(t),E_{[M]}(s)) = E_{[M]}((t^{A_{XM}}-1)s) \in G'.
   \end{align*}
   There exists $t$ such that $t^{A_{XM}}-1 \neq 0$, hence $E_{[M]}\subseteq  G'$.

   In all cases, $G'$ contains all the generators of $G$. Hence $G=G'$.

   Finally, consider $\mathbb{K}=\mathbb{F}_2$ and $\mathcal{R}$ is not of type $A_1,B_2,G_2$.
   Since
   \begin{align*}
    (n_{[X]}(t),E_{[Y]}(s)) = E_{[\omega_X(Y)]}(t^{-A_{XY}}s) E_{[Y]}(s)^{-1} \in G',
   \end{align*}
   i.e.
   \begin{align*}
    (n_{[X]}(1),E_{[Y]}(1)) = E_{[\omega_X(Y)]}(1) E_{[Y]}(1)^{-1} \in G',
   \end{align*}
   it suffice to show for a representative of each $W$-orbit of $\operatorname{ind}\mathcal{R}$.
   Notice that $(-|-)$ is $W$-invariant, so does $d(-)$. 
   On the other hand, since for any $M\in \operatorname{ind}\mathcal{R}$, there exists $Y\in \{ S_{1},\cdots,S_{m} \}$ and $i_1,\cdots,i_l$ such that $M\cong \omega_{S_{i_1}}\cdots\omega_{S_{i_l}}(Y)$,
   we only need to show $E_{[S_i]}\subseteq  G'$ for $i=1,\cdots,m$.
   It's easy to see that $d(S_1),\cdots,d(S_m)$ have at most two values.
   Hence $d(\operatorname{ind}\mathcal{R})$ have at most two values.

   We claim that if $d(X) = d(Y), X,Y\in \operatorname{ind}\mathcal{R}$, then $X,Y$ are in the same $W$-orbit.
   We only need to prove for the case when $X,Y$ are simple objects with $(H_X|H_Y)\neq 0$.
   Suppose $S_i,S_j$ are two simple objects satisfying $d(S_i)=d(S_j)$ and $(H_{S_i}|H_{S_j}) \neq 0$.
   Then 
   \begin{align*}
    A_{S_i,S_j} = \frac{(H_{S_i}|H_{S_j})}{d(S_i)} =  \frac{(H_{S_i}|H_{S_j})}{d(S_j)} = A_{S_j,S_i}.
   \end{align*}
   Since $A_{S_i,S_j},A_{S_j,S_i}$ are elements in the Cartan matrix, we must have $A_{S_i,S_j} = A_{S_j,S_i}=-1$.
   (Otherwise they can't be equal.)
   Then $\omega_{S_j}\omega_{S_i}(S_j)=S_i$, and $S_i,S_j$ are in the same $W$-orbit.
   The claim is proved.
   Thus we only need to show one object $M$ for each value of $d(\operatorname{ind}\mathcal{R})$, $E_{[M]}$ is in $G'$.

   If all $d(M)$ are equal, and since $\mathcal{R}$ is not of type $A_1$, there exist $X,Y\in \operatorname{ind}\mathcal{R}$ have extension.
   \begin{align*}
    (E_{[X]}(1),E_{[Y]}(1)) = E_{[L_{X,Y,1,1}]}(1) \in G'.
   \end{align*}
   Hence $G=G'$.

   If there're $d(X),d(Y)$ of two different values, we may assume that $d(X)<d(Y)$ and $X,Y$ have extension.
   Since $\mathcal{R}$ is not of type $G_2$, the subcategory of $\mathcal{R}$ generated by $X,Y$ is of type $B_2$. Hence
   \begin{align}
    (E_{[X]}(1),E_{[Y]}(1)) = E_{[L_{X,Y,1,1}]}(1) E_{[L_{X,Y,2,1}]}(1) \in G',
   \end{align}
   where $d(L_{X,Y,1,1})=d(X)$ and $d(L_{X,Y,2,1})=d(Y)$.
   Since $\mathcal{R}$ is not of type $B_2$, in the quiver corresponding to the complete section, there exist two vertices joined by one edge.
   Suppose these two vertex correspond to $S_i,S_j$. Then
   \begin{align*}
    (E_{[S_i]}(1),E_{[S_j]}(1)) = E_{[L_{S_i,S_j,1,1}]}(1) \in G'.
   \end{align*}
   Hence $E_{[M]}\subseteq  G'$ for all $M\in \operatorname{ind}\mathcal{R}$ such that $d(M) = d(L_{S_i,S_j,1,1})$.
   By formula (5), it also holds for all $M\in \operatorname{ind}\mathcal{R}$ such that $d(M)$ have the different value.
   Thus $G'$ contains all the generators of $G$, and $G=G'$.
\end{proof}

\begin{proposition}\label{xGx-1}
    $\bigcap\limits_{x\in G} xBx^{-1} = \{1\}$.
\end{proposition}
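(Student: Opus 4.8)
The plan is to analyse $D:=\bigcap_{x\in G}xBx^{-1}$, which is normal in $G$ and contained in $B$ (indeed it is the largest normal subgroup of $G$ lying in $B$), and to squeeze it first into $H$ and then into $\{1\}$. The inputs I will use are all available above: $B=UH$ with $U\cap H=\{1\}$; the corollary $B\cap V=\{1\}$ (hence also $H\cap V=\{1\}$); the existence, from the proof of that corollary, of an element $n_0\in N$ with $R(\overline{n_0})=\{[M]\mid M\in\operatorname{ind}\mathcal{B}\}$, together with the resulting identity $n_0Un_0^{-1}=V$ (here $U=U_{\overline{n_0}}^-$ since $R'(\overline{n_0})=\emptyset$); Lemma \ref{hEh}; and the fact that $G$ is by construction a subgroup of $\operatorname{Aut}(\mathfrak{g}_{\mathbb{K}})$, so its action on $\mathfrak{g}_{\mathbb{K}}$ is faithful.

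First I would push $D$ into $H$. Since $H$ is normal in $N$ and $n_0Un_0^{-1}=V$, we get $n_0Bn_0^{-1}=(n_0Un_0^{-1})(n_0Hn_0^{-1})=VH$; as $D$ is normal, $D=n_0Dn_0^{-1}\subseteq n_0Bn_0^{-1}$, so $D\subseteq UH\cap VH$. The key elementary step is then $UH\cap VH=H$: if $d=uh_1=vh_2$ with $u\in U$, $v\in V$, $h_i\in H$, then $v^{-1}u=h_2h_1^{-1}\in H$, so $v\in UH=B$, forcing $v\in V\cap B=\{1\}$; then $u=h_2h_1^{-1}\in U\cap H=\{1\}$, so $d=h_1\in H$. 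Hence $D\subseteq H$. (When $\mathbb{K}=\mathbb{F}_2$ one has $H=\{1\}$ and $B=U$, so already $D\subseteq U\cap V=\{1\}$, and the next step is vacuous.)

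Next I would show $D\cap H=\{1\}$ using the representation on $\mathfrak{g}_{\mathbb{K}}$. Fix $h\in D\subseteq H$. Writing $h$ as a product of generators $h_{[Y]}(s)$, each of which acts diagonally on the basis $\mathbb{S}$, we see that $h$ fixes every $H_S'$ and acts on each $u_X$ by a scalar $\chi_X(h)\in\mathbb{K}^{\times}$; moreover iterating Lemma \ref{hEh} gives $hE_{[X]}(t)h^{-1}=E_{[X]}(\chi_X(h)t)$ for all $X\in\operatorname{ind}\mathcal{R}$, $t\in\mathbb{K}$. Since $D$ is normal, the commutator $hE_{[X]}(t)h^{-1}E_{[X]}(t)^{-1}=E_{[X]}\bigl((\chi_X(h)-1)t\bigr)$ lies in $D\cap E_{[X]}$. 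For $X\in\operatorname{ind}\mathcal{B}$ we have $E_{[X]}\subseteq U$, so this intersection sits inside $H\cap U=\{1\}$; for $X\in\operatorname{ind}T\mathcal{B}$ it sits inside $H\cap V=\{1\}$. Because $t\mapsto E_{[X]}(t)$ is injective — as one sees from $E_{[X]}(t)u_{TX}=u_{TX}+tH_X'+t^2u_X$ and the linear independence of $u_X,H_X'$ — this forces $\chi_X(h)=1$ for every $X\in\operatorname{ind}\mathcal{R}$. Thus $h$ acts as the identity on the spanning set $\mathbb{S}$ of $\mathfrak{g}_{\mathbb{K}}$, hence $h=1$ by faithfulness, and therefore $D=\{1\}$.

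I expect the only step requiring real care to be the passage $D\subseteq H$, which hinges on having a Weyl element conjugating $B$ to $VH$ and on the disjointness $B\cap V=\{1\}$; after that the argument is a routine combination of $U\cap H=\{1\}$, Lemma \ref{hEh}, injectivity of the one‑parameter subgroups $E_{[X]}(\cdot)$, and faithfulness of the $\mathfrak{g}_{\mathbb{K}}$-action, with the case $\mathbb{K}=\mathbb{F}_2$ being immediate.
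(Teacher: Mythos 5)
Your proof is correct and follows essentially the same route as the paper's: conjugating $B$ by the long Weyl element $n_0$ (which sends $U$ to $V$) to trap $D=\bigcap_x xBx^{-1}$ inside $UH\cap VH=H$, and then using the conjugation action of $H$ on each root subgroup $E_{[X]}$ (Lemma~\ref{hEh}) together with $U\cap H=\{1\}$, $V\cap B=\{1\}$, and faithfulness of the $\mathfrak{g}_{\mathbb{K}}$-representation to force every $h\in D$ to act trivially on $\mathbb{S}$. The only cosmetic differences are that you spell out the elementary verification $UH\cap VH=H$ (which the paper asserts without proof) and you bypass the paper's intermediate observation that $D\subseteq Z(G)$, going straight from commutativity of $h$ with each $E_{[X]}(t)$ to $\chi_X(h)=1$; both are harmless.
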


\begin{proof}
    $\bigcap\limits_{x\in G} xBx^{-1}$ is a normal subgroup of $G$.
    We have shown that there exists $n_0 \in N$ such that $n_0 U n_0^{-1} = V$. 
    Hence $n_0 B n_0^{-1} \cap B = H$ and $\bigcap\limits_{x\in G} xBx^{-1} \subseteq  H$.
    
    For all $h\in \bigcap\limits_{x\in G} xBx^{-1} \subseteq  H$ and $u\in U$, we have $huh^{-1}u^{-1}\in U\cap H = \{1\}$. 
    Thus $huh^{-1} = u,  u\in U$.
    Similarly, for any $ v\in V,hvh^{-1} = v$.
    Thus $h$ commutes with all the generators of $G$, i.e. $h$ is contained in the center $Z(G)$ of $G$.
    Hence $\bigcap\limits_{x\in G} xBx^{-1} \subseteq  Z(G)$.
    
    Since $h\in H$, we can write $h=h_{[M_1]}(t_1)\cdots h_{[M_l]}(t_l)$ for some $t_i\in \mathbb{K}^{\times}, M_i \in \operatorname{ind}\mathcal{R}, i=1,\cdots , l$.
    For any $ E_{[M]}(t)$, we have $hE_{[M]}(t)h^{-1} = E_{[M]}(t)$.
    i.e.
    \begin{align*}
        E_{[M]}(t) = E_{[M]}(t_1^{A_{M_1M}}\cdots t_l^{A_{M_lM}}t), \qquad  t\in \mathbb{K}, M\in \operatorname{ind}\mathcal{R}.
    \end{align*}

    If $\mathbb{K}=\mathbb{F}_2$, then $H=1$ and thus $\bigcap\limits_{x\in G} xBx^{-1}=\{1\}$.

    If $\mathbb{K}\neq \mathbb{F}_2$, $t_1^{A_{M_1M}}\cdots t_l^{A_{M_lM}}=1$ holds for all $M \in \operatorname{ind}\mathcal{R}$.
    Notice that by definition $h$ acts on $\mathbb{S}$ by
    \begin{align*}
        &hu_{[M]}=t_1^{A_{M_1M}}\cdots t_l^{A_{M_lM}}u_{[M]} = u_{[M]}. \\
        &hH_S'=H_S'.
    \end{align*}
    Hence $h$ acts trivially on the Lie algebra $\mathfrak{g}$.
    Thus $h=1$ and $\bigcap\limits_{x\in G} xBx^{-1}=\{1\}$.
\end{proof}

\begin{theorem}
    The group $G$ is simple, except for cases $A_1(2),A_1(3),B_2(2),G_2(2)$.
\end{theorem}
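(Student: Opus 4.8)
The plan is to deduce the theorem from the two preceding propositions by Iwasawa's simplicity criterion, in the form used for groups with a $(B,N)$-pair (see \cite[Ch.~11]{carter} and \cite{Steinberg}). First I would collect the structural facts needed. Outside the cases $A_1(2),A_1(3),B_2(2),G_2(2)$ we have $G=G'$ by the preceding proposition, and $\bigcap_{x\in G}xBx^{-1}=\{1\}$ by Proposition~\ref{xGx-1}. Moreover $U$ is normal in $B$, $B=UH$, and $U$ is nilpotent: the chain $U=U_1\supseteq U_2\supseteq\cdots$ (with $U_{t+1}\trianglelefteq U_t$, $U_t=1$ for $t\gg0$) satisfies $[U_a,U_b]\subseteq U_{a+b}$ because the commutator formula (Proposition~\ref{commutator}) expresses $(E_{[X]}(t),E_{[Y]}(s))$ in terms of the $E_{[L_{X,Y,i,j}]}$ and $l(L_{X,Y,i,j})=i\,l(X)+j\,l(Y)$; in particular $B=U\rtimes H$ is solvable. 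Next, $G$ is generated by the conjugates of $U$: every $X\in\operatorname{ind}\mathcal{R}$ has $\underline{dim}X=w(\alpha_i)$ for some $w\in W$ and some simple root $\alpha_i$ (each $W$-orbit on $\Phi$ meets $\Pi$), hence $X\cong\omega_{S_{i_1}}\cdots\omega_{S_{i_l}}(S_i)$ and, iterating Lemma~\ref{nEn}, $E_{[X]}=n\,E_{[S_i]}\,n^{-1}\subseteq nUn^{-1}$ with $n\in N$, so $G=\langle E_{[X]}\rangle=\langle gUg^{-1}:g\in G\rangle$. Finally, $\Phi(\mathcal{R})$ is irreducible (the Dynkin diagram of a complete section is connected), since $\mathfrak{g}_{\mathcal{R}}\otimes_{\mathbb{Q}}\mathbb{C}\cong\mathfrak{g}(\Phi(\mathcal{R}))$ is simple by Theorem~\ref{PXgiso}.

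The Iwasawa argument then runs as follows. Let $K\trianglelefteq G$, $K\neq\{1\}$; I must show $K=G$. If $K\subseteq B$ then $K=gKg^{-1}\subseteq gBg^{-1}$ for all $g$, so $K\subseteq\bigcap_x xBx^{-1}=\{1\}$, a contradiction; hence $K\not\subseteq B$ and $KB$ is a subgroup strictly containing $B$. By Proposition~\ref{PJonly}, $KB=P_J$ for some nonempty $J\subseteq\Pi$. The crux is to show $J=\Pi$, i.e. $G=KB$. Granting this, $G/K=KB/K\cong B/(B\cap K)$ is solvable as a quotient of the solvable group $B$, while $G=G'$ makes $G/K$ perfect; a perfect solvable group is trivial, so $K=G$. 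Thus every nontrivial normal subgroup of $G$ is $G$ itself, and $G$ is simple.

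To prove $J=\Pi$ I would first show that any nontrivial normal $K$ already contains a full root subgroup, and then spread it over all roots. Since $KB=P_J=\bigsqcup_{w\in W_J}BwB$ strictly contains $B$, the group $W_J$ contains a simple reflection $\omega_{S_i}$ with $i\in J$, so $K\cap Bn_{[S_i]}B\neq\varnothing$; pick $k$ in this intersection. Using the decomposition $U=U_iE_{[S_i]}$ relative to $\omega_{S_i}$ (with $n_{[S_i]}U_in_{[S_i]}^{-1}\subseteq U$), the rank-one identities of Lemma~\ref{nEEE}, and repeated conjugation of $k$ by elements of $U$ and $V$ (all of which stays in $K$), one reduces $k$ to a nontrivial element of $E_{[TS_i]}\cap K$. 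Scaling by the torus, $h_{[X]}(s)E_{[TS_i]}(t)h_{[X]}(s)^{-1}=E_{[TS_i]}(s^{A_{X,TS_i}}t)$, and closing under products, gives $E_{[TS_i]}\subseteq K$ (when $\mathbb{K}=\mathbb{F}_2$ this is immediate, since $E_{[TS_i]}\cong\mathbb{F}_2$ has no proper nontrivial subgroup and $H=1$). So $K$ contains a root subgroup, hence — $K$ being normal and every root subgroup being $N$-conjugate to that of a simple object of the same root length — it contains $E_{[S_i]}$ for some simple $S_i$, and therefore all $E_{[X]}$ with $X$ of that length. If $\mathcal{R}$ is simply laced this already yields $K\supseteq\langle E_{[X]}\rangle=G$. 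If there are two root lengths, one uses the commutator formula along a path in the connected Dynkin diagram joining $i$ to a node of the other length to produce a root subgroup of the other length inside $K$; the constants $C_{X,Y,i,j}\in\{\pm1,\pm2,\pm3\}$ of Proposition~\ref{commutator} are invertible in $\mathbb{K}$ exactly when $\mathbb{K}$ is none of the excluded small fields, so this step goes through and $K=G$.

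The main obstacle is precisely this last step: showing that a nontrivial normal subgroup contains a full root subgroup and then fills up $G$. This is the casework behind Iwasawa's lemma for $(B,N)$-pairs, and the only point special to our setting is that the objects $L_{X,Y,i,j}$ and the structure constants $C_{X,Y,i,j}$ entering the reduction are those read off from the AR-quiver of $\mathcal{R}$, so the bookkeeping of lengths and signs is exactly the one set up in Section~4.1; the excluded cases $A_1(2),A_1(3),B_2(2),G_2(2)$ are exactly those in which either $G\neq G'$ or a structure constant needed above vanishes in $\mathbb{K}$.
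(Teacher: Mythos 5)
Your outer Iwasawa skeleton matches the paper's: take $K\trianglelefteq G$, observe $KB=P_J$ (Proposition~\ref{PJonly}), in the case $J=\Pi$ quotient to get a perfect solvable group and conclude $K=G$, and in the case $J=\emptyset$ use $\bigcap_{x\in G}xBx^{-1}=\{1\}$ (Proposition~\ref{xGx-1}) to get $K=\{1\}$. But the crux — why no intermediate $J$ can occur — is handled by a genuinely different, and here problematic, route.

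The paper never tries to produce a full root subgroup inside $G_1$. It instead shows directly that $J$ is $\emptyset$ or $\Pi$ by a short double-coset argument: if $I=\Pi\setminus J$ and $J$ are both nonempty, pick adjacent vertices $i\in I$, $j\in J$; from $Bn_{[S_j]}B\cap G_1\neq\emptyset$ one gets $n_{[S_i]}Bn_{[S_j]}Bn_{[S_i]}\cap G_1\neq\emptyset$, the double-coset multiplication rules force $n_{[S_i]}n_{[S_j]}n_{[S_i]}\in N_J$ or $n_{[S_j]}n_{[S_i]}\in N_J$, and both options imply $\alpha_i$ is supported on $J$ (via $\omega_{\alpha_i}(\alpha_j)=\alpha_j-A_{S_i,S_j}\alpha_i$ and Lemma~\ref{lemmaPJBNB}), contradicting $i\in I$. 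This bypasses entirely the need to manipulate root subgroups inside $K$, and in particular does not depend on the structure constants $C_{X,Y,i,j}$ being nonzero in $\mathbb{K}$; the only place structure constants enter the paper's simplicity theorem is via the earlier proposition $G=G'$, which is handled by its own careful case analysis.

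Your alternative — extract a nontrivial element of $E_{[TS_i]}\cap K$, scale by the torus, and propagate over all roots — is the classical Carter/Steinberg argument, but as written it has two genuine gaps. First, the step ``one reduces $k$ to a nontrivial element of $E_{[TS_i]}\cap K$'' is precisely the technical heart of that approach and is only asserted; it requires a careful rank-one computation using the unique Bruhat normal form, and cannot be waved through. Second, and more seriously, the claim that ``the constants $C_{X,Y,i,j}\in\{\pm1,\pm2,\pm3\}$ are invertible in $\mathbb{K}$ exactly when $\mathbb{K}$ is none of the excluded small fields'' is false: for $B_n(2)$ with $n\geq 3$, for $C_n(2)$, and for $F_4(2)$, some $C_{X,Y,i,j}=\pm 2$ vanish in $\mathbb{F}_2$, yet these groups are not on the excluded list and must still come out simple. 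To rescue your ``two root lengths'' step you would have to choose the commutator along a path where the relevant constant is $\pm 1$ (exactly the device the paper uses in the proof that $G=G'$), and then handle $F_4(2)$ and $B_n(2)$ separately; as stated your criterion excludes too much and the argument does not close.
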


\begin{proof}
    Let $G_1$ be a normal subgroup of $G$.
    Then $G_1B$ is a subgroup of $G$ containing $B$.
    Hence $G_1B = P_J$ for some $J\subseteq  \Pi$.
    Let $I=\Pi\setminus J$. 

    Suppose $I,J$ are both nonempty.
    Since the quiver corresponding to the complete section is connected, there exist two vertices $i,j$ of the quiver such that $\alpha_i \in I$, $\alpha_j \in J$, and there's one edge between $i$ and $j$(probably with valuation).
    Let $n_i=n_{[S_i]},n_j=n_{[S_j]}\in N$.
    Since $Bn_jB\subseteq  BN_JB=P_J=G_1B$ and $Bn_jB\cap B = \emptyset$, we have $Bn_jB\cap G_1 \neq \emptyset$.
    Hence $n_iBn_jBn_i\cap G_1 \neq \emptyset$.
    Since 
    \begin{align*}
        &Bn_jBBn_iB \subseteq  Bn_jn_iB \cup Bn_iB \\
        \text{and }\quad &Bn_jBBn_iB \subseteq  Bn_jn_iB \cup Bn_jB ,
    \end{align*}
    and $Bn_iB\cap Bn_jB = \emptyset$, we must have $Bn_jBBn_iB \subseteq  Bn_jn_iB $.
    Hence
    \begin{align*}
        n_iBn_jBn_i \subseteq  n_iBn_jn_iB \subseteq  Bn_in_jn_iB \cup Bn_jn_iB.
    \end{align*}
    Thus 
    \begin{align*}
        Bn_in_jn_iB \cap G_1 \neq \emptyset \quad \text{or} \quad Bn_jn_iB \cap G_1 \neq \emptyset,
    \end{align*}
    which means $n_in_jn_i\in N_J$ or $n_jn_i\in N_J$.

    The second case implies that $n_i\in N_J$ and thus $\omega_{S_i}\in W_J$, which is impossible.

    The first case implies that $n_{[\omega_{S_i}(S_j)]}(\eta_{S_i,S_j})\in N_J$, and thus $\omega_{[\omega_{S_i}(S_j)]} \in W_J$.
    By Lemma \ref{lemmaPJBNB}, $\omega_{\alpha_i}(\alpha_j) = \underline{dim}\omega_{S_i}(S_j)$ is supported on $J$.
    However, we have $\omega_{\alpha_i}(\alpha_j) = \alpha_j - A_{S_i,S_j}\alpha_i$ with $A_{S_i,S_j}\neq 0$, since vertices $i,j$ are connected.
    This means $\alpha_i \in J$, contradicts to $\alpha_i \in I$. 
    Hence $J = \emptyset$ or $\Pi$.

    If $J = \emptyset$, then $G_1B=B$, $G_1\subseteq  B$.
    For any $ x\in G$, we have $G_1=xG_1x^{-1}\subseteq  xBx^{-1}$.
    Hence $G_1\subseteq  \bigcap\limits_{x\in G} xBx^{-1} = \{1\}$ and thus $G_1=\{1\}$.

    If $J = \Pi$, then $G_1B=G$.
    We claim that $B$ is solvable.
    Since $(B,B)\subseteq  U$, we only need to show $U$ is solvable, which follows from the commutator formula.
    Specifically, suppose the largest length of $ M\in \operatorname{ind}\mathcal{R}$ is $a\in \mathbb{Z}_{>0}$, then commute $U$ with itself more than $a$ times we will get 1, which shows it's solvable.
    Hence 
    \begin{align*}
        G/G_1 = G_1B/G_1 \cong B/(G_1\cap B)
    \end{align*}
    is solvable.
    On the other hand, excluding cases $A_1(2),A_1(3),B_2(2),G_2(2)$, we have 
    \begin{align*}
        (G/G_1,G/G_1) = (G,G)/G_1 = G'/G_1 = G/G_1.
    \end{align*}
    Thus we have $G/G_1=\{1\}$ and $G_1=G$.

    Since the normal subgroups of $G$ can only be $\{1\}$ or $G$, $G$ is simple.
\end{proof}

\begin{proposition}
    Every $G$ (even non-simple ones) has trivial centre.
\end{proposition}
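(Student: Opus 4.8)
The plan is to show that the centre $Z(G)$ is contained in the core $\bigcap_{x\in G} xBx^{-1}$, which has already been shown to equal $\{1\}$ in Proposition \ref{xGx-1} — and, crucially, that proof is valid for \emph{every} $G$, with no exceptions on the type of $\mathcal{R}$ or the field $\mathbb{K}$ (it explicitly treats $\mathbb{K}=\mathbb{F}_2$ separately). So once we know $Z(G)\subseteq\bigcap_{x\in G} xBx^{-1}$, we are done in full generality.

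First I would note that any $z\in Z(G)$ commutes with every element of $B$, so $zBz^{-1}=B$ and hence $z\in N_G(B)$. Now take $J=\emptyset$ in Proposition \ref{PJproperties}(1): since $P_\emptyset = BN_\emptyset B = BHB = B$ (as $W_\emptyset=\{1\}$ gives $N_\emptyset=H\subseteq B$), that proposition yields $N_G(B)=N_G(P_\emptyset)=P_\emptyset=B$. Therefore $Z(G)\subseteq B$. (Alternatively one can avoid the parabolic machinery and argue straight from the Bruhat decomposition: if $z\in Z(G)$ lies in the double coset $BnB$, then $BnB = BzB = zB$ is a single left coset of $B$, which forces $\overline{n}=1$ and hence $z\in B$.)

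Finally, for any $x\in G$ the centrality of $z$ gives $z = xzx^{-1}\in xBx^{-1}$, so $Z(G)\subseteq \bigcap_{x\in G} xBx^{-1} = \{1\}$ by Proposition \ref{xGx-1}, i.e.\ $Z(G)=\{1\}$. I do not expect any genuine obstacle: the statement follows immediately from the self-normalizing property of $B$ together with the triviality of the core $\bigcap_{x} xBx^{-1}$. The only thing to verify carefully is that both of these inputs hold without restriction on $\mathcal{R}$ or $\mathbb{K}$ — which is the case, since neither Proposition \ref{PJproperties} nor Proposition \ref{xGx-1} excludes the small rank-one or characteristic-two situations.
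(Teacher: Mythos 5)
Your proof is correct and uses the same two key inputs as the paper (the self-normalizing property $N_G(B)=N_G(P_\emptyset)=P_\emptyset=B$ from Proposition \ref{PJproperties}(1), and $\bigcap_{x\in G}xBx^{-1}=\{1\}$ from Proposition \ref{xGx-1}), so it is essentially the same approach. Your route is somewhat cleaner: you observe directly that a central element normalizes $B$ and hence lies in $B$, whereas the paper reruns the dichotomy from the simplicity theorem ($Z(G)B=P_J$ with $J=\emptyset$ or $\Pi$), handling the $J=\Pi$ case by forcing $B=G=\{1\}$; your version avoids that detour, and your Bruhat-decomposition alternative ($z$ central $\Rightarrow BzB=zB$ is a single coset, forcing $\overline{n}=1$) is a valid, even more elementary substitute for the parabolic machinery.
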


\begin{proof}
    Let $Z(G)$ be the center of $G$, which is a normal subgroup of $G$. 
    By the same argument as above, we have $Z(G) = \{1\}$ or $Z(G)B=G$.

    If $Z(G)B=G$, then $B$ is a normal subgroup of $G$. 
    We have proved that $N_G(P_J) = P_J$. 
    As a special case, take $J=\emptyset$ and we have $N_G(B) = B$. 
    Hence $B=G$ and $xBx^{-1} = B$, for any $ x\in G=B$.
    Thus $ \{1\} = \bigcap\limits_{x\in G} xBx^{-1} =B =G$.
    Of course in this case we also have $Z(G)=\{1\}$.
\end{proof}

\subsection{The Steinberg Theorem and Linear Algebraic Groups}
In this subsection, we introduce the Steinberg theorem, and as a corollary, when $\mathbb{K}$ is an algebraically closed field, we show $G$ is a semisimple linear algebraic group, and $\operatorname{Lie}(G)$ is isomorphic to the initial Lie algebra $\mathfrak{g}_{\mathbb{K}}=\mathfrak{g}_{\mathbb{Z}}\otimes \mathbb{K}$.

We write down the Steinberg theorem in our settings as follows.
This theorem shows we can define $G$ as an abstract group with generators and relations.

\begin{theorem}\label{steinberg}
    (Steinberg) With notations the same as the previous sections, 
    for $ M\in \operatorname{ind}\mathcal{R}$ and $ t\in \mathbb{K}$, we introduce a symbol $x_{[M]}(t)$.
    Let $\overline{G}$ be the abstract group generated by $x_{[M]}(t)$ and subject to the following relations:

    (1) $x_{[M]}(t_1) x_{[M]}(t_2) = x_{[M]}(t_1+t_2)$

    (2) For $M\ncong N,M\ncong TN$, 
    \begin{align*}
        (x_{[M]}(t),x_{[N]}(s)) = \prod_{\substack{L_{M,N,i,j}\in \operatorname{ind}\mathcal{R} \\ i,j>0}} x_{[L_{M,N,i,j}]}(C_{M,N,i,j}t^is^j).
    \end{align*}

    (3) $\overline{h_{[M]}}(t_1)\overline{h_{[M]}}(t_2) = \overline{h_{[M]}}(t_1t_2)$, $t_1t_2\neq 0$.

    where $\overline{h_{[M]}}(t) = \overline{n_{[M]}}(t) \overline{n_{[M]}}(-1)$ and $\overline{n_{[M]}}(t) = x_{[M]}(t)x_{[TM]}(t^{-1}) x_{[M]}(t)$.

    (4) $\overline{n_{[M]}}(t) x_{[M]}(s) \overline{n_{[M]}}(t)^{-1} = x_{[TM]}(t^{-2}s)$, $t\neq 0$.

    Let $\overline{Z} $ be the centre of $\overline{G}$. Then $\overline{G}/\overline{Z}\cong G$.
\end{theorem}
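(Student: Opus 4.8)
The plan is to construct a surjective group homomorphism $\pi:\overline{G}\to G$ sending $x_{[M]}(t)\mapsto E_{[M]}(t)$, show it is surjective with kernel $\overline{Z}$, and conclude $\overline{G}/\overline{Z}\cong G$ by the first isomorphism theorem. Surjectivity is immediate once $\pi$ is well-defined, since the $E_{[M]}(t)$ generate $G$. That $\pi$ is well-defined follows because all four families of relations (1)--(4) defining $\overline{G}$ hold in $G$: relation (1) is the definition of $E_{[M]}(t)=\exp(t\,\mathbf{ad}\,u_M)$, relation (2) is Proposition \ref{commutator}, relation (3) is the homomorphism property of $h_{[M]}(t)$ together with Lemma \ref{nEEE} (which identifies $n_{[M]}(t)=E_{[M]}(t)E_{[TM]}(t^{-1})E_{[M]}(t)$ and $h_{[M]}(t)=n_{[M]}(t)n_{[M]}(-1)$), and relation (4) is the $t^{-2}$-scaling computed in the paragraph before Lemma \ref{nEEE} (the formulas $E_{[M]}(t)E_{[TM]}(t^{-1})E_{[M]}(t)u_M=t^{-2}u_{TM}$ etc., conjugated appropriately). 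So $\ker\pi$ is the only thing to pin down.

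The core of the proof is therefore to show $\ker\pi=\overline{Z}$. One inclusion, $\overline{Z}\subseteq\ker\pi$, should follow from Proposition \ref{xGx-1}: a central element of $\overline{G}$ maps to a central element of $G$ (using surjectivity of $\pi$), and we have shown $Z(G)$ is trivial, so the image of $\overline{Z}$ lies in $\{1\}$. For the reverse inclusion $\ker\pi\subseteq\overline{Z}$, I would mirror inside $\overline{G}$ the entire structure theory developed for $G$ in the previous subsections: define $\overline{U},\overline{V},\overline{H},\overline{N},\overline{B}$ as the subgroups of $\overline{G}$ generated by the corresponding symbols, prove the Bruhat decomposition $\overline{G}=\bigsqcup_{\overline n\in\overline N/\overline H}\overline B\overline n\overline B$ for $\overline{G}$ using only relations (1)--(4) (the proofs in Section 4 of the uniqueness in $U$, the relations $\overline{n}$-$\overline{E}$-$\overline{n}$, $\overline{n}$-$\overline{n}$-$\overline{n}$, etc., all go through formally since they only used the commutator formula and the $n,h$-identities, which are now relations), and then run a standard argument: if $g\in\ker\pi$ write $g=\bar b_1\bar n\bar b_2$; applying $\pi$ forces $\overline n$ to map to the identity coset, hence $\overline n\in\overline H$, so $g\in\overline B$; then $g=\bar u\bar h$ with $\pi(\bar u)\pi(\bar h)=1$; the uniqueness in $\overline B=\overline U\overline H$ together with $\pi$ restricted to $\overline U$ being injective (which follows from Proposition \ref{Uunique}, since the $E_{[M]}(t)$ satisfy no extra relations beyond those forced by the commutator formula) gives $\bar u=1$, so $g=\bar h\in\overline H$; finally a central-element argument shows any $\bar h\in\overline H\cap\ker\pi$ is central in $\overline{G}$ because it commutes with every $x_{[M]}(t)$ (as $h_{[M]}$-conjugation scales $E_{[N]}(s)$ by $t^{A_{MN}}$ and $\pi(\bar h)=1$ forces these scalars to be $1$ in $\mathbb{K}$, hence $\bar h$ commutes with each generator by relation (4) and the analogue of Lemma \ref{hEh} in $\overline G$).

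The main obstacle, and the part requiring real care rather than mechanical transcription, is establishing that $\overline{N}/\overline{H}\cong W$ inside $\overline{G}$ and that the Bruhat double cosets are genuinely disjoint in $\overline{G}$ — i.e. that no additional collapsing happens in the abstract group beyond what happens in $G$. In $G$ disjointness of the Bruhat cells was proved using the faithful action on $\mathfrak{g}_{\mathbb{K}}$ (linear independence of $\mathbb{S}$); in $\overline{G}$ we have no such action a priori, so one must instead verify that the relevant identities among the $\overline{n}$'s (namely $\overline{n_{[X]}}^2\in\overline H$ and $\overline{n_{[X]}}\,\overline{n_{[Y]}}\,\overline{n_{[X]}}^{-1}=\overline{n_{[\omega_X(Y)]}}(\eta_{XY})$, which are consequences of relations (1)--(4) exactly as Lemma \ref{nnn} was) suffice, together with the Matsumoto-type length argument from Lemmas \ref{Rn}--\ref{lastj} and the proposition that $\overline B\overline n\overline B=\overline B\overline n'\overline B\Rightarrow\overline n=\overline n'$ in $\overline N/\overline H$, to pin down $\overline N/\overline H$ as the Coxeter group $W$ without circularity. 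I expect this to go through — the arguments in Section 4 were written to be formal consequences of the commutator formula and the $n,h$-relations — but it is the place where one has to be most vigilant that nothing secretly used faithfulness on the Lie algebra.
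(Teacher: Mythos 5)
Your proposal matches the paper's proof, which is likewise a sketch along exactly these lines: build the surjection $\varphi:\overline G\to G$, show $\varphi|_{\overline U}:\overline U\to U$ is an isomorphism via the commutator relations and Prop.~\ref{Uunique}, re-derive the conjugation identities of Lemmas~\ref{nEn}--\ref{hnh} and the isomorphism $\overline N/\overline H\cong W$ from the defining relations, obtain the unique $\overline u'\,\overline h\,\overline n\,\overline u$ expression, and conclude $\ker\varphi\subseteq\overline H$ and hence $\ker\varphi=\overline Z$. On the worry you flagged: the identification $\overline N/\overline H\cong W$ does not require faithfulness, because the paper's proposition establishing $N/H\cong W$ already works through the abstract presentation of $W$ by $\omega_\alpha^2=1$ and $\omega_\alpha\omega_\beta\omega_\alpha^{-1}=\omega_{\omega_\alpha(\beta)}$, giving surjections $W\twoheadrightarrow\overline N/\overline H\twoheadrightarrow N/H\cong W$ whose composite is the identity; disjointness of Bruhat cells in $\overline G$ then reduces, via the injectivity of $\varphi|_{\overline U}$, to the corresponding facts already proved in $G$, so nothing is circular.
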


\begin{proof}
    We sketch the proof of this theorem, since it's similar to the previous process.
    
    We have already showed that $G$ satisfies the above relations, thus there is a surjective group homomorphism $\varphi:\overline{G}\rightarrow G$, mapping $x_{[M]}(t)$ to $E_{[M]}(t)$.
    
    Fix a complete section (and thus a hereditary subcategory $\mathcal{B}$) of $\mathcal{R}$, and let $\overline{U}$ be the subgroup of $\overline{G}$ generated by $x_{[M]}(t)$ with $M\in \operatorname{ind}\mathcal{B}, t\in \mathbb{K}$.
    By the commutator relations (2) and Prop \ref{commutator}, we have $\varphi|_{\overline{U}}:\overline{U}\rightarrow U$ is an isomorphism. 

    Let $\overline{H}$ be the subgroup generated by $\overline{h_{[M]}}(t)$ for all $M\in \operatorname{ind}\mathcal{R}$ and $t\in \mathbb{K}^{\times}$, and $\overline{N}$ be the subgroup generated by $\overline{H}$ and $\overline{n_{[M]}}(1)$ for all $M \in \operatorname{ind}\mathcal{R}$.
    We can calculate the conjugate relations similar to Lemma \ref{nEn} -- \ref{hnh}, and obtain that $\overline{N}/\overline{H}\cong W$.

    Similar to the proof of the Bruhat's decomposition of $G$, we can obtain that each element of $\overline{G}$ has a unique expression of the form $\overline{u}'\overline{h}\overline{n}\overline{u}$ for some $\overline{u}' \in \overline{U}, \overline{h} \in \overline{H}, \overline{n}\in \overline{N}$ and $\overline{u}\in \overline{U}_{w}^{-}$, where $w\in W$ is the image of $\overline{n}$ under the isomorphism $\overline{N}/\overline{H}\cong W$, and $\overline{U}_{w}^{-}$ is defined analogous to $U_{\overline{\varphi(\overline{n})}}^{-}$ (or equivalently, the inverse image of $U_{\overline{\varphi(\overline{n})}}^{-}$).

    Finally, we can show that the kernel of $\varphi$ is contained in $\overline{H}$ and then is exactly the centre $\overline{Z}$ of $\overline{G}$. 
    The theorem is proved.
\end{proof}

\begin{remark}\label{rmkH}
    (1) We can write down $\overline{Z} $ explicitly.
    Chosen a hereditary subcategory $\mathcal{B}$, let $\{S_1,\cdots,S_m\}$ be a set of representatives of isomorphism classes of simple objects in $\mathcal{B}$.
    Then $\overline{h} = \prod\limits_{i=1}^m \overline{h_{[S_i]}}(t_i) \in \overline{Z}$ if and only if 
    \begin{align}
        (t_1,\cdots,t_m) \in (\mathbb{K}^{\times})^m \qquad \text{satisfies } \quad \prod\limits_{i=1}^m t_i^{a_{ij}} = 1,\quad  j=1,\cdots,m,
    \end{align}
    where $(a_{ij})$ is the Cartan matrix.

    (2) the relation (4) in the theorem can be deduced from relations (1,2,3) when the root system is not of type $A_1$.
\end{remark}

\begin{corollary}
    When $\mathbb{K}$ is an algebraically closed field, $G$ is a semisimple linear algebraic group.
\end{corollary}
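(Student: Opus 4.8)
The plan is to invoke the Steinberg presentation (Theorem \ref{steinberg}) together with the Bruhat decomposition to equip $G$ with the structure of an affine algebraic group over the algebraically closed field $\mathbb{K}$, and then read off semisimplicity from the already-established simplicity of $G$ (modulo the small exceptional fields) and the structure of its Borel subgroup. First I would observe that, by Remark \ref{rmkH}, over an algebraically closed $\mathbb{K}$ the centre $\overline{Z}$ of $\overline{G}$ is the group of $\mathbb{K}$-points of a diagonalizable group scheme cut out by the monomial equations $\prod_i t_i^{a_{ij}}=1$, and $G\cong \overline{G}/\overline{Z}$; this already suggests $G$ is the $\mathbb{K}$-points of an algebraic group, namely the adjoint group of type $\Phi(\mathcal{R})$.

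The key steps, in order, are: (1) Use the Bruhat decomposition $G=\bigsqcup_{\overline n} BnB$ together with the unique-factorization result $x=u'hnu$, $u'\in U$, $h\in H$, $n\in N$, $u\in U_{\overline n}^-$, to exhibit $G$ as a disjoint union of locally closed pieces. Each piece is parametrized by affine spaces $\mathbb{A}^{\dim U}$, the torus $H\cong(\mathbb{K}^\times)^m$, and $\mathbb{A}^{l(\overline n)}$, via the polynomial commutator formula of Proposition \ref{commutator} and the explicit formulas (1)--(4) for the action of the generators; the big cell $B n_0 U^-$ is open and dense. (2) Check that multiplication and inversion are morphisms on these charts: this is where the explicit integral polynomial formulas for $E_{[X]}(t)$, $h_{[X]}(t)$, $n_{[X]}(t)$ and for the conjugation relations (Lemmas \ref{nEn}--\ref{hnh}) are used — all transition maps are polynomial/rational with nonvanishing denominators, so $G$ is an affine (indeed a linear) algebraic group, with $B=UH$ a connected solvable subgroup, $U$ unipotent, $H$ a maximal torus. (3) Identify the root subgroups $E_{[M]}$ with the unipotent one-parameter subgroups and $N/H\cong W$ with the Weyl group, so that $(B,N,\{n_{[S_i]}\})$ is a split $BN$-pair; $B$ is then a Borel subgroup and $G$ is connected (generated by the connected subgroups $E_{[M]}$). (4) Conclude semisimplicity: the radical $R(G)$ is a connected solvable normal subgroup; by Proposition \ref{xGx-1} the intersection of all Borels $\bigcap_{x} xBx^{-1}=\{1\}$, so $R(G)\subseteq\bigcap_x xBx^{-1}=\{1\}$; hence $G$ is semisimple. (For the exceptional small-field cases the corollary is about $\mathbb{K}$ algebraically closed, so $\mathbb{K}\neq\mathbb{F}_2,\mathbb{F}_3$ and no exceptions arise.)

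Concretely I would phrase the argument as: equip $G$ with the Zariski topology whose closed sets are defined on the big cell $\Omega=U H n_0 U^-\cong \mathbb{A}^{\dim U}\times(\mathbb{K}^\times)^m\times\mathbb{A}^{\dim U}$ and its translates, verify the compatibility on overlaps using the Bruhat decomposition and the commutator/conjugation formulas (all already proved and involving only $\mathbb{Z}$-coefficient polynomials), so that $G$ becomes a linear algebraic group with $\mathrm{Lie}(G)\cong\mathfrak{g}_{\mathbb{K}}$ realized by the $\mathbf{ad}$-action; then semisimplicity follows from triviality of the radical as above. Alternatively, and more cheaply, one may simply cite that $(G,B,N)$ is a split $BN$-pair with abelian $H$ and the root datum of $\Phi(\mathcal{R})$, so $G$ coincides with the adjoint Chevalley group $G_{\Phi(\mathcal{R})}^{\mathrm{ad}}(\mathbb{K})$ of \cite{carter}, \cite{Steinberg}, which is a semisimple linear algebraic group; since our $E_{[M]}(t)$, $h_{[M]}(t)$, $n_{[M]}(t)$ satisfy exactly Steinberg's relations (Theorem \ref{steinberg}) and act on the adjoint module, the identification is an isomorphism of abstract groups, hence $G$ inherits the algebraic-group structure.

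The main obstacle I expect is step (2): verifying cleanly that the charts glue into a genuine algebraic-group structure — i.e., that multiplication $\Omega\times\Omega\dashrightarrow G$ and the passage between $B n B$ pieces are given by morphisms of varieties — rather than merely that $G$ is abstractly a union of affine spaces. This requires systematically moving an element from one Bruhat cell to another using $(BnB)(Bn_{[S_i]}B)\subseteq Bnn_{[S_i]}B\cup BnB$ and keeping track that the coordinate changes are rational with controlled denominators; it is bookkeeping rather than deep, but it is the only nontrivial point. If one prefers to avoid it entirely, the citation route via Steinberg's relations and the known structure of adjoint Chevalley groups (\cite{carter}, \cite{Steinberg}, \cite{Lusztig}) disposes of the corollary in a couple of lines, at the cost of not being self-contained.
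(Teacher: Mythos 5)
Your proposal is correct in outline but takes a genuinely different route from the paper. The paper exploits the fact that $G$ is \emph{already} a matrix group: it acts on $\mathfrak{g}_{\mathbb{K}}$ by construction, and since the generators $E_{[X]}$ are closed connected one-parameter subgroups of $\operatorname{GL}(\mathfrak{g}_{\mathbb{K}})$, the subgroup they generate is automatically closed and connected. There is no need to glue Bruhat cells into an abstract variety (your route (1)--(2)); that approach is what one uses to build the Chevalley group scheme over $\mathbb{Z}$ when no faithful representation is in hand, and here it is unnecessary bookkeeping, precisely the obstacle you yourself flag. The paper then establishes that $B$ is a Borel subgroup \emph{not} by invoking the split BN-pair axioms, but by a direct contradiction: if $B\subsetneq B'$ with $B'$ Borel, then by Propositions \ref{PJonly} and \ref{PJproperties} we would have $B'=P_J$ for some $J\neq\emptyset$, forcing both $E_{[S_i]}$ and $E_{[TS_i]}$ into the unipotent radical $B_u'$, whence the semisimple element $h_{[S_i]}(t)$ would be unipotent, contradiction. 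Your step (3) asserts $B$ is Borel via the BN-pair structure but leaves this bridging argument implicit; that is the one genuine lacuna in the self-contained version of your plan. Your citation route (b) is correct but, as you say, not self-contained and so defeats the paper's purpose. The concluding step (4) agrees with the paper: $R(G)$ is connected, solvable, normal, hence contained in every Borel, and Proposition \ref{xGx-1} kills it. (The paper in fact first derives $R_u(G)=\{1\}$ via the semisimple/unipotent clash before passing to $R(G)=\{1\}$, which is slightly redundant; your shortcut is fine.) Net comparison: the paper's approach is shorter and more elementary because it uses the adjoint representation that is already there; your route (a) is heavier but would generalize to settings without a ready faithful representation.
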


\begin{proof}
    Thm \ref{steinberg} shows that $G$ can be characterized by finitely many algebraic equations, which means $G$ is a closed subgroup of $\operatorname{GL}(\mathfrak{g}_{\mathbb{K}})$, and thus a linear algebraic group over $\mathbb{K}$.

    Since $\{E_{[X]}\}_{X\in \operatorname{ind}\mathcal{R}}$ is a family of closed, connected subgroups of $G$, we have $U,V$ are both closed and connected subgroups of $G$.
    The corresponding matrix of $E_{[X]}(t)$ with respect to the Chevalley basis is upper triangular (or lower triangular) with diagonal elements being 1, hence $E_{[X]}(t)$ are unipotent elements, and $U$ (resp. $V$) is a unipotent subgroup.
    Obviously $H$ is a subtorus of $G$.

    Since $U,H$ are closed and connected subgroups of $G$ and they generate $B$, we have $B$ is closed and connected.
    Moreover, $B$ is solvable since $U$ is solvable. 
    Thus $B$ is contained in some Borel subgroup $B'$ of $G$.
    If $B\neq B'$, by Prop \ref{PJonly} and Prop \ref{PJproperties}(4), we have $B'=P_J$ for some $J\neq \emptyset$.
    Then by definition $B'$ contains $E_{[TS_i]}$ for some $i$.
    Thus $E_{[TS_i]}\subseteq B'\cap V = V \cap B_u'$, where $B_u'$ is the unipotent radical of $B'$.
    On the other hand, we have $E_{[S_i]} \subseteq U \subseteq B_u'$.
    Thus $h_{[S_i]}(t)$ is contained in $B_u'$ for any $t\in \mathbb{K}^{\times}$, which means it's unipotent.
    Since $h_{[S_i]}(t)$ is semisimple, it follows that $h_{[S_i]}(t)=1$ for any $t\in \mathbb{K}^{\times}$, which is impossible.
    Hence $B=B'$ is a Borel subgroup of $G$.
    Similarly, $VH$ is also a Borel subgroup of $G$.
    Then $H=VH\cap UH$ is the intersection of two Borel subgroups, and thus is a maximal torus.

    The unipotent radical $R_u(G)$ of $G$ is contained in the intersection of all the Borel subgroups, thus we have $R_u(G) \subseteq H$. 
    Since elements in $R_u(G)$ are unipotent and elements in $H$ are semisimple, we have $R_u(G)=\{1\}$. 
    Hence $G$ is reductive.

    Since all the Borel subgroups of $G$ are conjugate to each other, Prop \ref{xGx-1} tells that the intersection of all the Borel subgroups is $\{1\}$.
    Then the radical $R(G)$ of $G$ is $\{1\}$, because it's the identity component of the intersection of all Borel subgroups.
    Hence $G$ is a semisimple linear algebraic group.
\end{proof}

\begin{remark}
    (1) As a corollary, we can easily show that the center $Z(G)$ of $G$ is trivial when $\mathbb{K}$ is an algebraically closed field.
    Since $G$ is reductive, $Z(G)$ is contained in the maximal torus $H$. 
    Using the same argument in the proof of Prop \ref{xGx-1}, we have $Z(G)=\{1\}$.

    (2) Since $G$ is semisimple, we have $G=(G,G)$.

    (3) A (linear algebraic) group is semisimple means the group only has trivial closed, connected, normal, solvable subgroup, which is not enough to show it's simple as an abstract group.
    However, in our case, $G$ is quasi-simple thanks to the irreducibility of the root system, and then is simple as an abstract group since $Z(G)=\{1\}$.
\end{remark}

\begin{corollary}
    When $\mathbb{K}$ is an algebraically closed field, the Lie algebra $\operatorname{Lie}(G)$ of $G$ is isomorphic to $\mathfrak{g}_{\mathbb{K}}$.
\end{corollary}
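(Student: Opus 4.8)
The plan is to realize both $\operatorname{Lie}(G)$ and $\mathfrak{g}_{\mathbb{K}}$ as Lie subalgebras of $\mathfrak{gl}(\mathfrak{g}_{\mathbb{K}})=\operatorname{End}_{\mathbb{K}}(\mathfrak{g}_{\mathbb{K}})$ and to show that the adjoint map is the desired isomorphism. By the previous corollary $G$ is a closed subgroup of $\operatorname{GL}(\mathfrak{g}_{\mathbb{K}})$, so $\operatorname{Lie}(G)\subseteq\mathfrak{gl}(\mathfrak{g}_{\mathbb{K}})$. Since every generator $E_{[X]}(t)$ is a Lie algebra automorphism of $\mathfrak{g}_{\mathbb{K}}$, all of $G$ acts on $\mathfrak{g}_{\mathbb{K}}$ by Lie automorphisms, and hence $\mathbf{ad}\colon\mathfrak{g}_{\mathbb{K}}\to\mathfrak{gl}(\mathfrak{g}_{\mathbb{K}})$ is $G$-equivariant: $g(\mathbf{ad}\,x)g^{-1}=\mathbf{ad}(gx)$ for $g\in G$, $x\in\mathfrak{g}_{\mathbb{K}}$. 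In particular it is $H$-equivariant, where $H$ acts on $\mathfrak{g}_{\mathbb{K}}$ via $h_{[X]}(t)u_Y=t^{A_{XY}}u_Y$ and trivially on $K'\otimes\mathbb{K}$, and on $\operatorname{Lie}(G)$ by the adjoint action.

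First I would compute the Lie algebras of the basic one-parameter subgroups. The explicit action of $E_{[X]}(t)$ on $\mathbb{S}$ shows $E_{[X]}(t)=\operatorname{id}+t\,\mathbf{ad}\,u_X+O(t^{2})$ as an operator on $\mathfrak{g}_{\mathbb{K}}$; since $E_{[X]}$ is a one-dimensional connected unipotent subgroup and $t\mapsto E_{[X]}(t)$ is a morphism $\mathbb{G}_{a}\to G$, this gives $\operatorname{Lie}(E_{[X]})=\mathbb{K}\cdot\mathbf{ad}\,u_X\subseteq\operatorname{Lie}(G)$. Similarly the cocharacter $t\mapsto h_{[X]}(t)$ has differential $\mathbf{ad}(-H_X')$ (read off from $h_{[X]}(t)u_Y=t^{A_{XY}}u_Y$), so $\mathbf{ad}\,H_X'\in\operatorname{Lie}(H)\subseteq\operatorname{Lie}(G)$. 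As $\mathbb{S}=\{H_X'\}\cup\{u_Y\}$ spans $\mathfrak{g}_{\mathbb{K}}$, we obtain the inclusion $\mathbf{ad}(\mathfrak{g}_{\mathbb{K}})\subseteq\operatorname{Lie}(G)$ of Lie subalgebras.

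Next I would establish equality by a dimension count. The big cell, namely the product $VHU$, is a dense open subset of $G$ containing the identity: the product morphism $V\times H\times U\to G$ is injective (using $V\cap B=\{1\}$ and $U\cap H=\{1\}$) and of full dimension, hence an open immersion by the standard big-cell argument, so $\operatorname{Lie}(G)=\operatorname{Lie}(V)\oplus\operatorname{Lie}(H)\oplus\operatorname{Lie}(U)$. By Proposition~\ref{Uunique} and its evident analogue for $V$, the maps $\prod_{M\in\operatorname{ind}\mathcal{B}}E_{[M]}\to U$ and $\prod_{M\in\operatorname{ind}T\mathcal{B}}E_{[M]}\to V$ are isomorphisms of varieties, so $\operatorname{Lie}(U)=\bigoplus_{M\in\operatorname{ind}\mathcal{B}}\mathbb{K}\,\mathbf{ad}\,u_M$ and $\operatorname{Lie}(V)=\bigoplus_{M\in\operatorname{ind}T\mathcal{B}}\mathbb{K}\,\mathbf{ad}\,u_M$, the operators $\mathbf{ad}\,u_M$ being $\mathbb{K}$-independent because they shift the weight grading of $\mathfrak{g}_{\mathbb{K}}$ by the distinct roots $\underline{dim}M$; also $\dim\operatorname{Lie}(H)=\dim H=m$. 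Hence $\dim\operatorname{Lie}(G)=2|\Phi^{+}|+m=|\Phi|+m=\dim_{\mathbb{K}}\mathfrak{g}_{\mathbb{K}}$. By $H$-equivariance, $\mathbf{ad}$ sends the $\alpha$-weight line $\mathbb{K}u_{\alpha}$ of $\mathfrak{g}_{\mathbb{K}}$ onto the $\alpha$-weight line $\mathbb{K}\,\mathbf{ad}\,u_{\alpha}=\operatorname{Lie}(E_{\alpha})$ of $\operatorname{Lie}(G)$ (which is nonzero, since $[u_{\alpha},u_{-\alpha}]=\pm H'_{\alpha}$ and $H'_{\alpha}$ is a primitive vector of $K'$, hence nonzero in $K'\otimes\mathbb{K}$), and sends the zero-weight space $K'\otimes\mathbb{K}$ into the zero-weight space $\operatorname{Lie}(H)$. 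Thus $\mathbf{ad}$ is an isomorphism $\mathfrak{g}_{\mathbb{K}}\cong\operatorname{Lie}(G)$ as soon as $\mathbf{ad}\colon K'\otimes\mathbb{K}\to\operatorname{Lie}(H)$ is; as both sides are $m$-dimensional it is enough that $\mathbf{ad}$ be injective on $K'\otimes\mathbb{K}$, i.e.\ that $\mathfrak{g}_{\mathbb{K}}$ be centreless, which holds automatically when $\operatorname{char}\mathbb{K}=0$, and then the corollary follows.

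The main obstacle is this Cartan-part comparison in small positive characteristic. When $\operatorname{char}\mathbb{K}$ divides the determinant of the Cartan matrix, $\mathbf{ad}$ may fail to be injective on $K'\otimes\mathbb{K}$ (so $\mathfrak{g}_{\mathbb{K}}$ acquires a centre), and correspondingly $\operatorname{Lie}(H)=X_{*}(H)\otimes\mathbb{K}$ may be strictly larger than $\operatorname{span}\{\mathbf{ad}\,H'_{S_j}:j=1,\dots,m\}$, because the isogeny $\mathbb{G}_{m}^{m}\to H$, $(t_1,\dots,t_m)\mapsto\prod_{j}h_{[S_j]}(t_j)$, need not be separable. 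I would isolate the arithmetic of the torus $H$ — concretely, whether its cocharacter lattice is spanned by the coroots $\underline{dim}S_j$ — as a separate lemma, carry out the uniform root-space part of the argument first as above, and then either invoke that lemma or restrict to a good characteristic for $\Phi$. Apart from this one point, everything else is a routine application of the structure theory of $G$ developed in the previous subsections.
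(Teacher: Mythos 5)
Your proposal takes essentially the same route as the paper: realize both algebras inside $\operatorname{gl}(\mathfrak{g}_{\mathbb{K}})$ via the adjoint map, check that $\mathbf{ad}\,u_X$ is the tangent vector to $E_{[X]}$ and $\mathbf{ad}\,H_X'$ (equivalently $[\mathbf{ad}\,u_X,\mathbf{ad}\,u_{TX}]$, the paper's $C_{[X]}$) lies in $\operatorname{Lie}(H)$, and then match dimensions. The minor structural difference is that you obtain $\dim\operatorname{Lie}(G)$ from the big--cell decomposition $\operatorname{Lie}(G)=\operatorname{Lie}(V)\oplus\operatorname{Lie}(H)\oplus\operatorname{Lie}(U)$ and match root spaces one at a time using $H$-equivariance, whereas the paper simply invokes $\dim\operatorname{Lie}(G)=\dim G=\dim\mathfrak{g}_{\mathbb{K}}$ and proves directly that $\{A_{[X]}\}_{X}\sqcup\{C_{[S_j]}\}_{j}$ is linearly independent in $\operatorname{gl}(\mathfrak{g}_{\mathbb{K}})$. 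Both give the same count, and the root-space part of either argument is watertight.

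The place where you pause — the Cartan part in small positive characteristic — is not an overcaution on your part: it is a real gap, and the paper's own proof does not close it. To conclude $b_Y=0$ from $\sum_Y b_Y A_{YZ}=0$, the paper uses that the Cartan matrix $(A_{YZ})$ is invertible. Over $\mathbb{C}$ this is fine, but passing to $\mathbb{K}$ the paper merely remarks that linear independence of the integer matrices $\{A_{[X]}\}\sqcup\{C_{[Y]}\}$ carries over via $A\mapsto D_A$, which is not automatic: an integrally independent set can become dependent modulo $p$, and indeed the rows of the Cartan matrix become dependent exactly when $\operatorname{char}\mathbb{K}$ divides $\det(A_{YZ})$ (the index of connection). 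In that case $\mathfrak{g}_{\mathbb{K}}$ has a nontrivial centre supported in $K'\otimes\mathbb{K}$, so $\mathbf{ad}$ is not injective, while $\operatorname{Lie}(H)=X_*(H)\otimes\mathbb{K}$ is still $m$-dimensional because the cocharacter lattice of the adjoint torus $H$ strictly contains the coroot lattice spanned by $t\mapsto h_{[S_j]}(t)$ — exactly the separability failure you point to. The type $A_{p-1}$ example in characteristic $p$ ($\mathfrak{sl}_p$ versus $\mathfrak{pgl}_p$) shows the statement as written can genuinely fail. So the correct resolution is the one you suggest: either restrict the corollary to characteristics not dividing $\det(A_{YZ})$ (or, more coarsely, to very good characteristic), or isolate the torus arithmetic in a lemma and add the resulting hypothesis.
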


\begin{proof}
    We first consider the case $\mathbb{K}=\mathbb{C}$.    
    For $X\in \operatorname{ind}\mathcal{R}$, we can regard $E_{[X]}(t)$ as a matrix of $\operatorname{GL}(g_{\mathbb{C}})$ via its action on $\mathbb{S}$, and let 
    \begin{align*}
        A_{[X]}=\frac{\operatorname{d}E_{[X]}(t)}{\operatorname{d}t}\Big|_{t=0} = \operatorname{lim}\limits_{t\rightarrow 0} \frac{E_{[X]}(t)-1}{t}.
    \end{align*}
    By calculating its action on $\mathbb{S}$, it can be easily shown that for $c$ constant, 
    \begin{equation*}
        \frac{\operatorname{d}E_{[X]}(ct^i)}{\operatorname{d}t}\Big|_{t=0} = 
        \begin{cases}
            0,\qquad \qquad \text{if }i>1,\\
            cA_{[X]},\qquad \text{   if }i=1.
        \end{cases} 
    \end{equation*}

    Recall Prop \ref{commutator} showed that for $X,Y\in \operatorname{ind}\mathcal{R}, X\ncong Y, X\ncong TY$, 
     \begin{equation*}
        E_{[X]}(t) E_{[Y]}(s) E_{[X]}(t)^{-1}= \prod\limits_{i,j>0} E_{[L_{X,Y,i,j}]}(C_{X,Y,i,j}t^i s^j) E_{[Y]}(s).
    \end{equation*}
    We first regard $s$ as a constant and apply $\frac{\operatorname{d}}{\operatorname{d}t}|_{t=0}$ to both sides.
    Then we apply $\frac{\operatorname{d}}{\operatorname{d}s}|_{s=0}$ and obtain that 
    \begin{align*}
        [A_{[X]},A_{[Y]}] = \gamma_{XY}^L A_{[L]},
    \end{align*} 
    where the bracket is given by $[A,B]=AB-BA$.

    For each $X\in \operatorname{ind}\mathcal{R}$, define $C_{[X]}=[A_{[X]},A_{[TX]}]$.
    Then $C_{[X]}$ sends $u_{Y}$ to $-A_{XY}u_Y$, for each $Y\in \operatorname{ind}\mathcal{R}$, and $H_{Z}'$ to $0$, for each $Z\in \mathcal{R}$.
    Since $C_{[X]}$ are diagonal matrices, it's obvious that $[C_{[X]},C_{[Y]}]=0$ for any $X,Y\in \operatorname{ind}\mathcal{R}$.
    Moreover, using the Jacobi identity, we have $[C_{[X]},A_{[Y]}]=-A_{XY}A_{[Y]}$ for any $X,Y\in \operatorname{ind}\mathcal{R}$.

    The matrices $\{A_{[X]}\}_{X\in \operatorname{ind}\mathcal{R}}$ generate a subalgebra $\mathfrak{h}$ of $\operatorname{gl}(\mathfrak{g}_{\mathbb{C}})$. 
    Comparing to the modified version of Def \ref{PXg}, there exists a surjective algebra homomorphism 
    \begin{align*}
        \mathfrak{g}_{\mathbb{C}} \rightarrow \mathfrak{h}, \qquad u_X \mapsto A_{[X]}.
    \end{align*}
    
    We arbitrarily fix a complete section of $\mathcal{R}$ and obtain a hereditary subcategory $\mathcal{B}$. We fix a set $\Delta=\{S_1,\cdots,S_m\}$ of representatives of isomorphism classes of simple objects in $\mathcal{B}$, then $\{H_{S_i}'\}_{i=1,\cdots m}$ form a basis of $K'$.
    Assume 
    \begin{align*}
        \sum\limits_{X\in \operatorname{ind}\mathcal{R}} a_X A_{[X]}+\sum\limits_{Y\in \Delta}b_Y C_{[Y]} = 0
    \end{align*}
    for some $a_X,b_Y\in \mathbb{C}$.
    For any $Z\in \mathcal{R}$, apply the left hand side of the equation to $H_Z'$, we have 
    \begin{align*}
        0=( \sum\limits_{X\in \operatorname{ind}\mathcal{R}} a_X A_{[X]}+\sum\limits_{Y\in \Delta}b_Y C_{[Y]})(H_Z') = \sum\limits_{X\in \operatorname{ind}\mathcal{R}} a_X A_{[X]}H_Z' = \sum\limits_{X\in \operatorname{ind}\mathcal{R}} a_X A_{ZX} u_X.
    \end{align*}
    Recall that $\{u_X\}_{X\in \operatorname{ind}\mathcal{R}}$ are linearly independent, thus we have $a_XA_{ZX}=0$ for each $X\in \operatorname{ind}\mathcal{R}$.
    Since this holds for any $ Z\in \mathcal{R}$, we have $a_X=0$ for each $X\in \operatorname{ind}\mathcal{R}$, and $\sum\limits_{Y\in \Delta}b_Y C_{[Y]}=0$.
    Hence for any $Z\in \operatorname{ind}\mathcal{R}$, 
    \begin{align*}
        0=\sum\limits_{Y\in \Delta}b_Y C_{[Y]}u_Z = -(\sum\limits_{Y\in \Delta}b_Y A_{YZ})u_Z,
    \end{align*}
    which means $\sum\limits_{Y\in \Delta}b_Y A_{YZ}=0$ for any $Z\in \operatorname{ind}\mathcal{R}$.
    Since the Cartan matrix $(A_{YZ})_{Y,Z\in \Delta}$ is invertible for finite-type, the equations $\sum\limits_{Y\in \Delta}b_Y A_{YZ}=0,Z\in \Delta$ can only have zero solutions, i.e. $b_Y=0$ for each $Y\in \Delta$.
    Thus $\{A_{[X]}\}_{X\in \operatorname{ind}\mathcal{R}} \sqcup \{ C_{[Y]} \}_{Y\in \Delta}$ are linearly independent, and we have
 $       \operatorname{dim}\mathfrak{h} \geq \operatorname{dim}\mathfrak{g}_{\mathbb{C}} $.
 Hence $\mathfrak{g}_{\mathbb{C}}\cong \mathfrak{h}$.
 
 On the other hand, $\mathfrak{h}$ is contained in $\operatorname{Lie}(G)$, and $ \operatorname{dim} \operatorname{Lie}(G) =\operatorname{dim} G = \operatorname{dim} \mathfrak{g}_{\mathbb{C}}$.
 Thus we have $\operatorname{Lie}(G)=\mathfrak{h}\cong \mathfrak{g}_{\mathbb{C}}$.

 Then we consider any algebraically closed field $\mathbb{K}$.

Denote $\operatorname{dim}\mathfrak{g}_{\mathbb{K}} = n$. We recall the facts for $\operatorname{GL}(\mathfrak{g}_{\mathbb{K}})$ and refer Chapter 4 of \cite{Springer} for details.
 Note that $\mathbb{K}[\operatorname{GL}(\mathfrak{g}_{\mathbb{K}})]=\mathbb{K}[T_{ij},D^{-1}]_{1\leq i,j \leq n}$, where $D=\operatorname{det}(T_{ij})$ and $T_{ij}$ maps each matrix to its $(i,j)$-entry.
Denote by $\operatorname{gl}(\mathfrak{g}_{\mathbb{K}})$ the Lie algebra of all $n\times n$-matrices over $\mathbb{K}$, with Lie bracket given by $[A,B]=AB-BA$.
If char$\mathbb{K}=p>0$, we let the $p$-operation be taking the $p^{th}$ power.
For $A=(a_{ij})\in \operatorname{gl}(\mathfrak{g}_{\mathbb{K}})$, define a derivation $D_A\in \operatorname{Der}_{\mathbb{K}}(\mathbb{K}[\operatorname{GL}(\mathfrak{g}_{\mathbb{K}})],\mathbb{K}[\operatorname{GL}(\mathfrak{g}_{\mathbb{K}})])$ by 
\begin{align*}
    D_A T_{ij} = -\sum\limits_{k=1}^n a_{kj}T_{ik}.
\end{align*}
The map $A \mapsto D_A$ is injective, and $\operatorname{Lie}(\operatorname{GL}(\mathfrak{g}_{\mathbb{K}}))$ consists of these $D_A$. 
We can identity $\operatorname{Lie}(\operatorname{GL}(\mathfrak{g}_{\mathbb{K}}))$ and $\operatorname{gl}(\mathfrak{g}_{\mathbb{K}})$.
Since $G$ is a closed subgroup of $\operatorname{GL}(\mathfrak{g}_{\mathbb{K}})$, we can view $\operatorname{Lie}(G)$ as a subalgebra of $\operatorname{gl}(\mathfrak{g}_{\mathbb{K}})$.

For $X\in \operatorname{ind}\mathcal{R}$, since the entries of the matrix $A_{[X]}$ are all integers, we can define matrix $A_{[X]}\in \operatorname{gl}(\mathfrak{g}_{\mathbb{K}})$.    
Then $D_{A_{[X]}}, D_{C_{[X]}}$ are in $\operatorname{Lie}(G)$ and satisfy the similar relations. 
For example, for $X,Y\in \operatorname{ind}\mathcal{R}, X\ncong Y,X\ncong TY$, 
\begin{align*}
    [D_{A_{[X]}},D_{A_{[Y]}}]=D_{[A_{[X]},A_{[Y]}]}=D_{\gamma_{XY}^L A_{[L]}}= \gamma_{XY}^L D_{A_{[L]}}.
\end{align*} 

Since the map $A \mapsto D_A$ is injective, the linearly independence of $\{D_{A_{[X]}}\}_{X\in \operatorname{ind}\mathcal{R}} \sqcup \{ D_{C_{[Y]}} \}_{Y\in \Delta}$ follows from the linearly independence of $\{A_{[X]}\}_{X\in \operatorname{ind}\mathcal{R}} \sqcup \{ C_{[Y]} \}_{Y\in \Delta}$.

Similar to the previous case, we have $\operatorname{Lie}(G) \cong \mathfrak{g}_{\mathbb{K}}$. 
\end{proof}

\subsection{The Order of the Finite Chevalley Group}
In this subsection, we calculate the order of $G$ when $\mathbb{K}=\mathbb{F}_q$.
\begin{align*}
    |G| = \sum\limits_{\overline{n}\in N/H} |UHnU_{\overline{n}}^-| = |U|\cdot |H| \cdot \sum\limits_{\overline{n}\in N/H}|U_{\overline{n}}^-|.
\end{align*}

Denote $r=|\operatorname{ind}\mathcal{B}|$, then by Prop \ref{Uunique}, $|U| = q^r, |U_{\overline{n}}^-| = q^{l(\overline{n})}$.

As shown in Remark\ref{rmkH}(1), $|H| = \frac{1}{d}(q-1)^m$, where $m$ is the number of simple objects in $\operatorname{ind}\mathcal{B}$ and $d$ is given by formula(7) in Remark\ref{rmkH}, which can be easily calculated.

Hence
\begin{align*}
    |G| = \frac{1}{d} q^r (q-1)^m \sum\limits_{\overline{n}\in N/H} q^{l(\overline{n})} = \frac{1}{d} q^r (q-1)^m \sum\limits_{w\in W} q^{l(w)} .
\end{align*}

In practice, $\sum\limits_{w\in W} q^{l(w)} $ is hard to calculate.
Here we introduce a easier way to interpret this sum, following the calculations in chapter 10 of \cite{carter}.

We arbitrarily fix a complete section of $\mathcal{R}$, and obtain a hereditary subcategory $\mathcal{B}$.
Let $\{ S_1, \cdots ,S_m \}$ be a set of representatives of isomorphism classes of simple objects in $\mathcal{B}$.

For each $i = 1,\cdots,m$, define a linear map $\varpi_i : K' \rightarrow \mathbb{Z} $ such that $\varpi_i(H_{S_j}') = \delta_{ij}$, for all $j=1,\cdots,m$.
Define a multiplicative group $Q$: 
the elements in $Q$ are linear maps from $K'$ to $\mathbb{Q}$, of the form $\prod_{i=1}^m \varpi_i^{a_i}$ with $a_i\in \mathbb{Z}$ for each $i$.
The map $\varpi_i^{-1}$ is defined by $\varpi_i^{-1}(x) = -\varpi_i(x)$, for all $x\in K'$. 
And the multiplication is given by $(\varpi_i \cdot \varpi_j)(x) = \varpi_i(x)+\varpi_j(x)$, for all $x\in K'$.

For each $X\in \mathcal{R}$, Let $\beta(H_X)$ be the linear map from $K'$ to $\mathbb{Z}$ such that $(\beta(H_X))(H_Y') = (H_X|H_Y')$, for all $Y\in \mathcal{R}$.
Define a multiplicative group $P$ isomorphic to the Grothendieck group $K$, 
where the isomorphism is given by $\beta: \sum_{i=1}^m b_iH_{S_i} \mapsto \prod_{i=1}^m \beta(H_{S_i})^{b_i}$.
Notice that $\beta(H_{S_i})^{-1} = \beta(H_{TS_i})$, and the multiplication rule in $P$ is the same as that of $Q$. 

For any $i,j=1,\cdots,m$, denote $A_{S_i,S_j}$ by $A_{ij} $.
Since 
\begin{align*}
    \beta(H_{S_i})(H_{S_j}') = (H_{S_i}|H_{S_j}') = A_{ji} = \prod_{k=1}^m \varpi_k^{A_{ki}}(H_{S_j}'),
\end{align*}
we have $\beta(H_{S_i}) = \prod_{j=1}^m \varpi_j^{A_{ji}}$.
Hence $P$ is a subgroup of $Q$.

Then we define an action of $N$ on $Q$.
For any $n\in N$ and $i=1,\cdots ,m$, define $n\varpi_i$ be the linear map from $K'$ to $\mathbb{Q}$ such that 
\begin{align*}
    (n\varpi_i)(H_X') = \varpi_i (n^{-1}H_X'), 
\end{align*}
for any $X\in \mathcal{R}$.
For $\prod_{i=1}^m \varpi_i^{a_i}\in Q$, let $n(\prod_{i=1}^m \varpi_i^{a_i}) = \prod_{i=1}^m (n\varpi_i)^{a_i} $ .

To show this defines an action of $N$ on $Q$, we need to show that $n\varpi_i \in Q$.

Firstly, for any $X \in \mathcal{R}, M\in \operatorname{ind}\mathcal{R}$ and $i=1,\cdots,m$, we have 
\begin{align*}
    (n_{[M]}\varpi_i )(H_X') = \varpi_i(n_{[M]}^{-1} H_X').
\end{align*} 
Since 
\begin{align*}
    n_{[M]}^{-1}H_X = h_{[M]}(-1)n_{[M]}H_X =  H_{\omega_M(X)} = H_X - A_{MX}H_M,
\end{align*}
we have 
\begin{align*}
    d(X)(n_{[M]}\varpi_i )(H_X') &= \varpi_i(H_X) - A_{MX}\varpi_i(H_M) \\
    & = \varpi_i(H_X) - \frac{2(H_M|H_X)}{(H_M|H_M)}\varpi_i(H_M) \\
    &= \varpi_i(H_X) - \varpi_i(H_M')\beta(H_M)(H_X). 
\end{align*}
Since $\varpi_i(H_M')\in \mathbb{Z}$ and $\beta(H_M)\in Q$, we have 
\begin{align*}
    (n_{[M]}\varpi_i )(H_X') = (\varpi_i \cdot \beta(H_M)^{-\varpi_i(H_M')})(H_X').
\end{align*}
Hence $n_{[M]}\varpi_i = \varpi_i \cdot \beta(H_M)^{-\varpi_i(H_M')} \in Q$.

Secondly, consider the action of $h_{[M]}$, i.e. $(h_{[M]}\varpi_i)(H_X') = \varpi_i(h_{[M]}^{-1} H_X')$.
Notice that by definition we have $h_{[M]}^{-1}H_X' = H_X'$.
Hence $h_{[M]}\varpi_i = \varpi_i \in Q$ and the action of $N$ on $Q$ is well-defined.
Moreover, since $H$ acts trivially on $Q$, this action induces an action of $W\cong N/H$ on $Q$.

Notice that the action of $N$ on $P$ is compatible with the action of $N$ on $K$, via the isomorphism $\beta$.
Precisely, we have the following lemma.

\begin{lemma}
    For any $n\in N$ and $M\in \mathcal{R}$, we have $n\beta(H_M) = \beta(nH_M)$.
\end{lemma}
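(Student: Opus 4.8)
The plan is to unwind both sides of the asserted identity $n\beta(H_M)=\beta(nH_M)$ by evaluating them on an arbitrary element $H_X'$ of the chosen basis of $K'$, and to reduce to the case where $n$ is one of the generators $n_{[L]}$ of $N$ (the case $n\in H$ being trivial since $H$ acts trivially on $Q$ and $H$ stabilizes each $H_M$ up to the relation $h_{[L]}H_M=H_M$ coming from the action on $\mathfrak{g}_{\mathbb K}$). So first I would observe that both $n\mapsto n\beta(H_M)$ and $n\mapsto\beta(nH_M)$ are compatible with composition in $N$: on the left this is immediate from the definition $(nn')\varphi=n(n'\varphi)$ of the $N$-action on $Q$ together with multiplicativity of $\beta$; on the right it follows because $H\mapsto nH$ is the $N$-action on $K$ and $\beta:K\to P$ is an isomorphism of abelian groups. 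Hence it suffices to prove the identity for $n=n_{[L]}$, $L\in\operatorname{ind}\mathcal R$.

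Next, for $n=n_{[L]}$ I would compute the right-hand side using the already-established formula $n_{[L]}H_M = H_{\omega_L(M)} = H_M - A_{LM}H_L$ (this is exactly the computation carried out just above the lemma, using $n_{[L]}H_S' = H_{\omega_L(S)}'$, $\underline{dim}\,\omega_L(M)=\omega_{\underline{dim}L}(\underline{dim}M)$, and $W$-invariance of the Euler form). Applying the group isomorphism $\beta$ and its multiplicativity gives
\begin{align*}
    \beta(n_{[L]}H_M) = \beta(H_M - A_{LM}H_L) = \beta(H_M)\,\beta(H_L)^{-A_{LM}}.
\end{align*}
For the left-hand side I would reuse the displayed computation preceding the lemma, which shows $n_{[L]}\varpi_i = \varpi_i\cdot\beta(H_L)^{-\varpi_i(H_L')}$ for each $i$. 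Writing $\beta(H_M)=\prod_{i}\varpi_i^{c_i}$ with $c_i = \beta(H_M)(H_{S_i}') = (H_M|H_{S_i}')$, multiplicativity of the $N$-action on $Q$ gives
\begin{align*}
    n_{[L]}\beta(H_M) = \prod_i (n_{[L]}\varpi_i)^{c_i} = \Big(\prod_i \varpi_i^{c_i}\Big)\cdot \beta(H_L)^{-\sum_i c_i\,\varpi_i(H_L')} = \beta(H_M)\cdot\beta(H_L)^{-\sum_i c_i\,\varpi_i(H_L')}.
\end{align*}

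Finally I would match the two exponents of $\beta(H_L)$: I must check that $\sum_i c_i\,\varpi_i(H_L') = A_{LM}$. Writing $H_L' = \sum_j \varpi_j(H_L')\,H_{S_j}'$ (by definition of the dual basis $\varpi_j$) and recalling $c_i = (H_M|H_{S_i}')$, linearity of the Euler form gives $\sum_i c_i\,\varpi_i(H_L') = (H_M \mid \sum_i \varpi_i(H_L') H_{S_i}') = (H_M|H_L') = \frac{2(H_M|H_L)}{(H_L|H_L)} = A_{LM}$, where the last equalities use $H_L' = H_L/d(L)$ and the definition of $A_{LM}$. This completes the generator case and hence, by the reduction in the first step, the lemma. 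The only mildly delicate point — the "main obstacle," such as it is — is bookkeeping the distinction between $H_M$ (in $K$) and $H_M'$ (in $K'$) and the factor $d(L)$ relating them, so that the exponent computation lands exactly on the integer $A_{LM}$ rather than on $(H_M|H_L)$; everything else is a direct substitution into formulas already derived in the text.
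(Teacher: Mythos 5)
Your proof is correct and follows essentially the same route as the paper: reduce to the generators of $N$ (with $H$ acting trivially), compute the action of $n_{[L]}$ on the $\varpi_i$, and match exponents of $\beta(H_L)$ using $(H_M|H_L')=A_{LM}$. The paper organizes the bookkeeping slightly differently — it first checks $n_{[S_i]}\beta(H_{S_j})=\beta(n_{[S_i]}H_{S_j})$ for simple $S_i,S_j$ and then extends to general $M$ by multiplicativity of $\beta$ — but the underlying computation is the same as yours.
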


\begin{proof}
    Firstly, for any $i,j=1,\cdots,m$, we have 
    \begin{align*}
        n_{[S_i]}(\varpi_j) = \varpi_j \cdot \beta(H_{S_i})^{-\varpi_j(H_{S_i}')} = \varpi_j \cdot \beta(H_{S_i})^{-\delta_{ij}}.
    \end{align*}

    Hence  
    \begin{align*}
        n_{[S_i]}(\beta(H_{S_j})) &= n_{[S_i]}(\prod_{k=1}^m \varpi_k^{A_{kj}}) = \prod_{k=1}^m (\varpi_k\cdot \beta(H_{S_i})^{-\delta_{ik}})^{A_{kj}} \\
        &= \beta(H_{S_j}) \beta(H_{S_i})^{-A_{ij}} = \beta(H_{\omega_{S_i}(S_j)}) = \beta(n_{[S_i]}H_{S_j}).
    \end{align*}
    
    For any $M\in \mathcal{R}$, we can write $H_M = \sum_{j=1}^m b_{M,j}H_{S_j}$.
    Then $\beta(H_M) = \prod_{j=1}^m \beta(H_{S_j})^{b_{M,j}}$.
    Thus we have 
    \begin{align*}
        n_{[S_i]}\beta(H_M) = \prod_{j=1}^m \beta(n_{[S_i]}H_{S_j})^{b_{M,j}} = \beta(\sum_{j=1}^m b_{M,j} n_{[S_i]}H_{S_j}) = \beta(n_{[S_i]}H_M).
    \end{align*}

    Finally, since $H$ acts trivially on both $P$ and $K$, we have for any $n\in N$ and any $M\in \mathcal{R}$, $n\beta(H_M) = \beta(nH_M)$.
\end{proof}

Now we introduce an element $\rho$ in $Q$ which is useful in the calculations.
Let $\rho = \prod_{i=1}^m \varpi_i$. 

\begin{lemma}\label{rhoprop}
    We have the following properties.

    (1) For any $i=1,\cdots,m$, $\rho(H_{S_i}') = 1$. 

    (2) For any $i=1,\cdots,m$, $n_{[S_i]}(\rho) = \rho \cdot \beta(H_{S_i})^{-1}$.

    (3) For any $n\in N$, we have 
    \begin{align*}
        n(\rho) = \rho \cdot \prod_{M\in R(\overline{n^{-1}})} \beta(H_M)^{-1}.
    \end{align*}

    (4) We have 
    \begin{align*}
        \rho^2 = \prod_{M\in \operatorname{ind}\mathcal{B}} \beta(H_M).
    \end{align*}
   
\end{lemma}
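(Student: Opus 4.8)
The plan is to treat the four parts in order; each reduces to the definitions of $Q$, $P$, $\rho$, $\beta$ together with three already-available facts: the formula $n_{[S_i]}(\varpi_j)=\varpi_j\cdot\beta(H_{S_i})^{-\varpi_j(H_{S_i}')}=\varpi_j\cdot\beta(H_{S_i})^{-\delta_{ij}}$ used in the proof of the preceding lemma, that same lemma's compatibility statement $n\beta(H_M)=\beta(nH_M)$, and the description of $R(\overline n)$ in Lemma \ref{Rn}. Parts (1) and (2) are immediate. For (1), the group law on $Q$ gives $\rho(H_{S_j}')=\sum_{i=1}^m\varpi_i(H_{S_j}')=\sum_{i=1}^m\delta_{ij}=1$. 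For (2), apply $n_{[S_i]}$ to $\rho=\prod_{j=1}^m\varpi_j$ factor by factor: $n_{[S_i]}(\rho)=\prod_j(\varpi_j\cdot\beta(H_{S_i})^{-\delta_{ij}})=\rho\cdot\beta(H_{S_i})^{-\sum_j\delta_{ij}}=\rho\cdot\beta(H_{S_i})^{-1}$, since $Q$ is abelian.

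For (3) I would induct on $l(\overline n)$. The case $l(\overline n)=0$ is trivial: then $n\in H$ acts trivially on $Q$ and $R(\overline 1)=\emptyset$. For $l(\overline n)=t>0$, fix a reduced expression $(i_1,\dots,i_t)$ of $\overline n$ and set $n'=n_{[S_{i_1}]}\cdots n_{[S_{i_{t-1}}]}$, so $n$ and $n'n_{[S_{i_t}]}$ have the same image in $N/H$ and hence act identically on $Q$, giving $n(\rho)=n'(n_{[S_{i_t}]}(\rho))$. By (2) and the compatibility lemma this equals $n'(\rho)\cdot\beta(n'H_{S_{i_t}})^{-1}=n'(\rho)\cdot\beta(H_{\omega_{S_{i_1}}\cdots\omega_{S_{i_{t-1}}}(S_{i_t})})^{-1}$, where $\omega_{S_{i_1}}\cdots\omega_{S_{i_{t-1}}}(S_{i_t})\in\operatorname{ind}\mathcal{B}$ because the word is reduced. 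Substituting the induction hypothesis $n'(\rho)=\rho\prod_{M\in R(\overline{n'^{-1}})}\beta(H_M)^{-1}$, it remains to check $R(\overline{n^{-1}})=R(\overline{n'^{-1}})\sqcup\{[\omega_{S_{i_1}}\cdots\omega_{S_{i_{t-1}}}(S_{i_t})]\}$. This I would get from Lemma \ref{Rn}(3) applied to the reversed reduced words $(i_t,\dots,i_1)$ of $\overline{n^{-1}}$ and $(i_{t-1},\dots,i_1)$ of $\overline{n'^{-1}}$: both sets then appear as explicit lists, the first being the second with the one extra entry $[\omega_{S_{i_1}}\cdots\omega_{S_{i_{t-1}}}(S_{i_t})]$ appended; this entry is genuinely new since applying $\omega_{S_{i_{t-1}}}\cdots\omega_{S_{i_1}}$ to it returns $S_{i_t}\in\operatorname{ind}\mathcal{B}$, whereas every element of $R(\overline{n'^{-1}})$ is carried by that same composite into $\operatorname{ind}T\mathcal{B}$.

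For (4) I would compare how the two sides transform under $N$. By (2), $n_{[S_j]}(\rho^2)=\rho^2\beta(H_{S_j})^{-2}$. On the other side, since $\omega_{S_j}$ permutes $\operatorname{ind}\mathcal{B}\setminus\{[S_j]\}$ and sends $S_j$ to $TS_j$, the compatibility lemma gives $n_{[S_j]}(\prod_{M\in\operatorname{ind}\mathcal{B}}\beta(H_M))=(\prod_{M\ncong S_j}\beta(H_M))\beta(H_{TS_j})=(\prod_M\beta(H_M))\beta(H_{S_j})^{-2}$, using $\beta(H_{TS_j})=\beta(H_{S_j})^{-1}$. Hence $\chi:=\rho^2\cdot(\prod_M\beta(H_M))^{-1}\in Q$ satisfies $n_{[S_j]}(\chi)=\chi$ for all $j$; since the $n_{[S_j]}$ and $H$ generate $N$ and $H$ acts trivially on $Q$, $\chi$ is fixed by the induced $W$-action on $Q$. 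Finally $Q$ is the weight lattice of $\Phi$, which has no nonzero $W$-fixed element — a $W$-fixed weight pairs to $0$ with every $H_{S_j}'$, and $\Phi$ is irreducible since the Dynkin diagram of the complete section is connected — so $\chi=1$, giving (4). Alternatively, (4) follows from (3) applied to the longest element $n_0$: there $R(\overline{n_0^{-1}})=R(\overline{n_0})=\operatorname{ind}\mathcal{B}$, and $n_0(\rho)=\rho^{-1}$ because $-w_0$ is a diagram automorphism fixing $\rho$, so (3) reads $\rho^{-1}=\rho\cdot(\prod_M\beta(H_M))^{-1}$.

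I expect the real work to be in (3) and (4). In (3) the obstacle is the reduced-word bookkeeping that matches $R(\overline{n^{-1}})$ against $R(\overline{n'^{-1}})$ plus a single root, which must be cross-checked against Lemma \ref{Rn}(3) and the standard fact that initial segments of a reduced word yield positive roots. In (4) the obstacle is supplying the two inputs that $\omega_{S_j}$ permutes $\Phi^+\setminus\{\alpha_j\}$ and that the weight lattice admits no nonzero $W$-invariant vector; the latter is exactly the point where irreducibility of the root system enters, so if $\mathcal{R}$ decomposes one argues componentwise.
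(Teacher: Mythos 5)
Your proofs of (1) and (2) coincide with the paper's. For (3), you run the same iteration the paper does — apply (2) at each step and use the compatibility $n\beta(H_M)=\beta(nH_M)$ — except that you phrase it as an explicit induction on $l(\overline{n})$ and spend a sentence verifying that $R(\overline{n^{-1}})$ is the disjoint union of $R(\overline{n'^{-1}})$ and the one new class; the paper just writes out the telescoping product and reads off the answer from Lemma~\ref{Rn}(3). These are the same argument.

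For (4) you take a genuinely different route. The paper computes $n_{[S_i]}\bigl(\prod_{M\in\operatorname{ind}\mathcal{B}}\beta(H_M)\bigr)$ in two ways — once using that $\omega_{S_i}$ permutes $\operatorname{ind}\mathcal{B}\setminus\{[S_i]\}$, once by expanding $H_{\omega_{S_i}(M)}=H_M-A_{S_i,M}H_{S_i}$ inside $\beta$ — and from the comparison extracts the numerical identity $\sum_{M\in\operatorname{ind}\mathcal{B}}A_{S_i,M}=2$, which it then feeds directly into the coordinates of $\prod_M\beta(H_M)$ in the basis $\{\varpi_i\}$. You instead observe that both $\rho^2$ and $\prod_M\beta(H_M)$ pick up the same factor $\beta(H_{S_j})^{-2}$ under $n_{[S_j]}$, hence their ratio $\chi$ is $W$-fixed, and a $W$-fixed element of $Q$ vanishes because $n_{[S_j]}(\chi)=\chi\cdot\beta(H_{S_j})^{-\chi(H_{S_j}')}$ forces $\chi(H_{S_j}')=0$ for every $j$. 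That last step does not actually require irreducibility of $\Phi$: it follows directly from $\{H_{S_j}'\}$ being a basis of $K'$ and $\beta$ being injective, so that appeal to connectedness is superfluous. Both approaches are valid; the paper's is more computational but also yields the side fact $\sum_M A_{S_i,M}=2$ explicitly, while yours is a little slicker and uses only (2) together with the permutation property of $\omega_{S_j}$, which the paper's proof also relies on. Your second alternative for (4), via the longest element and the fact that $-w_0$ is a diagram automorphism, is also correct but imports a standard external fact about $w_0$ that the paper avoids.
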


\begin{proof}
    By definition, we have
    \begin{align*}
         \rho(H_{S_i}') = (\prod_{j=1}^m \varpi_j)(H_{S_i}') = \sum_{j=1}^m \varpi_j (H_{S_i}') = 1
    \end{align*}
   and
    \begin{align*}
        n_{[S_i]}(\rho) = \prod_{j=1}^m n_{[S_i]}(\varpi_j) = \prod_{j\neq i} \varpi_j \cdot \varpi_i \cdot \beta(H_{S_i})^{-1} = \rho \cdot \beta(H_{S_i})^{-1}.
    \end{align*}
    Hence (1),(2) are proved.

    (3) For any $n\in N$, let $\overline{n} = \overline{n_{[S_{i_1}]}} \cdots \overline{n_{[S_{i_l}]}} $ be a reduced expression of $\overline{n}$.
    Then $\overline{n^{-1}} = \overline{n_{[S_{i_l}]}} \cdots \overline{n_{[S_{i_1}]}} $ is a reduced expression of $\overline{n^{-1}}$.
    Recall that 
    \begin{align*}
         R(\overline{n^{-1}}) = \{ [S_{i_1}], [\omega_{S_{i_1}}(S_{i_2})],\cdots,[\omega_{S_{i_1}}\cdots \omega_{S_{i_{l-1}}}(S_{i_l})]   \}.
    \end{align*}
    Hence 
    \begin{align*}
        n(\rho) &= n_{[S_{i_1}]} \cdots n_{[S_{i_l}]}(\rho) \\
            &= n_{[S_{i_1}]} \cdots n_{[S_{i_{l-1}}]}(\rho\cdot \beta(H_{S_{i_l}})^{-1}) \\
            &=  n_{[S_{i_1}]} \cdots n_{[S_{i_{l-2}}]}(\rho\cdot \beta(H_{S_{i_{l-1}}})^{-1} \cdot \beta(H_{\omega_{S_{i_{l-1}}}(S_{i_l})})^{-1}   ) \\
            &= \cdots =\rho \cdot \beta(H_{S_{i_1}})^{-1} \cdots \beta(H_{\omega_{S_{i_1}} \cdots \omega_{S_{i_{l-1}}} (S_{i_l}) })^{-1} \\
            &= \rho \cdot \prod_{M\in R(\overline{n^{-1}})} \beta(H_M)^{-1}.
    \end{align*}

    (4) On the one hand, for any $i=1,\cdots ,m$, we have 
    \begin{align*}
        n_{[S_i]}(\prod_{M\in \operatorname{ind}\mathcal{B}} \beta(H_M)) &= \prod_{M\in \operatorname{ind}\mathcal{B}} \beta(H_{\omega_{S_i}(M)}) \\
        &= \prod_{M\in \operatorname{ind}\mathcal{B}} \beta(H_M) \cdot \beta(H_{S_i})^{-1} \cdot \beta(H_{TS_i}) \\
        &= \prod_{M\in \operatorname{ind}\mathcal{B}} \beta(H_M) \cdot \beta(H_{S_i})^{-2}. 
    \end{align*}

    On the other hand, we also have 
    \begin{align*}
        n_{[S_i]}(\prod_{M\in \operatorname{ind}\mathcal{B}} \beta(H_M)) &= \beta(\sum_{M\in \operatorname{ind}\mathcal{B}} H_{\omega_{S_i}(M)}) \\
        &= \beta(\sum_{M\in \operatorname{ind}\mathcal{B}} H_M -(\sum_{M\in \operatorname{ind}\mathcal{B}} A_{S_i,M})H_{S_i} ) \\
        &= \prod_{M\in \operatorname{ind}\mathcal{B}} \beta(H_M) \cdot \beta(H_{S_i})^{-\sum_{M\in \operatorname{ind}\mathcal{B}} A_{S_i,M}}.
    \end{align*}

    Hence for any $i=1,\cdots,m$, 
    \begin{align*}
        \sum_{M\in \operatorname{ind}\mathcal{B}} A_{S_i,M} = 2.
    \end{align*}

    Since $ \prod_{M\in \operatorname{ind}\mathcal{B}} \beta(H_M) \in Q$, we can write 
    \begin{align*}
         \prod_{M\in \operatorname{ind}\mathcal{B}} \beta(H_M) = \prod_{i=1}^m \varpi_i^{a_i},
    \end{align*}
    where 
    \begin{align*}
        a_j &= \sum_{i=1}^m a_i \delta_{ij} = (\prod_{i=1}^m \varpi_i^{a_i}) (H_{S_j}') = (\prod_{M\in \operatorname{ind}\mathcal{B}} \beta(H_M)) (H_{S_j}')\\
         &=  \sum_{M\in \operatorname{ind}\mathcal{B}} (H_M|H_{S_j}') =  \sum_{M\in \operatorname{ind}\mathcal{B}} A_{S_j,M} = 2.
    \end{align*}

    Thus 
    \begin{align*}
        \prod_{M\in \operatorname{ind}\mathcal{B}} \beta(H_M) = \prod_{i=1}^m \varpi_i^2 = \rho^2.
    \end{align*}
   
\end{proof}

For any subset $\Omega \subseteq \operatorname{ind}\mathcal{B}$, we consider  
\begin{align*}
   S_{\Omega} =  \rho \cdot \prod_{M\in \Omega} \beta(H_M)^{-1}.
\end{align*}

Since $\prod_{M\in \Omega} \beta(H_M)\in Q$, we can write 
\begin{align*}
    \prod_{M\in \Omega} \beta(H_M) = \prod_{i=1}^m \varpi_i^{b_{\Omega,i}},
\end{align*}
where 
\begin{align*}
    b_{\Omega,i} = \prod_{j=1}^m \varpi_j^{b_{\Omega,j}}(H_{S_i}') = \prod_{M\in \Omega}\beta(H_M)(H_{S_i}') = \sum_{M\in\Omega} (H_M|H_{S_i}') = \sum_{M\in \Omega} A_{S_i,M}.
\end{align*}

Hence
\begin{align*}
    S_{\Omega} = \rho \cdot \prod_{M\in \Omega} \beta(H_M)^{-1} = \prod_{i=1}^m \varpi_i^{1-\sum_{M\in \Omega} A_{S_i,M}}.
\end{align*}

Denote 
\begin{align*}
    a_{\Omega,i} = S_{\Omega} (H_{S_i}') = 1-\sum_{M\in\Omega} A_{S_i,M}.
\end{align*}

Then for any $L\in \operatorname{ind}\mathcal{R}$, we have 
\begin{align*}
    n_{[L]}(S_{\Omega}) = n_{[L]}(\prod_{i=1}^m \varpi_i^{a_{\Omega,i}}) = S_{\Omega}\cdot \beta(H_L)^{-S_{\Omega}(H_{L}')}.
\end{align*}
This means $S_{\Omega}$ is fixed by $n_{[L]}$ if and only if $S_{\Omega}(H_L') = 0$.

\begin{lemma}\label{omega0}
    If $a_{\Omega,i} >0$ for all $i=1,\cdots,m$, then $\Omega = \emptyset$.
\end{lemma}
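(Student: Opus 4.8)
The plan is to show that the hypothesis $a_{\Omega,i}>0$ for all $i$ forces $S_\Omega = \rho$, and then use property (4) of Lemma~\ref{rhoprop} (or rather its content) to conclude $\Omega = \emptyset$. Recall $a_{\Omega,i} = 1 - \sum_{M\in\Omega}A_{S_i,M} = S_\Omega(H'_{S_i})$; since these are integers, $a_{\Omega,i}>0$ means $a_{\Omega,i}\geq 1$, i.e.\ $\sum_{M\in\Omega}A_{S_i,M}\leq 0$ for every $i$. First I would write $\prod_{M\in\Omega}\beta(H_M) = \beta\big(\sum_{M\in\Omega}H_M\big)$ and set $H = \sum_{M\in\Omega}H_M \in K$. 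The condition says $(H_M\mid H'_{S_i})$ summed over $M\in\Omega$ is $\leq 0$ for all $i$, that is, $\beta(H)(H'_{S_i})\leq 0$ for all $i$, equivalently $(H\mid H'_{S_i})\leq 0$ for all simple $S_i$.

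Next I would exploit positivity: since every $M\in\operatorname{ind}\mathcal B$ has $\underline{\dim}M\in\Phi^+$, the class $\underline{\dim}H = \sum_{M\in\Omega}\underline{\dim}M$ is a nonnegative (and, if $\Omega\neq\emptyset$, nonzero) combination of simple roots, say $\underline{\dim}H = \sum_j c_j\alpha_j$ with $c_j\geq 0$. Then, using that the Euler form on $K$ coincides with the invariant form and that $(\alpha_j\mid H'_{S_i})$ is essentially $A_{ji}$ (up to the positive scalar $d(S_j)$), I compute
\begin{align*}
    (H\mid H) \;=\; \sum_j c_j\, (\alpha_j \mid H) \;=\; \sum_j c_j\, d(S_j)\, (H\mid H'_{S_j}) \;\leq\; 0,
\end{align*}
because each $c_j\geq 0$, each $d(S_j)>0$, and each $(H\mid H'_{S_j})\leq 0$ by the hypothesis. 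But the form $(-\mid-)$ restricted to the real span of $\Phi$ (equivalently, the Killing form on the Cartan) is positive definite for a finite-type root system, so $(H\mid H)\leq 0$ forces $H = 0$.

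Finally I would argue that $H = \sum_{M\in\Omega}H_M = 0$ implies $\Omega = \emptyset$: the classes $\{\underline{\dim}M\}_{M\in\operatorname{ind}\mathcal B}$ all lie in $\Phi^+$, hence are nonnegative nonzero combinations of the simple roots, so no nontrivial sum of them can vanish in $K$ (the coefficient of each $\alpha_j$ in $\sum_{M\in\Omega}\underline{\dim}M$ is a sum of nonnegative integers and is zero only if every term is zero, which cannot happen for a nonempty $\Omega$). Therefore $\Omega = \emptyset$. I expect the main obstacle to be purely bookkeeping: correctly tracking the scalars $d(S_j)$ relating $\varpi_i$, $\beta(H_{S_j})$, and the form $(-\mid-)$, and making sure the positive-definiteness is invoked on the correct lattice (the $\mathbb Q$- or $\mathbb R$-span of the roots, where the form is the restriction of the Euler form); none of this is deep, but the chain of identifications $\varpi_i\leftrightarrow H'_{S_i}\leftrightarrow$ fundamental coweights must be handled carefully.
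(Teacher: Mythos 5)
Your proof is correct and follows essentially the same route as the paper: from $a_{\Omega,i}>0$ you get $(\sum_{M\in\Omega}H_M\mid H_{S_i})\leq 0$ for all $i$, expand $\sum_{M\in\Omega}H_M$ in the $H_{S_i}$ with nonnegative coefficients, deduce $(\sum_{M\in\Omega}H_M\mid\sum_{M\in\Omega}H_M)\leq 0$, and invoke positive-definiteness of the form to conclude. The only cosmetic difference is your careful tracking of the $d(S_j)$ factors; the opening remark about showing $S_\Omega=\rho$ is unnecessary since the argument goes directly to $\sum_{M\in\Omega}H_M=0$.
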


\begin{proof}
    For all $i=1,\cdots,m$, since $a_{\Omega,i}>0$, we have 
    \begin{align*}
        (\sum_{M\in\Omega} H_M|H_{S_i}') = \sum_{M\in\Omega} (H_M|H_{S_i}') = \sum_{M\in\Omega} A_{S_i,M} \leq 0, 
    \end{align*}
    and hence 
    \begin{align*}
        (\sum_{M\in\Omega} H_M|H_{S_i}) \leq 0.
    \end{align*}

    Since $\Omega$ is a subset of $\operatorname{ind}\mathcal{B}$, we can write $\sum_{M\in\Omega} H_M = \sum_{i=1}^m c_i H_{S_i}$ for some $c_i\geq 0$, $i=1,\cdots,m$.
    Then 
    \begin{align*}
        (\sum_{M\in\Omega} H_M|\sum_{M\in\Omega} H_M) = \sum_{i=1}^m c_i(\sum_{M\in\Omega}H_M|H_{S_i}) \leq 0.
    \end{align*} 
    This means $\sum_{M\in\Omega} H_M=0 $ and $\Omega = \emptyset$.
\end{proof}

Let $\mathbb{Q}Q$ be the rational group algebra of $Q$.
The elements in $\mathbb{Q}Q$ are of the form $\sum_{x\in Q} \lambda_x x$, where $\lambda_x\in \mathbb{Q}$.
The action of $N$ on $Q$ can be linearly extended to an action on $\mathbb{Q}Q$.

Define a linear map $\theta:\mathbb{Q}Q \rightarrow \mathbb{Q}Q$ :
\begin{align*}
    \theta = \sum_{n\in N} (-1)^{l(\overline{n})} n.
\end{align*}

\begin{lemma}\label{SOmega}
    (1) If there exists some $L\in \operatorname{ind}\mathcal{R}$ such that $S_{\Omega}(H_L') = 0$, then $\theta(S_{\Omega}) = 0$.

    (2) If for any $L\in \operatorname{ind}\mathcal{R}$, we have $S_{\Omega}(H_{L}')\neq 0$, then $S_{\Omega} = n(\rho)$ for some $n\in N$, and $\Omega = R(\overline{n^{-1}})$.
\end{lemma}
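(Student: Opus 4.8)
The plan is to exploit the transformation rule established just before the statement, namely that for any $L \in \operatorname{ind}\mathcal{R}$ one has $n_{[L]}(S_\Omega) = S_\Omega \cdot \beta(H_L)^{-S_\Omega(H_L')}$, so that $S_\Omega$ is fixed by $n_{[L]}$ precisely when $S_\Omega(H_L') = 0$. For part (1), suppose $S_\Omega(H_L') = 0$ for some $L \in \operatorname{ind}\mathcal{R}$, and pick a simple object $S_i$ in $\mathcal{B}$ with $L$ in the $W$-orbit of $S_i$; more directly, it suffices to find a simple $S_i$ with $S_\Omega$ fixed by $n_{[S_i]}$, which we can arrange since the reflections $\omega_{\alpha_j}$ generate $W$ and the stabilizer of $S_\Omega$ in $W$ is a parabolic subgroup — if it is nontrivial it contains some simple reflection. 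Given such an $S_i$, decompose the sum defining $\theta$ according to cosets of the order-two subgroup $\{1, \overline{n_{[S_i]}}\} \subseteq W$: write $N = \bigsqcup_j (C_j \sqcup C_j n_{[S_i]})$ where $l(\overline{c\, n_{[S_i]}}) = l(\overline{c}) + 1$ for a suitable choice of coset representatives $c \in C_j$ (the standard fact that one can choose minimal-length coset representatives). Then the contributions of $c$ and $c\, n_{[S_i]}$ to $\theta(S_\Omega)$ are $(-1)^{l(\overline{c})} c(S_\Omega)$ and $(-1)^{l(\overline{c})+1} c\, n_{[S_i]}(S_\Omega) = -(-1)^{l(\overline{c})} c(S_\Omega)$ since $n_{[S_i]}(S_\Omega) = S_\Omega$; these cancel in pairs, giving $\theta(S_\Omega) = 0$.

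For part (2), assume $S_\Omega(H_L') \neq 0$ for every $L \in \operatorname{ind}\mathcal{R}$; equivalently $a_{\Omega,i} = S_\Omega(H_{S_i}') \neq 0$ for all $i = 1,\dots,m$. The goal is to move $S_\Omega$ by the $W$-action into the dominant chamber. Concretely, I would run the following descent: if all $a_{\Omega,i} > 0$ then by Lemma \ref{omega0} we get $\Omega = \emptyset$ and $S_\Omega = \rho$, so $n = 1$ works. Otherwise some $a_{\Omega,i} < 0$ (it cannot be $0$ by hypothesis), and applying $n_{[S_i]}$ replaces $S_\Omega$ by $n_{[S_i]}(S_\Omega) = S_\Omega \cdot \beta(H_{S_i})^{-a_{\Omega,i}}$, which is again of the form $S_{\Omega'}$ for an appropriate $\Omega'$ — here one must check that the new exponent vector $(1 - \sum_{M\in\Omega'} A_{S_j,M})_j$ is realized by an actual subset $\Omega' \subseteq \operatorname{ind}\mathcal{B}$; this is exactly the statement $\underline{dim}\,\omega_{S_i}(M) = \omega_{\alpha_i}(\underline{dim}\,M)$ pushed through $\beta$, so $\Omega' = \omega_{S_i}(\Omega)$ reindexed into $\operatorname{ind}\mathcal{B}$, using that $\omega_{\alpha_i}$ permutes $\Phi^+ \setminus \{\alpha_i\}$ and sends $\alpha_i \mapsto -\alpha_i$. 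This process strictly increases $\sum_i a_{\Omega,i}$ (equivalently decreases the height of $\sum_{M\in\Omega} H_M$), hence terminates; at termination all exponents are positive, so by Lemma \ref{omega0} the resulting subset is empty, i.e. after applying some $n' = n_{[S_{i_k}]}\cdots n_{[S_{i_1}]}$ we reach $n'(S_\Omega) = \rho$. Setting $n = (n')^{-1}$ gives $S_\Omega = n(\rho)$, and then Lemma \ref{rhoprop}(3) identifies $S_\Omega = n(\rho) = \rho \cdot \prod_{M \in R(\overline{n^{-1}})} \beta(H_M)^{-1}$; comparing with $S_\Omega = \rho \cdot \prod_{M\in\Omega}\beta(H_M)^{-1}$ and using that $\beta$ is an isomorphism onto $P$ (so the multiset of $\beta(H_M)$ determines $\Omega$, as $\operatorname{ind}\mathcal{B}$ maps to distinct positive roots) yields $\Omega = R(\overline{n^{-1}})$.

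The main obstacle I expect is the bookkeeping in part (2): verifying that applying $n_{[S_i]}$ to $S_\Omega$ genuinely lands back in the family $\{S_{\Omega'} : \Omega' \subseteq \operatorname{ind}\mathcal{B}\}$ rather than in some larger subgroup of $Q$, and that the induced map on subsets is $\Omega \mapsto \omega_{S_i}(\Omega)$ with the convention that the object $S_i$ itself (if in $\Omega$) is removed and replaced appropriately — this is where the condition $a_{\Omega,i} \neq 0$ for all $i$ is used to guarantee we never "stall," and where one needs the compatibility $\beta(n H_M) = n\beta(H_M)$ of Lemma \ref{rhoprop}'s surrounding lemma together with the behaviour of $\omega_{\alpha_i}$ on positive roots. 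A secondary point requiring care is the choice of minimal-length coset representatives in part (1) and the resulting length additivity $l(\overline{c\,n_{[S_i]}}) = l(\overline{c}) + 1$, which is standard Coxeter-group theory but should be invoked cleanly; alternatively one can phrase part (1) entirely inside $W$ via the isomorphism $N/H \cong W$, which sidesteps worrying about the $H$-part of $N$ since $H$ acts trivially on $Q$.
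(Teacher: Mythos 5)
Your proposal takes a genuinely different route on part (1), and that route has a gap. You reduce to finding a simple object $S_i$ with $n_{[S_i]}(S_\Omega)=S_\Omega$, on the grounds that the stabilizer of $S_\Omega$ in $W$ is a parabolic subgroup and hence, if nontrivial, contains a simple reflection. But the stabilizer of a general weight is only \emph{conjugate} to a standard parabolic and need not contain any simple reflection. For instance, in type $A_3$, the weight $\varpi_1 - \varpi_2 + \varpi_3 = \rho - \left(\alpha_2 + (\alpha_1+\alpha_2+\alpha_3)\right)$, which is $S_\Omega$ for $\Omega=\{S_2,\,\theta\}$ with $\theta$ the highest root object, is fixed by $\omega_{\alpha_1+\alpha_2}$ and $\omega_{\alpha_2+\alpha_3}$ but by no simple reflection. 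The paper's proof never reduces to simple reflections: for \emph{any} $L$ with $n_{[L]}(S_\Omega)=S_\Omega$, one uses that $\omega_{\underline{\dim}L}$ is a reflection and hence $l(\overline{n_{[L]}})$ is odd, so the sign character gives $(-1)^{l(\overline{n n_{[L]}})}=-(-1)^{l(\overline{n})}$, and the bijection $n\mapsto nn_{[L]}$ of $N$ pairs each term of $\theta(S_\Omega)$ with its negative. You should adopt that argument.

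On part (2) your overall strategy (descent to the dominant chamber, then Lemma \ref{omega0}, then Lemma \ref{rhoprop}(3) and injectivity of $\beta$) is sound and is the same in spirit as the paper's, but your stated termination measure is wrong. The quantity $\sum_i a_{\Omega,i}$ need not strictly increase: applying $\omega_{\alpha_i}$ changes $\sum_j a_{\Omega,j}$ by $-a_{\Omega,i}\sum_j\langle\alpha_i,\alpha_j^\vee\rangle$, and the column sum $\sum_j\langle\alpha_i,\alpha_j^\vee\rangle$ vanishes for interior vertices of type $A_n$ (and can vanish elsewhere), so the sum can stay constant. The parenthetical ``(equivalently decreases the height of $\sum_{M\in\Omega}H_M$)'' is not equivalent, but it \emph{is} the correct measure: writing $\nu=\sum_{M\in\Omega}\underline{\dim}M$ one has $\nu'-\nu = a_{\Omega,i}\,\alpha_i$, so the height of $\nu$ drops by $|a_{\Omega,i}|\geq 1$ at each step, and height is bounded below by $0$. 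You should state and prove that directly, dropping the false ``equivalently.'' The paper instead chooses the maximum of the finite set $\{n(S_\Omega)\colon n\in N\}$ under a lexicographic order on $P\otimes_{\mathbb{Z}}\mathbb{Q}$, which exists trivially and for which maximality immediately forces $S_\Omega'(H_{S_i}')\geq 0$; that is cleaner and avoids all the bookkeeping of a step-by-step descent. Finally, your verification that $n_{[S_i]}(S_\Omega)$ is again some $S_{\Omega'}$ is correct but only covers a single simple reflection; the paper verifies $n(S_\Omega)=S_{\Omega'}$ for arbitrary $n$ at one stroke by splitting $\prod_{M\in\Omega}\beta(nH_M)^{-1}$ according to whether $\omega_{\overline{n}}^{-1}(L)$ lands in $\operatorname{ind}\mathcal{B}$ or $\operatorname{ind}T\mathcal{B}$, which is worth incorporating.
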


\begin{proof}
    (1) Since $n_{[L]}(S_{\Omega}) = S_{\Omega}$ and $l(\overline{n_{[L]}})$ is odd, for any $n\in N$, we have 
    \begin{align*}
        (-1)^{l(\overline{nn_{[L]}})} nn_{[L]} (S_{\Omega}) + (-1)^{l(\overline{n})} n(S_{\Omega}) = 0.
    \end{align*}
    Hence $\theta(S_{\Omega}) = \sum_{n\in N} (-1)^{l(\overline{n})} n(S_{\Omega}) = 0$.

    (2) We define a total ordering on $P\otimes_{\mathbb{Z}} \mathbb{Q}$: 
    for $v_1,v_2 \in P\otimes_{\mathbb{Z}} \mathbb{Q}$, $v_1>v_2$ if write $v_1v_2^{-1} = \prod_{i=1}^m \beta(H_{S_i})^{\lambda_i}$ ($\lambda_i \in \mathbb{Q}$ for each $i$) and the first non-zero $\lambda_i>0$.
   
    Firstly, we show that there exists an $n\in N$ such that $n(S_{\Omega})(H_{S_i}')>0$ holds for all $i=1,\cdots,m$.
    Consider the set $\{ n(S_{\Omega}) |n\in N  \}$.
    Let $S_{\Omega}' = n'(S_{\Omega})$ for some $n'\in N$ be the largest among the elements in this set with respect to the ordering.
    Then since for each $i=1,\cdots,m$, 
    \begin{align*}
        n_{[S_i]}(S_{\Omega}') = S_{\Omega}' \cdot \beta(H_{S_i})^{-S_{\Omega}'(H_{S_i}')} \leq S_{\Omega}',
    \end{align*}
    we have $S_{\Omega}'(H_{S_i}')\geq 0$.
    If there exists an $i$ such that $S_{\Omega}'(H_{S_i}') = 0$, then $n_{[S_i]}(S_{\Omega}') = S_{\Omega}'$.
    Hence $n_{[S_i]}n'(S_{\Omega}) = n'(S_{\Omega})$, i.e. $(n')^{-1}n_{[S_i]}n'(S_{\Omega})=S_{\Omega}$.
    Suppose $\overline{n'} = \overline{n_{[S_{i_1}]}} \cdots \overline{n_{[S_{i_l}]}}$ is a reduced expression of $\overline{n'}$, 
    then 
    \begin{align*}
        \overline{(n')^{-1}n_{[S_i]}n'} = \overline{n_{[\omega_{S_{i_l}}\cdots \omega_{S_{i_1}} (S_i) ]}}.
    \end{align*}
    Thus $n_{[\omega_{S_{i_l}}\cdots \omega_{S_{i_1}} (S_i)]}(S_{\Omega}) = S_{\Omega}$.
    However, since for any $L\in \operatorname{ind}\mathcal{R}$, $S_{\Omega}(H_{L}')\neq 0$, i.e. $n_{[L]}(S_{\Omega}) \neq S_{\Omega}$, we obtain a contradiction.
    Hence for each $i=1,\cdots,m$, we have $n'(S_{\Omega})(H_{S_i}') =   S_{\Omega}'(H_{S_i}')> 0$.

    Secondly, for any $n\in N$, we show that $n(S_{\Omega}) = S_{\Omega'}$ for some subset $\Omega'$ of $\operatorname{ind}\mathcal{B}$.
    We have 
    \begin{align*}
        n(S_{\Omega}) = n(\rho\cdot \prod_{M\in \Omega} \beta(H_M)^{-1}) = \rho \cdot \prod_{M\in R(\overline{n^{-1}})} \beta(H_M)^{-1}  \cdot \prod_{M\in \Omega} \beta(nH_M)^{-1}.
    \end{align*}
    If $\overline{n} = \overline{n_{[S_{i_1}]}}\cdots\overline{n_{[S_{i_l}]}}$ is a reduced expression of $\overline{n}$, we denote $\omega_{\overline{n}} = \omega_{S_{i_1}}\cdots \omega_{S_{i_l}}$.
    Notice that $\omega_{\overline{n}}$ doesn't depend on which reduced expression is chosen.
    With this notation, $nH_M=H_{\omega_{\overline{n}}(M)}$.
    Then 
    \begin{align*}
        \prod_{M\in \Omega} \beta(nH_M)^{-1} &= \prod_{\omega_{\overline{n}}^{-1}(L)\in \Omega} \beta(H_L)^{-1} \\
         &=  \prod_{\substack{\omega_{\overline{n}}^{-1}(L)\in \Omega \\ L\in \operatorname{ind}\mathcal{B}}} \beta(H_L)^{-1} \cdot \prod_{\substack{\omega_{\overline{n}}^{-1}(L)\in \Omega \\ L\in \operatorname{ind}T\mathcal{B}}} \beta(H_L)^{-1} \\
         &=\prod_{\substack{\omega_{\overline{n}}^{-1}(L)\in \Omega \\ L\in \operatorname{ind}\mathcal{B}}} \beta(H_L)^{-1} \cdot \prod_{\substack{\omega_{\overline{n}}^{-1}(TL)\in \Omega \\ L\in \operatorname{ind}\mathcal{B}}} \beta(H_L).
    \end{align*}
    All $L$ appears in $\prod_{\omega_{\overline{n}}^{-1}(L)\in \Omega ; L\in \operatorname{ind}\mathcal{B}} \beta(H_L)^{-1} $ are in $\operatorname{ind}\mathcal{B}$ but not in $R(\overline{n^{-1}})$, 
    while all $L$ appears in $\prod_{\omega_{\overline{n}}^{-1}(TL)\in \Omega ; L\in \operatorname{ind}\mathcal{B}} \beta(H_L) $ are in $R(\overline{n^{-1}})$.
    Put them together, we have $n(S_{\Omega}) = S_{\Omega'}$ for some $\Omega'\subseteq \operatorname{ind}\mathcal{B}$.

    As a result, we have $S_{\Omega}' = n'(S_{\Omega}) = S_{\Omega''}$ for some $\Omega''\subseteq \operatorname{ind}\mathcal{B}$.
    By Lemma \ref{omega0}, we have $\Omega''=\emptyset$.
    This means $n'(S_{\Omega}) = \rho$ and $S_{\Omega}=(n')^{-1}(\rho)$.
\end{proof}

\begin{theorem}
    We have 
    \begin{align*}
        \theta(\rho) = |H|\cdot \rho^{-1} \cdot \prod_{M\in \operatorname{ind}\mathcal{B}}(\beta(H_M)-1).
    \end{align*}
\end{theorem}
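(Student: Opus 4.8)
The plan is to push the whole identity into the rational group algebra $\mathbb{Q}Q$ and to recognize both sides, up to the factor $|H|$, as the same $W$-anti-invariant element, pinned down by its behaviour on strictly dominant monomials. First I would rewrite the right-hand side: since $\rho^2=\prod_{M\in\operatorname{ind}\mathcal{B}}\beta(H_M)$ by Lemma \ref{rhoprop}(4), we have $\rho^{-1}=\rho\cdot\prod_{M\in\operatorname{ind}\mathcal{B}}\beta(H_M)^{-1}$, whence
\[
\rho^{-1}\prod_{M\in\operatorname{ind}\mathcal{B}}\bigl(\beta(H_M)-1\bigr)=\rho\prod_{M\in\operatorname{ind}\mathcal{B}}\bigl(1-\beta(H_M)^{-1}\bigr)=\sum_{\Omega\subseteq\operatorname{ind}\mathcal{B}}(-1)^{|\Omega|}S_{\Omega}=:Y .
\]
For the left-hand side, the action of $N$ on $Q$ factors through $W\cong N/H$, so grouping the defining sum of $\theta$ over cosets of $H$ gives $\theta(\rho)=|H|\,Z$ where $Z:=\sum_{\overline{n}\in N/H}(-1)^{l(\overline{n})}\,n(\rho)$. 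Thus the theorem is equivalent to the identity $Y=Z$ in $\mathbb{Q}Q$.

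Next I would show that $Y$ and $Z$ are both $W$-anti-invariant, i.e.\ $w\cdot\xi=(-1)^{l(w)}\xi$ for all $w\in W$. For $Z$ this is immediate by reindexing $\overline{n}\mapsto\overline{w}\,\overline{n}$ and using that $l(\,\cdot\,)\bmod 2$ is the sign homomorphism $W\to\{\pm1\}$. For $Y$, each $n_{[S_i]}$ acts on $\mathbb{Q}Q$ by an algebra automorphism; applying it and using $n_{[S_i]}(\rho)=\rho\,\beta(H_{S_i})^{-1}$ (Lemma \ref{rhoprop}(2)), the compatibility $n_{[S_i]}\beta(H_M)=\beta(H_{\omega_{S_i}(M)})$, the facts that $\omega_{\alpha_i}$ permutes $\Phi^{+}\setminus\{\alpha_i\}$ and sends $\alpha_i$ to $-\alpha_i$, that $\beta(H_{TS_i})=\beta(H_{S_i})^{-1}$, and the elementary identity $\beta(H_{S_i})^{-1}\bigl(1-\beta(H_{S_i})\bigr)=-\bigl(1-\beta(H_{S_i})^{-1}\bigr)$, one computes $n_{[S_i]}(Y)=-Y$; since the images of the $n_{[S_i]}$ generate $W$, $Y$ is anti-invariant.

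Finally I would compare $Y$ and $Z$ on strictly dominant monomials, i.e.\ on those $x\in Q$ with $x(H_{S_i}')>0$ for all $i$. Any anti-invariant element of $\mathbb{Q}Q$ is supported on the union of the free $W$-orbits in $Q$ (a monomial with nontrivial stabilizer is fixed by a reflection, which has odd length, so its coefficient vanishes); each free orbit contains exactly one strictly dominant monomial, and within it the coefficients propagate by the sign character, so an anti-invariant element is determined by its coefficients on strictly dominant monomials. Now $\rho$ is strictly dominant by Lemma \ref{rhoprop}(1). In $Z$, a strictly dominant monomial must be $w\cdot\rho$ for some $w$, and a strictly dominant element $W$-conjugate to $\rho$ equals $\rho$; together with $\operatorname{Stab}_W(\rho)=\{1\}$ this gives that the only strictly dominant monomial of $Z$ is $\rho$, with coefficient $1$. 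In $Y$, Lemma \ref{omega0} shows $S_\Omega$ strictly dominant forces $\Omega=\emptyset$, and $S_\Omega=\rho$ forces $\sum_{M\in\Omega}H_M=0$ in $K$ (as $\beta$ is injective), hence $\Omega=\emptyset$ since a nonempty family of distinct positive roots has nonzero sum; so the only strictly dominant monomial of $Y$ is $S_\emptyset=\rho$, again with coefficient $1$. Therefore $Y-Z$ is anti-invariant and vanishes on every strictly dominant monomial, so $Y-Z=0$, which yields $\theta(\rho)=|H|\,Z=|H|\,\rho^{-1}\prod_{M\in\operatorname{ind}\mathcal{B}}(\beta(H_M)-1)$.

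The step I expect to be the main obstacle is the last one: making precise, in this concrete $\mathbb{Q}Q$-model, that a $W$-anti-invariant element is pinned down by its coefficients on strictly dominant monomials. This rests on the classical facts that every $W$-orbit on the weight lattice meets the dominant cone in exactly one point, that a dominant monomial with nontrivial stabilizer is fixed by some $n_{[S_i]}$ (so that "dominant and in a free orbit" coincides with "strictly dominant"), and that reflections act with determinant $-1$; all of this is standard Weyl-group combinatorics — exactly what underlies the Weyl denominator formula — so the remaining work is bookkeeping rather than anything substantial.
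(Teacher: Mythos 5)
Your proof is correct and takes a genuinely different route from the paper's. The paper also introduces $\lambda = \rho^{-1}\prod_{M}(\beta(H_M)-1)$, proves the anti-invariance $n_{[S_i]}(\lambda) = -\lambda$, and expands $\lambda = \sum_{\Omega}(-1)^{|\Omega|}S_{\Omega}$, but from there it computes $\theta(\lambda)$ twice: once from anti-invariance (getting $|N|\lambda$) and once term-by-term using the full strength of Lemma \ref{SOmega} --- the part you did not use, namely the explicit bijection between the $S_{\Omega}$ with $\theta(S_{\Omega})\neq 0$ and the $n(\rho)$, together with $|\Omega|=|R(\overline{n^{-1}})|=l(\overline{n})$. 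Equating the two evaluations of $\theta(\lambda)$ then gives the theorem by an index shift $\theta(n(\rho))=(-1)^{l(\overline{n})}\theta(\rho)$. You instead set $Z = \sum_{\overline{n}}(-1)^{l(\overline{n})}n(\rho)$, note $\theta(\rho)=|H|Z$, verify that both $Y$ and $Z$ are $W$-anti-invariant, and identify them by comparing coefficients on the unique strictly dominant monomial $\rho$, invoking Lemma \ref{omega0} (plus the injectivity of $\beta$) in place of Lemma \ref{SOmega}(2). Your approach is more conceptual and shorter once the classical facts about anti-invariants in a group algebra of the weight lattice are granted (stabilizers generated by reflections, free orbits meeting the open dominant chamber exactly once, the sign character); the paper's approach is more computational but stays entirely inside the lemmas it has already proved, and avoids importing the abstract anti-invariance machinery. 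Both are valid; yours quietly makes Lemma \ref{SOmega}(2) unnecessary, which is a genuine simplification provided the Weyl-combinatorics background is taken as standing.
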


\begin{proof}
    Let $\lambda = \rho^{-1} \cdot \prod_{M\in \operatorname{ind}\mathcal{B}}(\beta(H_M)-1)$.
    For any $i=1,\cdots,m$, 
    \begin{align*}
        n_{[S_i]}(\lambda) &= \rho^{-1} \cdot \beta(H_{S_i}) \cdot \prod_{M\in \operatorname{ind}\mathcal{B}}(\beta(n_{[S_i]}H_M)-1) \\
            &= \rho^{-1} \cdot \beta(H_{S_i}) \cdot \prod_{M\in \operatorname{ind}\mathcal{B}}(\beta(H_{\omega_{S_i}(M)})-1) \\
            &=\rho^{-1} \cdot \beta(H_{S_i}) \cdot \prod_{M\in \operatorname{ind}\mathcal{B}}(\beta(H_M)-1) \cdot \frac{\beta(H_{S_i})^{-1}-1}{\beta(H_{S_i})-1} \\
            &=-\lambda.
    \end{align*}
    Hence for any $n\in N$, we have $n(\lambda) = (-1)^{l(\overline{n})}\lambda$, and $\theta(\lambda) = |N|\lambda$.

    On the other hand, by Lemma\ref{rhoprop}(4), 
    \begin{align*}
        \lambda &= \rho \cdot \prod_{M\in \operatorname{ind}\mathcal{B}} \beta(H_M)^{-1} \cdot \prod_{M\in \operatorname{ind}\mathcal{B}}(\beta(H_M)-1) \\
         &= \rho \cdot \prod_{M\in \operatorname{ind}\mathcal{B}}(1-\beta(H_M)^{-1}) \\
         &= \rho \cdot \sum_{\Omega \subseteq \operatorname{ind}\mathcal{B}} (-1)^{|\Omega|} \prod_{M\in \Omega} \beta(H_M)^{-1} \\
         &= \sum_{\Omega\subseteq \operatorname{ind}\mathcal{B}} (-1)^{|\Omega|} S_{\Omega}.
    \end{align*}

    Thus by Lemma\ref{SOmega}, 
    \begin{align*}
        \lambda &= \frac{1}{|N|} \theta(\lambda) = \frac{1}{|N|} \sum_{\Omega \subseteq \operatorname{ind}\mathcal{B}} (-1)^{|\Omega|} \theta(S_{\Omega}) \\
        &=\frac{1}{|N|} \sum_{\overline{n}\in N/H} (-1)^{l(\overline{n})} \theta(n(\rho)) = \frac{1}{|N|} \sum_{\overline{n}\in N/H} \theta(\rho)  = \frac{1}{|H|} \theta(\rho),
    \end{align*}    
    and the desired result follows.
\end{proof}

Since $\mathbb{Q}Q$ is an integral domain, let $F=Frac(\mathbb{Q}Q)$ be the field of fractions of $\mathbb{Q}Q$, and let $F[t]$ be the polynomial ring in indeterminate $t$.

\begin{theorem}\label{Macdonald}
    We have 
    \begin{align*}
        \sum_{n\in N} ( \prod_{M\in \operatorname{ind}\mathcal{B}} \frac{1-t\cdot n\beta(H_M)^{-1}}{1-n\beta(H_M)^{-1}}    ) = \sum_{n\in N} t^{l(\overline{n})}.
    \end{align*}
\end{theorem}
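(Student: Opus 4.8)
The plan is to express both sides inside the rational group algebra $\mathbb{Q}Q$ (or rather its fraction field $F=\operatorname{Frac}(\mathbb{Q}Q)$, which makes sense since $\mathbb{Q}Q$ is a domain) in terms of the operator $\theta=\sum_{n\in N}(-1)^{l(\overline n)}n$ already in play. Put $N_t:=\rho\cdot\prod_{M\in\operatorname{ind}\mathcal{B}}\bigl(1-t\,\beta(H_M)^{-1}\bigr)$, a polynomial in $t$ with coefficients in $\mathbb{Q}Q$. First I would observe that multiplying the numerator and denominator of the $n$-th summand on the left by $n(\rho)$ gives, for every $n\in N$,
\[
\prod_{M\in\operatorname{ind}\mathcal{B}}\frac{1-t\,n\beta(H_M)^{-1}}{1-n\beta(H_M)^{-1}}=\frac{n(N_t)}{n(N_1)},
\]
and that $N_1=\rho\prod_M\bigl(1-\beta(H_M)^{-1}\bigr)=\rho^{-1}\prod_M\bigl(\beta(H_M)-1\bigr)$ by Lemma \ref{rhoprop}(4); this is precisely the element $\lambda$ appearing in the previous theorem's proof, which is nonzero and satisfies $n(\lambda)=(-1)^{l(\overline n)}\lambda$. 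Hence the left-hand side equals $\sum_{n\in N}n(N_t)/\bigl((-1)^{l(\overline n)}\lambda\bigr)=\theta(N_t)/\lambda$, and the theorem reduces to showing $\theta(N_t)=\bigl(\sum_{n\in N}t^{l(\overline n)}\bigr)\lambda$.

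Next I would expand $N_t=\sum_{\Omega\subseteq\operatorname{ind}\mathcal{B}}(-t)^{|\Omega|}S_\Omega$, where $S_\Omega=\rho\prod_{M\in\Omega}\beta(H_M)^{-1}$ is exactly the element studied in Lemma \ref{SOmega}, so $\theta(N_t)=\sum_{\Omega}(-t)^{|\Omega|}\theta(S_\Omega)$. By Lemma \ref{SOmega}(1) the summands with $S_\Omega(H_L')=0$ for some $L\in\operatorname{ind}\mathcal{R}$ contribute nothing; by Lemma \ref{SOmega}(2) the surviving $\Omega$ are exactly those of the form $\Omega=R(\overline{n^{-1}})$ for a unique $\overline n\in N/H$ (uniqueness because $\rho$ is regular, hence has trivial stabiliser in $W$), and for these $S_\Omega=n(\rho)$. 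A one-line computation gives $\theta\circ n=(-1)^{l(\overline n)}\theta$ (from $l(\overline{n'n})\equiv l(\overline{n'})+l(\overline n)\bmod 2$), hence $\theta(S_\Omega)=(-1)^{l(\overline n)}\theta(\rho)$; and $|\Omega|=|R(\overline{n^{-1}})|=l(\overline{n^{-1}})=l(\overline n)$ by Lemma \ref{Rn}(3). Therefore each surviving $\Omega$ contributes $(-t)^{l(\overline n)}(-1)^{l(\overline n)}\theta(\rho)=t^{l(\overline n)}\theta(\rho)$, and summing over the surviving $\Omega$ — which biject with $N/H$ — yields $\theta(N_t)=\bigl(\sum_{\overline n\in N/H}t^{l(\overline n)}\bigr)\theta(\rho)$.

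Finally, substituting $\theta(\rho)=|H|\,\lambda$ from the previous theorem gives $\theta(N_t)/\lambda=|H|\sum_{\overline n\in N/H}t^{l(\overline n)}=\sum_{n\in N}t^{l(\overline n)}$, which together with the first paragraph proves the identity. I do not expect a serious obstacle: all the real work is done by Lemmas \ref{Rn}, \ref{SOmega}, \ref{rhoprop} and the computation of $\theta(\rho)$, and the only points needing care are that all manipulations take place in the fraction field $F$ (legitimate since $\mathbb{Q}Q$ is a domain and $N_1=\lambda\neq 0$) and that the signs cancel correctly, $(-t)^{|\Omega|}(-1)^{l(\overline n)}=t^{l(\overline n)}$. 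The most delicate bookkeeping step is the appeal to Lemma \ref{SOmega}(2) to see that the $\Omega$ surviving $\theta$ correspond bijectively to the elements of $N/H$ with $|\Omega|$ equal to the length — but this is already packaged in that lemma.
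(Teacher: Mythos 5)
Your proof is correct and follows essentially the same route as the paper's: multiply numerator and denominator by $n(\rho)$, recognise the denominator as $n(\lambda)=(-1)^{l(\overline n)}\lambda$, expand the numerator into the sum $\sum_\Omega(-t)^{|\Omega|}S_\Omega$, apply $\theta$, invoke Lemma \ref{SOmega} to identify the surviving $\Omega$ with $R(\overline{n^{-1}})$ for $\overline n\in N/H$, and finish with $\theta(\rho)=|H|\lambda$. The only cosmetic difference is that you package the numerator as $N_t$ and explicitly record the identity $\theta\circ n=(-1)^{l(\overline n)}\theta$, whereas the paper carries the factor $n(\rho)$ through the calculation and uses the same relation implicitly in the penultimate step.
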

    
\begin{proof}
    Again, let $\lambda = \rho^{-1} \cdot \prod_{M\in \operatorname{ind}\mathcal{B}}(\beta(H_M)-1)$.
    Then 
    \begin{align*}
        (-1)^{l(\overline{n})} \lambda = n(\lambda) = n(\rho)\cdot \prod_{M\in \operatorname{ind}\mathcal{B}}( 1-n\beta(H_M)^{-1}  ).
    \end{align*}
    Hence
    \begin{align*}
        \sum_{n\in N} ( \prod_{M\in \operatorname{ind}\mathcal{B}} &\frac{1-t\cdot n\beta(H_M)^{-1}}{1-n\beta(H_M)^{-1}}   ) = \frac{1}{\lambda} \sum_{n\in N} (-1)^{l(\overline{n})} n(\rho) \cdot \prod_{M\in \operatorname{ind}\mathcal{B}} (1-t\cdot n\beta(H_M)^{-1}) \\
          &=\frac{1}{\lambda} \sum_{n\in N} (-1)^{l(\overline{n})} n(\rho) \cdot \sum_{\Omega \subseteq \operatorname{ind}\mathcal{B}} (-t)^{|\Omega|} n(\prod_{M\in \Omega}\beta(H_M)^{-1})\\
          &=\frac{1}{\lambda} \sum_{\Omega \subseteq \operatorname{ind}\mathcal{B}} (-t)^{|\Omega|} \sum_{n\in N} (-1)^{l(\overline{n})} n(S_{\Omega}) \\
          &=\frac{1}{\lambda} \sum_{\Omega \subseteq \operatorname{ind}\mathcal{B}} (-t)^{|\Omega|} \theta(S_{\Omega}) =\frac{1}{\lambda} \sum_{\overline{n}\in N/H} (-t)^{l(\overline{n})} \theta(n(\rho)) \\
          &= \frac{1}{\lambda} \sum_{\overline{n}\in N/H} t^{l(\overline{n})} \theta(\rho) =|H|\sum_{\overline{n}\in N/H} t^{l(\overline{n})} =\sum_{n\in N} t^{l(\overline{n})}. 
    \end{align*}
\end{proof}

\begin{theorem}
    Let $l(M)$ be the length of $M \in \mathcal{R}$, then 
    \begin{align*}
        \sum_{\overline{n}\in N/H} t^{l(\overline{n})} = \prod_{M\in \operatorname{ind}\mathcal{B}} \frac{t^{l(M)+1}-1}{t^{l(M)}-1}.
    \end{align*}
\end{theorem}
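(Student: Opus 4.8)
The plan is to derive the identity from the Macdonald-type identity of Theorem~\ref{Macdonald} by applying one carefully chosen ring homomorphism (a ``principal specialization'') to both sides. The key observation is that on the left-hand side of Theorem~\ref{Macdonald} the only group-algebra elements that occur are $n\beta(H_M)^{\pm1}$ with $n\in N$ and $M\in\operatorname{ind}\mathcal{B}$, and by the lemma immediately preceding Theorem~\ref{Macdonald} together with $nH_M=H_{\omega_{\overline{n}}(M)}$ we have $n\beta(H_M)=\beta(H_{\omega_{\overline{n}}(M)})\in P$; so everything lives in $\mathbb{Q}P$, and $P$ is free abelian on $\beta(H_{S_1}),\dots,\beta(H_{S_m})$ via the isomorphism $\beta$. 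Hence there is a well-defined $\mathbb{Q}[t]$-algebra homomorphism $\psi$ from the localization of $\mathbb{Q}P[t]$ in which the left-hand side of Theorem~\ref{Macdonald} is written into $\mathbb{Q}[t,t^{-1}]$ (localized at the elements $1-t^{k}$, $k\neq0$), determined by $\psi(\beta(H_{S_i}))=t^{-1}$ for all $i$. Since $H_M=\sum_i a_iH_{S_i}$ and $l(M)=\sum_i a_i$ whenever $\underline{dim}\,M=\sum_i a_i\alpha_i$, this gives $\psi(\beta(H_M))=t^{-l(M)}$ for every $M\in\operatorname{ind}\mathcal{R}$ (with the convention $l(TM)=-l(M)$); in particular $\psi\bigl(1-n\beta(H_M)^{-1}\bigr)=1-t^{\,l(\omega_{\overline{n}}(M))}$, which is a unit in the target because $l(\omega_{\overline{n}}(M))$, being the height of a root, is nonzero. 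So $\psi$ really is defined where we need it.

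Applying $\psi$ to the right-hand side of Theorem~\ref{Macdonald} gives, since $\psi$ fixes $\mathbb{Q}[t]$,
\[
\psi\Bigl(\sum_{n\in N}t^{l(\overline{n})}\Bigr)=|H|\sum_{\overline{n}\in N/H}t^{l(\overline{n})}.
\]
On the left-hand side, the $n$-th summand depends only on $\overline{n}$ (as $n\beta(H_M)=\beta(H_{\omega_{\overline{n}}(M)})$ and $H$ acts trivially), and $\psi$ sends it to $\prod_{M\in\operatorname{ind}\mathcal{B}}\frac{1-t\cdot t^{\,l(\omega_{\overline{n}}(M))}}{1-t^{\,l(\omega_{\overline{n}}(M))}}=\prod_{M\in\operatorname{ind}\mathcal{B}}\frac{1-t^{\,l(\omega_{\overline{n}}(M))+1}}{1-t^{\,l(\omega_{\overline{n}}(M))}}$. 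Thus the left-hand side becomes $|H|\sum_{\overline{n}\in N/H}\prod_{M\in\operatorname{ind}\mathcal{B}}\frac{1-t^{\,l(\omega_{\overline{n}}(M))+1}}{1-t^{\,l(\omega_{\overline{n}}(M))}}$.

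It then remains to evaluate this sum termwise. For $\overline{n}=e$ we have $\omega_{\overline{n}}=\mathrm{id}$ and $l(M)\geq1$ for $M\in\operatorname{ind}\mathcal{B}$, so the term equals $\prod_{M\in\operatorname{ind}\mathcal{B}}\frac{1-t^{l(M)+1}}{1-t^{l(M)}}=\prod_{M\in\operatorname{ind}\mathcal{B}}\frac{t^{l(M)+1}-1}{t^{l(M)}-1}$, exactly the claimed right-hand side. For $\overline{n}\neq e$ the Weyl group element $w=\omega_{\overline{n}}$ is nontrivial, so $w^{-1}$ has a right descent: there is a simple root $\alpha_i$ with $w^{-1}(\alpha_i)\in\Phi^{-}$. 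Setting $\gamma=-w^{-1}(\alpha_i)\in\Phi^{+}$ and letting $M_0\in\operatorname{ind}\mathcal{B}$ be the indecomposable with $\underline{dim}\,M_0=\gamma$, we get $\omega_{\overline{n}}(M_0)=TS_i$, so $l(\omega_{\overline{n}}(M_0))=-1$ and the factor for $M_0$ equals $\frac{1-t^{0}}{1-t^{-1}}=0$; hence the whole $\overline{n}$-term vanishes. Therefore the left-hand side collapses to $|H|\prod_{M\in\operatorname{ind}\mathcal{B}}\frac{t^{l(M)+1}-1}{t^{l(M)}-1}$, and comparing with the right-hand side and cancelling the unit $|H|$ yields the theorem.

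The routine parts are the verification that $\psi$ is well defined and extends to the localization, the identity $n\beta(H_M)=\beta(H_{\omega_{\overline{n}}(M)})$, and the standard fact that a non-identity Weyl group element has a right descent. The one genuinely structural point — and the crux of the argument — is the choice of the specialization $\beta(H_{S_i})\mapsto t^{-1}$: it is precisely this choice that simultaneously turns the identity-coset summand of Theorem~\ref{Macdonald} into the desired product over positive roots and, through the vanishing factor $\frac{1-t^{0}}{1-t^{-1}}=0$ that appears exactly when $\omega_{\overline{n}}$ sends some positive root to a negative simple root, annihilates all the other summands. Beyond locating this specialization I do not expect a serious obstacle.
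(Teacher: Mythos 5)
Your proof is correct and is essentially the paper's proof: both apply the principal specialization $\beta(H_M)\mapsto t^{-l(M)}$ to the Macdonald identity of Theorem~\ref{Macdonald} and then observe that, after specialization, every summand with $\overline{n}\neq 1$ is killed by a factor $\frac{1-t^{0}}{1-t^{-1}}=0$ coming from some $M_0$ with $l(\omega_{\overline{n}}(M_0))=-1$. The only cosmetic difference is that you justify the existence of such an $M_0$ by the elementary fact that a nontrivial Weyl group element sends some positive root to the negative of a simple root, whereas the paper phrases the same fact via Gabriel's theorem for the BGP reflection functors; these are the same statement seen from the combinatorial and representation-theoretic sides, respectively.
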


\begin{proof}
    We have a homomorphism from $P$ to the infinite cyclic group generated by $t$, which maps $\beta(H_M)$ to $t^{-l(M)}$.
    Let $\mathbb{Q}P$ be the rational group algebra of $P$.
    This homomorphism can be extended to an algebra homomorphism $\mathbb{Q}P[t]\rightarrow \mathbb{Q}[t,\frac{1}{t}]$, which maps $t$ to $t$.
    The identity proved in Thm \ref{Macdonald} can be interpreted as an identity in $\mathbb{Q}P[t]$, by multiplying the denominator to both sides.
    Apply the homomorphism to this identity, we obtain
    \begin{align*}
        \sum_{n\in N}(\prod_{M\in \operatorname{ind}\mathcal{B}} \frac{1-t^{1+l(\omega_{\overline{n}}(M))}}{1-t^{l(\omega_{\overline{n}}(M))}}) = \sum_{n\in N} t^{l(\overline{n})}.
    \end{align*}

    Recall that our $n\in N$ is defined by the BGP reflection functors. 
    By Gabriel's theorem, for any $n\in N$ such that $\overline{n}\neq 1$, there exists some $M\in \operatorname{ind}\mathcal{B}$ such that $l(\omega_{\overline{n}}(M)) = -1$.
    Hence
    \begin{align*}
        \sum_{n\in N} t^{l(\overline{n})} =  \sum_{n\in H}(\prod_{M\in \operatorname{ind}\mathcal{B}} \frac{1-t^{1+l(M)}}{1-t^{l(M)}}) = |H| \prod_{M\in \operatorname{ind}\mathcal{B}} \frac{t^{1+l(M)}-1}{t^{l(M)}-1},
    \end{align*}
    and thus 
    \begin{align*}
        \sum_{\overline{n}\in N/H} t^{l(\overline{n})} = \prod_{M\in \operatorname{ind}\mathcal{B}} \frac{t^{1+l(M)}-1}{t^{l(M)}-1}.
    \end{align*}
\end{proof}

\begin{lemma}\label{k1k2}
    Let $k_i$ be the number of objects in $ \operatorname{ind}\mathcal{B}$ with length $i$.
    We have $k_1,k_2,\cdots$ is a descending sequence of positive integers, i.e.
\begin{align*}
   m= k_1 \geq k_2  \geq \cdots .
\end{align*}
\end{lemma}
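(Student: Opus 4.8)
The plan is to reduce the statement to a standard fact about root systems. By the results quoted in the excerpt, fixing the complete section gives a hereditary subcategory $\mathcal{B}$, the map $\underline{dim}_{\mathcal{B}}$ restricts to a bijection between $\operatorname{ind}\mathcal{B}$ and $\Phi^+(\mathcal{B})$, and if $\underline{dim}_{\mathcal{B}}M=\sum_{i=1}^m a_i\alpha_i$ then $l(M)=\sum_{i=1}^m a_i$, i.e. $l(M)$ is the height of the root $\underline{dim}_{\mathcal{B}}M$. Hence $k_i$ is exactly the number of positive roots of $\Phi=\Phi(\mathcal{R})$ of height $i$, and the lemma becomes: the number of positive roots of height $i$ is weakly decreasing in $i$, and equals $m$ for $i=1$. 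The equality $k_1=m$ is immediate, since the positive roots of height $1$ are precisely the $m$ simple roots $\alpha_1,\dots,\alpha_m$ (corresponding to $S_1,\dots,S_m$).

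For the monotonicity I would pass to the complex simple Lie algebra $\mathfrak{g}=\mathfrak{g}(\Phi(\mathcal{R}))$ and use its principal grading. Let $h=\sum_{\alpha\in\Phi^+}\alpha^\vee$, let $e=\sum_{i=1}^m e_i$ be a regular nilpotent, and complete $\{e,h,f\}$ to a principal $\mathfrak{sl}_2$-triple (possible since we are in characteristic $0$). Because $\langle\alpha_i,\rho^\vee\rangle=1$ for every simple root $\alpha_i$, the operator $\operatorname{ad}(h)$ acts on the root space $\mathfrak{g}_\alpha$ by the scalar $2\,\operatorname{ht}(\alpha)$; consequently the sum $\mathfrak{g}^{(j)}:=\bigoplus_{\operatorname{ht}(\alpha)=j}\mathfrak{g}_\alpha$ of root spaces of height $j$ is exactly the $2j$-weight space of $\operatorname{ad}(h)$, and has dimension $k_j$ for $j\geq 1$. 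Viewing $\mathfrak{g}$ as a module over the principal $\mathfrak{sl}_2$, the operator $\operatorname{ad}(e)$ maps $\mathfrak{g}^{(j)}$ onto $\mathfrak{g}^{(j+1)}$ for every $j\geq 0$ by $\mathfrak{sl}_2$-representation theory: on each irreducible summand, $e$ is surjective from one weight space onto the next one up as soon as the target is not the lowest weight space, which it never is here since the source weight $2j$ is $\geq 0$. Therefore $k_j=\dim\mathfrak{g}^{(j)}\geq\dim\mathfrak{g}^{(j+1)}=k_{j+1}$ for all $j\geq 1$, which completes the proof. Equivalently, one may simply quote the classical identity expressing $\sum_{\alpha\in\Phi^+}t^{\operatorname{ht}(\alpha)}$ in terms of the exponents of $\Phi$, from which $k_j$ equals the number of exponents that are $\geq j$ — manifestly weakly decreasing in $j$ and equal to $m$ when $j=1$.

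The only point that needs a word of justification — and it is hardly an obstacle — is the identification of the $\operatorname{ad}(h)$-eigenvalue on $\mathfrak{g}_\alpha$ with twice the height of $\alpha$ (equivalently $\langle\alpha_i,\rho^\vee\rangle=1$), together with the existence of a principal $\mathfrak{sl}_2$-triple; both are entirely standard and can be cited from Carter's book or Bourbaki. No information about the root category beyond the dictionary $\operatorname{ind}\mathcal{B}\leftrightarrow\Phi^+(\mathcal{B})$ and the translation of length into height — already recorded in the excerpt — enters the argument, so this is essentially the shortest available route.
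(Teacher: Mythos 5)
Your proof is correct, but it takes a genuinely different route from the paper's. The paper proves the lemma by brute-force inspection of the AR-quiver for each Dynkin type: for type $A$ the quiver is drawn explicitly and $k_i = m+1-i$ is read off; for types $D$, $E$ the lengths are computed recursively along the quiver using the ``wing vertex'' $P_1$; and for $B$, $C$, $F$, $G$ the result is deduced from the simply-laced cases via the folding of Theorem~\ref{deng}. You instead observe that $k_i$ is the number of positive roots of height $i$, then invoke the principal $\mathfrak{sl}_2$-triple: $\operatorname{ad}(h)$ with $h = 2\rho^\vee$ grades $\mathfrak{g}$ by twice the height, and $\operatorname{ad}(e)$ surjects $\mathfrak{g}^{(j)}\twoheadrightarrow\mathfrak{g}^{(j+1)}$ for $j\geq 0$ by weight-by-weight $\mathfrak{sl}_2$-theory (your phrasing in terms of ``the target is never the lowest weight space'' is valid, since the target weight $2(j+1)>0$ can never be a lowest weight of an irreducible summand). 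This is the classical argument of Kostant showing $k_j$ equals the number of exponents $\geq j$. Your approach is shorter, uniform, and avoids case analysis entirely, at the cost of leaving the root-category framework and appealing to characteristic-zero Lie theory; the paper's approach is heavier but stays self-contained within the AR-quiver machinery and, as a byproduct, actually exhibits the partition $(k_1, k_2, \ldots)$ whose dual $(x_1, \ldots, x_m)$ is what the subsequent corollary on $|G|$ needs. Both are valid; neither subsumes the other.
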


\begin{proof}
    We prove this Lemma by considering the AR-quiver for each type.
    Since we arbitrarily fix a complete section of $\mathcal{R}$ and obtain a hereditary subcategory $\mathcal{B}$, we have a corresponding quiver with some orientation.
    Note that the result we want doesn't depend on the orientation of the quiver.
    Thus we only need to show for one special orientation and we may assume that $\mathcal{B}$ exactly corresponds to the orientation we choose. 

    Firstly, we consider the case of type A.
    Let the orientation of the quiver be 
    \begin{align*}
        1 \rightarrow 2 \rightarrow \cdots \rightarrow m
    \end{align*}
    with the vertices labelled by numbers $1,2,\cdots,m$.
    Then the AR-quiver for $\mathcal{B}$ is

\resizebox{14cm}{!}{
    \xymatrix{
        0\cdots 01 \ar[rd]    &                                   & 0\cdots 10 \ar[rd]    &                                  & \cdots \cdots                           &                       & 010\cdots 0 \ar[rd]    &                        & 10\cdots 0 \\
                              & 0\cdots 011 \ar[rd] \ar[ru]       &                       & \cdots \ar[rd]                   &                                   & \cdots \ar[ru]            &                        & 110\cdots 0 \ar[ru]    &            \\
                              &                                   & \cdots \ar[rd]        &                                  & 01\cdots 10 \ar[rd] \ar[ru]       &                       & \cdots \ar[ru]             &                        &            \\
                              &                                   &                       & 01\cdots 1 \ar[rd] \ar[ru]       &                                   & 1\cdots 10 \ar[ru]    &                        &                        &            \\
                              &                                   &                       &                                  & 1\cdots 1 \ar[ru]                 &                       &                        &                        &           
    }
}

It's easy to see that the indecomposables in the first row have length 1, the indecomposables in the second row have length 2, etc.
Thus in this case $k_i=m+1-i$ for $i=1,\cdots,m$, and the Lemma follows.

Secondly, we consider the case of type DE.
It's well-known that any quiver of this type is of the form below (with probably different orientation).

\[
\xymatrix{
    \bullet    &\ar[l]\bullet \cdots  \bullet  & \ar[l]\bullet \ar[r] \ar[d] & \bullet \cdots \bullet \ar[r] & \bullet \\
    & & \bullet \ar@{.}[d] & & \\
    & & \bullet \ar[d] & &\\
    & & \bullet & &
}
\]

We label the unique source of this quiver by 1, and denote the projective module corresponding to 1 by $P_1$.

Then we describe the AR-quiver corresponding to this orientation from left to right.
The AR-quiver starts from three branches of the form the same as that of type A, before they meet at the point $P_1$.
Ringel\cite{1099} called the vertex corresponding to $P_1$ in the AR-quiver a wing vertex, and $P_1$ a wing module.
\[
\resizebox{4cm}{!}{
\xymatrix{
    \bullet \ar[rd]  &                    &                     &                                           &         \\
                        & \cdots \ar[rd]  &                     &                                           &         \\
                        &                    & \bullet \ar[rd]  &                                           & \cdots \\
                        &                    &                     & \bullet P_1 \ar[ru] \ar[rd] \ar[rdd] &         \\
                        &                    & \bullet \ar[ru]  &                                           & \cdots \\
                        & \cdots \ar[ru]  & \bullet \ar[ruu] &                                           &  \cdots \\
    \bullet \ar[ru]  &                    &                     &                                           &         \\
                        & \cdots \ar[ruu] &                     &                                           &         \\
                        &                    &                     &                                           &         \\
    \bullet \ar[ruu] &                    &                     &                                           &        
 }
}
\]
The lengths of the corresponding modules in the branches are calculated in the case of type A.
Assume that the three adjacent vertices of $P_1$ have lengths $x,y,z$, respectively.
Then $P_1$ has length $x+y+z+1$, and the three adjacent vertices on the right hand side of $P_1$ all have length $x+y+z$.
Following this process, we can calculate all the lengths of module in $\operatorname{ind}\mathcal{B}$, and get the desired result.

For example, we calculated the case of type $\rm E_6$.
The quiver for type $\rm E_6$ is as follows.
\[
\xymatrix{
    \bullet    &\ar[l] \bullet  & \ar[l]\bullet \ar[r] \ar[d] &  \bullet \ar[r] & \bullet \\
    & & \bullet  & & \\
}
\]
The following diagram is the AR-quiver, whose vertices are labelled by the lengths of the corresponding indecomposable modules.

\[
    \resizebox{13cm}{!}{
\xymatrix{
    1 \ar[rd]  &                           & 1 \ar[rd]                        &                           & 4 \ar[rd]                        &                           & 4 \ar[rd]                         &                           & 3 \ar[rd]                         &                           & 3 \ar[rd]                        &               &   \\
                  & 2 \ar[rd] \ar[ru]   &                                     & 5 \ar[ru] \ar[rd]   &                                     & 8 \ar[ru] \ar[rd]   &                                      & 7 \ar[ru] \ar[rd]   &                                      & 6 \ar[ru] \ar[rd]   &                                     & 2 \ar[rd]  &   \\
                  &                           & 6 \ar[ru] \ar[rd] \ar[rdd] &                           & 9 \ar[ru] \ar[rd] \ar[rdd] &                           & 11 \ar[ru] \ar[rd] \ar[rdd] &                           & 10 \ar[ru] \ar[rd] \ar[rdd] &                           & 5 \ar[ru] \ar[rd] \ar[rdd] &               & 1 \\
                  & 1 \ar[ru]              &                                     & 5 \ar[ru]              &                                     & 4 \ar[ru]              &                                      & 7 \ar[ru]              &                                      & 3 \ar[ru]              &                                     & 2 \ar[ru]  &   \\
                  & 2 \ar[ruu] \ar[rdd] &                                     & 5 \ar[ruu] \ar[rdd] &                                     & 8 \ar[ruu] \ar[rdd] &                                      & 7 \ar[ruu] \ar[rdd] &                                      & 6 \ar[ruu] \ar[rdd] &                                     & 2 \ar[ruu] &   \\
                  &                           &                                     &                           &                                     &                           &                                      &                           &                                      &                           &                                     &               &   \\
    1 \ar[ruu] &                           & 1 \ar[ruu]                       &                           & 4 \ar[ruu]                       &                           & 4 \ar[ruu]                        &                           & 3 \ar[ruu]                        &                           & 3 \ar[ruu]                       &               &  
}
    }
\]
From the AR-quiver, we can see that for $\rm E_6$, 
\begin{align*}
    k_1=6, k_2=k_3=k_4=5,k_5=4,k_6=k_7=3,k_8=2,k_9=k_{10}=k_{11}=1.
\end{align*}

Finally, we consider the case of type BCFG.
By Thm \ref{deng}, the AR-quiver in this case can be obtained from some AR-quiver corresponding to type ADE.
Thus the lengths of modules in $\operatorname{ind}\mathcal{B}$ can also be calculated explicitly.

For example, we consider the case of type $\rm F_4$, whose quiver is of the following form.
Note that we use multiple edges (or arrows) to indicate the valuation.
\[
\xymatrix{
   \bullet \ar@{-}[r] & \bullet \ar@2{-}[r] & \bullet \ar@{-}[r] & \bullet
}
\]
This can be obtained from $\rm E_6$ with an automorphism.
\[
\begin{tikzcd}
    \bullet \arrow[r, no head] \arrow[rrrr, no head, dashed, bend left=49] & \bullet \arrow[rr, no head, dashed, bend left=49] \arrow[r, no head] & \bullet \arrow[r, no head] \arrow[d, no head] & \bullet \arrow[r, no head] & \bullet \\
                                                                           &                                                                      & \bullet                                       &                            &        
    \end{tikzcd}
\]

Thus we can fold the AR-quiver of $\rm E_6$ and obtain that of $\rm F_4$.
Again we use the lengths of the corresponding indecomposable modules to label the vertices of the AR-quiver.

\[
    \resizebox{13cm}{!}{
\xymatrix{
    1 \ar[rd]  &                           & 1 \ar[rd]                        &                           & 4 \ar[rd]                        &                           & 4 \ar[rd]                         &                           & 3 \ar[rd]                         &                           & 3 \ar[rd]                        &               &   \\
                  & 2 \ar@2{->}[rd] \ar[ru]   &                                     & 5 \ar[ru] \ar@2{->}[rd]   &                                     & 8 \ar[ru] \ar@2{->}[rd]   &                                      & 7 \ar[ru] \ar@2{->}[rd]   &                                      & 6 \ar[ru] \ar@2{->}[rd]   &                                     & 2 \ar@2{->}[rd]  &   \\
                  &                           & 6 \ar@2{->}[ru] \ar[rd]  &                           & 9 \ar@2{->}[ru] \ar[rd]  &                           & 11 \ar@2{->}[ru] \ar[rd]  &                           & 10 \ar@2{->}[ru] \ar[rd]  &                           & 5 \ar@2{->}[ru] \ar[rd]  &               & 1 \\
                  & 1 \ar[ru]              &                                     & 5 \ar[ru]              &                                     & 4 \ar[ru]              &                                      & 7 \ar[ru]              &                                      & 3 \ar[ru]              &                                     & 2 \ar[ru]  &   
        }
    }
\]

Hence in this case, we have 
\begin{align*}
    k_1=4,k_2=k_3=k_4=k_5=3, k_6=k_7=2, k_8=k_9=k_{10}=k_{11}=1.
\end{align*}

Similar to this example, we can calculate all the lengths of modules in $\operatorname{ind}\mathcal{B}$ via the AR-quiver, and thus finish the proof of the Lemma.
\end{proof}

By Lemma\ref{k1k2}, we have $(k_1,k_2,\cdots)$ form a partition of $r=|\operatorname{ind}\mathcal{B}|$, and denote its dual partition by $(x_1,\cdots,x_m)$.

\begin{corollary}
    With the notations as above, 
     \begin{align*}
        |G| = \frac{1}{d} q^r (q^{x_1+1}-1) \cdots (q^{x_m+1}-1).
    \end{align*}
\end{corollary}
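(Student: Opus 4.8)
The plan is to combine the three preceding theorems of the subsection. Recall that we have already established the order formula
\begin{align*}
    |G| = \frac{1}{d}\, q^r (q-1)^m \sum_{w\in W} q^{l(w)} = \frac{1}{d}\, q^r (q-1)^m \sum_{\overline{n}\in N/H} q^{l(\overline{n})},
\end{align*}
together with the identity $\sum_{\overline{n}\in N/H} t^{l(\overline{n})} = \prod_{M\in \operatorname{ind}\mathcal{B}} \frac{t^{l(M)+1}-1}{t^{l(M)}-1}$, and Lemma \ref{k1k2} which says that $(k_1, k_2, \ldots)$, with $k_i$ the number of indecomposables in $\mathcal{B}$ of length $i$, is a partition of $r = |\operatorname{ind}\mathcal{B}|$. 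So the only task left is a purely combinatorial one: evaluate the product $\prod_{M\in \operatorname{ind}\mathcal{B}} \frac{t^{l(M)+1}-1}{t^{l(M)}-1}$ at $t=q$, re-express it via the dual partition $(x_1,\dots,x_m)$, multiply by $\frac{1}{d} q^r (q-1)^m$, and check that the $(q-1)^m$ factor cancels the denominators.

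First I would rewrite the big product by grouping indecomposables according to their length: since exactly $k_i$ objects of $\operatorname{ind}\mathcal{B}$ have length $i$,
\begin{align*}
    \prod_{M\in \operatorname{ind}\mathcal{B}} \frac{t^{l(M)+1}-1}{t^{l(M)}-1} = \prod_{i\geq 1} \left( \frac{t^{i+1}-1}{t^{i}-1} \right)^{k_i}.
\end{align*}
This is a telescoping-type product once one tracks how often each cyclotomic-like factor $t^{j}-1$ occurs: the factor $t^{j}-1$ appears in the numerator with multiplicity $k_{j-1}$ (for $j\geq 2$) and in the denominator with multiplicity $k_j$. Hence
\begin{align*}
    \prod_{i\geq 1} \left( \frac{t^{i+1}-1}{t^{i}-1} \right)^{k_i} = \prod_{j\geq 1} (t^{j}-1)^{\,k_{j-1}-k_j},
\end{align*}
with the convention $k_0 := 0$ (so the $j=1$ exponent is $-k_1 = -m$). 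Here I would use $k_0=0$ rather than the more natural bookkeeping value; let me instead keep the denominator $t-1$ explicit: the product equals $\dfrac{\prod_{j\geq 2}(t^j-1)^{k_{j-1}-k_j}}{(t-1)^{k_1}}\cdot\prod_{j\geq 2}(t^j-1)^{0}$, i.e. more cleanly $\prod_{j\geq 1}(t^j-1)^{k_{j-1}}\big/\prod_{j\geq 1}(t^j-1)^{k_j}$, which after re-indexing the numerator is $\frac{1}{t-1}\cdot$ (something) — the point is simply that all positive exponents come from numerators and we get a net expression whose only negative-exponent factor is $(t-1)^{-m}$. Substituting $t=q$ and multiplying by $(q-1)^m$ exactly clears that denominator, leaving $\frac{1}{d}q^r$ times a product of factors $(q^{j}-1)$ with nonnegative integer multiplicities $k_{j-1}-k_j$ summing to $\sum_j (k_{j-1}-k_j)\cdot 1$ terms... more precisely $\sum_{j\ge 2}(k_{j-1}-k_j) = k_1 = m$ factors in total (a telescoping sum), matching the $m$ factors $(q^{x_i+1}-1)$ in the claim.

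The last step is to identify this product of $(q^j-1)$'s with $\prod_{i=1}^m (q^{x_i+1}-1)$, where $(x_1,\dots,x_m)$ is the dual (conjugate) partition of $(k_1,k_2,\dots)$. This is the standard fact that for a partition $\lambda=(k_1\geq k_2\geq\cdots)$ with conjugate $\lambda'=(x_1,\dots,x_m)$, the multiset $\{x_i+1 : 1\le i\le m\}$ equals the multiset that contains the value $j$ with multiplicity $k_{j-1}-k_j$ for each $j\geq 1$ — equivalently, reading the Young diagram of $\lambda$ by columns, the $i$-th column has $x_i$ boxes, and the columns of length $j-1$ are precisely those indexed $i$ with $k_j < i \le k_{j-1}$, of which there are $k_{j-1}-k_j$. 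I would include a one-line verification of this, perhaps via the two examples already computed in Lemma \ref{k1k2} ($E_6$ and $F_4$) where one can check directly that the exponents $x_i+1$ are $2,5,6,8,9,12$ and so on. I do not expect a genuine obstacle here; the ``hard part'' is only the careful bookkeeping of exponents in the telescoping product and making the conjugate-partition identification airtight, which is routine combinatorics on Young diagrams. I would therefore present the argument compactly: state the grouping, the telescoping rewrite, the cancellation of $(q-1)^m$, and the column-reading of the Young diagram, then conclude
\begin{align*}
    |G| = \frac{1}{d}\, q^r (q^{x_1+1}-1)\cdots(q^{x_m+1}-1).
\end{align*}
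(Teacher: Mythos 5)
Your proposal is correct and supplies exactly the argument the paper leaves implicit: the corollary is stated with no proof, and the intended derivation is precisely to combine the order formula with the product identity $\sum_{\overline{n}} t^{l(\overline{n})} = \prod_M \frac{t^{l(M)+1}-1}{t^{l(M)}-1}$, group by length, and invoke the conjugate-partition identity. Your telescoping bookkeeping is right: the exponent of $(q^j-1)$ in $\prod_{i\geq 1}\bigl(\frac{q^{i+1}-1}{q^i-1}\bigr)^{k_i}$ is $k_{j-1}-k_j$ (with $k_0=0$), so $(q-1)^m$ cancels the $j=1$ factor, and $|\{i : x_i+1 = j\}| = k_{j-1}-k_j$ for $j\geq 2$ by the standard column-reading of the Young diagram, exactly as you say. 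The only cosmetic issue is the hedging in the middle paragraph where you restate the telescoping twice; a clean final write-up should commit to the $k_0=0$ convention once and drop the parenthetical ``something'' — the $E_6$ check ($x_i+1 = 2,5,6,8,9,12$, which are indeed the degrees of the fundamental invariants) is a nice sanity confirmation worth keeping.
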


%



\begin{remark}
    $x_1,\cdots, x_m$ can be given explicitly, see \cite[Prop 10.2.5]{carter} for all the values with respect to the types of root systems.
\end{remark}

\nocite{BGP}
\nocite{Springer}
\nocite{RINGEL1990137}
\nocite{RingelLie}
\nocite{Happelbook}
\nocite{Riedtmann}


\end{document}